\numberwithin{equation}{section}
\numberwithin{figure}{section}
\theoremstyle{plain}
\newtheorem{thm}{\protect\theoremname}
  \theoremstyle{remark}
  \newtheorem{rem}[thm]{\protect\remarkname}
  \theoremstyle{plain}
  \newtheorem{lem}[thm]{\protect\lemmaname}
  \theoremstyle{definition}
  \newtheorem{defn}[thm]{\protect\definitionname}
  \theoremstyle{plain}
  \newtheorem{prop}[thm]{\protect\propositionname}
  \theoremstyle{definition}
  \newtheorem{example}[thm]{\protect\examplename}
  \theoremstyle{plain}
  \newtheorem{cor}[thm]{\protect\corollaryname}
\newcommand{\xyR}[1]{\xydef@\xymatrixrowsep@{#1}}
\newcommand{\xyC}[1]{\xydef@\xymatrixcolsep@{#1}}
\newcommand{\Rep}{\mathsf{Rep}}
\newcommand{\Coh}{\mathsf{Coh}}
\newcommand{\Mod}{\mathsf{Mod}}
\newcommand{\Vect}{\mathsf{Vect}}
\newcommand{\Modif}{\mathsf{Modif}}
\newcommand{\HT}{\mathsf{HT}}
\newcommand{\Norm}{\mathsf{Norm}}
\newcommand{\Bun}{\mathsf{Bun}}
\newcommand{\Gra}{\mathsf{Gr}}
\newcommand{\Fil}{\mathsf{Fil}}
\newcommand{\inv}{\mathrm{inv}}
\newcommand{\Spec}{\mathrm{Spec}}
\newcommand{\Spa}{\mathrm{Spa}}
\newcommand{\Hom}{\mathrm{Hom}}
\newcommand{\Ext}{\mathrm{Ext}}
\newcommand{\coker}{\mathrm{coker}}
\newcommand{\length}{\mathrm{length}}
\newcommand{\Tor}{\mathrm{Tor}}
\newcommand{\rank}{\mathrm{rank}}
\newcommand{\sk}{\mathrm{sk}}
\newcommand{\Gr}{\mathrm{Gr}}
\newcommand{\Gal}{\mathrm{Gal}}
\newcommand{\Sub}{\mathrm{Sub}}
\newcommand{\Sym}{\mathrm{Sym}}
\newcommand{\Jump}{\mathrm{Jump}}
\newcommand{\HTT}{\mathrm{HT}}
\newcommand{\loc}{\mathrm{loc}}
\newcommand{\Frac}{\mathrm{Frac}}
\newcommand{\Aut}{\mathrm{Aut}}
\newcommand{\Proj}{\mathrm{Proj}}
\newcommand{\Exp}{\mathrm{Exp}}
\newcommand{\Log}{\mathrm{Log}}
\newcommand{\Pic}{\mathrm{Pic}}
\newcommand{\eqd}{\stackrel{\mathrm{def}}{=}}
\renewcommand{\C}{\mathsf{C}}
  \providecommand{\corollaryname}{Corollary}
  \providecommand{\definitionname}{Definition}
  \providecommand{\examplename}{Example}
  \providecommand{\lemmaname}{Lemma}
  \providecommand{\propositionname}{Proposition}
  \providecommand{\remarkname}{Remark}
\providecommand{\theoremname}{Theorem}
\begin{document}

\title{Harder-Narasimhan Filtrations for Breuil-Kisin-Fargues modules}

\author{Christophe Cornut and Macarena Peche Irissarry. }
\begin{abstract}
We define and study Harder-Narasimhan filtrations on Breuil-Kisin-Fargues
modules and related objects relevant to $p$-adic Hodge theory. 
\end{abstract}

\subjclass[2000]{$14F30$, $14F40$, $14G20$.}

\date{January, 2018}

\maketitle

\section{Introduction}

\subsection{Context}

Cohomology theories provide classifying functors from categories of
algebraic varieties to various realisation categories. Grothendieck
conjectured that there is a universal such functor, and thus also
a universal realisation category, which he called the category of
motives. He also worked out an elementary bottom-up construction of
this universal functor and its target category, assuming a short list
of hard conjectures -- the so-called standard conjectures, on which
little progress has been made. A top-down approach to Grothendieck's
conjecture aims to cut down the elusive category of motives from the
various realisation categories of existing cohomology theories, and
this first requires assembling them in some ways. 

Over an algebraically closed complete extension $C$ of $\mathbb{Q}_{p}$,
Bhatt, Morrow and Scholze \cite{BaMoSc16} have recently defined a
new (integral) $p$-adic cohomology theory, which specializes to all
other known such theories and nicely explains their relations and
pathologies. It takes values in the category of Breuil-Kisin-Fargues
modules (hereafter named BKF-modules), a variant of Breuil-Kisin modules
due to Fargues~\cite{Fa15}. This new realisation category has various,
surprisingly different but nevertheless equivalent incarnations, see
\cite[14.1.1]{ScWe15},~\cite[7.5]{Sc17} or section~$3$; beyond
its obvious relevance for $p$-adic motives, it is also expected to
play a role in the reformulation of the $p$-adic Langlands program
proposed by Fargues \cite{Fa16}. 

In this paper, we mostly investigate an hidden but implicit structure
of these BKF-modules: they are equiped with some sort of Harder-Narasimhan
formalism, adapted from either \cite{Imp16} or \cite{LeWE16}, which
both expanded the original constructions of Fargues \cite{Fa12} from
$p$-divisible groups over $\mathcal{O}_{C}$ to Breuil-Kisin modules.

\subsection{Overview}

In section~\ref{sec:BKF-Modules}, we define our categories of BKF-modules,
review what Barghav, Morrow and Scholze had to say about them, exhibit
the HN-filtrations (which we call Fargues filtrations) and work out
their basic properties. In section~\ref{sec:FunctorsOfFargues},
we turn our attention to the curvy avatar of BKF-modules up to isogenies,
namely admissible modifications of vector bundles on the curve, and
to their Hodge-Tate realizations. The link between all three incarnations
of sthukas with one paw was established by Fargues, according to Scholze
who sketched a proof in his lectures at Berkeley%
\footnote{Between \cite{ScWe15} and \cite{BaMoSc16}, the paw was twisted from
$A\xi$ to $A\xi'$. We follow the latter convention. No sthukas were
harmed in the making of \emph{our} paper, but our valuations have
lame normalizations. %
}. We redo Scholze's proof in slow motion and investigate the Fargues
filtration on the curvy and Hodge-Tate side, where it tends to be
more tractable. We also clarify various issues pertaining to exactness,
and introduce some full subcategories where the Fargues filtration
is particularly well-behaved. In a subsequent work, we will show that
ordinary BKF-modules with $G$-structures factor through these subcategories
and compute the corresponding reduction maps, from lattices in the
étale realization to lattices in the crystalline realization.

\subsection{Results}

We refer to the main body of the paper for all notations. 

We define Fargues filtrations $\mathcal{F}_{F}$ and their types $t_{F}$
on $\Mod_{A,t}^{\varphi}$ (\ref{sub:FarguesOnModAt}), $\Mod_{A,f}^{\varphi,\ast}$
(\ref{sub:TypeHN}), $\Mod_{\mathcal{O}_{K}^{\flat},f}^{\varphi}$
(\ref{sub:RisOkflat}), $\Modif_{X}^{ad}$ and $\HT_{E}^{B_{dR}}$
(\ref{sub:DefFarguesFiltrOnModif} and \ref{sub:HodgeTateModDef}).
We show that they are compatible with $\otimes$-product constructions
on $\Mod_{A,f}^{\varphi,\ast}$ (prop.~\ref{prop:catOfHNTypeTensor}),
$\Mod_{\mathcal{O}_{K}^{\flat}}^{\varphi}$ (prop.~\ref{prop:FarguesonO_Kflattensor}),
$\Modif_{X}^{ad}$ and $\HT_{E}^{B_{dR}}$ (prop.~\ref{prop:OnHT+AdModif_Farguestensor}).
On the isogeny category $\Mod_{A,\ast}^{\varphi}\otimes E$, we only
define a type $t_{F,\infty}$ (\ref{sub:FarguesTypeBKF}), analogous
to Fargues's renormalized Harder-Narasimhan function in~\cite{Fa12}.
This type matches the Fargues type $t_{F}$ on $\Mod_{A,f}^{\varphi,\ast}\otimes E$
(prop.~\ref{prop:TypeHN}), and proposition~\ref{prop:SpecialSubCats}
compares it with the Fargues type $t_{F}$ on $\Modif_{X}^{ad}$ and
$\HT_{E}^{B_{dR}}$. We define Hodge filtrations $\mathcal{F}_{H}$
and their types $t_{H}$ on $\Mod_{\mathcal{O}_{K}^{\flat},f}^{\varphi}$
(\ref{sub:RisOkflat}), $\Mod_{\mathcal{O}_{L},f}^{\varphi}$ (\ref{sub:RisOL}),
$\Mod_{A[\frac{1}{\pi}],f}^{\varphi}$ and $\Mod_{A,\ast}^{\varphi}\otimes E$
(\ref{sub:RisA=00005B1/pi=00005D}), $\Modif_{X}^{ad}$ (\ref{sub:AdmModifDef})
and $\HT_{E}^{B_{dR}}$ (\ref{sub:HodgeTateModDef}). We define opposed
Newton filtrations $\mathcal{F}_{N}$ and $\mathcal{F}_{N}^{\iota}$
and their types $t_{N}$ and $t_{N}^{\iota}$ on $\Mod_{L,f}^{\varphi}$
(\ref{sub:RisL}), and a Newton (or slope) filtration $\mathcal{F}_{N}$
with type $t_{N}$ on $\Bun_{X}$ (\ref{sub:Newton4BundlesOnCurve})
and $\Modif_{X}^{ad}$ (\ref{sub:AdmModifDef}). The Hodge and Newton
filtrations are compatible with $\otimes$-product constructions and
satisfy some exactness properties. If $K=C$ is algebraically closed,
then for a finite free BKF-module $M\in\Mod_{A,f}^{\varphi}$ mapping
to the admissible modification $\underline{\mathcal{E}}\in\Modif_{X}^{ad}$,
we establish the following inequalities:
\[
\xyR{1pc}\xyC{1.5pc}\xymatrix{t_{H}(M\otimes E)\ar@{}[r]|{\geq}\ar@{=}[d] & t_{H}(M\otimes\mathcal{O}_{K}^{\flat})\ar@{}[rr]|{\geq} &  & t_{F}(M\otimes\mathcal{O}_{K}^{\flat})\ar@{}[r]|{\geq} & t_{F,\infty}(M)\ar@{:}[dd]_{?}^{?}\\
t_{H}(M\otimes E)\ar@{}[r]|{\geq}\ar@{=}[d] & t_{H}(M\otimes\mathcal{O}_{L})\ar@{}[r]|{\geq} & t_{N}^{\iota}(M\otimes L)\ar@{=}[d]\\
t_{H}(\underline{\mathcal{E}})\ar@{}[rr]|{\geq} &  & t_{N}(\underline{\mathcal{E}})\ar@{}[rr]|{\geq} &  & t_{F}(\underline{\mathcal{E}})
}
\]
We failed to establish our hope that $t_{F,\infty}(M)=t_{F}(\underline{\mathcal{E}})$
(as did Fargues for $p$-divisible groups in \cite[Théorème 20]{Fa12}
and the second named author for Breuil-Kisin modules in \cite[Proposition 3.11]{Imp16}),
but we nevertheless show in proposition~\ref{prop:SpecialSubCats}
that
\[
\begin{array}{rcll}
t_{F,\infty}(M) & \leq & t_{F}(\underline{\mathcal{E}}) & \mbox{if }M\in\Mod_{A,f}^{\varphi,\ast},\\
t_{F,\infty}(M) & \geq & t_{F}(\underline{\mathcal{E}}) & \mbox{if }\underline{\mathcal{E}}\in\Modif_{X}^{ad,\ast}.
\end{array}
\]
We also investigate sufficient conditions for the equality $\mathcal{F}_{F}(\underline{\mathcal{E}})=\mathcal{F}_{N}(\underline{\mathcal{E}})$. 
\begin{rem}
The definition of the full subcategory $\Mod_{A,f}^{\varphi,\ast}$
of $\Mod_{A,f}^{\varphi}$ is inspired by the notion of $p$-divisible
groups of HN-type, due to Fargues, and expanded to Breuil-Kisin modules
in~\cite{Imp16}. The definition of the full subcategory $\Modif_{X}^{ad,\ast}$
of $\Modif_{X}^{ad}$ is new to this paper. We do not know if these
subcategories are related under Fargues's equivalence $\underline{\mathcal{E}}:\Mod_{A,f}^{\varphi}\otimes E\stackrel{\simeq}{\longrightarrow}\Modif_{X}^{ad}$
(see theorem~\ref{thm:FarguesScholze}).
\end{rem}

\subsection{Thanks}

First and foremost, Laurent Fargues, obviously. Then also Matthew
Morrow. And Jared Weinstein for his notes, Peter Scholze for his talks.

\subsection{Notations}

\subsubsection{Types\label{sub:Types}}

Let $(\Gamma,+,\leq)$ be a totally ordered commutative group. For
$r\in\mathbb{N}$, we consider the following submonoid of $\Gamma^{r}$:
\[
\Gamma_{\geq}^{r}\eqd\left\{ (\gamma_{1},\cdots,\gamma_{r})\in\Gamma^{r}:\gamma_{1}\geq\cdots\geq\gamma_{r}\right\} .
\]
It is equipped with a partial order defined by 
\begin{multline*}
(\gamma_{1},\cdots,\gamma_{r})\leq(\gamma_{1}^{\prime},\cdots,\gamma_{r}^{\prime})\iff\begin{cases}
\forall s\in\{1,\cdots,r\} & \gamma_{1}+\cdots+\gamma_{s}\leq\gamma_{1}^{\prime}+\cdots+\gamma_{s}^{\prime},\\
\mbox{and} & \gamma_{1}+\cdots+\gamma_{r}=\gamma_{1}^{\prime}+\cdots+\gamma_{r}^{\prime},
\end{cases}
\end{multline*}
with an involution $\iota:\Gamma_{\geq}^{r}\rightarrow\Gamma_{\geq}^{r}$
and functions $\deg,\max,\min:\Gamma_{\geq}^{r}\rightarrow\Gamma$
defined by
\[
(\gamma_{1},\cdots,\gamma_{r})^{\iota}\eqd(-\gamma_{r},\cdots,-\gamma_{1}),\qquad\begin{array}{rcl}
\deg(\gamma_{1},\cdots,\gamma_{r}) & \eqd & \gamma_{1}+\cdots+\gamma_{r},\\
(\gamma_{1},\cdots,\gamma_{r})^{\max} & \eqd & \gamma_{1},\\
(\gamma_{1},\cdots,\gamma_{r})^{\min} & \eqd & \gamma_{r}.
\end{array}
\]
For $r_{1},r_{2}\in\mathbb{N}$, there is also a ``convex sum''
map
\[
\ast:\Gamma_{\geq}^{r_{1}}\times\Gamma_{\geq}^{r_{2}}\rightarrow\Gamma_{\geq}^{r_{1}+r_{2}}
\]
which concatenates and reorders the elements. We set $\Gamma_{+}:=\{\gamma\in\Gamma:\gamma\geq0\}$
and 
\[
\Gamma_{+,\geq}^{\infty}\eqd\underrightarrow{\lim}\,\Gamma_{+,\geq}^{r}=\left\{ (\gamma_{i})_{i=1}^{\infty}\left|\begin{array}{ll}
\forall i\geq1 & \gamma_{i}\geq\gamma_{i+1}\geq0\\
\forall i\gg1 & \gamma_{i}=0
\end{array}\right.\right\} 
\]
where $\Gamma_{+,\geq}^{r}:=\Gamma_{\geq}^{r}\cap\Gamma_{+}^{r}$,
with the transition morphisms $\Gamma_{+,\geq}^{r}\hookrightarrow\Gamma_{+,\geq}^{r+1}$
given by $(\gamma_{1},\cdots,\gamma_{r})\mapsto(\gamma_{1},\cdots,\gamma_{r},0)$.
Thus $\Gamma_{+,\geq}^{\infty}$ is yet another partially ordered
monoid equipped with a degree function $\deg:\Gamma_{+,\geq}^{\infty}\rightarrow\Gamma_{+}$
and a ``convex sum'' operator 
\[
\ast:\Gamma_{+,\geq}^{\infty}\times\Gamma_{+,\geq}^{\infty}\rightarrow\Gamma_{+,\geq}^{\infty}.
\]
If $\Gamma\subset\mathbb{R}$, we will often identify $\Gamma_{\geq}^{r}$
with the monoid of all continuous concave functions $f:[0,r]\rightarrow\mathbb{R}$
such that $f(0)=0$ and $f$ is affine of slope $\gamma_{i}\in\Gamma$
on $[i-1,i]$ for all $i\in\{1,\cdots,r\}$. Under this identification,
$f\leq f'$ if and only if $f(s)\leq f'(s)$ for all $s\in[0,r]$
with equality for $s=r$, $f^{\iota}(s)=f(r-s)-f(r)$ for all $s\in[0,r]$,
$\deg(f)=f(r)$ and finally for $f_{1}\in\Gamma_{\geq}^{r_{1}}$,
$f_{2}\in\Gamma_{\geq}^{r_{2}}$ and $s\in[0,r_{1}+r_{2}]$, 
\[
f_{1}\ast f_{2}(s)=\max\left\{ f_{1}(s_{1})+f_{2}(s_{2})\left|\begin{array}{l}
s_{1}\in[0,r_{1}],\, s_{2}\in[0,r_{2}]\\
\mbox{and }s=s_{1}+s_{2}
\end{array}\right.\right\} .
\]
Similarly, we will identify $\Gamma_{+,\geq}^{\infty}$ with the monoid
of all continuous concave functions $f:\mathbb{R}_{+}\rightarrow\mathbb{R}_{+}$
such that $f(0)=0$, $f$ is affine of slope $\gamma_{i}\in\Gamma_{+}$
on $[i-1,i]$ for all positive integer $i$, with $\gamma_{i}=0$
for $i\gg0$. Then $f\leq f'$ if and only if $f(s)\leq f'(s)$ for
all $s\in\mathbb{R}_{+}$ with equality for $s\gg0$, $\deg(f)=f(s)$
for $s\gg0$ and 
\[
f_{1}\ast f_{2}(s)=\max\left\{ f_{1}(t)+f_{2}(s-t)\vert t\in[0,s]\right\} .
\]

\subsubsection{Filtrations\label{sub:Filtrations}}

In~\cite{Co14}, we defined a notion of $\Gamma$-filtrations for
finite free quasi-coherent sheaves (aka vector bundles) over schemes,
and in \cite{Co16} we investigated a notion of $\mathbb{R}$-filtrations
on bounded modular lattices of finite length. Here is a common simple
framework for $\Gamma$-filtrations and their types. If $(X,\leq)$
is a bounded partially ordered set with smallest element $0_{X}$
and largest element $1_{X}$, then a $\Gamma$-filtration on $X$
is a function $\mathcal{F}:\Gamma\rightarrow X$ which is non-increasing,
exhaustive, separated and left-continuous: $\mathcal{F}(\gamma_{1})\geq\mathcal{F}(\gamma_{2})$
for $\gamma_{1}\leq\gamma_{2}$, $\mathcal{F}(\gamma)=1_{X}$ for
$\gamma\ll0$, $\mathcal{F}(\gamma)=0_{X}$ for $\gamma\gg0$ and
for every $\gamma\in\Gamma$, there is a $\gamma'<\gamma$ such that
$\mathcal{F}$ is constant on $]\gamma',\gamma]:=\{\eta\in\Gamma\vert\gamma'<\eta\leq\gamma\}$.
If all chains of $X$ are finite, the formula
\[
\mathcal{F}(\gamma)=\begin{cases}
0_{X} & \mbox{for }\gamma>\gamma_{1}\\
c_{i} & \mbox{for }\gamma_{i+1}<\gamma\leq\gamma_{i}\\
1_{X} & \mbox{for }\gamma\leq\gamma_{s}
\end{cases}
\]
yields a bijection between the set $\mathbf{F}^{\Gamma}(X)$ of all
$\Gamma$-filtrations on $X$ and the set of all pairs $(c_{\bullet},\gamma_{\bullet})$
where $c_{\bullet}=\mathcal{F}(\Gamma)=(c_{0}<\cdots<c_{s})$ is a
(finite) chain of length $s$ in $X$ with $c_{0}=0_{X}$ and $c_{s}=1_{X}$,
while $\gamma_{\bullet}=\Jump(\mathcal{F})=(\gamma_{1}>\cdots>\gamma_{s})$
is a decreasing sequence in $\Gamma$. We then set $\mathcal{F}_{+}(\gamma):=\max\left\{ \mathcal{F}(\eta):\eta>\gamma\right\} $.
If $\rank:X\rightarrow\mathbb{N}$ is an increasing function and $r=\rank(1_{X})$,
then all chains of $X$ are finite of length $s\leq r$ and any $\Gamma$-filtration
$\mathcal{F}\in\mathbf{F}^{\gamma}(X)$ as a well-defined type $\mathbf{t}(\mathcal{F})\in\Gamma_{\geq}^{r}$:
for any $\gamma\in\Gamma$, the multiplicity of $\gamma$ in $\mathbf{t}(\mathcal{F})$
is equal to $\rank(\mathcal{F}(\gamma))-\rank(\mathcal{F}_{+}(\gamma))$.

If $\C$ is an essentially small quasi-abelian category equipped with
a rank function $\rank:\sk\,\C\rightarrow\mathbb{N}$, as defined
in \cite[3.1]{Co16}, then for every object $X$ of $\C$, the partially
ordered set $\Sub(X)$ of all strict subobjects of $X$ is a bounded
modular lattice of finite length. A $\Gamma$-filtration on $X$ is
then a $\Gamma$-filtration on $\Sub(X)$, and we denote by $\mathbf{F}^{\Gamma}(X)$
the set of all $\Gamma$-filtrations on $X$. For $\mathcal{F}\in\mathbf{F}^{\Gamma}(X)$,
we typically write 
\[
\mathcal{F}^{\gamma}=\mathcal{F}^{\geq\gamma}=\mathcal{F}(\gamma),\quad\mathcal{F}_{+}^{\gamma}=\mathcal{F}^{>\gamma}=\mathcal{F}_{+}(\gamma)\quad\mbox{and}\quad\Gr_{\mathcal{F}}^{\gamma}=\mathcal{F}^{\gamma}/\mathcal{F}_{+}^{\gamma}.
\]
If $r=\rank(X)$, the type map $\mathbf{t}:\mathbf{F}^{\Gamma}(X)\rightarrow\Gamma_{\geq}^{r}$
is given by 
\[
\mathbf{t}(\mathcal{F})=(\gamma_{1}\geq\cdots\geq\gamma_{r})\iff\forall\gamma\in\Gamma:\quad\rank\,\Gr_{\mathcal{F}}^{\gamma}=\#\{i:\gamma_{i}=\gamma\}
\]
and the degree map $\deg:\mathbf{F}^{\Gamma}(X)\rightarrow\Gamma$
is given by 
\[
\deg(\mathcal{F})=\deg(\mathbf{t}(\mathcal{F}))={\textstyle \sum_{\gamma\in\Gamma}}\rank\,\Gr_{\mathcal{F}}^{\gamma}\cdot\gamma.
\]
If $0\rightarrow x\rightarrow X\rightarrow y\rightarrow0$ is an exact
sequence in $\C$, any $\Gamma$-filtration $\mathcal{F}\in\mathbf{F}^{\Gamma}(X)$
induces $\Gamma$-filtrations $\mathcal{F}_{x}\in\mathbf{F}^{\Gamma}(x)$
and $\mathcal{F}_{y}\in\mathbf{F}^{\Gamma}(y)$, and we have 
\[
\mathbf{t}(\mathcal{F})=\mathbf{t}(\mathcal{F}_{x})\ast\mathbf{t}(\mathcal{F}_{y})\quad\mbox{in}\quad\Gamma_{\geq}^{r}.
\]
We denote by $\Gra^{\Gamma}\C$ and $\Fil^{\Gamma}\C$ the quasi-abelian
categories of $\Gamma$-graded and $\Gamma$-filtered objects in $\C$.
For finite dimensional vector spaces over a field $k$, we set
\[
\Gr_{k}^{\Gamma}\eqd\Gr^{\Gamma}\Vect_{k}\quad\mbox{and}\quad\Fil_{k}^{\Gamma}\eqd\Fil^{\Gamma}\Vect_{k}.
\]
 When $\Gamma=\mathbb{R}$, we simplify our notations to $\mathbf{F}(X):=\mathbf{F}^{\mathbb{R}}(X)$.

\subsubsection{Invariants\label{sub:Invariants}}

Let $\mathcal{O}$ be a valuation ring with fraction field $K$, maximal
ideal $\mathfrak{m}$ and residue field $k$. We denote by $(\Gamma,+,\leq)$
the totally ordered commutative group $(K^{\times}/\mathcal{O}^{\times},\cdot,\leq)$,
when we want to view it as an additive group. We extend the total
orders to $K/\mathcal{O}^{\times}=K^{\times}/\mathcal{O}^{\times}\cup\{0\}$
and $\Gamma\cup\{-\infty\}$, by declaring that the added elements
are smaller than everyone else. We denote by $\left|-\right|:K\rightarrow K/\mathcal{O}^{\times}$
the projection. Thus for every $\lambda_{1},\lambda_{2}\in K$, $\left|\lambda_{1}\right|\leq\left|\lambda_{2}\right|\iff\mathcal{O}\lambda_{1}\subset\mathcal{O}\lambda_{2}$.
We write 
\[
\Exp:\Gamma\cup\{-\infty\}\longleftrightarrow K^{\times}/\mathcal{O}^{\times}\cup\{0\}:\Log
\]
for the corresponding isomorphisms. When the valuation has height
$1$, i.e.~when it is given by a genuine absolute value $\left|-\right|:K\rightarrow\mathbb{R}_{+}$,
we will identify $K^{\times}/\mathcal{O}^{\times}$ with the corresponding
subgroup $\left|K^{\times}\right|\subset\mathbb{R}_{+}^{\times}$,
and $\Gamma$ with a subgroup of $\mathbb{R}$, using genuine logarithms
and exponential maps in a suitable base. For every element $\gamma\in\Gamma$,
\[
I(\gamma)\eqd\left\{ x\in K:\left|x\right|\leq\Exp(-\gamma)\right\} 
\]
is a free, rank one $\mathcal{O}$-submodule of $K$. If $\gamma\in\Gamma_{+}$,
it is a principal ideal of $\mathcal{O}$ and 
\[
\mathcal{O}(\gamma)\eqd\mathcal{O}/I(\gamma)
\]
is a finitely presented torsion $\mathcal{O}$-module. These modules
are the building blocks of the category of finitely presented torsion
$\mathcal{O}$-modules, which we denote by $\C$. 
\begin{lem}
\label{lem:DefInvTorsionOmod}For any $M\in\C$, there is a unique
element $(\gamma_{i})_{i=1}^{\infty}$ in $\Gamma_{+,\geq}^{\infty}$
such that $M\simeq\oplus_{i=1}^{\infty}\mathcal{O}(\gamma_{i})$.
Then $I(\sum_{i=1}^{\infty}\gamma_{i})$ is the Fitting ideal of $M$,
$I(\gamma_{i})$ is the annihilator of $\Lambda_{\mathcal{O}}^{i}(M)$
and $\max\{i:\gamma_{i}\neq0\}$ is the minimal number of generators
of $M$. \end{lem}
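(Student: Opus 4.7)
The plan is to construct the decomposition $M\simeq\bigoplus_{i}\mathcal{O}(\gamma_{i})$ by putting a presentation matrix of $M$ into Smith normal form, then to read off the three invariant formulas directly from this canonical shape; uniqueness of the sequence $(\gamma_{i})$ will follow from the annihilator formula, which identifies each $\gamma_{i}$ canonically in terms of $M$.

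For existence, I fix a finite presentation $\mathcal{O}^{m}\xrightarrow{A}\mathcal{O}^{n}\twoheadrightarrow M\to0$ and invoke the fact that a commutative valuation ring is an elementary divisor ring: since its ideals are totally ordered by inclusion, every finitely generated ideal is principal (Bezout property), and the standard reduction---pick an entry of smallest absolute value in $A$, use Bezout combinations to clear its row and column, recurse on the remaining block---diagonalises $A$ to $\mathrm{diag}(a_{1},a_{2},\ldots)$ with $a_{1}\mid a_{2}\mid\cdots$. Torsion of $M$ forces each $a_{i}$ nonzero; writing $|a_{i}|=\Exp(-\gamma_{i})$ yields $\gamma_{1}\geq\gamma_{2}\geq\cdots\geq0$ and
\[
M\simeq\bigoplus_{i\geq1}\mathcal{O}/(a_{i})=\bigoplus_{i\geq1}\mathcal{O}(\gamma_{i}),
\]
which lies in $\Gamma_{+,\geq}^{\infty}$ since $\mathcal{O}(0)=0$ trims the tail.

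Next I verify the three formulas on this normal form. The Fitting ideal is generated by the maximal minors of any presentation matrix, hence equals $(a_{1}a_{2}\cdots)=I(\sum_{i}\gamma_{i})$. By Nakayama over the local ring $\mathcal{O}$ the minimal number of generators equals $\dim_{k}M/\mathfrak{m}M$; since $\mathcal{O}(\gamma)/\mathfrak{m}\mathcal{O}(\gamma)$ is $k$ for $\gamma>0$ and $0$ for $\gamma=0$, this count is exactly $\max\{i:\gamma_{i}\neq0\}$. For the annihilator of $\Lambda_{\mathcal{O}}^{i}M$, each $\mathcal{O}(\gamma_{j})$ is cyclic so $\Lambda^{\geq2}$ of it vanishes; the graded decomposition of the exterior power of a direct sum, combined with $\mathcal{O}(\gamma)\otimes_{\mathcal{O}}\mathcal{O}(\gamma')=\mathcal{O}(\min(\gamma,\gamma'))$, gives
\[
\Lambda_{\mathcal{O}}^{i}M\simeq\bigoplus_{j_{1}<\cdots<j_{i}}\mathcal{O}(\gamma_{j_{i}}).
\]
The annihilator of this sum is the intersection of the totally ordered family $\{I(\gamma_{j_{i}})\}$, hence its smallest member; this is attained at $j_{i}=i$ (where $\gamma_{j_{i}}$ is largest) and equals $I(\gamma_{i})$, with the degenerate case $\gamma_{i}=0$ matching the vanishing of $\Lambda^{i}M$ and the convention $I(0)=\mathcal{O}$.

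Uniqueness of $(\gamma_{i})$ is then immediate: each $\gamma_{i}$ is the unique element of $\Gamma_{+}$ with $I(\gamma_{i})=\mathrm{ann}_{\mathcal{O}}(\Lambda_{\mathcal{O}}^{i}M)$, a canonical invariant of $M$ independent of any chosen decomposition. The main technical step is the Smith normal form itself: it is classical over valuation rings but deserves some care, because the potential absence of Noetherianity rules out any ``maximal element'' argument and forces one to work purely with the Bezout/total-order structure of $\mathcal{O}$.
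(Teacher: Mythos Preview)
Your proof is correct and follows the same route as the paper's: establish the cyclic decomposition, then read off the Fitting ideal, the minimal number of generators via $\dim_k M\otimes k$, and $\mathrm{ann}(\Lambda^i_{\mathcal{O}} M)$ via the identity $\Lambda^i_{\mathcal{O}} M \simeq \bigoplus_{|S|=i}\mathcal{O}(\min_{j\in S}\gamma_j)$, deducing uniqueness from the latter; the only difference is that the paper cites a reference (Gabber--Ramero) for the structure theorem rather than sketching Smith normal form. One small slip in your sketch: under the paper's convention $|a|\leq|b|\iff\mathcal{O}a\subset\mathcal{O}b$, the pivot that directly divides all other entries is the one of \emph{largest} absolute value, not smallest.
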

\begin{proof}
By~\cite[6.1.14]{GaRa03}, $M\simeq\oplus_{i=1}^{r}\mathcal{O}(\gamma_{i})$
for some $r\in\mathbb{N}$ and $\gamma_{1}\geq\cdots\geq\gamma_{r}>0$.
Plainly, $I(\gamma_{1})$ is the annihilator of $M$, $I(\sum_{i=1}^{r}\gamma_{i})$
is the Fitting ideal of $M$ and $r=\dim_{k}M\otimes_{\mathcal{O}}k$
is the minimal number of generators of $M$. For every $i\geq1$,
$\Lambda_{\mathcal{O}}^{i}M\simeq\oplus_{I}\mathcal{O}(\gamma_{I})$
where $I$ ranges through the subsets of $\{1,\cdots,r\}$ with $i$
elements and $\gamma_{I}:=\min\{\gamma_{i}:i\in I\}$, thus indeed
$I(\gamma_{i})$ is the annihilator of $\Lambda_{\mathcal{O}}^{i}M$. \end{proof}
\begin{defn}
We denote the above invariant by $\inv(M)=(\inv_{i}(M))_{i=1}^{\infty}$
and set 
\[
r(M)\eqd\max\{i:\inv_{i}(M)\neq0\},\quad\length(M)\eqd\sum_{i=1}^{\infty}\inv_{i}(M).
\]
\end{defn}
\begin{lem}
\label{lem:InvTorsOModdecUnderSubquo}Fix $M,N\in\C$ and suppose
that $N$ is a subquotient of $M$. Then 
\[
r(N)\leq r(M)\quad\mbox{and}\quad\forall i:\quad\inv_{i}(N)\leq\inv_{i}(M).
\]
\end{lem}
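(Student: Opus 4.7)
The plan is to reduce the lemma to two special cases, exploiting that any subquotient is a submodule of a quotient: it suffices to treat separately the case where $N$ is a quotient of $M$, and the case where $N$ is a submodule of $M$.

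\emph{Quotient case.} A surjection $M\twoheadrightarrow N$ induces a surjection $\Lambda_{\mathcal{O}}^{i}M\twoheadrightarrow\Lambda_{\mathcal{O}}^{i}N$ for every $i\geq 1$, hence an inclusion of annihilators $\mathrm{Ann}(\Lambda_{\mathcal{O}}^{i}M)\subseteq\mathrm{Ann}(\Lambda_{\mathcal{O}}^{i}N)$. By Lemma~\ref{lem:DefInvTorsionOmod}, this reads $I(\inv_{i}(M))\subseteq I(\inv_{i}(N))$. Since $\gamma\mapsto I(\gamma)$ is order-reversing (as $\Exp$ is an order-preserving bijection), this is equivalent to $\inv_{i}(N)\leq\inv_{i}(M)$. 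The bound $r(N)\leq r(M)$ is immediate because generators of $M$ map to generators of $N$ under a surjection.

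\emph{Submodule case.} Here I would use $\mathcal{O}$-linear duality, $M^{\vee}\eqd\Hom_{\mathcal{O}}(M,K/\mathcal{O})$, to reduce to the quotient case. The key computation is that $(-)^{\vee}$ preserves the building blocks: for $\gamma\in\Gamma_{+}$, pick $\pi_{\gamma}\in K$ with $I(\gamma)=\mathcal{O}\pi_{\gamma}$ (possible because $\gamma$ lies in the value group); then
\[
\mathcal{O}(\gamma)^{\vee}=\Hom_{\mathcal{O}}(\mathcal{O}/I(\gamma),K/\mathcal{O})\simeq I(\gamma)^{-1}/\mathcal{O}=\pi_{\gamma}^{-1}\mathcal{O}/\mathcal{O}\simeq\mathcal{O}/I(\gamma)=\mathcal{O}(\gamma)
\]
via multiplication by $\pi_{\gamma}$. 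Combined with the structure theorem behind Lemma~\ref{lem:DefInvTorsionOmod}, this shows that $(-)^{\vee}$ preserves $\C$ and the invariants $\inv$ and $r$. Moreover $(-)^{\vee}$ is exact on $\C$, either by inspection on the building blocks, or because $K/\mathcal{O}$ is an injective $\mathcal{O}$-module over a valuation domain. An inclusion $N\hookrightarrow M$ therefore dualizes to a surjection $M^{\vee}\twoheadrightarrow N^{\vee}$, and the quotient case yields
\[
\inv_{i}(N)=\inv_{i}(N^{\vee})\leq\inv_{i}(M^{\vee})=\inv_{i}(M),\qquad r(N)=r(N^{\vee})\leq r(M^{\vee})=r(M).
\]

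The only potential obstacle is the bookkeeping needed to verify that $(-)^{\vee}$ both preserves invariants on $\C$ and is exact; both points reduce to the explicit description of the cyclic pieces $\mathcal{O}(\gamma)$, so no genuine new input beyond Lemma~\ref{lem:DefInvTorsionOmod} is required. A naive attempt to do the submodule case directly through exterior powers fails because the map $\Lambda_{\mathcal{O}}^{i}N\to\Lambda_{\mathcal{O}}^{i}M$ need not be injective for torsion modules, which is precisely what forces us to dualize.
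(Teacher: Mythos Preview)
Your proof is correct and follows essentially the same approach as the paper: reduce to the submodule and quotient cases, handle the quotient case via the surjection $\Lambda_{\mathcal{O}}^{i}M\twoheadrightarrow\Lambda_{\mathcal{O}}^{i}N$ together with Lemma~\ref{lem:DefInvTorsionOmod}, and reduce the submodule case to the quotient case via the exact duality $X\mapsto X^{\vee}=\Hom_{\mathcal{O}}(X,K/\mathcal{O})$ on $\C$, which preserves $\inv$. The only minor difference is that the paper observes $r(N)\leq r(M)$ is a formal consequence of $\inv_{i}(N)\leq\inv_{i}(M)$ for all $i$, rather than treating it separately.
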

\begin{proof}
We just need to establish the second claim when $N$ is either a submodule
or a quotient of $M$. For $X\in\C$, set $X^{\vee}:=\Hom_{\mathcal{O}}(X,K/\mathcal{O})$.
One checks using the previous lemma that this defines an exact duality
on $\C$, with $\inv(X)=\inv(X^{\vee})$. We may thus even assume
that $N$ is a quotient of $M$. Our claim now follows from the previous
lemma and the surjectivity of $\Lambda_{\mathcal{O}}^{i}M\twoheadrightarrow\Lambda_{\mathcal{O}}^{i}N$.\end{proof}
\begin{lem}
\label{lem:InvTorsOMod_Formula}For $M\in\C$ and any positive integer
$r$, 
\[
\sum_{i=1}^{r}\inv_{i}(M)=\max\left\{ \length\left\langle x_{1},\cdots,x_{r}\right\rangle :x_{i}\in M\right\} .
\]
\end{lem}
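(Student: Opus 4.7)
The plan is to establish the two inequalities separately, using the structure theorem from Lemma~\ref{lem:DefInvTorsionOmod} together with the monotonicity of invariants under subquotients from Lemma~\ref{lem:InvTorsOModdecUnderSubquo}. Fix a decomposition $M\simeq\bigoplus_{i=1}^{s}\mathcal{O}(\gamma_{i})$ with $\gamma_{1}\geq\cdots\geq\gamma_{s}>0$, where $s=r(M)$ and $\gamma_{i}=\inv_{i}(M)$ (setting $\inv_{i}(M)=0$ for $i>s$).

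For the upper bound $\sum_{i=1}^{r}\inv_{i}(M)\geq\length\langle x_{1},\dots,x_{r}\rangle$, I would let $N=\langle x_{1},\dots,x_{r}\rangle\subset M$. Since $N$ is a submodule of $M$, Lemma~\ref{lem:InvTorsOModdecUnderSubquo} gives $\inv_{i}(N)\leq\inv_{i}(M)$ for every $i$. On the other hand, $N$ is generated by $r$ elements, so by Lemma~\ref{lem:DefInvTorsionOmod} (which identifies $r(N)$ with the minimal number of generators of $N$) we have $r(N)\leq r$, i.e.\ $\inv_{i}(N)=0$ for $i>r$. Summing the invariants of $N$ then yields
\[
\length(N)=\sum_{i=1}^{\infty}\inv_{i}(N)=\sum_{i=1}^{r}\inv_{i}(N)\leq\sum_{i=1}^{r}\inv_{i}(M).
\]

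For the reverse inequality, I would exhibit generators that saturate the bound. Let $e_{i}\in M$ be a generator of the summand $\mathcal{O}(\gamma_{i})$ for $i\leq s$, and set $e_{i}=0$ for $s<i\leq r$ (if $r>s$). Then $\langle e_{1},\dots,e_{r}\rangle=\bigoplus_{i=1}^{\min(r,s)}\mathcal{O}(\gamma_{i})$, whose length is $\sum_{i=1}^{\min(r,s)}\gamma_{i}=\sum_{i=1}^{r}\inv_{i}(M)$ since the remaining invariants vanish. The only step that requires care is the observation that $N$ needs at most $r$ generators — this is trivial by construction, but it is the precise hook that lets the subquotient bound in Lemma~\ref{lem:InvTorsOModdecUnderSubquo} cut the sum off at index $r$; this is really the crux of the argument and the reason the formula works.
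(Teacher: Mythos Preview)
Your proof is correct and follows essentially the same approach as the paper: for the upper bound you use $r(N)\leq r$ from Lemma~\ref{lem:DefInvTorsionOmod} together with $\inv_{i}(N)\leq\inv_{i}(M)$ from Lemma~\ref{lem:InvTorsOModdecUnderSubquo}, exactly as the paper does. The only difference is that the paper treats the lower bound as ``plainly sufficient'' (i.e.\ obvious), whereas you spell out the choice of generators $e_{i}$ explicitly.
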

\begin{proof}
It is plainly sufficient to establish that for every submodule $N$
of $M$ generated by $r$ elements, $\length(N)\leq\sum_{i=1}^{r}\inv_{i}(M)$.
Now $r(N)\leq r$ by lemma~\ref{lem:DefInvTorsionOmod}, thus indeed
$\length(N)=\sum_{i=1}^{r}\inv_{i}(N)\leq\sum_{i=1}^{r}\inv_{i}(M)$
by lemma~\ref{lem:InvTorsOModdecUnderSubquo}. \end{proof}
\begin{lem}
\label{lem:InvTorsOModAndExactSeq}Let $0\rightarrow M_{1}\rightarrow M_{2}\rightarrow M_{3}\rightarrow0$
be an exact sequence of $\mathcal{O}$-modules. Suppose that two out
of $\{M_{1},M_{2},M_{3}\}$ belong to $\C.$ Then so does the third
one and 
\[
\length(M_{1})+\length(M_{3})=\length(M_{3})\quad\mbox{in}\quad\Gamma_{+},
\]
\[
\inv(M_{1})\ast\inv(M_{3})\leq\inv(M_{2})\le\inv(M_{1})+\inv(M_{3})\quad\mbox{in}\quad\Gamma_{+,\geq}^{\infty}.
\]
Moreover, $\inv(M_{1})\ast\inv(M_{3})=\inv(M_{3})$ if and only if
the exact sequence splits.\end{lem}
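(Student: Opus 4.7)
The plan is to establish the four assertions in turn, relying on the structure theorem underlying lemma~\ref{lem:DefInvTorsionOmod} together with lemmas~\ref{lem:InvTorsOModdecUnderSubquo} and~\ref{lem:InvTorsOMod_Formula}. Closure of $\C$ under subobjects, quotients and extensions reduces to the nontrivial case of a submodule $M_1$ of $M_2 \in \C$ with finitely presented torsion quotient: then $M_1$ is finitely generated torsion, and any finitely generated submodule of a finitely presented torsion $\mathcal{O}$-module is itself finitely presented by \cite[6.1.14]{GaRa03}. The length identity $\length(M_1) + \length(M_3) = \length(M_2)$ then follows by applying $\Log$ to the multiplicativity $\mathrm{Fitt}_0(M_2) = \mathrm{Fitt}_0(M_1) \cdot \mathrm{Fitt}_0(M_3)$ of the zeroth Fitting ideal in the short exact sequence, combined with the identification $\mathrm{Fitt}_0(M) = I(\length(M))$ from lemma~\ref{lem:DefInvTorsionOmod}.

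For the upper bound $\inv(M_2) \leq \inv(M_1) + \inv(M_3)$, I fix $r \geq 1$, pick $x_1, \ldots, x_r \in M_2$ with $\length\langle x_1, \ldots, x_r\rangle = \sum_{i=1}^r \inv_i(M_2)$ via lemma~\ref{lem:InvTorsOMod_Formula}, and set $N := \langle x_1, \ldots, x_r\rangle$. The exact sequence $0 \to N \cap M_1 \to N \to \bar N \to 0$ with $\bar N$ the image of $N$ in $M_3$, combined with the length additivity just established, yields $\length(N) = \length(N \cap M_1) + \length(\bar N)$; since $r(N) \leq r$, lemma~\ref{lem:InvTorsOModdecUnderSubquo} gives $r(N \cap M_1) \leq r$, and $\bar N$ is visibly generated by $\leq r$ elements, so a second application of lemma~\ref{lem:InvTorsOMod_Formula} in $M_1$ and $M_3$ bounds each summand. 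For the lower bound $\inv(M_1) \ast \inv(M_3) \leq \inv(M_2)$, the partial-sum description of $\ast$ reduces the claim to showing that, for every decomposition $r = r_1 + r_3$, $\sum_{i=1}^{r_1}\inv_i(M_1) + \sum_{i=1}^{r_3}\inv_i(M_3) \leq \sum_{i=1}^{r}\inv_i(M_2)$; I would pick $r_1$ elements of $M_1$ and $r_3$ elements of $M_3$ realising those partial sums, lift the latter to $M_2$, and use length additivity to lower-bound the length of the resulting submodule of $M_2$, before invoking lemma~\ref{lem:InvTorsOMod_Formula} one last time.

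The main obstacle is the splitting criterion. One direction is immediate from the definition of $\ast$. For the converse I would induct on $r(M_2)$. Assume $\inv(M_2) = \inv(M_1) \ast \inv(M_3)$ with $r(M_2) > 0$; the top invariant $\gamma_1^{(2)}$ equals either $\gamma_1^{(1)}$ or $\gamma_1^{(3)}$. In the first case, I pick $x \in M_1$ of maximal order generating a cyclic direct summand of $M_1$, which is also a direct summand of $M_2$ because its order equals the top invariant of $M_2$; writing $M_2 = \mathcal{O} x \oplus M_2'$ gives $M_1 = \mathcal{O} x \oplus (M_1 \cap M_2')$, and the induced sequence $0 \to M_1 \cap M_2' \to M_2' \to M_3 \to 0$ preserves the invariants equality and splits by induction, producing the section at once. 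In the second case, I pick $\bar y \in M_3$ of maximal order splitting off a cyclic summand $\mathcal{O}\bar y$ of $M_3$, and any lift $\tilde y \in M_2$ of $\bar y$: the order of $\tilde y$ is squeezed between $\gamma_1^{(3)}$ and $\gamma_1^{(2)} = \gamma_1^{(3)}$, hence equals $\gamma_1^{(3)}$, so $\mathcal{O}\tilde y$ is a direct summand of $M_2$ with $\mathcal{O}\tilde y \cap M_1 = 0$, and the quotient sequence $0 \to M_1 \to M_2/\mathcal{O}\tilde y \to M_3/\mathcal{O}\bar y \to 0$ again preserves the invariants equality and splits by induction. The delicate step is to combine the cyclic splitting $\bar y \mapsto \tilde y$ with the inductive section of the quotient sequence into a genuine section $M_3 \to M_2$: the naive combination projects onto $M_3$ as the identity plus an error in $\mathcal{O}\bar y$, which I correct by subtracting its lift along the isomorphism $\mathcal{O}\tilde y \cong \mathcal{O}\bar y$. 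This bookkeeping, routine once set up, is where most of the care is needed.
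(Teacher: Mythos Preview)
Your argument is correct and follows the same overall architecture as the paper's: lemma~\ref{lem:InvTorsOMod_Formula} for both inequalities, and induction on $r(M_2)$ for the splitting criterion. The lower bound is handled identically. Two points of divergence are worth noting.

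For the upper bound, you work with a submodule $N\subset M_2$ generated by $r$ elements and split it via $N\cap M_1$ and its image $\bar N$ in $M_3$. The paper instead passes to a quotient: it fixes a surjection $M_2\twoheadrightarrow M_2'=\bigoplus_{i=1}^r\mathcal{O}(\inv_i(M_2))$, pushes $M_1$ and $M_3$ forward, and bounds using lemma~\ref{lem:InvTorsOModdecUnderSubquo}. Both arguments are short; yours has the minor advantage of not needing to exhibit such a quotient map, while the paper's avoids checking that $N\cap M_1$ has $r(-)\le r$.

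For the splitting criterion, the paper streamlines your two-case analysis by invoking the duality $X\mapsto X^\vee=\Hom_\mathcal{O}(X,K/\mathcal{O})$ from the proof of lemma~\ref{lem:InvTorsOModdecUnderSubquo} to reduce to the case $\gamma=\inv_1(M_3)$. More significantly, once there the paper observes that $M_2$ is a module over $\mathcal{O}(\gamma)$ (since $I(\gamma)$ is its annihilator), so $M_3^\circ\simeq\mathcal{O}(\gamma)$ is \emph{free} hence projective over $\mathcal{O}(\gamma)$, and the surjection $M_2\to M_3\twoheadrightarrow M_3^\circ$ therefore splits directly, giving $M_2=M_2^\circ\oplus M_2'$ with $M_1\subset M_2'$ automatic. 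This eliminates your error-correction step entirely: the section on the cyclic piece and compatibility with $M_1$ come for free from projectivity. Your bookkeeping is correct, but the projectivity observation is the cleaner way through.
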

\begin{proof}
The first assertion holds for any coherent ring. The additivity of
the length comes from \cite[6.3.1]{GaRa03} and lemma~\ref{lem:DefInvTorsionOmod}.
For $x_{1},\cdots,x_{r}\in M_{1}$ and $z_{1},\cdots,z_{s}\in M_{3}$,
set $y_{i}=x_{i}\in M_{2}$ for $1\leq i\leq r$ and lift $z_{i}\in M_{3}$
to some $y_{r+i}\in M_{2}$ for $1\leq i\leq s$. Then 
\begin{eqnarray*}
\length\left(\left\langle y_{1},\cdots,y_{r+s}\right\rangle \right) & = & \length\left(\left\langle y_{1},\cdots,y_{r+s}\right\rangle \cap M_{1}\right)+\length\left(\left\langle z_{1},\cdots,z_{s}\right\rangle \right)\\
 & \geq & \length\left(\left\langle x_{1},\cdots,x_{r}\right\rangle \right)+\length\left(\left\langle z_{1},\cdots,z_{s}\right\rangle \right).
\end{eqnarray*}
Lemma~\ref{lem:InvTorsOMod_Formula} now implies that indeed $\inv(M_{1})\ast\inv(M_{3})\leq\inv(M_{2})$.
For the second inequality, let $r$ be a positive integer, fix a surjective
homomorphism $M_{2}\twoheadrightarrow M_{2}^{\prime}$ where $M_{2}^{\prime}=\oplus_{i=1}^{r}\mathcal{O}(\inv_{i}(M_{2}))$,
let $M_{1}^{\prime}\subset M_{2}^{\prime}$ be the image of $M_{1}$
and $M_{3}^{\prime}=M_{2}^{\prime}/M_{1}^{\prime}$, so that $M_{i}^{\prime}$
is a finitely presented quotient of $M_{i}$ for $i\in\{1,2,3\}$.
Then
\begin{align*}
{\textstyle \sum_{i=1}^{r}}\inv_{i}(M_{2}) & =\length(M_{2}^{\prime})=\length(M_{1}^{\prime})+\length(M_{3}^{\prime})\\
 & ={\textstyle \sum_{i=1}^{r}}\inv_{i}(M_{1}^{\prime})+{\textstyle \sum_{i=1}^{r}}\inv_{i}(M_{3}^{\prime})\\
 & \leq{\textstyle \sum_{i=1}^{r}}\inv_{i}(M_{1})+{\textstyle \sum_{i=1}^{r}}\inv_{i}(M_{3})
\end{align*}
with equality for $r\gg0$, using lemma~\ref{lem:InvTorsOModdecUnderSubquo}
and the aforementioned additivity of $\length$. Therefore indeed
$\inv(M_{2})\leq\inv(M_{1})+\inv(M_{3})$ in $\Gamma_{+,\geq}^{\infty}$. 

If our exact sequence splits, then plainly $\inv(M_{2})=\inv(M_{1})\ast\inv(M_{3})$.
We prove the converse implication by induction on $r(M_{2})$. If
$r(M_{2})=0$, there is nothing to prove: $M_{1}=M_{2}=M_{3}=0$.
Suppose therefore that $r(M_{2})\geq1$ and $\inv(M_{2})=\inv(M_{1})\ast\inv(M_{3})$,
and let $\gamma=\inv_{1}(M_{2})$. Then also $\gamma=\inv_{1}(M_{1})$
or $\gamma=\inv_{1}(M_{3})$. Using the duality $X\mapsto X^{\vee}$
from the proof of lemma~\ref{lem:InvTorsOModdecUnderSubquo}, we
may assume that $\gamma=\inv_{1}(M_{3})$. Write $M_{3}=M_{3}^{\circ}\oplus M_{3}^{\prime}$
with $M_{3}^{\circ}\simeq\mathcal{O}(\gamma)$. This lifts to a splitting
$M_{2}=M_{2}^{\circ}\oplus M_{2}^{\prime}$ of the $\mathcal{O}(\gamma)$-module
$M_{2}$, with $M_{1}\subset M_{2}^{\prime}$ and $M_{2}^{\circ}\simeq\mathcal{O}(\gamma)$.
Since $\inv(M_{i})=\inv(M_{i}^{\circ})\ast\inv(M_{i}^{\prime})$ for
$i\in\{2,3\}$, we still have $\inv(M_{2}^{\prime})=\inv(M_{1})\ast\inv(M_{3}^{\prime})$
for the exact sequence $0\rightarrow M_{1}\rightarrow M_{2}^{\prime}\rightarrow M_{3}^{\prime}\rightarrow0$.
But $r(M_{2})=r(M_{2}^{\prime})+1$, so this last sequence splits
and so does the initial one.
\end{proof}
\noindent For every $M\in\C$, there is a canonical $\Gamma$-filtration
$\mathcal{F}(M)$ on $M\otimes k$ defined by
\[
\mathcal{F}^{\gamma}(M)\eqd\begin{cases}
\frac{M[I(-\gamma)]+\mathfrak{m}M}{\mathfrak{m}M}\subset\frac{M}{\mathfrak{m}M}=M\otimes k & \mbox{if }\gamma\leq0,\\
0 & \mbox{if }\gamma\geq0.
\end{cases}
\]
It depends functorially upon $M$ and one checks easily that we have
\[
\inv(M)=\mathbf{t}^{\iota}(\mathcal{F}(M))\quad\mbox{in}\quad\Gamma_{+,\geq}^{r}\subset\Gamma_{+,\geq}^{\infty}
\]
where $r=r(M)$. In particular, $\length(M)=-\deg(\mathcal{F}(M))$.
\begin{lem}
\label{lem:InvTorsOModExSeqCaseEqual}For any exact sequence $0\rightarrow M_{1}\rightarrow M_{2}\rightarrow M_{3}\rightarrow0$
of finitely presented torsion $\mathcal{O}$-modules, the following
conditions are equivalent: 
\begin{enumerate}
\item The exact sequence splits.
\item For every $\gamma\in\Gamma$, the induced complex of $k$-vector spaces
\[
0\rightarrow\mathcal{F}^{\gamma}(M_{1})\rightarrow\mathcal{F}^{\gamma}(M_{2})\rightarrow\mathcal{F}^{\gamma}(M_{3})\rightarrow0
\]
is a short exact sequence.
\item We have $\inv(M_{2})=\inv(M_{1})\ast\inv(M_{3})$ in $\Gamma_{+,\geq}^{\infty}$.
\end{enumerate}
\end{lem}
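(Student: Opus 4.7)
The plan is to close the chain of implications $(1)\Rightarrow(2)\Rightarrow(3)\Rightarrow(1)$, the last of which is already provided by Lemma~\ref{lem:InvTorsOModAndExactSeq}.

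For $(1)\Rightarrow(2)$, I would use that each of the three operations $M\mapsto M[I(-\gamma)]$, $M\mapsto \mathfrak{m}M$ and $M\mapsto M\otimes_{\mathcal{O}}k$ is additive, hence so is $M\mapsto\mathcal{F}^{\gamma}(M)$. If $M_{2}\simeq M_{1}\oplus M_{3}$, this immediately yields $\mathcal{F}^{\gamma}(M_{2})=\mathcal{F}^{\gamma}(M_{1})\oplus\mathcal{F}^{\gamma}(M_{3})$ as subspaces of $M_{2}\otimes k=(M_{1}\otimes k)\oplus(M_{3}\otimes k)$, and the two inclusions form the desired short exact sequence for every $\gamma\in\Gamma$.

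For $(2)\Rightarrow(3)$, the main step is to transfer the exactness from the subspaces $\mathcal{F}^{\gamma}$ to the graded quotients $\Gr_{\mathcal{F}}^{\gamma}$. Since each $\mathcal{F}(M_{i})$ has only finitely many jumps, for any given $\gamma$ one can pick $\eta>\gamma$ close enough to $\gamma$ that $\mathcal{F}^{\eta}(M_{i})=\mathcal{F}_{+}^{\gamma}(M_{i})$ for $i=1,2,3$ simultaneously. Applying hypothesis (2) at both $\gamma$ and $\eta$ and using additivity of $k$-dimension on short exact sequences, one deduces
\[
\dim_{k}\Gr_{\mathcal{F}(M_{2})}^{\gamma}=\dim_{k}\Gr_{\mathcal{F}(M_{1})}^{\gamma}+\dim_{k}\Gr_{\mathcal{F}(M_{3})}^{\gamma}
\]
for every $\gamma\in\Gamma$, together with $r(M_{2})=r(M_{1})+r(M_{3})$ by specializing to $\gamma\ll0$ (where $\mathcal{F}^{\gamma}(M_{i})=M_{i}\otimes k$). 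Since the multiplicity of $\gamma$ in $\mathbf{t}(\mathcal{F}(M_{i}))$ is precisely $\dim_{k}\Gr_{\mathcal{F}(M_{i})}^{\gamma}$, this amounts to the identity
\[
\mathbf{t}(\mathcal{F}(M_{2}))=\mathbf{t}(\mathcal{F}(M_{1}))\ast\mathbf{t}(\mathcal{F}(M_{3}))
\]
in $\Gamma_{\geq}^{r(M_{1})+r(M_{3})}$. Applying the sign-reversing involution $\iota$, which visibly commutes with $\ast$ at the level of multisets (both reduce to negation and to disjoint union), and invoking the preceding identity $\inv(M)=\mathbf{t}^{\iota}(\mathcal{F}(M))$, one obtains $\inv(M_{2})=\inv(M_{1})\ast\inv(M_{3})$ in $\Gamma_{+,\geq}^{\infty}$.

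There is no serious obstacle: both $(1)\Rightarrow(2)$ and $(2)\Rightarrow(3)$ reduce to routine bookkeeping with direct sums, dimensions, and the definition of the type via graded-piece multiplicities, while the genuinely nontrivial splitting criterion $(3)\Rightarrow(1)$ is already in hand from Lemma~\ref{lem:InvTorsOModAndExactSeq}.
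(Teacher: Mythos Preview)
Your proof is correct and follows essentially the same route as the paper: the chain $(1)\Rightarrow(2)\Rightarrow(3)\Rightarrow(1)$, with $(3)\Rightarrow(1)$ taken from Lemma~\ref{lem:InvTorsOModAndExactSeq}, $(1)\Rightarrow(2)$ by additivity, and $(2)\Rightarrow(3)$ by passing from exactness of the filtered pieces to the identity $\mathbf{t}(\mathcal{F}(M_{2}))=\mathbf{t}(\mathcal{F}(M_{1}))\ast\mathbf{t}(\mathcal{F}(M_{3}))$ and then applying $\iota$ together with $\inv(M)=\mathbf{t}^{\iota}(\mathcal{F}(M))$. The only difference is that the paper invokes the general fact from~\ref{sub:Filtrations} that types are $\ast$-additive on short exact sequences of filtered objects, whereas you spell this out by hand via a dimension count on the graded pieces.
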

\begin{proof}
Plainly $(1)\Rightarrow(2)$ and $(3)\Rightarrow(1)$ by lemma~\ref{lem:InvTorsOModAndExactSeq}.
If $(2)$ holds, then
\[
0\rightarrow\left(M_{1}\otimes k,\mathcal{F}(M_{1})\right)\rightarrow\left(M_{2}\otimes k,\mathcal{F}(M_{2})\right)\rightarrow\left(M_{3}\otimes k,\mathcal{F}(M_{3})\right)\rightarrow0
\]
is an exact sequence of $\Gamma$-filtered $k$-vector spaces, thus
\[
\mathbf{t}(\mathcal{F}(M_{2}))=\mathbf{t}(\mathcal{F}(M_{1}))\ast\mathbf{t}(\mathcal{F}(M_{3}))\quad\mbox{in}\quad\Gamma_{\geq}^{r}
\]
where $r=r(M_{2})=r(M_{1})+r(M_{3})$, therefore 
\[
\mathbf{t}^{\iota}(\mathcal{F}(M_{2}))=\mathbf{t}^{\iota}(\mathcal{F}(M_{1}))\ast\mathbf{t}^{\iota}(\mathcal{F}(M_{3}))\quad\mbox{in}\quad\Gamma_{\geq}^{r}
\]
from which $(3)$ immediately follows. 
\end{proof}

\subsubsection{Lattices\label{sub:Lattices}}

An $\mathcal{O}$-lattice in a finite dimensional $K$-vector space
$V$ is a finitely generated $\mathcal{O}$-submodule $L$ of $V$
spanning $V$ over $K$. Any such $L$ is finite free over $\mathcal{O}$
by \cite[VI, \S 3, 6, Lemme 1]{BoAC56}. We denote by $\mathcal{L}(V)$
the set of all $\mathcal{O}$-lattices in $V$. Since $\mathcal{O}$
is an elementary divisor ring \cite[\S 10]{Ka66}, for every $L_{1},L_{2}\in\mathcal{L}(V)$,
there is an $\mathcal{O}$-basis $(e_{1},\cdots,e_{r})$ of $L_{1}$
and elements $(x_{1},\cdots,x_{r})$ of $K^{\times}$ such that $(x_{1}e_{1},\cdots,x_{r}e_{r})$
is an $\mathcal{O}$-basis of $L_{2}$ and $\left|x_{1}\right|\geq\cdots\geq\left|x_{r}\right|$
-- we say that the basis is adapted to $L_{1}$ and $L_{2}$. If $\gamma_{i}=\log\left|x_{i}\right|$,
then $(\gamma_{1},\cdots,\gamma_{r})$ belongs to $\Gamma_{\geq}^{r}$
and does not depend upon the chosen basis. Indeed, one checks using
the given adapted basis that the formula
\[
\mathcal{F}^{\gamma}(L_{1},L_{2})\eqd\frac{L_{1}\cap I(\gamma)L_{2}+\mathfrak{m}L_{1}}{\mathfrak{m}L_{1}}\subset\frac{L_{1}}{\mathfrak{m}L_{1}}=L_{1}\otimes k
\]
defines a $\Gamma$-filtration $\mathcal{F}(L_{1},L_{2})$ on $L_{1}\otimes k$,
whose type $\mathbf{d}(L_{1},L_{2})\in\Gamma_{\geq}^{r}$ equals $(\gamma_{1},\cdots,\gamma_{r})$.
In particular, $L_{1}=L_{2}$ if and only if $\mathbf{d}(L_{1},L_{2})=0$.
This computation also shows that $\mathbf{d}(L_{2},L_{1})=\mathbf{d}^{\iota}(L_{1},L_{2})$
in $\Gamma_{\geq}^{r}$. If $L_{1}\subset L_{2}$, then $Q=L_{2}/L_{1}$
is a finitely presented torsion $\mathcal{O}$-module, $\mathbf{d}(L_{1},L_{2})\in\Gamma_{+,\geq}^{r}$
and $\mathbf{d}(L_{1},L_{2})=\inv(Q)$ in $\Gamma_{+,\geq}^{\infty}$.
If moreover $L_{1}\subset\mathfrak{m}L_{2}$ (i.e.~$\inv_{r}(Q)\neq0$),
the projection $L_{2}\twoheadrightarrow Q$ induces an isomorphism
$L_{2}\otimes k\simeq Q\otimes k$ mapping $\mathcal{F}(L_{2},L_{1})$
to $\mathcal{F}(Q)$. 
\begin{lem}
\label{lem:triangIneq}For $L_{1},L_{2},L_{3}\in\mathcal{L}(V)$,
we have the following triangular inequality: 
\[
\mathbf{d}(L_{1},L_{3})\leq\mathbf{d}(L_{1},L_{2})+\mathbf{d}(L_{2},L_{3})\quad\mbox{in}\quad\Gamma_{\geq}^{r}.
\]
\end{lem}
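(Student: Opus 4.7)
My plan is to reduce to the case of nested lattices $L_{1}\subset L_{2}\subset L_{3}$ and then invoke Lemma~\ref{lem:InvTorsOModAndExactSeq} applied to the associated exact sequence of successive quotients.

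For the reduction, I would use that $\mathbf{d}$ behaves predictably under rescaling of any of the three lattices. Explicitly, if $(e_{i})_{i=1}^{r}$ is a basis of $L_{1}$ adapted to $L_{2}$ with $(x_{i}e_{i})_{i=1}^{r}$ the resulting basis of $L_{2}$, then for $\alpha,\beta\in K^{\times}$ the basis $(\beta e_{i})$ of $\beta L_{1}$ is adapted to $\alpha L_{2}$ with second basis $(\alpha x_{i}e_{i})=((\alpha/\beta)x_{i}\cdot\beta e_{i})$, which gives
\[
\mathbf{d}(\beta L_{1},\alpha L_{2})=\mathbf{d}(L_{1},L_{2})+\log|\alpha/\beta|\cdot(1,\ldots,1),
\]
and analogous identities for the other two pairs. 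The shifts proportional to $(1,\ldots,1)$ cancel from the two sides of the triangular inequality, and translation by a common multiple of $(1,\ldots,1)$ clearly preserves the partial order on $\Gamma_{\geq}^{r}$. Hence the triangular inequality for $(L_{1},L_{2},L_{3})$ is equivalent to the same inequality for $(\beta L_{1},\alpha L_{2},L_{3})$, and since any two lattices in $V$ are commensurable, one can choose $\alpha,\beta\in K^{\times}$ with $\beta L_{1}\subset\alpha L_{2}\subset L_{3}$.

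In this nested situation, the text has already identified $\mathbf{d}(L,L')=\inv(L'/L)$ whenever $L\subset L'$. Setting $M_{1}:=L_{2}/L_{1}$, $M_{2}:=L_{3}/L_{1}$, and $M_{3}:=L_{3}/L_{2}$, I get a short exact sequence $0\to M_{1}\to M_{2}\to M_{3}\to0$ in $\C$, and the second inequality of Lemma~\ref{lem:InvTorsOModAndExactSeq} applied to it reads $\inv(M_{2})\leq\inv(M_{1})+\inv(M_{3})$ in $\Gamma_{+,\geq}^{\infty}$, which is exactly $\mathbf{d}(L_{1},L_{3})\leq\mathbf{d}(L_{1},L_{2})+\mathbf{d}(L_{2},L_{3})$ once one embeds $\Gamma_{+,\geq}^{r}\hookrightarrow\Gamma_{+,\geq}^{\infty}$ by padding with zeros.

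No serious obstacle is anticipated, as the heavy lifting has already been carried out in Lemma~\ref{lem:InvTorsOModAndExactSeq}. The only mildly technical points are the bookkeeping for the scaling reduction (in particular checking that the $(1,\ldots,1)$-shifts really do cancel on both sides, which follows from additivity of $\deg$) and the compatibility of the partial orders on $\Gamma_{\geq}^{r}$ and $\Gamma_{+,\geq}^{\infty}$, both routine from the definitions in~\ref{sub:Types}.
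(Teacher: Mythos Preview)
Your proposal is correct and follows essentially the same route as the paper: reduce by rescaling to the nested case $L_{1}\subset L_{2}\subset L_{3}$ and then apply Lemma~\ref{lem:InvTorsOModAndExactSeq} to the exact sequence $0\to L_{2}/L_{1}\to L_{3}/L_{1}\to L_{3}/L_{2}\to 0$. The only cosmetic difference is that the paper uses a single parameter, replacing $(L_{1},L_{2},L_{3})$ by $(xL_{1},L_{2},x^{-1}L_{3})$, whereas you use two parameters $\alpha,\beta$; the bookkeeping is the same.
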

\begin{proof}
For any $x\in K^{\times}$ and $L,L'\in\mathcal{L}(V)$, if $\gamma=\log\left|x\right|$,
then 
\[
\mathbf{d}(x^{-1}L,L')=\mathbf{d}(L,xL')=\mathbf{d}(L,L')+(\gamma,\cdots,\gamma)\quad\mbox{in}\quad\Gamma_{\geq}^{r}.
\]
Changing $(L_{1},L_{2},L_{3})$ to $(xL_{1},L_{2},x^{-1}L_{3})$ for
a suitable $x$, we may thus assume that $L_{1}\subset L_{2}\subset L_{3}$.
Applying lemma~\ref{lem:InvTorsOModAndExactSeq} to the exact sequence
\[
0\rightarrow L_{2}/L_{1}\rightarrow L_{3}/L_{1}\rightarrow L_{3}/L_{2}\rightarrow0
\]
we obtain the desired inequality. \end{proof}
\begin{rem}
When $\Gamma\hookrightarrow\mathbb{R}$, the previous lemma also follows
from~\cite[5.2.8 \& 6.1]{Co14}.\end{rem}
\begin{lem}
\label{lem:RelPos=000026ExcSeq}Let $0\rightarrow V_{1}\rightarrow V_{2}\rightarrow V_{3}\rightarrow0$
be an exact sequence of $K$-vector spaces. For any pair of $\mathcal{O}$-lattices
$L_{2},L_{2}^{\prime}\in\mathcal{L}(V_{2})$, their inverse and direct
images in $V_{1}$ and $V_{3}$ are $\mathcal{O}$-lattices $L_{1},L'_{1}\in\mathcal{L}(V_{1})$
and $L_{3},L'_{3}\in\mathcal{L}(V_{3})$, and we have 
\[
\mathbf{d}(L_{2},L'_{2})\geq\mathbf{d}(L_{1},L'_{1})\ast\mathbf{d}(L_{3},L'_{3})\quad\mbox{in}\quad\Gamma_{\geq}^{r_{2}}
\]
where $r_{i}=\dim_{K}V_{i}$, with equality if and only if for every
$\gamma\in\Gamma$, 
\[
0\rightarrow\mathcal{F}^{\gamma}(L_{1},L_{1}^{\prime})\rightarrow\mathcal{F}^{\gamma}(L_{2},L_{2}^{\prime})\rightarrow\mathcal{F}^{\gamma}(L_{3},L_{3}^{\prime})\rightarrow0
\]
is an exact sequence. \end{lem}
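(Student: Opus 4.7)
The plan is to reduce to a question about torsion $\mathcal{O}$-modules, where Lemmas~\ref{lem:InvTorsOModAndExactSeq} and~\ref{lem:InvTorsOModExSeqCaseEqual} can be applied directly. First I would verify that the induced submodules really are lattices: $L_3$ is finitely generated, torsion-free, and spans $V_3$ over $K$, hence is $\mathcal{O}$-free; the surjection $L_2 \twoheadrightarrow L_3$ then splits, so $L_1$ is a direct summand of $L_2$ and hence a lattice in $V_1$. The same argument handles the primed versions.

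Next, I would multiply $L_2$ by a scalar $x^{-1}$ with $|x|$ small enough that $L_i' \subset \mathfrak{m}L_i$ for $i=1,2,3$. This shift adds $(\log|x|,\ldots,\log|x|)$ (of length $r_i$) to each $\mathbf{d}(L_i,L_i')$, hence the same constant vector of length $r_2 = r_1+r_3$ to both sides of the target inequality. A parallel computation shows that multiplication by $x$ identifies $\mathcal{F}^\gamma(x^{-1}L_i,L_i')$ with $\mathcal{F}^{\gamma-\log|x|}(L_i,L_i')$ under the natural isomorphism $x^{-1}L_i \otimes k \simeq L_i \otimes k$, so the exactness condition in the statement is also preserved. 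I may therefore assume $L_i' \subset \mathfrak{m}L_i$ throughout.

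With this reduction, set $Q_i := L_i/L_i' \in \C$; applying the snake lemma to the diagram of inclusions $(L_i' \subset L_i)_{i=1,2,3}$ produces a short exact sequence $0 \to Q_1 \to Q_2 \to Q_3 \to 0$. Since $\inv(Q_i) = \mathbf{d}(L_i',L_i) = \mathbf{d}^\iota(L_i,L_i')$, Lemma~\ref{lem:InvTorsOModAndExactSeq} yields $\inv(Q_1) * \inv(Q_3) \leq \inv(Q_2)$. Applying the involution $\iota$, which preserves $\leq$ and commutes with $*$, delivers the desired inequality $\mathbf{d}(L_2,L_2') \geq \mathbf{d}(L_1,L_1') * \mathbf{d}(L_3,L_3')$.

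For the equality criterion, Lemma~\ref{lem:InvTorsOModExSeqCaseEqual} equates equality of invariants with short-exactness, for every $\gamma \in \Gamma$, of $0 \to \mathcal{F}^\gamma(Q_1) \to \mathcal{F}^\gamma(Q_2) \to \mathcal{F}^\gamma(Q_3) \to 0$. Under the hypothesis $L_i' \subset \mathfrak{m}L_i$, the discussion preceding Lemma~\ref{lem:triangIneq} supplies canonical isomorphisms $L_i \otimes k \simeq Q_i \otimes k$ sending $\mathcal{F}(L_i,L_i')$ to $\mathcal{F}(Q_i)$. The main obstacle is checking that these isomorphisms are natural with respect to the inclusion $L_1 \hookrightarrow L_2$ and the projection $L_2 \twoheadrightarrow L_3$ on one side and the analogous maps on the $Q_i$-side on the other; this follows from the functoriality of the quotient construction, and it transports the exactness condition from the $Q_i$ to the $L_i$, completing the equivalence.
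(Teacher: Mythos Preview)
Your proof is correct and follows essentially the same route as the paper: reduce by a scalar shift to the case $L_i'\subset\mathfrak{m}L_i$, form the exact sequence of torsion quotients $Q_i=L_i/L_i'$, apply Lemma~\ref{lem:InvTorsOModAndExactSeq} for the inequality (passing through $\iota$), and Lemma~\ref{lem:InvTorsOModExSeqCaseEqual} together with the identification $\mathcal{F}(L_i,L_i')\simeq\mathcal{F}(Q_i)$ for the equality criterion. You even spell out the naturality check for that identification, which the paper leaves implicit.
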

\begin{proof}
Plainly $0\rightarrow L_{1}\rightarrow L_{2}\rightarrow L_{3}\rightarrow0$
and $0\rightarrow L_{1}^{\prime}\rightarrow L_{2}^{\prime}\rightarrow L_{3}^{\prime}\rightarrow0$
are exact; thus $L_{3}$ and $L_{3}^{\prime}$ are finitely generated
over $\mathcal{O}$, in particular they are both $\mathcal{O}$-lattices
in $V_{3}$ and free over $\mathcal{O}$; it follows that both exact
sequences split, which implies that $L_{1}$ and $L_{1}^{\prime}$
are also (finite free) $\mathcal{O}$-lattices in $V_{1}$. For the
remaining claims, we may as above replace $L_{2}^{\prime}$ by $xL_{2}^{\prime}$
for some $x\in K^{\times}$ (which replaces $L_{i}^{\prime}$ by $xL_{i}^{\prime}$
for $i\in\{1,3\}$) to reduce to the case where $L_{i}^{\prime}\subset\mathfrak{m}L_{i}\subset L_{i}$
for all $i\in\{1,2,3\}$. Applying lemma~\ref{lem:InvTorsOModAndExactSeq}
to the resulting exact sequence of finitely presented torsion $\mathcal{O}$-modules
\[
0\rightarrow L_{1}/L_{1}^{\prime}\rightarrow L_{2}/L_{2}^{\prime}\rightarrow L_{3}/L_{3}^{\prime}\rightarrow0
\]
we obtain the inequality $\mathbf{d}^{\iota}(L_{2},L'_{2})\geq\mathbf{d}^{\iota}(L_{1},L'_{1})\ast\mathbf{d}^{\iota}(L_{3},L'_{3})$
in $\Gamma_{\geq}^{r_{2}}$, which is equivalent to the desired inequality
$\mathbf{d}(L_{2},L'_{2})\geq\mathbf{d}(L_{1},L'_{1})\ast\mathbf{d}(L_{3},L'_{3})$.
Moreover by lemma~\ref{lem:InvTorsOModExSeqCaseEqual}, equality
holds in either one of them if and only if for every $\gamma\in\Gamma$,
\[
0\rightarrow\mathcal{F}^{\gamma}(L_{1},L_{1}^{\prime})\rightarrow\mathcal{F}^{\gamma}(L_{2},L_{2}^{\prime})\rightarrow\mathcal{F}^{\gamma}(L_{3},L_{3}^{\prime})\rightarrow0
\]
is an exact sequence of $k$-vector spaces. This proves the lemma.\end{proof}
\begin{rem}
When $\Gamma\hookrightarrow\mathbb{R}$, the inequality also follows
from~\cite[5.2.10 \& 6.1]{Co14}.
\end{rem}
\noindent For $L_{1},L_{2}\in\mathcal{L}(V)$, we denote by $\nu(L_{1},L_{2})\in\Gamma$
the degree of $\mathbf{d}(L_{1},L_{2})$.

\subsubsection{Tensor products\label{sub:TensorProd}}

There are also compatible notions of tensor products, symmetric and
exterior powers for types, objects and $\Gamma$-filtered objects
in arbitrary quasi-tannakian categories, and $\mathcal{O}$-lattices
in $K$-vector spaces. All of these notions are fairly classical,
and their various compatibilities easily checked. For instance if
$L$ and $L^{\prime}$ are $\mathcal{O}$-lattices in $V$ and $i\in\mathbb{N}$,
then $\Lambda_{\mathcal{O}}^{i}L$ and $\Lambda_{\mathcal{O}}^{i}L'$
are $\mathcal{O}$-lattices in $\Lambda_{K}^{i}V$, $\mathcal{F}(\Lambda_{\mathcal{O}}^{i}L,\Lambda_{\mathcal{O}}^{i}L')$
is the $\Gamma$-filtration $\Lambda_{k}^{i}\mathcal{F}(L,L')$ on
$\Lambda_{k}^{i}(L\otimes_{\mathcal{O}}k)$ which is the image of
the $\Gamma$-filtration $\mathcal{F}(L,L')^{\otimes i}$ on $(L\otimes_{\mathcal{O}}k)^{\otimes i}$
under the projection $(L\otimes_{\mathcal{O}}k)^{\otimes i}\twoheadrightarrow\Lambda_{k}^{i}(L\otimes_{\mathcal{O}}k)$,
where $\mathcal{F}(L,L')^{\otimes i}(\gamma)=\sum_{\gamma_{1}+\cdots+\gamma_{i}=\gamma}\mathcal{F}(L,L')(\gamma_{1})\otimes\cdots\otimes\mathcal{F}(L,L')(\gamma_{i})$
in $(L\otimes_{\mathcal{O}}k)^{\otimes i}$ for every $\gamma\in\Gamma$.
The type $\mathbf{d}(\Lambda_{\mathcal{O}}^{i}L,\Lambda_{\mathcal{O}}^{i}L')=\mathbf{t}(\Lambda_{k}^{i}\mathcal{F}(L,L'))=\Lambda^{i}\mathbf{d}(L,L')$
is obtained from $\mathbf{d}(L,L')=(\gamma_{1},\cdots,\gamma_{r})$
(with $r=\dim_{K}V$) by reordering the elements $\gamma_{I}=\sum_{j\in I}\gamma_{j}$
where $I$ ranges through all subsets of $\{1,\cdots,r\}$ of cardinality
$i$.

\section{Breuil-Kisin-Fargues Modules\label{sec:BKF-Modules}}

\subsection{The rings}

Let $p$ be a prime number, $E$ be a finite extension of $\mathbb{Q}_{p}$,
$K$ a perfectoid field extension of $E$, $K^{\flat}$ the tilt of
$K$. We denote by $\mathcal{O}_{E}$, $\mathcal{O}_{K}$ and $\mathcal{O}_{K}^{\flat}$
the ring of integers in $E$, $K$ and $K^{\flat}$, with maximal
ideals $\mathfrak{m}_{E}$, $\mathfrak{m}_{K}$ and $\mathfrak{m}_{K}^{\flat}$,
and perfect residue fields $\mathbb{F}_{q}:=\mathcal{O}_{E}/\mathfrak{m}_{E}$
(finite with $q$ elements) and $\mathbb{F}:=\mathcal{O}_{K}/\mathfrak{m}_{K}=\mathcal{O}_{K}^{\flat}/\mathfrak{m}_{K}^{\flat}$.
We fix once and for all a uniformizer $\pi$ of $E$. We denote by
$W_{\mathcal{O}_{E}}(-)$ the Witt vector functor with values in $\mathcal{O}_{E}$-algebras,
as defined in~\cite[1.2]{FaFo15}. We set 
\[
A(\mathcal{O}_{K})\eqd W_{\mathcal{O}_{E}}(\mathcal{O}_{K}^{\flat}),\quad A(K)\eqd W_{\mathcal{O}_{E}}(K^{\flat}),\quad\mathcal{O}_{L}\eqd W_{\mathcal{O}_{E}}(\mathbb{F}),\quad L\eqd\Frac(\mathcal{O}_{L}).
\]
Thus $A(K)$ and $\mathcal{O}_{L}$ are complete discrete valuation
rings with uniformizer $\pi$ and residue fields respectively equal
to $K^{\flat}$ and $\mathbb{F}$, while our main player $A:=A(\mathcal{O}_{K})$
is a non-noetherian complete local ring with maximal ideal $\mathfrak{m}$
and residue field $\mathbb{F}$. We denote by $\varphi$ the Frobenius
$x\mapsto x^{q}$ in characteristic $p$ or its extension to $\mathcal{O}_{E}$-Witt
vectors. The ring homomorphisms $\mathcal{O}_{K}^{\flat}\hookrightarrow K^{\flat}$
and $\mathcal{O}_{K}^{\flat}\twoheadrightarrow\mathbb{F}$ induce
$\varphi$-equivariant homomorphisms of $\mathcal{O}_{E}$-algebras
$A\hookrightarrow A(K)$ and $A\twoheadrightarrow\mathcal{O}_{L}$.
The formula $\theta(\sum_{n\geq0}[(x_{i,n})_{i}]\pi^{n})=\sum x_{0,n}\pi^{n}$
defines a surjective ring homomorphism $\theta:A\twoheadrightarrow\mathcal{O}_{K}$
whose kernel is a principal ideal. Here $[-]$ is the Teichmüller
lift and $(x_{i,n})_{i\geq0}\in\mathcal{O}_{K}^{\flat}$ for all $n\geq0$,
i.e.~$x_{i,n}\in\mathcal{O}_{K}$ with $x_{i,n}=x_{i+1,n}^{p}$ for
all $i\geq0$. We fix a generator $\xi$ of $\ker(\theta)$ and set
$\xi':=\varphi(\xi)$. We write $\varpi$ for the image of $\xi$
in $\mathcal{O}_{K}^{\flat}=A_{1}$, where more generally $A_{n}:=A/\pi^{n}A$
for $n\in\mathbb{N}$. Thus $\varpi$ is a pseudo-uniformizer of $K^{\flat}$,
i.e.~a non-zero element of $\mathfrak{m}_{K}^{\flat}$. For an $A$-module
$M$ and $n\in\mathbb{N}$, we define\emph{ }
\[
M(K)\eqd M\otimes_{A}A(K),\quad M(\mathcal{O}_{L})\eqd M\otimes_{A}\mathcal{O}_{L},\quad M_{n}\eqd M\otimes_{A}A_{n}=M/\pi^{n}M.
\]
In particular, $M_{1}=M\otimes_{A}\mathcal{O}_{K}^{\flat}$. We normalize
the absolute value of $K^{\flat}$ by requiring that $q\left|\varpi^{q}\right|=1$.

\subsection{Categories of $A$-modules}

\subsubsection{~\label{sub:ProjDimModA}}

For an $A$-module $M$, we denote by $\tilde{M}$ the corresponding
quasi-coherent sheaf on $X:=\Spec\, A$. Since $U:=X\setminus\{\mathfrak{m}\}$
is a quasi-compact open subscheme of the affine scheme $X$, there
is an exact sequence \cite[II Corollaire 4]{SGA2r} of $A$-modules
\[
0\rightarrow H_{\{\mathfrak{m}\}}^{0}(X,\tilde{M})\rightarrow M=H^{0}(X,\tilde{M})\rightarrow H^{0}(U,\tilde{M})\rightarrow H_{\{\mathfrak{m}\}}^{1}(X,\tilde{M})\rightarrow0
\]
and for every $i\geq1$, an isomorphism of $A$-modules 
\[
H^{i}(U,\tilde{M})\simeq H_{\{\mathfrak{m}\}}^{i+1}(X,\tilde{M}).
\]
Moreover for any sequence of parameters $(a,b)$ spanning an ideal
$I$ with $\sqrt{I}=\mathfrak{m}$, 
\[
H_{\{\mathfrak{m}\}}^{i}(X,\tilde{M})\simeq H^{i}\left(\left[M\rightarrow M\left[{\textstyle \frac{1}{a}}\right]\oplus M\left[{\textstyle \frac{1}{b}}\right]\rightarrow M\left[{\textstyle \frac{1}{ab}}\right]\right]\right)
\]
by \cite[II Proposition 5]{SGA2r}, thus $H_{\{\mathfrak{m}\}}^{i}(X,\tilde{M})=H^{i-1}(U,\tilde{M})=0$
for $i\geq3$. Also, 
\[
H_{\{\mathfrak{m}\}}^{i}(X,\tilde{M})=\underrightarrow{\lim}\,\Ext_{A}^{i}\left(A/I^{n},M\right)
\]
for any $i\geq0$ if moreover $(a,b)$ is regular by \cite[II Lemme 9]{SGA2r}.
For $M=A$, we find
\[
H_{\{\mathfrak{m}\}}^{i}\left(X,\mathcal{O}_{X}\right)=\begin{cases}
0 & \mbox{if }i\neq2\\
\mathcal{E} & \mbox{if }i=2
\end{cases}\quad\mbox{with}\quad\mathcal{E}=\frac{A\left[{\textstyle \frac{1}{\pi[\varpi]}}\right]}{A\left[{\textstyle \frac{1}{\pi}}\right]+A\left[{\textstyle \frac{1}{[\varpi]}}\right]}\neq0
\]
using \cite[4.6]{BaMoSc16} for $i=1$. By~\cite[2.6 \& 2.7]{HaMa07}
and with the definition given there, 
\[
\mbox{p-depth}_{A}(M)=\sup\left\{ k\geq0:H_{\{\mathfrak{m}\}}^{i}\left(X,\tilde{M}\right)=0\,\mbox{for all }i<k\right\} .
\]
In particular $\mbox{p-depth}_{A}(A)=2$. We say that the $A$-module
$M$ is perfect if it has a finite resolution by finite free $A$-modules.
The Auslander-Buchsbaum theorem of \cite[Chapter 6, Theorem 2]{No76}
then assert that for any such $M$, 
\[
\mbox{proj}.\mbox{dim}_{A}(M)+\mbox{p-depth}_{A}(M)=2.
\]
In particular, $\mbox{proj}.\mbox{dim}_{A}(M)\leq1$ if and only if
the $A$-submodule 
\[
M[\mathfrak{m}^{\infty}]\eqd H_{\{\mathfrak{m}\}}^{0}(X,\tilde{M})
\]
of $M=H^{0}(X,\tilde{M})$ is trivial, and $M$ is finite free if
and only if moreover 
\[
H_{\{\mathfrak{m}\}}^{1}(X,\tilde{M})=\coker\left(M\rightarrow H^{1}(U,\tilde{M})\right)
\]
is trivial.

\subsubsection{~\label{sub:basicsonModA*}}

We denote by $\Mod_{A}$ the abelian category of all $A$-modules.
Let $\Mod_{A,\ast}$ be the strictly full subcategory of finitely
presented $A$-modules $M$ such that $M[\frac{1}{\pi}]$ is a projective
$A[\frac{1}{\pi}]$-module. Any such $M$ is a perfect $A$-module
and $M[\frac{1}{\pi}]$ is actually finite and free over $A[\frac{1}{\pi}]$
by \cite[4.9 \& 4.12]{BaMoSc16}. By \cite[4.13]{BaMoSc16}, the $A$-dual
$M^{\vee}:=\Hom_{A}(M,A)$ is finite free over $A$, so is the bidual
$M_{f}:=M^{\vee\vee}$, the kernel of the canonical morphism $M\rightarrow M_{f}$
is the torsion submodule $M[\pi^{\infty}]$ of $M$, it is a finitely
presented $A$-module killed by $\pi^{n}$ for $n\gg0$, and the cokernel
of $M\rightarrow M_{f}$ is a finitely presented torsion $A$-module
$\overline{M}$ supported at $\mathfrak{m}$. We claim that $M[\mathfrak{m}^{\infty}]$
is then also a finitely presented $A$-module (supported at $\mathfrak{m}$).
To see this, note that 
\[
M[\mathfrak{m}^{\infty}]=\ker\left(M[\pi^{\infty}]\rightarrow M[\pi^{\infty}][{\textstyle \frac{1}{[\varpi]}}]\right).
\]
Since $M[\pi^{\infty}][{\textstyle \frac{1}{[\varpi]}}]\simeq M[\pi^{\infty}](K)$
is a finitely generated torsion module over the complete discrete
valuation ring $A(K)$, there is a unique sequence of integers 
\[
\inv_{A(K)}\left(M[\pi^{\infty}](K)\right)\eqd(n_{1}\geq\cdots\geq n_{s}>0)\quad\mbox{in}\quad\mathbb{N}_{\geq}^{s}
\]
for some $s\in\mathbb{N}$ such that $M[\pi^{\infty}][{\textstyle \frac{1}{[\varpi]}}]$
is isomorphic to 
\[
\oplus_{i=1}^{s}A_{n_{i}}(K)=\oplus_{i=1}^{s}A_{n_{i}}[{\textstyle \frac{1}{[\varpi]}}].
\]
Chasing denominators, we may modify any such isomorphism into one
that fits in a commutative diagram of $\pi^{\infty}$-torsion $A$-modules
\[
\begin{array}{ccc}
\oplus_{i=1}^{\ell}A_{n_{i}} & \hookrightarrow & M[\pi^{\infty}]\\
\cap &  & \downarrow\\
\oplus_{i=1}^{\ell}A_{n_{i}}[{\textstyle \frac{1}{[\varpi]}]} & \stackrel{\simeq}{\longrightarrow} & M[\pi^{\infty}][{\textstyle \frac{1}{[\varpi]}]}
\end{array}
\]
If $\pi^{n}M[\pi^{\infty}]=0$, the cokernel of the top map is a finitely
generated $A_{n}$-module $Q$ with $Q[\frac{1}{[\varpi]}]=0$, thus
$[\varpi]^{m}Q=0$ for $m\gg0$. Since $(\oplus_{i=1}^{\ell}A_{n_{i}})[\mathfrak{m}^{\infty}]=0$,
$M[\mathfrak{m}^{\infty}]$ embeds into $Q$, therefore also $[\varpi]^{m}M[\mathfrak{m}^{\infty}]=0$,
i.e.~$M[\mathfrak{m}^{\infty}]$ is the kernel of $[\varpi]^{m}$
acting on the finitely presented $A_{n}$-module $M[\pi^{\infty}]$.
It follows that $M[\mathfrak{m}^{\infty}]$ is itself finitely presented
over $A_{n}$ and $A$, since $A_{n}$ is a coherent ring by (the
easy case of) \cite[Proposition 3.24]{BaMoSc16}. We finally define
the subquotient 
\[
M_{t}\eqd M[\pi^{\infty}]/M[\mathfrak{m}^{\infty}].
\]
This is a finitely presented $A$-module killed by $\pi^{n}$ for
$n\gg0$.

\subsubsection{~\label{sub:DevissageOfModules}}

We will consider the following strictly full subcategories of $\Mod_{A,\ast}$:
\[
\begin{array}{lcl}
\Mod_{A,f} & \eqd & \left\{ \mbox{finite free \ensuremath{A}-modules}\right\} ,\\
\Mod_{A,\pi^{\infty}} & \eqd & \left\{ \mbox{finitely presented \ensuremath{A}-modules killed by \ensuremath{\pi^{n}\,}for \ensuremath{n\gg0}}\right\} \\
 & = & \left\{ M\in\Mod_{A,\ast}\mbox{ such that }M=M[\pi^{\infty}]\right\} ,\\
\Mod_{A,\mathfrak{m}^{\infty}} & \eqd & \left\{ \mbox{finitely presented \ensuremath{A}-modules killed by \ensuremath{(\pi,[\varpi])^{n}\,}for \ensuremath{n\gg0}}\right\} \\
 & = & \left\{ M\in\Mod_{A,\ast}\mbox{ such that }M=M[\mathfrak{m}^{\infty}]\right\} ,\\
\Mod_{A,t} & \eqd & \left\{ \mbox{finitely presented \ensuremath{A}-modules with \ensuremath{\pi\,}nilpotent and \ensuremath{[\varpi]\,}injective}\right\} \\
 & = & \left\{ M\in\Mod_{A,\ast}\mbox{ such that \ensuremath{M=M_{t}}}\right\} .
\end{array}
\]
Then any $M\in\Mod_{A,\ast}$ has a canonical and functorial \emph{dévissage
\[
\xyR{1pc}\xymatrix{ & M[\mathfrak{m}^{\infty}]\ar@{^{(}->}[d]\\
0\ar[r] & M[\pi^{\infty}]\ar[r]\ar@{->>}[d] & M\ar[r] & M_{f}\ar[r] & \overline{M}\ar[r] & 0\\
 & M_{t}
}
\]
}with everyone in the relevant subcategory. The projective dimension
of the nonzero $A$-modules in $\Mod_{A,f}$, $\Mod_{A,t}$ and $\Mod_{A,\mathfrak{m}^{\infty}}$
are respectively $0$, $1$ and $2$.

\subsubsection{~\label{sub:noncanseq}}

For $n\gg0$, $\pi^{n}$ kills $M[\pi^{\infty}]$ and $\overline{M}$,
thus for any $m\in M_{f}$, $\pi^{n}m$ is the image of some $m'\in M$
and $\pi^{n}m'\in M$ only depends upon $m$. This defines an embedding
$M_{f}\hookrightarrow M$ whose cokernel $Q$ is a finitely presented
$A$-module killed by $\pi^{2n}$:
\[
0\rightarrow M_{f}\rightarrow M\rightarrow Q\rightarrow0.
\]
If $M[\mathfrak{m}^{\infty}]=0$, then also $Q[\mathfrak{m}^{\infty}]=0$,
i.e.~$Q\in\Mod_{A,t}$.

\subsubsection{~\label{sub:FiltOnModAt=000026ModAm}}

Any $A$-module $M$ in $\Mod_{A,\pi^{\infty}}$ has yet another canonical
and functorial \emph{dévissage}, the finite non-decreasing filtration
by the finitely presented $A$-submodules $M[\pi^{n}]$ of $M$ whose
successive quotients $M[\pi^{n}]/M[\pi^{n-1}]\simeq\pi^{n-1}M[\pi^{n}]$
are finitely presented $\mathcal{O}_{K}^{\flat}$-modules. If $M$
belongs to $\Mod_{A,t}$, these subquotients are torsion free, thus
finite free over $\mathcal{O}_{K}^{\flat}$. If $M$ belongs to $\Mod_{A,\mathfrak{m}^{\infty}}$,
they are finitely presented torsion $\mathcal{O}_{K}^{\flat}$-modules,
thus themselves non-canonically isomorphic to direct sums of modules
of the form $\mathcal{O}_{K}^{\flat}(x):=\mathcal{O}_{K}^{\flat}/x\mathcal{O}_{K}^{\flat}$
with $x$ nonzero in $\mathcal{O}_{K}^{\flat}$.

\subsubsection{~\label{sub:Tor(A(K)LO_L)}}

For every $A$-module $N$ and any nonzero $x\in\mathcal{O}_{K}^{\flat}$,
the exact sequences 
\[
0\rightarrow A\stackrel{\pi}{\rightarrow}A\rightarrow\mathcal{O}_{K}^{\flat}\rightarrow0\quad\mbox{and}\quad0\rightarrow\mathcal{O}_{K}^{\flat}\stackrel{x}{\rightarrow}\mathcal{O}_{K}^{\flat}\rightarrow\mathcal{O}_{K}^{\flat}(x)\rightarrow0
\]
give $\Tor_{0}^{A}(N,\mathcal{O}_{K}^{\flat})=N/\pi N$, $\Tor_{1}^{A}(N,\mathcal{O}_{K}^{\flat})=N[\pi]$,
and an exact sequence
\[
0\rightarrow N[\pi]/xN[\pi]\rightarrow\Tor_{1}^{A}(N,\mathcal{O}_{K}^{\flat}(x))\rightarrow(N/\pi N)[x]\rightarrow0.
\]
It follows that for every $M\in\Mod_{A,\ast}$, 
\[
\Tor_{1}^{A}(A(K),M)=0\quad\mbox{and}\quad\Tor_{1}^{A}(L,M)=0
\]
since this holds for $M\in\{A,\mathcal{O}_{K}^{\flat},\mathcal{O}_{K}^{\flat}(x)\}$.
If moreover $M[\mathfrak{m}^{\infty}]=0$, then also 
\[
\Tor_{1}^{A}(\mathcal{O}_{L},M)=0
\]
since this holds for $M\in\{A,\mathcal{O}_{K}^{\flat}\}$.

\subsubsection{~}

The category $\Mod_{A,\ast}$ is stable under extensions in $\Mod_{A}$.
The next proposition implies that it inherits from $\Mod_{A}$ the
structure of a closed symmetric monoidal category, which just says
that~$\Mod_{A,\ast}$ is a $\otimes$-category with internal Homs. 
\begin{prop}
For every $M_{1}$ and $M_{2}$ in $\Mod_{A,\ast}$ and any $i\geq0$,
\[
\Tor_{i}^{A}(M_{1},M_{2})\quad\mbox{and}\quad\Ext_{A}^{i}(M_{1},M_{2})
\]
also belong to $\Mod_{A,\ast}$.\end{prop}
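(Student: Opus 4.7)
The plan is to use finite free resolutions. By~\cite[4.9 and 4.12]{BaMoSc16} and the Auslander-Buchsbaum formula recalled in~\ref{sub:ProjDimModA}, both $M_{1}$ and $M_{2}$ are perfect $A$-modules of projective dimension at most $2$, and $M_{j}[\frac{1}{\pi}]$ is finite free over $A[\frac{1}{\pi}]$. Choosing a finite free resolution $P_{\bullet}\to M_{1}$ of length at most $2$, the groups $\Tor_{i}^{A}(M_{1},M_{2})=H_{i}(P_{\bullet}\otimes_{A}M_{2})$ and $\Ext_{A}^{i}(M_{1},M_{2})=H^{i}(\Hom_{A}(P_{\bullet},M_{2}))$ vanish for $i\geq 3$, and in the remaining degrees they are (co)homology of bounded complexes whose terms are finite direct sums of copies of $M_{2}$, hence lie in $\Mod_{A,\ast}$.

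The two properties defining $\Mod_{A,\ast}$ must then be verified for these (co)homology modules: projectivity after inverting $\pi$, and finite presentation over $A$. The first is the easy half: since $A\to A[\frac{1}{\pi}]$ is flat and $M_{j}[\frac{1}{\pi}]$ is finite free, flat base change annihilates all higher $\Tor$ and $\Ext$ after $[\frac{1}{\pi}]$, while $\Tor_{0}$ and $\Ext^{0}$ become the tensor product and the Hom of two finite free $A[\frac{1}{\pi}]$-modules, which are themselves finite free.

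Finite presentation over $A$ is the main obstacle, since $A$ is not known to be coherent. The case $\Tor_{0}=M_{1}\otimes_{A}M_{2}$ is standard. For $i\geq 1$, the previous step forces the higher $\Tor$ and $\Ext$ to be $\pi^{\infty}$-torsion (killed by a fixed $\pi^{n}$ once finite generation is established), hence modules over the coherent ring $A_{n}:=A/\pi^{n}A$, whose coherence is the easy case of~\cite[3.24]{BaMoSc16} recalled in~\ref{sub:basicsonModA*}; once identified as subquotients of finitely presented $A_{n}$-modules built from $M_{j}[\pi^{\infty}]$ (itself finitely presented over $A$ by~\cite[4.12]{BaMoSc16} recalled in~\ref{sub:basicsonModA*}), coherence of $A_{n}$ upgrades finite generation to finite presentation over $A_{n}$, hence over $A$. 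The remaining awkward case $\Ext^{0}=\Hom_{A}(M_{1},M_{2})$ is not $\pi$-torsion; here I would apply $\Hom_{A}(-,M_{2})$ to the canonical devissage of $M_{1}$ recalled in~\ref{sub:DevissageOfModules} and chase the long exact Ext sequences, reducing modulo the $\pi$-torsion Ext-terms already handled to $\Hom(M_{1,f},M_{2})$, a finite direct sum of copies of $M_{2}$ and manifestly in $\Mod_{A,\ast}$; stability of $\Mod_{A,\ast}$ under extensions (noted just before the proposition) then closes the argument.

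The main technical difficulty is thus managing finite presentation without coherence of $A$ itself: this is handled by routing everything through the coherent quotients $A_{n}$ via the $\pi^{\infty}$-torsion submodules of modules in $\Mod_{A,\ast}$, together with the separate devissage argument required to treat $\Ext^{0}$.
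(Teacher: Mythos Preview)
The paper's proof takes a much shorter route to finite presentation than you do. Having fixed a finite free resolution $P_\bullet \to M_1$, it notes that each term of $\Hom_A(P_\bullet,M_2)$ and $P_\bullet\otimes_A M_2$ is a finite direct sum of copies of the perfect module $M_2$, so these are perfect complexes, and it then asserts in a single line that the cohomology groups of a perfect complex are finitely presented over $A$. The remaining half of the argument---freeness after inverting $\pi$ via flat base change---is exactly yours. Thus the paper replaces your entire d\'evissage through the coherent rings $A_n$ and the modules $M_j[\pi^\infty]$ by one sentence about perfect complexes.

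Your caution about the non-coherence of $A$ is well-taken: over a general ring the cohomology of a bounded complex of finite free modules need not even be finitely generated, so the paper's one-line step is not a triviality (indeed it is essentially equivalent to coherence of $A$). But your proposed workaround has a genuine gap exactly where you acknowledge the difficulty. To conclude that the higher $\Tor$ and $\Ext$ are killed by a single $\pi^n$ you first need them to be finitely generated, and you never supply this; the phrase ``identified as subquotients of finitely presented $A_n$-modules built from $M_j[\pi^\infty]$'' is too vague, and in any case a subquotient of a finitely presented module over a non-coherent ring need not be finitely generated. Even if one works instead with the total complex $T_\bullet$ of a double finite free resolution and reduces modulo $\pi^n$ so as to land among finitely presented $A_n$-modules via coherence, the universal-coefficient sequence $0\to H_i(T)/\pi^n\hookrightarrow H_i(T/\pi^n)\to H_{i-1}(T)[\pi^n]\to 0$ only exhibits $H_i(T)/\pi^n$ as a \emph{submodule} of something finitely presented, which by itself gives no bound on generators. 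Your d\'evissage for $\Ext^0$ is more convincing in outline but depends on the same unproved finiteness of the torsion Ext-terms feeding into it.
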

\begin{proof}
Fix a finite resolution $P_{\bullet}$ of $M_{1}$ by finite free
$A$-modules. Then 
\[
\Ext_{A}^{i}(M_{1},M_{2})=H^{i}\left(\Hom_{A}(P_{\bullet},M_{2})\right)\quad\mbox{and}\quad\Tor_{i}^{A}(M_{1},M_{2})=H^{i}\left(P_{\bullet}\otimes_{A}M_{2})\right).
\]
Since $\Hom_{A}(P_{\bullet},M_{2})$ and $P_{\bullet}\otimes_{A}M_{2}$
are perfect complexes, their cohomology groups are finitely presented
over $A$. Since moreover $A\rightarrow A[\frac{1}{\pi}]$ is flat,
\[
\Ext_{A}^{i}(M_{1},M_{2})[{\textstyle \frac{1}{\pi}}]=\Ext_{A[{\textstyle \frac{1}{\pi}}]}^{i}(M_{1}[{\textstyle \frac{1}{\pi}}],M_{2}[{\textstyle \frac{1}{\pi}}])=\begin{cases}
\Hom_{A[{\textstyle \frac{1}{\pi}}]}(M_{1}[{\textstyle \frac{1}{\pi}}],M_{2}[{\textstyle \frac{1}{\pi}}]) & \mbox{if }i=0,\\
0 & \mbox{if }i>0,
\end{cases}
\]
since $M_{1}[{\textstyle \frac{1}{\pi}}]$ is finite free over $A[{\textstyle \frac{1}{\pi}}]$,
and similarly 
\[
\Tor_{i}^{A}(M_{1},M_{2})[{\textstyle \frac{1}{\pi}}]=\Tor_{i}^{A[{\textstyle \frac{1}{\pi}}]}(M_{1}[{\textstyle \frac{1}{\pi}}],M_{2}[{\textstyle \frac{1}{\pi}}])=\begin{cases}
M_{1}[{\textstyle \frac{1}{\pi}}]\otimes_{A[{\textstyle \frac{1}{\pi}}]}M_{2}[{\textstyle \frac{1}{\pi}}] & \mbox{if }i=0,\\
0 & \mbox{if }i>0.
\end{cases}
\]
So all of these $A[{\textstyle \frac{1}{\pi}}]$-modules are indeed
finite and free. 
\end{proof}

\subsubsection{~\label{sub:ModAtquasiab}}

The categories $\Mod_{A,\pi^{\infty}}$ and $\Mod_{A,\mathfrak{m}^{\infty}}$
are weak Serre subcategories of $\Mod_{A}$: they are stable under
kernels, cokernels and extensions. In particular, they are both abelian.
The category $\Mod_{A,t}$ is also stable by extensions and kernels
in $\Mod_{A}$, but it is only quasi-abelian. In fact, the exact sequence
(for $M\in\Mod_{A,\pi^{\infty}}$) 
\[
0\rightarrow M[\mathfrak{m}^{\infty}]\rightarrow M\rightarrow M_{t}\rightarrow0
\]
 yields a cotilting torsion theory \cite{BoVdB03} on the abelian
category $\Mod_{A,\pi^{\infty}}$ with torsion class $\Mod_{A,\mathfrak{m}^{\infty}}$
and torsion-free class $\Mod_{A,t}$: any $M\in\Mod_{A,\pi^{\infty}}$
is a quotient of $A_{n}^{r}\in\Mod_{A,t}$ for some $n,r\in\mathbb{N}$,
and there is no nonzero morphism from an object in $\Mod_{A,\mathfrak{m}^{\infty}}$
to an object in $\Mod_{A,t}$. The kernel and coimage of a morphism
in $\Mod_{A,t}$ are the corresponding kernel and coimage in the abelian
category $\Mod_{A,\pi^{\infty}}$ or $\Mod_{A}$. The image and cokernel
of $f:M\rightarrow N$ in $\Mod_{A,t}$ are given by 
\[
\mathrm{im}_{\Mod_{A,t}}(f)=f(M)^{\mathrm{sat}}\quad\mbox{and}\quad\coker_{\Mod_{A,t}}(f)=(N/f(M))_{t}=N/f(M)^{\mathrm{sat}}
\]
where 
\[
f(M)^{\mathrm{sat}}/f(M)\eqd(N/f(M))[\mathfrak{m}^{\infty}]=(N/f(M))[[\varpi]^{\infty}].
\]
The morphism $f$ is strict if and only if $N/f(M)$ has no $[\varpi]$-torsion.
It is a mono-epi if and only if $f$ is injective and $N/f(M)$ is
killed by $[\varpi]^{n}$ for $n\gg0$. Finally, short exact sequences
in $\Mod_{A,t}$ remain exact in $\Mod_{A}$.

\subsubsection{~}

The categories $\Mod_{A,f}$, $\Mod_{A,\pi^{\infty}}$ and $\Mod_{A,\mathfrak{m}^{\infty}}$
are stable under the usual $\Ext$'s and $\Tor$'s in $\Mod_{A}$,
and so they are also $\otimes$-categories with internal Homs (but
only $\Mod_{A,f}$ has a neutral object). They are also stable under
symmetric and exterior powers (of rank $k\geq1$ for the torsion categories). 

The category $\Mod_{A,t}$ is stable under the internal Hom of $\Mod_{A}$,
but it is not stable under the $\otimes$-product of $\Mod_{A}$.
For instance, if $x\neq0$ belongs to $\mathfrak{m}_{K}^{\flat}$,
then 
\[
M=(\pi,[x])/(\pi^{2})
\]
 is a finitely generated ideal of $A_{2}$, so it belongs to $\Mod_{A,t}$,
but the image of $\pi$ in 
\[
M\otimes_{A}\mathcal{O}_{K}^{\flat}=M/\pi M=(\pi,[x])/(\pi^{2},\pi[x])
\]
is a nonzero element killed by $[x]\in\mathfrak{m}\setminus\pi A$.
We can nevertheless equip $\Mod_{A,t}$ with a tensor product compatible
with the usual internal Hom, given by 
\[
(M_{1},M_{2})\mapsto M_{1}\otimes_{t}M_{2}\eqd\left(M_{1}\otimes_{A}M_{2}\right)_{t}=\left(M_{1}\otimes_{A}M_{2}\right)/\left(M_{1}\otimes_{A}M_{2}\right)[\mathfrak{m}^{\infty}]
\]
With this definition, $\Mod_{A,t}$ becomes yet another $\otimes$-category
with internal Homs.

\subsubsection{~}

As explained in \ref{sub:basicsonModA*} or \ref{sub:Invariants},
there is an invariant 
\[
\inv_{t}:\sk\,\Mod_{A,\pi^{\infty}}\rightarrow\mathbb{N}_{\geq}^{\infty}
\]
defined as follows: for every $M\in\Mod_{A,\pi^{\infty}}$, 
\[
\inv_{t}(M)=(n_{1}\geq\cdots\geq n_{s})\iff M(K)\simeq\oplus_{i=1}^{s}A_{n_{i}}(K).
\]
Alternatively, $\inv_{t}(M)$ is the unique element $(n_{1}\geq\cdots\geq n_{s})$
of $\mathbb{N}_{\geq}^{\infty}$ such that 
\[
\forall n\geq1:\qquad\rank_{\mathcal{O}_{K}^{\flat}}\left(M[\pi^{n}]/M[\pi^{n-1}]\right)=\left|\left\{ i:n_{i}\geq n\right\} \right|.
\]
This follows from~\ref{sub:Tor(A(K)LO_L)}, which indeed implies
that for every $n\geq1$, 
\[
M(K)[\pi^{n}]/M(K)[\pi^{n-1}]\simeq M[\pi^{n}]/M[\pi^{n-1}]\otimes_{\mathcal{O}_{K}^{\flat}}K^{\flat}.
\]
This invariant yields a function $\rank_{t}:\sk\,\Mod_{A,\pi^{\infty}}\rightarrow\mathbb{N}$
defined by 
\[
\rank_{t}(M)=\deg(\inv_{t}(M))=\sum_{i=1}^{s}n_{i}=\length_{A(K)}M(K).
\]
Plainly, $\inv_{t}(M)=\inv_{t}(M_{t})$, $\rank_{t}(M)=\rank_{t}(M_{t})$
and
\[
\begin{array}{rcccl}
\rank_{t}(M)=0 & \iff & \inv_{t}(M)=0 & \iff & {\textstyle M}(K)=0\\
 & \iff & M_{t}=0 & \iff & M=M[\mathfrak{m}^{\infty}].
\end{array}
\]
Moreover by \ref{sub:Tor(A(K)LO_L)} and lemma~\ref{lem:InvTorsOModAndExactSeq},
for every exact sequence 
\[
0\rightarrow M_{1}\rightarrow M_{2}\rightarrow M_{3}\rightarrow0
\]
in $\Mod_{A,\pi^{\infty}}$, we have $\rank_{t}(M_{2})=\rank_{t}(M_{1})+\rank_{t}(M_{3})$
and 
\[
\inv_{t}(M_{2})\geq\inv_{t}(M_{1})\ast\inv_{t}(M_{3})\quad\mbox{in}\quad\mathbb{N}_{\geq}^{\infty}
\]
with equality if and only if the exact sequence 
\[
0\rightarrow M_{1}(K)\rightarrow M_{2}(K)\rightarrow M_{3}(K)\rightarrow0
\]
of finite length $A(K)$-modules is split. In particular, the function
\[
\rank_{t}:\sk\,\Mod_{A,t}\rightarrow\mathbb{N}
\]
is a rank function on $\Mod_{A,t}$ in the sense of \cite{Co16}:
it is additive on short exact sequences, nonzero on nonzero objects,
and constant on mono-epis in $\Mod_{A,t}$.

\subsubsection{~}

There is another invariant 
\[
\inv_{\pi^{\infty}}:\sk\,\Mod_{A,\pi^{\infty}}\rightarrow\mathbb{N}_{\geq}^{\infty}
\]
defined as follows: for every $M\in\Mod_{A,\pi^{\infty}}$, 
\[
\inv_{\pi^{\infty}}(M)=(n_{1}\geq\cdots\geq n_{s})\iff M(\mathcal{O}_{L})=\oplus_{i=1}^{s}\mathcal{O}_{L}/\pi^{n_{i}}\mathcal{O}_{L}.
\]
Using~\ref{sub:Tor(A(K)LO_L)} as above, we now find that if $M$
belongs to $\Mod_{A,t}$, then again 
\[
\forall n\geq1:\qquad\rank_{\mathcal{O}_{K}^{\flat}}\left(M[\pi^{n}]/M[\pi^{n-1}]\right)=\left|\left\{ i:n_{i}\geq n\right\} \right|.
\]
In particular, both invariants coincide on $\Mod_{A,t}$ and for any
exact sequence 
\[
0\rightarrow M_{1}\rightarrow M_{2}\rightarrow M_{3}\rightarrow0
\]
in $\Mod_{A,t}$, we thus also have by lemma~\ref{lem:InvTorsOModAndExactSeq}
\[
\inv_{\pi^{\infty}}(M_{2})\geq\inv_{\pi^{\infty}}(M_{1})\ast\inv_{\pi^{\infty}}(M_{3})\quad\mbox{in}\quad\mathbb{N}_{\geq}^{\infty}
\]
with equality if and only if the exact sequence of finite length $\mathcal{O}_{L}$-modules
\[
0\rightarrow M_{1}(\mathcal{O}_{L})\rightarrow M_{2}(\mathcal{O}_{L})\rightarrow M_{3}(\mathcal{O}_{L})\rightarrow0
\]
is split. For a general $M$ in $\Mod_{A,\pi^{\infty}}$, the exact
sequence
\[
0\rightarrow M[\mathfrak{m}^{\infty}](\mathcal{O}_{L})\rightarrow M(\mathcal{O}_{L})\rightarrow M_{t}(\mathcal{O}_{L})\rightarrow0
\]
then shows that 
\[
\inv_{\pi^{\infty}}(M)\geq\inv_{\pi^{\infty}}(M_{t})\ast\inv_{\pi^{\infty}}(M[\mathfrak{m}^{\infty}])=\inv_{t}(M)\ast\inv_{\pi^{\infty}}(M[\mathfrak{m}^{\infty}])
\]
with equality if and only if the exact sequence is split.

\subsubsection{~\label{sub:rankfree/tors}}

For $M\in\Mod_{A,\ast}$, let $I$ be the image of $M\rightarrow M_{f}$.
For any $n\geq1$, recall that $M_{n}=M/\pi^{n}M$, which is a finitely
presented $A_{n}$-module. The dévissage of $M$ from~\ref{sub:DevissageOfModules}
yields exact sequences of finitely presented $A_{n}$-modules
\[
0\rightarrow M[\pi^{\infty}]_{n}\rightarrow M_{n}\rightarrow I_{n}\rightarrow0\quad\mbox{and}\quad0\rightarrow\overline{M}[\pi^{n}]\rightarrow I_{n}\rightarrow M_{f,n}\rightarrow\overline{M}_{n}\rightarrow0.
\]
It follows that $I_{n}(K)\simeq M_{f,n}(K)$ is finite free over $A_{n}(K)$
and 
\[
M_{n}(K)\simeq M[\pi^{\infty}]_{n}(K)\oplus M_{f,n}(K)\simeq M_{t,n}(K)\oplus M_{f,n}(K).
\]
In particular, $\inv_{t}M_{n}=\inv_{t}M_{t,n}\ast\inv_{t}M_{f,n}$
and 
\[
\rank_{t}M_{n}=\rank_{t}M_{t,n}+n\,\rank_{A}M_{f}
\]
with $n\mapsto\rank_{t}M_{t,n}$ non-decreasing and equal to $\rank_{t}M_{t}$
for $n\gg0$.

\subsubsection{~\label{sub:defdegonModminfty}}

A \emph{good filtration} on a module $M$ in $\Mod_{A,\mathfrak{m}^{\infty}}$
is a sequence 
\[
0=M_{0}\subsetneq M_{1}\subsetneq\cdots\subsetneq M_{r}=M
\]
of $A$-submodules such that for all $i\in\{1,\cdots,r\}$, $M_{i}/M_{i-1}\simeq\mathcal{O}_{K}^{\flat}(x_{i})$
for some nonzero $x_{i}\in\mathcal{O}_{K}^{\flat}$ -- thus $M_{i}\in\Mod_{A,\mathfrak{m}^{\infty}}$
for all $i$. We have seen in \ref{sub:FiltOnModAt=000026ModAm}~that
any $M$ in $\Mod_{A,\mathfrak{m}^{\infty}}$ has such a good filtration.
We claim that the principal ideal 
\[
\delta(M)\eqd\mathcal{O}_{K}^{\flat}{\textstyle \prod_{i=1}^{r}}x_{i}
\]
 does not depend upon the chosen good filtration on $M$. Indeed if
\[
0=M'_{0}\subsetneq M'_{1}\subsetneq\cdots\subsetneq M'_{r'}=M
\]
is another good filtration with $M'_{i}/M'_{i-1}\simeq\mathcal{O}_{K}^{\flat}(y_{i})$,
$y_{i}\neq0$ in $\mathcal{O}_{K}^{\flat}$, set 
\[
M_{i,j}=M_{i-1}+M'_{j}\cap M_{i}\quad\mbox{and}\quad M'_{j,i}=M'_{j-1}+M_{i}\cap M'_{j}
\]
Then $j\mapsto\overline{M}_{i,j}=M_{i,j}/M_{i-1}$ and $i\mapsto\overline{M}_{j,i}=M'_{j,i}/M'_{j-1}$
are good filtrations on $M_{i}/M_{i-1}$ and $M'_{j}/M'_{j-1}$ respectively,
with 
\[
\overline{M}_{i,j}/\overline{M}_{i,j-1}\simeq\frac{M'_{j}\cap M_{i}}{M'_{j}\cap M_{i-1}+M'_{j-1}\cap M_{i}}\simeq\overline{M}_{j,i}/\overline{M}_{j,i-1}.
\]
It is therefore sufficient to treat the case where $M=\mathcal{O}_{K}^{\flat}(x)$
for some nonzero $x\in\mathcal{O}_{K}^{\flat}$, which follows from
lemma~\ref{lem:InvTorsOModAndExactSeq}. We thus obtain a generalized
length function, 
\[
\length_{\mathfrak{m}^{\infty}}:\sk\,\Mod_{A,\mathfrak{m}^{\infty}}\rightarrow\mathbb{R}_{+},\qquad\length_{\mathfrak{m}^{\infty}}(M)\eqd-\log_{q}\left|\delta(M)\right|
\]
which is plainly additive on short exact sequences in $\Mod_{A,\mathfrak{m}^{\infty}}$.
Here $\left|\delta\right|=\left|x\right|$ if $\delta=\mathcal{O}_{K}^{\flat}x$.
Note also that $\length_{\mathfrak{m}^{\infty}}(M)=0$ if and only
if $M=0$.

\subsubsection{~\label{sub:relatinglength2rank}}

For every $a\in A\setminus\pi A$, $M\mapsto M/aM$ is an exact functor
from $\Mod_{A,t}$ to $\Mod_{A,\mathfrak{m}^{\infty}}$ which maps
$M=\mathcal{O}_{K}^{\flat}$ to $\mathcal{O}_{K}^{\flat}(\overline{a})=\mathcal{O}_{K}^{\flat}/\overline{a}\mathcal{O}_{K}^{\flat}$,
with $\overline{a}=a\bmod\pi\in\mathcal{O}_{K}^{\flat}$. It follows
that for every $M\in\Mod_{A,t}$, we have the following formula: 
\[
\length_{\mathfrak{m}^{\infty}}\left(M/aM\right)=-\log_{q}\left|\overline{a}\right|\cdot\rank_{t}(M).
\]
Since $\log_{q}\left|\overline{\xi'}\right|=\log_{q}\left|\varpi^{q}\right|=-1$,
we obtain another formula for the rank on $\Mod_{A,t}$:
\[
\rank_{t}(M)=\length_{\mathfrak{m}^{\infty}}\left(M/\xi'M\right)\quad\mbox{in}\quad\mathbb{N}\subset\mathbb{R}_{+}.
\]

\subsubsection{~}

The functor $M\mapsto M[\frac{1}{\pi}]$ extends to the isogeny categories,
\[
-[{\textstyle \frac{1}{\pi}]:}\Mod_{A,f}\otimes E\rightarrow\Mod_{A,\ast}\otimes E\rightarrow\Mod_{A[\frac{1}{\pi}]}.
\]
The functor $\Mod_{A,f}\otimes E\rightarrow\Mod_{A,\ast}\otimes E$
is an equivalence of categories, with inverse induced by $M\mapsto M_{f}$.
The functor $\Mod_{A,\ast}\otimes E\rightarrow\Mod_{A[\frac{1}{\pi}]}$
is fully faithful with essential image the full subcategory $\Mod_{A[\frac{1}{\pi}],f}$
of finite free $A[\frac{1}{\pi}]$-modules.

\subsection{Categories of $\varphi$-$A$-modules}

\subsubsection{~}

Let $\Mod_{A}^{\varphi}$ be the category of $A$-modules $M$ equipped
with an $A[\xi^{\prime-1}]$-linear isomorphism $\varphi_{M}:(\varphi^{\ast}M)[\xi^{\prime-1}]\rightarrow M[\xi^{\prime-1}]$.
A morphism $(M_{1},\varphi_{1})\rightarrow(M_{2},\varphi_{2})$ is
an $A$-linear morphism $f:M_{1}\rightarrow M_{2}$ such that the
following diagram is commutative:
\[
\xyR{2pc}\xymatrix{(\varphi^{\ast}M_{1})[\xi^{\prime-1}]\ar[r]^{\varphi^{\ast}f}\ar[d]^{\varphi_{1}} & (\varphi^{\ast}M_{2})[\xi^{\prime-1}]\ar[d]^{\varphi_{2}}\\
M_{1}[\xi^{\prime-1}]\ar[r]^{f} & M_{2}[\xi^{\prime-1}]
}
\]
Its kernel and cokernels are given by $\left(\ker(f),\varphi_{1}^{\prime}\right)$
and $\left(\coker(f),\varphi_{2}^{\prime}\right)$ with
\[
\xymatrix{(\varphi^{\ast}\ker(f))[\xi^{\prime-1}]\ar@{^{(}->}[r]\ar[d]^{\varphi_{1}^{\prime}} & (\varphi^{\ast}M_{1})[\xi^{\prime-1}]\ar[r]^{\varphi^{\ast}f}\ar[d]^{\varphi_{1}} & (\varphi^{\ast}M_{2})[\xi^{\prime-1}]\ar[d]^{\varphi_{2}}\ar@{->>}[r] & (\varphi^{\ast}\coker(f))[\xi^{\prime-1}]\ar[d]^{\varphi_{2}^{\prime}}\\
\ker(f)[\xi^{\prime-1}]\ar@{^{(}->}[r] & M_{1}[\xi^{\prime-1}]\ar[r]^{f} & M_{2}[\xi^{\prime-1}]\ar@{->>}[r] & \coker[\xi^{\prime-1}]
}
\]
commutative. This makes sense since $M\mapsto M[\xi^{\prime-1}]$
and $M\mapsto\varphi^{\ast}M$ are exact. The category $\Mod_{A}^{\varphi}$
is abelian, and it is a $\otimes$-category: using the isomorphisms
\begin{eqnarray*}
\varphi^{\ast}(M_{1}\otimes_{A}M_{2})\left[\xi^{\prime-1}\right] & \simeq & \left(\varphi^{\ast}(M_{1})\left[\xi^{\prime-1}\right]\right)\otimes_{A\left[\xi^{\prime-1}\right]}\left(\varphi^{\ast}(M_{2})\left[\xi^{\prime-1}\right]\right),\\
\varphi^{\ast}\left(\Sym_{A}^{k}M\right)\left[\xi^{\prime-1}\right] & \simeq & \Sym_{A\left[\xi^{\prime-1}\right]}^{k}\left(\varphi^{\ast}\left(M\right)\left[\xi^{\prime-1}\right]\right),\\
\varphi^{\ast}\left(\Lambda_{A}^{k}M\right)\left[\xi^{\prime-1}\right] & \simeq & \Lambda_{A\left[\xi^{\prime-1}\right]}^{k}\left(\varphi^{\ast}\left(M\right)\left[\xi^{\prime-1}\right]\right),\\
\varphi^{\ast}(A)\left[\xi^{\prime-1}\right] & \simeq & A\left[\xi^{\prime-1}\right],
\end{eqnarray*}
the tensor product, symmetric and exterior powers, and neutral object
are 
\begin{eqnarray*}
(M_{1},\varphi_{1})\otimes(M_{2},\varphi_{2}) & \eqd & \left(M_{1}\otimes M_{2},\varphi_{1}\otimes\varphi_{2}\right),\\
\Sym^{k}(M,\varphi) & \eqd & \left(\Sym^{k}(M),\Sym^{k}(\varphi)\right),\\
\Lambda^{k}(M,\varphi) & \eqd & \left(\Lambda^{k}(M),\Lambda^{k}(\varphi)\right),\\
\mbox{and}\quad A & \eqd & (A,\mathrm{Id}).
\end{eqnarray*}

\subsubsection{~\label{sub:DefBKFMod}}

A \emph{Breuil-Kisin-Fargues module} or \emph{BKF-module} is an $A$-module
$M$ in $\Mod_{A,\ast}$ equipped with an $A[\xi^{\prime-1}]$-linear
isomorphism $\varphi_{M}:(\varphi^{\ast}M)[\xi^{\prime-1}]\rightarrow M[\xi^{\prime-1}]$.
This defines a strictly full subcategory $\Mod_{A,\ast}^{\varphi}$
of $\Mod_{A}^{\varphi}$. For $\star\in\{f,\pi^{\infty},\mathfrak{m}^{\infty},t\}$,
we denote by $\Mod_{A,\star}^{\varphi}$ the strictly full subcategory
of $\Mod_{A,\ast}^{\varphi}$ of all BKF-modules $(M,\varphi_{M})$
whose underlying $A$-module $M$ lies in the strictly full subcategory
$\Mod_{A,\star}$ of $\Mod_{A}$. Note that $\Mod_{A,\mathfrak{m}^{\infty}}^{\varphi}=\Mod_{A,\mathfrak{m}^{\infty}}$
since $M[\xi^{\prime-1}]=0$ for $M\in\Mod_{A,\mathfrak{m}^{\infty}}$.

Since $M\mapsto\varphi^{\ast}M[\xi^{\prime-1}]$ is exact, the functorial
\emph{dévissage} of objects in $\Mod_{A,\ast}$ yields an analogous
functorial \emph{dévissage} for any BKF-modules $(M,\varphi_{M})$
in $\Mod_{A,\ast}^{\varphi}$,\emph{
\[
\xyR{1pc}\xyC{1.5pc}\xymatrix{ & \left(M[\mathfrak{m}^{\infty}],\varphi_{M[\mathfrak{m}^{\infty}]}\right)\ar@{^{(}->}[d]\\
0\ar[r] & \left(M[\pi^{\infty}],\varphi_{M[\pi^{\infty}]}\right)\ar[r]\ar@{->>}[d] & \left(M,\varphi_{M}\right)\ar[r] & \left(M_{f},\varphi_{f}\right)\ar[r] & \left(\overline{M},\varphi_{\overline{M}}\right)\ar[r] & 0\\
 & \left(M_{t},\varphi_{M_{t}}\right)
}
\]
}with everyone in the relevant strictly full subcategory.

\subsubsection{~}

The categories $\Mod_{A,\pi^{\infty}}^{\varphi}$ and $\Mod_{A,\mathfrak{m}^{\infty}}^{\varphi}$
are weak Serre subcategories of $\Mod_{A}^{\varphi}$: they are stable
under kernels, cokernels and extensions. In particular, they are both
abelian. The category $\Mod_{A,t}^{\varphi}$ is also stable under
extensions and kernels in $\Mod_{A}^{\varphi}$, but it is only quasi-abelian.
This last statement now requires some argument, given below: for every
$M\in\Mod_{A,\pi^{\infty}}^{\varphi}$, the exact sequence 
\[
0\rightarrow M[\mathfrak{m}^{\infty}]\rightarrow M\rightarrow M_{t}\rightarrow0
\]
yields a torsion theory on the abelian category $\Mod_{A,\pi^{\infty}}^{\varphi}$
with torsion class $\Mod_{A,\mathfrak{m}^{\infty}}^{\varphi}$ and
torsion-free class $\Mod_{A,t}^{\varphi}$, but we do not know whether
this torsion theory is cotilting (is every object $M$ of of $\Mod_{A,\pi^{\infty}}^{\varphi}$
a quotient of some $N$ in $\Mod_{A,t}^{\varphi}$?), and thus we
can not appeal to the criterion of \cite[B.3]{BoVdB03} for quasi-abelian
categories, as we did for $\Mod_{A,t}$ in~\ref{sub:ModAtquasiab}.
Plainly, kernels and coimages in $\Mod_{A,t}^{\varphi}$ are the corresponding
kernels and coimages in $\Mod_{A}^{\varphi}$. The image and cokernel
of a morphism $f:(M,\varphi_{M})\rightarrow(N,\varphi_{N})$ in $\Mod_{A,t}^{\varphi}$
are respectively equal to 
\[
\left(f(M)^{\mathrm{sat}},\varphi_{f(M)^{\mathrm{sat}}}\right)\quad\mbox{and}\quad\left(\left(N/f(M)\right)_{t},\varphi_{(N/f(M))_{t}}\right)
\]
where the Frobeniuses are induced by $\varphi_{N}:\varphi^{\ast}(N)[\xi^{\prime-1}]\rightarrow N[\xi^{\prime-1}]$
on respectively 
\begin{eqnarray*}
\varphi^{\ast}\left(f(M)^{\mathrm{sat}}\right)[\xi^{\prime-1}] & = & \varphi^{\ast}\left(f(M)\right)[\xi^{\prime-1}]\\
\quad\mbox{and}\quad\varphi^{\ast}\left(\left(N/f(M)\right)_{t}\right)[\xi^{\prime-1}] & = & \left(\varphi^{\ast}\left(N\right)/\varphi^{\ast}\left(f(M)\right)\right)[\xi^{\prime-1}].
\end{eqnarray*}
Such a morphism is strict if and only if $N/f(M)$ has no $\mathfrak{m}^{\infty}$-torsion,
or $[\varpi]$-torsion. It is a monomorphism (resp.~an epimorphism)
if and only if $f:M\rightarrow N$ is injective (resp.~$N/f(M)$
is killed by $[\varpi]^{n}$ for $n\gg0$). It is a strict monomorphism
(resp.~a strict epimorphism) if and only if $f:M\rightarrow N$ is
injective and $N/f(M)$ has no $[\varpi]$-torsion (resp.~$f:M\rightarrow N$
is surjective). We have to show that these classes of morphisms are
respectively stable under arbitrary push-outs and pull-backs: this
follows from the analogous properties of the quasi-abelian category
$\Mod_{A,t}$, since the forgetful functor $\Mod_{A,t}^{\varphi}\rightarrow\Mod_{A,t}$
is strongly exact (i.e.~commutes with kernels and cokernels). Finally,
short exact sequences in $\Mod_{A,t}^{\varphi}$ remain exact in $\Mod_{A}^{\varphi}$.

\subsubsection{~}

Any BKF-module $(M,\varphi_{M})$ in $\Mod_{A,t}^{\varphi}$ has a
canonical functorial filtration by strict subobjects $(M[\pi^{n}],\varphi_{M[\pi^{n}]})$
such that $M[\pi^{n}]/M[\pi^{n-1}]\simeq\pi^{n-1}M[\pi^{n}]$ is a
finite free $\mathcal{O}_{K}^{\flat}$-module, and conversely, any
$(M,\varphi_{M})$ in $\Mod_{A}^{\varphi}$ which is a successive
extension of such BKF-modules belongs to $\Mod_{A,t}^{\varphi}$.

\subsubsection{~}

The categories $\Mod_{A,\ast}^{\varphi}$, $\Mod_{A,f}^{\varphi}$,
$\Mod_{A,\mathfrak{m}^{\infty}}^{\varphi}$ and $\Mod_{A,\pi^{\infty}}^{\varphi}$
are stable under the tensor product, symmetric and exterior powers
of $\Mod_{A}^{\varphi}$. The isomorphism 
\[
\varphi^{\ast}\left(\Hom_{A}(M_{1},M_{2})\right)\left[\xi^{\prime-1}\right]\simeq\Hom_{A[\xi^{-1}]}\left(\left(\varphi^{\ast}M_{1}\right)\left[\xi^{\prime-1}\right],\left(\varphi^{\ast}M_{2}\right)\left[\xi^{\prime-1}\right]\right)
\]
which is valid for any finitely presented $M_{1}$ also yields an
internal Hom, 
\[
\Hom\left(\left(M_{1},\varphi_{1}\right),\left(M_{2},\varphi_{2}\right)\right)\eqd\left(\Hom_{A}(M_{1},M_{2}),\Hom_{A[\xi^{\prime-1}]}\left(\varphi_{1}^{-1},\varphi_{2}\right)\right)
\]
on any of these categories. The subcategory $\Mod_{A,t}^{\varphi}$
of $\Mod_{A,\ast}^{\varphi}$ is stable under this internal Hom, but
it is not stable under the tensor product. As for $\Mod_{A,t}$, there
is a modified tensor product $(M_{1},\varphi_{1})\otimes_{t}(M_{2},\varphi_{2}):=(M_{1}\otimes_{t}M_{2},\varphi_{1}\otimes_{t}\varphi_{2})$
which turns $\Mod_{A,t}^{\varphi}$ into a genuine $\otimes$-category
with internal Hom's.

\subsubsection{~}

There is a Tate object $A\{1\}=\left(A\{1\},\varphi_{A\{1\}}\right)$
in $\Mod_{A,f}^{\varphi}$, defined in~\cite[4.24]{BaMoSc16}. The
$A$-module $A\{1\}$ is free of rank $1$, and $\varphi_{A\{1\}}:\varphi^{\ast}(A\{1\})[\xi^{\prime-1}]\rightarrow A\{1\}[\xi^{\prime-1}]$
maps $\varphi^{\ast}A\{1\}$ to $\xi^{\prime-1}A\{1\}$. For any BKF-module
$M$ and $n\in\mathbb{Z}$ we set 
\[
M\{n\}\eqd M\otimes A\{n\}\quad\mbox{with}\quad A\{n\}\eqd\begin{cases}
A\{1\}^{\otimes n} & \mbox{if }n\geq0,\\
A\{-n\}^{\vee} & \mbox{if }n\leq0.
\end{cases}
\]
If $M[\mathfrak{m}^{\infty}]=0$, then $M$ and $\varphi^{\ast}M$
have no $\xi^{\prime}$-torsion, thus $M\subset M[\xi^{\prime-1}]$
and $\varphi^{\ast}M\subset\varphi^{\ast}M[\xi^{\prime-1}]$. We then
say that $M$ is effective if $\varphi_{M}(\varphi^{\ast}M)\subset M$.
Plainly, 
\[
M\{-n\}\mbox{ is effective for every }n\gg0.
\]

\subsection{The Fargues filtration on $\Mod_{A,t}^{\varphi}$\label{sub:FarguesOnModAt}}

\subsubsection{~}

The rank function on $\Mod_{A,t}$ yields a rank function on $\Mod_{A,t}^{\varphi}$,
\[
\rank_{t}:\sk\,\Mod_{A,t}^{\varphi}\rightarrow\mathbb{N},\qquad\rank_{t}(M,\varphi_{M})\eqd\rank_{t}(M).
\]
In addition, the length function on $\Mod_{A,\mathfrak{m}^{\infty}}$
yields a degree function on $\Mod_{A,t}^{\varphi}$, 
\[
\deg_{t}:\sk\,\Mod_{A,t}^{\varphi}\rightarrow\mathbb{R}
\]
which is defined as follows. For every $(M,\varphi_{M})\in\Mod_{A,t}^{\varphi}$
and $n\gg0$, $M\{-n\}$ is effective and $\varphi_{M\{-n\}}$ maps
$\varphi^{\ast}M\{-n\}$ injectively into $M\{-n\}$ with cokernel
\[
Q^{n}(M,\varphi_{M})\simeq M/\xi^{\prime n}\varphi_{M}\left(\varphi^{\ast}M\right),\qquad Q^{n}(M,\varphi_{M})\in\Mod_{A,\mathfrak{m}^{\infty}}.
\]
From the short exact sequences 
\[
0\rightarrow Q^{n}(M,\varphi_{M})\stackrel{\xi^{\prime}}{\longrightarrow}Q^{n+1}(M,\varphi_{M})\rightarrow M/\xi'M\rightarrow0
\]
and \ref{sub:relatinglength2rank}, we thus obtain that 
\[
\deg_{t}(M,\varphi_{M})\eqd n\,\rank_{t}(M)-\length_{\mathfrak{m}^{\infty}}Q^{n}(M,\varphi_{M})
\]
does not depend upon $n\gg0$. Plainly, 
\[
\deg_{t}(M,\varphi_{M})\{n\}=\deg_{t}(M,\varphi_{M})+n\,\rank_{t}(M)
\]
for every $n\in\mathbb{Z}$. A short exact sequence 
\[
0\rightarrow(M_{1},\varphi_{M_{1}})\rightarrow(M_{2},\varphi_{M_{2}})\rightarrow(M_{3},\varphi_{M_{3}})\rightarrow0
\]
in $\Mod_{A,t}^{\varphi}$ yields, for every $n\gg0$, a commutative
diagram
\[
\xyC{1pc}\xymatrix{ & 0\ar[d] & 0\ar[d] & 0\ar[d]\\
0\ar[r] & \varphi^{\ast}M_{1}\{-n\}\ar[r]\ar[d] & \varphi^{\ast}M_{2}\{-n\}\ar[r]\ar[d] & \varphi^{\ast}M_{3}\{-n\}\ar[r]\ar[d] & 0\\
0\ar[r] & M_{1}\{-n\}\ar[r]\ar[d] & M_{2}\{-n\}\ar[r]\ar[d] & M_{3}\{-n\}\ar[r]\ar[d] & 0\\
0\ar[r] & Q^{n}(M_{1},\varphi_{M_{1}})\ar[r]\ar[d] & Q^{n}(M_{2},\varphi_{M_{2}})\ar[r]\ar[d] & Q^{n}(M_{3},\varphi_{M_{3}})\ar[r]\ar[d] & 0\\
 & 0 & 0 & 0
}
\]
 with exact rows and columns, from which easily follows that 
\[
\deg_{t}(M_{2},\varphi_{M_{2}})=\deg_{t}(M_{1},\varphi_{M_{1}})+\deg_{t}(M_{3},\varphi_{M_{3}}).
\]
Similarly a mono-epi $f:(M,\varphi_{M})\rightarrow(N,\varphi_{N})$
in $\Mod_{A,t}^{\varphi}$ yields an exact sequence 
\[
0\rightarrow\ker(\varphi^{\ast}Q\rightarrow Q)\rightarrow Q^{n}(M,\varphi_{M})\rightarrow Q^{n}(N,\varphi_{N})\rightarrow\coker(\varphi^{\ast}Q\rightarrow Q)\rightarrow0
\]
in $\Mod_{A,\mathfrak{m}^{\infty}}$, where $Q=N/f(M)$ and $\varphi^{\ast}Q\rightarrow Q$
is induced by $\xi^{\prime n}\varphi_{N}$. Thus
\begin{eqnarray*}
\deg_{t}(N,\varphi_{N})-\deg_{t}(M,\varphi_{M}) & = & \length_{\mathfrak{m}^{\infty}}\varphi^{\ast}Q-\length_{\mathfrak{m}^{\infty}}Q\\
 & = & (q-1)\cdot\length_{\mathfrak{m}^{\infty}}Q
\end{eqnarray*}
and $\deg_{t}(M,\varphi_{M})\leq\deg_{t}(N,\varphi_{N})$ with equality
if and only if $f$ is an isomorphism.

\subsubsection{~}

These rank and degree functions induce a Harder-Narasimhan theory
on the quasi-abelian category $\Mod_{A,t}^{\varphi}$~\cite{An09,Co16}.
A BKF-module $M$ in $\Mod_{A,t}^{\varphi}$ is semi-stable of slope
$\mu\in\mathbb{R}$ if and only if for every strict subobject $N$
of $M$, $\deg_{t}(N)\leq\mu\,\rank_{t}(N)$ with equality for $N=M$.
With this definition, the trivial BKF-module is semi-stable of slope
$\mu$ for all $\mu\in\mathbb{R}$. The semi-stable BKF-modules of
slope $\mu$ form an abelian full subcategory of $\Mod_{A,t}^{\varphi}$,
and every BKF-module $M$ in $\Mod_{A,t}^{\varphi}$ has a unique
decreasing $\mathbb{R}$-filtration $\mathcal{F}$ by strict subobjects
$\mathcal{F}^{\geq\gamma}$ with $\Gr_{\mathcal{F}}^{\gamma}:=\mathcal{F}^{\geq\gamma}/\mathcal{F}^{>\gamma}$
semi-stable of slope $\gamma$ for all $\gamma\in\mathbb{R}$, where
$\mathcal{F}^{>\gamma}:=\cup_{\gamma'>\gamma}\mathcal{F}^{\geq\gamma'}$.
We call $\mathcal{F}_{F}(M):=\mathcal{F}$ the Fargues filtration
of $M$. It depends functorially upon $M$ and there is no nonzero
morphism from $M_{1}$ to $M_{2}$ if $M_{1}$ and $M_{2}$ are semi-stable
of slope $\mu_{1}$ and $\mu_{2}<\mu_{1}$. The Fargues type of $M$
is the type $t_{F}(M)\in\mathbb{R}_{\geq}^{r}$ of $\mathcal{F}_{F}(M)$,
with $r=\rank_{t}(M)$. Finally, we denote by $\Gr_{F}^{\bullet}(M)$
the associated graded object in $\Mod_{A,t}^{\varphi}$.
\begin{prop}
\label{prop:PseudoIsogInModt}If $M_{1}\rightarrow M_{2}$ is a mono-epi
in $\Mod_{A,t}^{\varphi}$ with cokernel $Q$ in $\Mod_{A,\mathfrak{m}^{\infty}}^{\varphi}$
and $r=\rank_{t}M_{1}=\rank_{t}M_{2}$, then for every $s\in[0,r]$,
\[
0\leq t_{F}(M_{2})(s)-t_{F}(M_{1})(s)\leq(q-1)\cdot\length_{\mathfrak{m}^{\infty}}Q
\]
with equality on the left (resp.~right) for $s=0$ (resp.~$s=r$).
In particular, 
\[
0\leq\left\{ \begin{array}{c}
t_{F}^{\max}(M_{2})-t_{F}^{\max}(M_{1})\\
t_{F}^{\min}(M_{2})-t_{F}^{\min}(M_{1})
\end{array}\right\} \leq(q-1)\cdot\length_{\mathfrak{m}^{\infty}}Q.
\]
\end{prop}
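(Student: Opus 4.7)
The plan is to exploit the fact that both $t_{F}(M_{1})$ and $t_{F}(M_{2})$, viewed as concave piecewise linear functions on $[0,r]$, have break points only at integers, so the difference $t_{F}(M_{2})-t_{F}(M_{1})$ is affine between the HN break points of the two polygons. It therefore suffices to establish the inequalities at those break points and then extend by convex combination. The endpoints are immediate: $t_{F}(M_{i})(0)=0$ gives left-equality at $s=0$, while at $s=r$ we have $t_{F}(M_{i})(r)=\deg_{t}(M_{i})$, so the mono-epi degree formula $\deg_{t}(M_{2})-\deg_{t}(M_{1})=(q-1)\cdot\length_{\mathfrak{m}^{\infty}}Q$ recalled above is exactly the right-equality.

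For the lower bound $t_{F}(M_{1})\leq t_{F}(M_{2})$, let $0=HN_{0}\subset\cdots\subset HN_{k}=M_{1}$ be the Harder--Narasimhan filtration of $M_{1}$ and, for each $i$, let $HN_{i}':=f(HN_{i})^{\mathrm{sat}}\subset M_{2}$ be its strict image. Then $HN_{i}\to HN_{i}'$ is a mono-epi in $\Mod_{A,t}^{\varphi}$ with cokernel inside $Q$, so rank is preserved and the mono-epi degree formula gives $\deg_{t}(HN_{i})\leq\deg_{t}(HN_{i}')\leq t_{F}(M_{2})(\rank_{t}HN_{i})$, the last inequality being the standard HN bound on degrees of strict subobjects. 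Thus $t_{F}(M_{1})(\sigma)\leq t_{F}(M_{2})(\sigma)$ at every break point $\sigma$ of $t_{F}(M_{1})$, and since $t_{F}(M_{1})$ is affine while $t_{F}(M_{2})$ is concave between any two consecutive such break points, convex combination at the endpoints propagates the inequality to all of $[0,r]$.

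For the upper bound, let $0=HN_{0}\subset\cdots\subset HN_{k}=M_{2}$ be the Harder--Narasimhan filtration of $M_{2}$ and set $N_{i}:=HN_{i}\cap M_{1}\subset M_{1}$. The quotient $HN_{i}/N_{i}$ embeds into $Q$ (it is the image of $HN_{i}$ in $M_{2}/M_{1}=Q$), so it lies in $\Mod_{A,\mathfrak{m}^{\infty}}^{\varphi}$ with $\length_{\mathfrak{m}^{\infty}}(HN_{i}/N_{i})\leq\length_{\mathfrak{m}^{\infty}}Q$; and $M_{1}/N_{i}$ embeds into the $[\varpi]$-torsion-free module $M_{2}/HN_{i}$, so $N_{i}$ is a strict subobject of $M_{1}$ and $N_{i}\to HN_{i}$ is a rank-preserving mono-epi in $\Mod_{A,t}^{\varphi}$. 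The degree formula then gives
\[
t_{F}(M_{2})(\rank_{t}HN_{i})=\deg_{t}(HN_{i})=\deg_{t}(N_{i})+(q-1)\length_{\mathfrak{m}^{\infty}}(HN_{i}/N_{i})\leq t_{F}(M_{1})(\rank_{t}HN_{i})+(q-1)\length_{\mathfrak{m}^{\infty}}Q
\]
at every break point of $t_{F}(M_{2})$, and the same convex-combination argument, now using concavity of $t_{F}(M_{1})$ against affineness of $t_{F}(M_{2})$ between its own break points, propagates to $[0,r]$.

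The final clause is then formal, since $t_{F}^{\max}(M)=t_{F}(M)(1)$ and $t_{F}^{\min}(M)=\deg_{t}(M)-t_{F}(M)(r-1)$: specialising the main inequality to $s=1$ and $s=r-1$ and combining with the degree equality at $s=r$ yields the stated bounds on $t_{F}^{\max}$ and $t_{F}^{\min}$. The only delicate point in the entire argument is the verification that $f(HN_{i})^{\mathrm{sat}}$ and $HN_{i}\cap M_{1}$ really are strict subobjects of the expected rank in $\Mod_{A,t}^{\varphi}$, with quotients sitting inside $Q$; once that is in place, everything reduces to the mono-epi degree formula already established in the text.
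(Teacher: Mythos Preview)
Your proof is correct and is essentially the same as the paper's. The paper phrases the sufficient condition as showing $d_{1}(\gamma)\leq f_{2}(r_{1}(\gamma))$ and $d_{2}(\gamma)\leq f_{1}(r_{2}(\gamma))+(q-1)\length_{\mathfrak{m}^{\infty}}Q$ for all $\gamma$, using exactly the same two constructions you use (the saturation $\mathcal{F}_{F}^{\gamma}(M_{1})^{\mathrm{sat}}$ in $M_{2}$ for the lower bound, and the kernel $\mathcal{F}_{F}^{\gamma}(M_{2})\cap M_{1}$ for the upper bound), and leaves the convex-combination step implicit; you spell it out, and you also make explicit why $N_{i}=HN_{i}\cap M_{1}$ is a strict subobject of $M_{1}$, which the paper asserts without comment.
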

\begin{proof}
Set $f_{i}=t_{F}(M_{i})$ and $(r_{i},d_{i})(\gamma)=(\rank_{t},\deg_{t})(\mathcal{F}_{F}^{\gamma}(M_{i}))$
for $\gamma\in\mathbb{R}$ and $i\in\{1,2\}$. It is sufficient to
show that for every $\gamma\in\mathbb{R}$, 
\[
d_{1}(\gamma)\leq f_{2}(r_{1}(\gamma))\quad\mbox{and}\quad d_{2}(\gamma)\leq f_{1}(r_{2}(\gamma))+(q-1)\cdot\length_{\mathfrak{m}^{\infty}}Q.
\]
For the first inequality, let $\mathcal{F}_{F}^{\gamma}(M_{1})^{\mathrm{sat}}$
be the image of $\mathcal{F}_{F}^{\gamma}(M_{1})$ in $M_{2}$. Then
\[
d_{1}(\gamma)=\deg_{t}\mathcal{F}_{F}^{\gamma}(M_{1})\leq\deg_{t}\mathcal{F}_{F}^{\gamma}(M_{1})^{\mathrm{sat}}\leq f_{2}(r_{1}(\gamma))
\]
since $\mathcal{F}_{F}^{\gamma}(M_{1})\rightarrow\mathcal{F}_{F}^{\gamma}(M_{1})^{\mathrm{sat}}$
is a mono-epi and $\mathcal{F}_{F}^{\gamma}(M_{1})^{\mathrm{sat}}$
is a strict subobject of rank $r_{1}(\gamma)$ in $M_{2}$. For the
second inequality, let $\mathcal{F}_{F}^{\gamma}(M_{2})^{\prime}$
and $Q^{\gamma}$ be respectively the kernel and image of $\mathcal{F}_{F}^{\gamma}(M_{2})\rightarrow Q$.
Then 
\begin{eqnarray*}
d_{2}(\gamma)=\deg_{t}\mathcal{F}_{F}^{\gamma}(M_{2}) & = & \deg_{t}\mathcal{F}_{F}^{\gamma}(M_{2})^{\prime}+(q-1)\cdot\length_{\mathfrak{m}^{\infty}}Q^{\gamma}\\
 & \leq & f_{1}(r_{2}(\gamma))+(q-1)\cdot\length_{\mathfrak{m}^{\infty}}Q
\end{eqnarray*}
since $\mathcal{F}_{F}^{\gamma}(M_{2})^{\prime}$ is a strict subobject
of rank $r_{2}(\gamma)$ in $M_{1}$ and $Q^{\gamma}\subset Q$. \end{proof}
\begin{prop}
\label{prop:ExactSeqInModt}Let $0\rightarrow M_{1}\rightarrow M_{2}\rightarrow M_{3}\rightarrow0$
be an exact sequence in $\Mod_{A,t}^{\varphi}$, set $r_{i}=\rank_{t}M_{i}$
and view $t_{F}(M_{i})$ as a concave function $f_{i}:[0,r_{i}]\rightarrow\mathbb{R}$.
Then 
\[
f_{1}\ast f_{3}(s)\geq f_{2}(s)\geq\begin{cases}
f_{1}(s) & \mbox{if }0\leq s\leq r_{1}\\
f_{1}(r_{1})+f_{3}(s-r_{1}) & \mbox{if }r_{1}\leq s\leq r_{2}
\end{cases}
\]
with equality for $s=0$ and $s=r_{2}$. In particular,\textup{
\[
\begin{array}{ccccc}
t_{F}^{\max}(M_{1}) & \leq & t_{F}^{\max}(M_{2}) & \leq & \max\left\{ t_{F}^{\max}(M_{1}),t_{F}^{\max}(M_{3})\right\} ,\\
t_{F}^{\min}(M_{3}) & \geq & t_{F}^{\min}(M_{2}) & \geq & \min\left\{ t_{F}^{\min}(M_{1}),t_{F}^{\min}(M_{3})\right\} ,
\end{array}
\]
\[
\mbox{and}\qquad\qquad t_{F}(M_{2})\leq t_{F}(M_{1})\ast t_{F}(M_{3})\quad\mbox{in}\quad\mathbb{R}_{\geq}^{r_{2}}.\qquad\qquad
\]
}\textup{\emph{Moreover, $t_{F}(M_{2})=t_{F}(M_{1})\ast t_{F}(M_{3})$
if and only if for every $\gamma\in\mathbb{R}$, 
\[
0\rightarrow\mathcal{F}_{F}^{\gamma}(M_{1})\rightarrow\mathcal{F}_{F}^{\gamma}(M_{2})\rightarrow\mathcal{F}_{F}^{\gamma}(M_{3})\rightarrow0
\]
is exact.}}\end{prop}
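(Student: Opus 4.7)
The plan is to establish the upper and lower polygon bounds on $f_{2}$ separately, using the good behaviour of $\rank_{t}$ and $\deg_{t}$ on short exact sequences in $\Mod_{A,t}^{\varphi}$ (additive on SESs; $\deg_{t}$ increases along mono-epis by a multiple of $\length_{\mathfrak{m}^{\infty}}$ of the cokernel). First I would record the equality cases at the endpoints: since SESs in $\Mod_{A,t}^{\varphi}$ remain exact in $\Mod_{A,\pi^{\infty}}$, additivity gives $r_{2}=r_{1}+r_{3}$ and $\deg_{t}M_{2}=\deg_{t}M_{1}+\deg_{t}M_{3}$, so all three bounds agree with $f_{2}$ at $s=0$ and $s=r_{2}$.

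For the lower bound, I would use the injection $M_{1}\hookrightarrow M_{2}$, which is strict, to push each HN step $\mathcal{F}_{F}^{\geq\gamma}(M_{1})$ into $M_{2}$ as a strict subobject of the same rank $\rho_{1}(\gamma)$ and degree $f_{1}(\rho_{1}(\gamma))$; this yields $f_{2}(s)\geq f_{1}(s)$ for $s\in[0,r_{1}]$ at all breakpoints and hence everywhere by concave interpolation. For $s\in[r_{1},r_{2}]$, I would take the preimage in $M_{2}$ of each HN step $\mathcal{F}_{F}^{\geq\gamma}(M_{3})$; this preimage sits in an exact sequence with $M_{1}$ and $\mathcal{F}_{F}^{\geq\gamma}(M_{3})$, hence is a strict subobject of $M_{2}$ of rank $r_{1}+\rho_{3}(\gamma)$ and degree $\deg_{t}M_{1}+f_{3}(\rho_{3}(\gamma))=f_{1}(r_{1})+f_{3}(\rho_{3}(\gamma))$, which gives the second lower bound. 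For the upper bound $f_{2}\leq f_{1}\ast f_{3}$, given an arbitrary strict subobject $N\subset M_{2}$, I set $N_{1}=N\cap M_{1}=\ker(N\to M_{3})$ and let $\bar{N}_{3}$ be the abelian image of $N$ in $M_{3}$. Then $N_{1}$ is a strict subobject of $M_{1}$ (because $M_{1}/N_{1}\hookrightarrow M_{2}/N$ has no $[\varpi]$-torsion) and $\bar{N}_{3}\in\Mod_{A,t}^{\varphi}$ as a subobject of $M_{3}$, so the sequence $0\to N_{1}\to N\to\bar{N}_{3}\to0$ is exact in $\Mod_{A,t}^{\varphi}$, giving $\deg_{t}N=\deg_{t}N_{1}+\deg_{t}\bar{N}_{3}\leq f_{1}(\rank_{t}N_{1})+f_{3}(\rank_{t}\bar{N}_{3})\leq(f_{1}\ast f_{3})(\rank_{t}N)$, where we use $\deg_{t}\bar{N}_{3}\leq\deg_{t}\bar{N}_{3}^{\mathrm{sat}}\leq f_{3}(\rank_{t}\bar{N}_{3}^{\mathrm{sat}})$. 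The max/min consequences then follow from reading off the slopes of the concave polygons at $s=0$ and $s=r_{2}$, comparing the slope of $f_{2}$ with those of $f_{1}$, $f_{3}$ and $f_{1}\ast f_{3}$.

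The main obstacle is the equivalence at the end, specifically the direction $(\Rightarrow)$: assuming $t_{F}(M_{2})=t_{F}(M_{1})\ast t_{F}(M_{3})$, I must recover strict exactness of the induced sequences on each $\mathcal{F}_{F}^{\geq\gamma}$. The converse $(\Leftarrow)$ is easy: if the $\gamma$-steps form short exact sequences in $\Mod_{A,t}^{\varphi}$, additivity of $\rank_{t}$ shows that the multiplicity of each slope $\gamma$ in $t_{F}(M_{2})$ is the sum of its multiplicities in $t_{F}(M_{1})$ and $t_{F}(M_{3})$, which is exactly the definition of $\ast$. For $(\Rightarrow)$, I apply the upper bound argument above to $N:=\mathcal{F}_{F}^{\geq\gamma}(M_{2})$; equality $f_{2}=f_{1}\ast f_{3}$ forces equality throughout the chain of inequalities, so (i) $\bar{N}_{3}$ is already strict in $M_{3}$, and (ii) both $N_{1}$ and $\bar{N}_{3}$ lie on the HN polygons of $M_{1}$ and $M_{3}$ respectively. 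To upgrade this to $N_{1}=\mathcal{F}_{F}^{\geq\gamma}(M_{1})$ and $\bar{N}_{3}=\mathcal{F}_{F}^{\geq\gamma}(M_{3})$, I would verify that both $N_{1}$ and $\bar{N}_{3}$ are semi-stable of slope $\geq\gamma$ (by comparing with strict subobjects of the HN steps they sit inside) and that their quotients $M_{1}/N_{1}$, $M_{3}/\bar{N}_{3}$ inject into $M_{2}/\mathcal{F}_{F}^{\geq\gamma}(M_{2})$ and $M_{3}$ respectively, hence have all HN slopes $<\gamma$; uniqueness of the HN filtration of $M_{1}$ and $M_{3}$ then identifies these with the $\gamma$-steps, yielding the desired exact sequence. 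This final identification, which is the analogue of Lemma~\ref{lem:InvTorsOModExSeqCaseEqual} in the present HN setup, is where the care is required.
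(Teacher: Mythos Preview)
Your direct argument is essentially the standard proof behind the references the paper cites (\cite[Proposition~21]{Co16}, \cite[4.4.4]{An09}); the paper itself gives no details. Your treatment of the inequalities and of the $(\Leftarrow)$ direction is correct. The upper bound step is fine once you note, as you do, that $\bar{N}_{3}\hookrightarrow\bar{N}_{3}^{\mathrm{sat}}$ is a mono-epi, so $\deg_{t}\bar{N}_{3}\leq\deg_{t}\bar{N}_{3}^{\mathrm{sat}}\leq f_{3}(\rank_{t}\bar{N}_{3})$.

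There is a genuine slip in your $(\Rightarrow)$ argument. With $N=\mathcal{F}_{F}^{\geq\gamma}(M_{2})$, you claim that $M_{3}/\bar{N}_{3}$ ``injects into $M_{3}$'' and therefore has all HN slopes $<\gamma$; neither assertion is true. The fix is to argue on $\bar{N}_{3}$ rather than on its cokernel. Equality in your chain forces $\bar{N}_{3}=\bar{N}_{3}^{\mathrm{sat}}$, so $0\to N_{1}\to N\to\bar{N}_{3}\to0$ is exact in $\Mod_{A,t}^{\varphi}$ with $\bar{N}_{3}$ strict in $M_{3}$; your already-proved inequality $t_{F}^{\min}(\bar{N}_{3})\geq t_{F}^{\min}(N)\geq\gamma$ then gives $\bar{N}_{3}\subset\mathcal{F}_{F}^{\geq\gamma}(M_{3})$. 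Dually, $M_{1}/N_{1}\hookrightarrow M_{2}/N$ is a strict monomorphism (its cokernel is $M_{3}/\bar{N}_{3}\in\Mod_{A,t}$), so $t_{F}^{\max}(M_{1}/N_{1})\leq t_{F}^{\max}(M_{2}/N)<\gamma$ and hence $\mathcal{F}_{F}^{\geq\gamma}(M_{1})\subset N_{1}$. Now use the hypothesis $t_{F}(M_{2})=t_{F}(M_{1})\ast t_{F}(M_{3})$ to count ranks:
\[
\rank_{t}N_{1}+\rank_{t}\bar{N}_{3}=\rank_{t}\mathcal{F}_{F}^{\geq\gamma}(M_{2})=\rank_{t}\mathcal{F}_{F}^{\geq\gamma}(M_{1})+\rank_{t}\mathcal{F}_{F}^{\geq\gamma}(M_{3}),
\]
which together with the two inclusions forces equality of ranks on both sides. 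Since a rank-zero object of $\Mod_{A,t}^{\varphi}$ is zero, this yields $N_{1}=\mathcal{F}_{F}^{\geq\gamma}(M_{1})$ and $\bar{N}_{3}=\mathcal{F}_{F}^{\geq\gamma}(M_{3})$, which is the desired exactness. (Also, replace ``semi-stable of slope $\geq\gamma$'' by ``$t_{F}^{\min}\geq\gamma$''; semi-stability is not what you need or get here.)
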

\begin{proof}
These are standard properties of Harder-Narasimhan filtrations on
quasi-abelian categories, see for instance~\cite[Proposition 21]{Co16}
or \cite[4.4.4]{An09}. \end{proof}
\begin{prop}
\label{prop:tF(twist)onModt}For every $M\in\Mod_{A,t}^{\varphi}$
of rank $r\in\mathbb{N}$ and any $n\in\mathbb{Z}$, 
\[
\mathcal{F}_{F}^{\gamma}(M\{n\})=\mathcal{F}_{F}^{\gamma-n}(M)\{n\}
\]
for every $\gamma\in\mathbb{R}$, hence 
\[
t_{F}(M\{n\})=t_{F}(M)+(n,\cdots,n)\quad\mbox{in}\quad\mathbb{R}_{\geq}^{r}.
\]
\end{prop}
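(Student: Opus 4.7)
The plan is to exploit that tensoring with the invertible object $A\{n\}$ defines an exact auto-equivalence $T_{n} : \Mod_{A,t}^{\varphi} \to \Mod_{A,t}^{\varphi}$, with inverse $T_{-n}$, and then to invoke the uniqueness of the Harder-Narasimhan filtration in our quasi-abelian category. First I would record that, since $A\{n\}$ is free of rank one over $A$, tensoring with it is exact and induces a bijection $N \leftrightarrow N\{n\}$ between strict subobjects of $M$ and strict subobjects of $M\{n\}$; moreover it preserves the rank function $\rank_{t}$, since $M\{n\} \simeq M$ as $A$-modules and hence $M\{n\}(K) \simeq M(K)$ as $A(K)$-modules.

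Second, by the identity $\deg_{t}(M\{n\}) = \deg_{t}(M) + n \cdot \rank_{t}(M)$ already established in the excerpt, tensoring with $A\{n\}$ shifts the slope $\mu := \deg_{t}/\rank_{t}$ of every non-zero object uniformly by $n$. Consequently, a non-zero $M \in \Mod_{A,t}^{\varphi}$ is semi-stable of slope $\mu$ if and only if $M\{n\}$ is semi-stable of slope $\mu + n$: the inequality $\mu(N\{n\}) \leq \mu(M\{n\})$ for a strict subobject $N\{n\}$ of $M\{n\}$ is equivalent to $\mu(N) \leq \mu(M)$ for the corresponding strict subobject $N$ of $M$, with equality on maximal subobjects preserved.

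Now I would consider the filtration of $M\{n\}$ defined by $\gamma \mapsto \mathcal{F}_{F}^{\gamma-n}(M)\{n\}$. It is a decreasing, exhaustive and separated filtration of $M\{n\}$ by strict subobjects, and its graded pieces are $\Gr_{\mathcal{F}_{F}(M)}^{\gamma-n}\{n\}$, which by the preceding step are semi-stable of slope $(\gamma-n) + n = \gamma$. By uniqueness of the Harder-Narasimhan filtration in $\Mod_{A,t}^{\varphi}$ (cited earlier from~\cite{An09,Co16}), this filtration must coincide with $\mathcal{F}_{F}(M\{n\})$, which yields exactly $\mathcal{F}_{F}^{\gamma}(M\{n\}) = \mathcal{F}_{F}^{\gamma-n}(M)\{n\}$. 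The type identity $t_{F}(M\{n\}) = t_{F}(M) + (n,\cdots,n)$ then follows from the very definition of $\mathbf{t}$, because $\rank_{t}\,\Gr_{\mathcal{F}_{F}(M\{n\})}^{\gamma} = \rank_{t}\,\Gr_{\mathcal{F}_{F}(M)}^{\gamma-n}$, so the multiplicity of $\gamma$ in $t_{F}(M\{n\})$ equals the multiplicity of $\gamma - n$ in $t_{F}(M)$.

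There is no real obstacle here: the argument is purely formal once one checks that $-\otimes A\{n\}$ is an auto-equivalence preserving strict subobjects and the rank, and shifting degree by $n \cdot \rank_{t}$. The one subtle point worth verifying is that strictness of an inclusion $N \hookrightarrow M$ is preserved under the twist, which reduces to the fact that $M/N$ has no $[\varpi]$-torsion if and only if $M\{n\}/N\{n\} \simeq (M/N)\{n\}$ has none -- clear because the twist is an isomorphism of underlying $A$-modules.
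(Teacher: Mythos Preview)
Your proof is correct and follows essentially the same approach as the paper: both rely on the observation that $N\mapsto N\{n\}$ is a bijection on strict subobjects satisfying $\mu(N\{n\})=\mu(N)+n$, and then invoke uniqueness of the Harder--Narasimhan filtration. The paper states this in a single sentence, while you have spelled out the details (exactness of the twist, preservation of strictness, the rank and degree computations) more carefully.
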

\begin{proof}
This is obvious: the map $N\mapsto N\{n\}$ induces a bijection between
strict subobjects of $M$ and strict subobjects of $M\{n\}$, with
$\mu(N\{n\})=\mu(N)+n$. 
\end{proof}

\subsubsection{~\label{sub:ExactSeqInModpi}}

For $M\in\Mod_{A,\pi^{\infty}}^{\varphi}$, we set $t_{F}(M)=t_{F}(M_{t})$.
An exact sequence 
\[
0\rightarrow M_{1}\rightarrow M_{2}\rightarrow M_{3}\rightarrow0
\]
in $\Mod_{A,\pi^{\infty}}^{\varphi}$ gives rise to three exact sequences:
\[
0\rightarrow M_{1}[\mathfrak{m}^{\infty}]\rightarrow M_{2}[\mathfrak{m}^{\infty}]\rightarrow M_{3}[\mathfrak{m}^{\infty}]\rightarrow Q\rightarrow0
\]
\[
0\rightarrow M_{4}\rightarrow M_{2,t}\rightarrow M_{3,t}\rightarrow0
\]
\[
0\rightarrow M_{1,t}\rightarrow M_{4}\rightarrow Q\rightarrow0
\]
with $Q\in\Mod_{A,\mathfrak{m}^{\infty}}^{\varphi}$ and $M_{4}\in\Mod_{A,t}^{\varphi}$.
Set $\ell_{i}=\length_{\mathfrak{m}^{\infty}}M_{i}[\mathfrak{m}^{\infty}]$,
$r_{i}=\rank_{t}(M_{i})$, $f_{i}=t_{F}(M_{i})$ and $\ell_{Q}=\length_{\mathfrak{m}^{\infty}}Q$.
We thus have the following relations: 
\[
r_{1}=r_{4},\quad r_{1}+r_{3}=r_{2},\quad\ell_{Q}=\ell_{1}-\ell_{2}+\ell_{3}
\]
\[
f_{4}\ast f_{3}(s)\geq f_{2}(s)\geq\begin{cases}
f_{4}(s) & \mbox{for }0\leq s\leq r_{1},\\
f_{4}(r_{1})+f_{3}(s-r_{1}) & \mbox{for }r_{1}\leq s\leq r_{2}.
\end{cases}
\]
\[
f_{1}(s)\leq f_{4}(s)\leq f_{1}(s)+(q-1)\ell_{Q}\quad\mbox{for }0\leq s\leq r_{1}.
\]
Set $c_{i}=\max\left\{ \left|t_{F}^{\min}(M_{i})\right|,\left|t_{F}^{\max}(M_{i})\right|\right\} $
so that $f_{i}$ is $c_{i}$-Lipschitzian and
\[
c_{2}\leq\max\left\{ c_{3},c_{4}\right\} ,\quad\left|c_{1}-c_{4}\right|\leq(q-1)\ell_{Q}.
\]
Moreover, we have
\[
f_{4}\ast f_{3}(s)\leq\begin{cases}
f_{4}(s)+c_{4}r_{3}+\max(f_{3}) & \mbox{for }0\leq s\leq r_{1},\\
f_{3}(s-r_{1})+c_{3}r_{1}+\max(f_{4}) & \mbox{for }r_{1}\leq s\leq r_{2}
\end{cases}
\]
We obtain the following inequalities: for $0\leq s\leq r_{1}$, 
\[
0\le f_{2}(s)-f_{1}(s)\leq\left(c_{1}+c_{3}\right)r_{3}+(q-1)(\ell_{1}+\ell_{3})(r_{3}+1)
\]
and for $r_{1}\leq s\leq r_{2}$, 
\[
-c_{1}r_{1}\leq f_{2}(s)-f_{3}(s-r_{1})\leq\left(c_{1}+c_{3}+(q-1)(\ell_{1}+\ell_{3})\right)r_{1}
\]
which also implies that for $0\leq s\leq r_{3}$, 
\[
\left|f_{2}(s)-f_{3}(s)\right|\leq\max\left\{ \begin{array}{c}
c_{1}+2c_{3}+(q-1)(\ell_{1}+\ell_{3}),\\
2c_{1}+c_{3}+3(q-1)(\ell_{1}+\ell_{3})
\end{array}\right\} \cdot r_{1}.
\]

\subsection{The Fargues type on $\Mod_{A,\ast}^{\varphi}$\label{sub:FarguesTypeBKF}}

\subsubsection{~}

For any $M\in\Mod_{A}^{\varphi}$ and $n\geq1$, consider the exact
sequence
\[
\xyC{2pc}\xymatrix{0\ar[r] & M[\pi^{n}]\ar[r] & M\ar[r]^{\pi^{n}} & M\ar[r] & M_{n}\ar[r] & 0}
.
\]
Suppose that $M$ is a BKF-module, i.e.~belongs to $\Mod_{A,\ast}^{\varphi}$.
Then $M_{n}$ and $M[\pi^{n}]$ both belong to $\Mod_{A,\pi^{\infty}}^{\varphi}$.
Moreover, $\rank_{t}M_{n}\geq n\,\rank_{A}M$ by \ref{sub:rankfree/tors}.
Viewing $t_{F}(M_{n})$ as a concave function on $[0,\rank_{t}M_{n}]$,
we may thus define 
\[
t_{F,n}(M):[0,\rank_{A}M]\rightarrow\mathbb{R},\qquad t_{F,n}(M)(s)={\textstyle \frac{1}{n}}t_{F}(M_{n})(ns).
\]

\begin{prop}
\label{prop:DeftFinfty}There is a constant $C(M)$ such that the
functions $t_{F,n}(M)$ are $C(M)$-Lipschitzian. They converge uniformly
to a continuous concave function 
\[
t_{F,\infty}(M):[0,\rank_{A}M]\rightarrow\mathbb{R}.
\]
If $M_{1},M_{2}\in\Mod_{A,\ast}^{\varphi}$ become isomorphic in the
isogeny category $\Mod_{A,\ast}^{\varphi}\otimes E$, then 
\[
t_{F,\infty}(M_{1})=t_{F,\infty}(M_{2}).
\]
\end{prop}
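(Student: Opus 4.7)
The plan is to handle $M\in\Mod_{A,f}^\varphi$ first via a monotonicity argument along multiplicative chains of reductions, extract the limit along the factorial subsequence by Dini's theorem, and control the full sequence via a Euclidean decomposition combined with a Lipschitz refinement of the sup-convolution bound; the general case and the isogeny invariance then follow from bounded-modification estimates. The central exact sequence is the following: since $\pi\in\mathcal{O}_E$ is Frobenius-fixed and $M$ is $\pi$-torsion free when $M\in\Mod_{A,f}^\varphi$, multiplication by $\pi^k$ is a morphism of BKF-modules and yields, in $\Mod_{A,t}^\varphi$,
$$0\longrightarrow M_n\stackrel{\pi^k}{\longrightarrow}M_{n+k}\longrightarrow M_k\longrightarrow 0\qquad(\star)$$
for every $n,k\geq 1$.

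For $M\in\Mod_{A,f}^\varphi$ with $d:=\rank_A M$, Proposition~\ref{prop:ExactSeqInModt} applied to $(\star)$ with $k=1$ inductively bounds $|t_F^{\max}(M_n)|$ and $|t_F^{\min}(M_n)|$ by $C(M):=\max(|t_F^{\max}(M_1)|,|t_F^{\min}(M_1)|)$; since the slopes of $t_{F,n}(M)$ coincide with those of $f_n:=t_F(M_n)$, each $t_{F,n}(M)$ is $C(M)$-Lipschitz on $[0,d]$. Iterating $(\star)$ exhibits $M_{mn}$ as a successive extension of $m$ copies of $M_n$, so Proposition~\ref{prop:ExactSeqInModt} yields $f_{mn}\leq f_n^{\ast m}$; by concavity the $m$-fold sup-convolution equals $m\,f_n(\cdot/m)$, and evaluation at $mns$ gives $t_{F,mn}(M)(s)\leq t_{F,n}(M)(s)$. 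The sequence $\{t_{F,n!}(M)\}_n$ is thus a monotone decreasing family of $C(M)$-Lipschitz functions on the compact $[0,d]$, bounded below by $-C(M)d$; it converges pointwise to a $C(M)$-Lipschitz (hence continuous) concave function $t_{F,\infty}(M)$, and Dini's theorem upgrades the convergence to uniform.

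For an arbitrary $n$, Euclidean division writes $n=qK!+r$ with $0\leq r<K!$, and the exact sequence $0\to M_r\to M_n\to M_{qK!}\to 0$ combined with Proposition~\ref{prop:ExactSeqInModt} gives the upper bound $f_n\leq f_r\ast f_{qK!}$ and the lower bound $f_n(ns)\geq f_r(rd)+f_{qK!}(ns-rd)$. An elementary Lipschitz argument---the concave function $u\mapsto f_r(u)+f_{qK!}(ns-u)$ attains its maximum within $\min(rd,qK!d)=rd$ of the proportional split $u_0=rs$---sharpens the upper bound to $(f_r\ast f_{qK!})(ns)\leq f_r(rs)+f_{qK!}(qK!s)+2C(M)rd$. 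Dividing by $n$ yields $|t_{F,n}(M)(s)-t_{F,qK!}(M)(s)|=O(K!/n)$ uniformly in $s$. Choosing first $K$ large so that $t_{F,K!}(M)$ is uniformly $\epsilon$-close to $t_{F,\infty}(M)$, and then $n$ large so that $K!/n<\epsilon$, forces $t_{F,n}(M)\to t_{F,\infty}(M)$ uniformly on $[0,d]$.

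For general $M\in\Mod_{A,\ast}^\varphi$ and for the isogeny invariance, the canonical map $M\to M_f$ from the d\'evissage of~\ref{sub:DevissageOfModules}, as well as any monomorphism $N_1\hookrightarrow N_2$ in $\Mod_{A,f}^\varphi$ with cokernel killed by some $\pi^N$, has kernel and cokernel killed by $\pi^N$. The snake lemma on mod-$\pi^n$ reduction produces exact sequences in $\Mod_{A,\pi^\infty}^\varphi$ whose kernels and cokernels have rank and length bounded independently of $n$, and the comparison estimates of~\ref{sub:ExactSeqInModpi} give $|f_n(M)-f_n(M_f)|=O(1)$ uniformly; dividing by $n$ yields equality of the limits. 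The main technical obstacle is this Lipschitz refinement of the sup-convolution bound, which sharpens the naive $O(1)$ conclusion of Proposition~\ref{prop:ExactSeqInModt} to the $O(K!/n)$ control required to bootstrap the uniform convergence of the factorial subsequence to the full sequence.
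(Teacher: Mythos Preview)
Your proof is correct and follows essentially the same approach as the paper: the subadditivity from the exact sequence $(\star)$, the uniform Lipschitz bound coming from $M_1$, the Euclidean-division estimate to pass from a cofinal subsequence to the full sequence, and the d\'evissage plus the estimates of~\ref{sub:ExactSeqInModpi} for general $M$ and isogeny invariance. The only organizational difference is that you extract the limit along the factorial subsequence via Dini's theorem and then compare, whereas the paper runs the Euclidean division directly against an arbitrary modulus $n_0$ to obtain $\limsup_n f_n(s)\leq f_{n_0}(s)$ in a single Fekete-type step; both are standard and yield the same conclusion.
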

\begin{proof}
Let $r=\rank_{A}M=\rank_{A}M_{f}$ and set $f_{n}=f_{n}(M)=t_{F,n}(M)$. 

Suppose first that $M$ is free. Then for every $n,m\geq1$, the exact
sequence 
\[
0\rightarrow M_{n}\stackrel{\pi^{m}}{\longrightarrow}M_{n+m}\rightarrow M_{m}\rightarrow0
\]
in $\Mod_{A,t}^{\varphi}$ gives the inequality 
\[
t_{F}(X_{n+m})\leq t_{F}(X_{n})\ast t_{F}(X_{m})\quad\mbox{in}\quad\mathbb{R}_{\geq}^{r}.
\]
It follows that for every $n,k\geq1$ and $0\leq s\leq r$, 
\[
f_{nk}(s)\leq f_{n}(s)\quad\mbox{with equality for }s\in\{0,r\}.
\]
In particular, $f_{n}(s)\leq f_{1}(s)$ with equality for $s\in\{0,r\}$,
and the slopes of the continuous piecewise linear functions $f_{n}$
are uniformly bounded by the constant 
\[
C=C(M)=\max\left\{ \left|t_{F}^{\min}(M_{1})\right|,\left|t_{F}^{\max}(M_{1})\right|\right\} .
\]
Fix $n_{0},n\geq1$. For $n=n_{0}q_{n}+r_{n}$ with $q_{n}\geq0$
and $0\leq r_{n}<n_{0}$, we have 
\[
t_{F}(X_{n})\leq t_{F}(X_{n_{0}q_{n}})\ast t_{F}(X_{r_{n}})\leq t_{F}(X_{n_{0}})^{\ast^{q_{n}}}\ast t_{F}(X_{r_{n}})
\]
from which we obtain that for $0\leq s\leq r$, 
\[
f_{n}(s)\leq(1-{\textstyle \frac{r_{n}}{n})}f_{n_{0}}(s^{\prime})+{\textstyle \frac{r_{n}}{n}}f_{r_{n}}(s^{\prime\prime})
\]
for some $s^{\prime},s^{\prime\prime}\in[0,r]$ with $n_{0}q_{n}s^{\prime}+r_{n}s^{\prime\prime}=ns$.
But then $s^{\prime}-s=\frac{r_{n}}{n}(s^{\prime}-s^{\prime\prime})$,
thus 
\[
f_{n}(s)\leq(1-{\textstyle \frac{r_{n}}{n})}f_{n_{0}}(s)+{\textstyle \frac{r_{n}}{n}}\left(2rC{\textstyle (1-\frac{r_{n}}{n})}+\sup(f_{1})\right).
\]
Therefore $\limsup f_{n}(s)\leq f_{n_{0}}(s)$ and this being true
for all $n_{0}\geq1$, 
\[
\limsup f_{n}(s)\leq\liminf f_{n}(s)
\]
i.e.~$f_{n}(s)$ converges to some limit $f_{\infty}(s)\in\mathbb{R}$.
Since all the $f_{n}$'s are $C$-Lipschitzian concave, so is $f_{\infty}=f_{\infty}(M)$
and the convergence is uniform. 

Suppose next that $M$ is torsion free, so that $0\rightarrow M\rightarrow M_{f}\rightarrow\overline{M}\rightarrow0$
is exact and for $n\gg0$ (such that $\pi^{n}\overline{M}=0$), we
obtain an exact sequence 
\[
0\rightarrow\overline{M}\rightarrow M_{n}\rightarrow M_{f,n}\rightarrow\overline{M}\rightarrow0
\]
which identifies $\overline{M}$ and $M_{n}[\mathfrak{m}^{\infty}]$
(since $M_{f,n}[\mathfrak{m}^{\infty}]=0$), i.e.
\[
0\rightarrow M_{n,t}\rightarrow M_{f,n}\rightarrow\overline{M}\rightarrow0
\]
is exact. Our claim now follows from proposition~\ref{prop:PseudoIsogInModt},
with the constant 
\[
C(M)=C(M_{f})+(q-1)\length_{\mathfrak{m}^{\infty}}\overline{M}
\]
and the limit $f_{\infty}(M)=f_{\infty}(M_{f})$.

For the general case, let $I$ be the image of $M\rightarrow M_{f}$,
so that $I$ is a torsion free BKF-module. This time for $n\gg0$,
we have an exact sequence 
\[
0\rightarrow M[\pi^{\infty}]\rightarrow M_{n}\rightarrow I_{n}\rightarrow0.
\]
We have just seen that $I_{n}[\mathfrak{m}^{\infty}]=\overline{I}=\overline{M}$
for $n\gg0$. Our claim now follows from the discussion of section~\ref{sub:ExactSeqInModpi}
with the constant 
\[
C(M)=\max\left\{ C(I),C(M_{t})+(q-1)\length_{\mathfrak{m}^{\infty}}M[\mathfrak{m}^{\infty}]\oplus\overline{M}\right\} 
\]
and the limit $f_{\infty}(M)=f_{\infty}(I)=f_{\infty}(M_{f})$. Here
\begin{eqnarray*}
C(I) & = & \max\left\{ \left|t_{F}^{\min}(M_{f,1})\right|,\left|t_{F}^{\max}(M_{f,1})\right|\right\} +(q-1)\length_{\mathfrak{m}^{\infty}}\overline{M},\\
C(M_{t}) & = & \max\left\{ \left|t_{F}^{\min}(M_{t})\right|,\left|t_{F}^{\max}(M_{t})\right|\right\} .
\end{eqnarray*}

It remains to establish that $M\mapsto t_{F,\infty}(M)$ is constant
on isogeny classes, and we already know that $t_{F,\infty}(M)=t_{F,\infty}(M_{f})$.
We thus have to show that if 
\[
0\rightarrow M_{1}\rightarrow M_{2}\rightarrow Q\rightarrow0
\]
is an exact sequence in $\Mod_{A}^{\varphi}$ with $M_{1}$, $M_{2}$
finite free and $Q$ torsion, then $t_{F,\infty}(M_{1})$ equals $t_{F,\infty}(M_{2})$.
For $n\gg0$ (such that $\pi^{n}Q=0$), we obtain exact sequences
\[
0\rightarrow Q\rightarrow M_{1,n}\rightarrow M_{2,n}\rightarrow Q\rightarrow0.
\]
Splitting them in two short exact sequences and using again the computations
of section~\ref{sub:ExactSeqInModpi} yields the desired equality. \end{proof}
\begin{prop}
\label{prop:tFinfty(twist)onMod*}For any BKF-module $M$ of rank
$r\in\mathbb{N}$ and any $n\in\mathbb{Z}$, 
\[
\forall s\in[0,r]:\qquad t_{F,\infty}(M\{n\})(s)=t_{F,\infty}(M)(s)+sn.
\]
\end{prop}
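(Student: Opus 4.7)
The plan is to reduce to the already-established twist formula on $\Mod_{A,t}^{\varphi}$ (proposition~\ref{prop:tF(twist)onModt}) by working level-by-level in the definition of $t_{F,\infty}$. The key observation is that twisting commutes with reduction mod $\pi^{k}$: since $A\{n\}$ is free of rank one over $A$, the natural map
\[
(M\{n\})_{k} = M\{n\}/\pi^{k} M\{n\} \longrightarrow (M/\pi^{k}M)\{n\} = M_{k}\{n\}
\]
is a functorial isomorphism in $\Mod_{A,\pi^{\infty}}^{\varphi}$.

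Next I will extend the twist formula from $\Mod_{A,t}^{\varphi}$ to $\Mod_{A,\pi^{\infty}}^{\varphi}$. For $N \in \Mod_{A,\pi^{\infty}}^{\varphi}$, flatness of $A\{n\}$ over $A$ yields $(N\{n\})[\mathfrak{m}^{\infty}] = N[\mathfrak{m}^{\infty}]\{n\}$ and thus $(N\{n\})_{t} = N_{t}\{n\}$, with $\rank_{t}(N\{n\}) = \rank_{t}(N)$. By definition $t_{F}(N\{n\}) := t_{F}((N\{n\})_{t}) = t_{F}(N_{t}\{n\})$, and proposition~\ref{prop:tF(twist)onModt} applied to $N_{t} \in \Mod_{A,t}^{\varphi}$ gives
\[
t_{F}(N\{n\}) = t_{F}(N_{t}) + (n,\ldots,n) = t_{F}(N) + (n,\ldots,n) \quad\text{in}\quad \mathbb{R}_{\geq}^{\rank_{t}N}.
\]
Translating to concave functions on $[0,\rank_{t}N]$, this reads $t_{F}(N\{n\})(u) = t_{F}(N)(u) + nu$.

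Applying this to $N = M_{k}$ (with $\rank_{t}M_{k} \geq k\,\rank_{A}M$) and unwinding the definition of $t_{F,k}$:
\[
t_{F,k}(M\{n\})(s) = {\textstyle \frac{1}{k}} t_{F}(M_{k}\{n\})(ks) = {\textstyle \frac{1}{k}}\bigl(t_{F}(M_{k})(ks) + n\cdot ks\bigr) = t_{F,k}(M)(s) + ns
\]
for every $s \in [0,\rank_{A}M]$ and every $k \geq 1$. Passing to the uniform limit $k \to \infty$ guaranteed by proposition~\ref{prop:DeftFinfty} gives the desired identity
\[
t_{F,\infty}(M\{n\})(s) = t_{F,\infty}(M)(s) + ns.
\]
There is no genuine obstacle here; the only point that needs a moment of care is verifying that the twist formula survives the definitional step $t_{F}(-) := t_{F}((-)_{t})$ on $\Mod_{A,\pi^{\infty}}^{\varphi}$, which is immediate from the freeness of $A\{n\}$.
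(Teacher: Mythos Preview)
Your proof is correct and follows exactly the route the paper intends: the paper's proof is the single line ``This follows from proposition~\ref{prop:tF(twist)onModt}'', and you have simply unpacked what that invocation means (twisting commutes with $(-)_k$ and with $(-)_t$ since $A\{n\}$ is free of rank one, apply the twist formula on $\Mod_{A,t}^{\varphi}$ level by level, then pass to the limit).
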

\begin{proof}
This follows from proposition~\ref{prop:tF(twist)onModt}.
\end{proof}

\subsubsection{~\label{sub:TypeHN}}

The first part of the proof of proposition~\ref{prop:DeftFinfty}
shows that
\begin{prop}
\label{prop:ComptFinftytFn}For a finite free BKF-module $M$ of rank
$r\in\mathbb{N}$, 
\[
t_{F,\infty}(M)(s)\leq t_{F,n}(M)(s)\leq t_{F,1}(M)(s)
\]
for every $n\geq1$ and $s\in[0,r]$ with equality for $s\in\{0,r\}$. \end{prop}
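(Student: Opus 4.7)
The proposition essentially repackages estimates that fell out of the proof of Proposition~\ref{prop:DeftFinfty}, so the plan is to extract them cleanly rather than prove anything new. Write $f_n = t_{F,n}(M)$ and recall that since $M$ is free, each $M_n$ lies in $\Mod_{A,t}^{\varphi}$ with $\rank_t M_n = nr$.

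First I would revisit the short exact sequences $0 \to M_n \xrightarrow{\pi^m} M_{n+m} \to M_m \to 0$ in $\Mod_{A,t}^{\varphi}$ and apply Proposition~\ref{prop:ExactSeqInModt} to obtain the concavity-style inequality
\[
t_F(M_{n+m}) \leq t_F(M_n) \ast t_F(M_m) \quad \mbox{in} \quad \mathbb{R}_{\geq}^{r(n+m)}.
\]
After renormalizing each $t_F(M_n)$ to a concave function $f_n$ on $[0,r]$, iterating this bound yields $f_{nk}(s) \leq f_n(s)$ for all $n,k \geq 1$ and $s \in [0,r]$. Moreover, additivity of $\rank_t$ and $\deg_t$ on short exact sequences in $\Mod_{A,t}^{\varphi}$ forces $\deg_t M_n = n\,\deg_t M_1$, so that $f_n(0) = 0$ and $f_n(r) = \deg_t M_1$ are independent of $n$; in particular the inequality $f_{nk} \leq f_n$ is an equality at the endpoints.

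Specializing to $n=1$ and renaming $k$ as $n$, the bound becomes $t_{F,n}(M)(s) \leq t_{F,1}(M)(s)$, which is the right-hand inequality. For the left-hand inequality I would fix $n \geq 1$ and $s \in [0,r]$, apply $f_{nk}(s) \leq f_n(s)$ for every $k$, and then let $k \to \infty$. The uniform convergence $f_{nk} \to t_{F,\infty}(M)$ established in Proposition~\ref{prop:DeftFinfty} allows passing to the limit to conclude $t_{F,\infty}(M)(s) \leq t_{F,n}(M)(s)$. Equality at $s \in \{0,r\}$ is preserved in the limit, again because $f_{nk}(0)$ and $f_{nk}(r)$ are constant in $k$.

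There is no real obstacle here: the nontrivial ingredients (the subadditivity bound from Proposition~\ref{prop:ExactSeqInModt} and the uniform convergence $f_{nk} \to f_\infty$) were already established in the proof of Proposition~\ref{prop:DeftFinfty}; the only thing needed is to isolate the relation $f_{nk} \leq f_n$ that was used internally there and to read off the two asserted inequalities.
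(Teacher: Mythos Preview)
Your proposal is correct and matches the paper's approach exactly: the paper simply states that the proposition follows from the first part of the proof of Proposition~\ref{prop:DeftFinfty}, and you have extracted precisely those ingredients (the subadditivity $f_{nk}\le f_n$ from the short exact sequences via Proposition~\ref{prop:ExactSeqInModt}, the endpoint equalities from additivity of $\deg_t$, and the passage to the limit along the subsequence $nk\to\infty$).
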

\begin{defn}
\label{Def:HNType}We say that a finite free BKF-module $M$ is of
HN-type if 
\[
t_{F,\infty}(M)=t_{F,1}(M).
\]

\end{defn}
\noindent Thus if $M$ is of HN-type and rank $r\in\mathbb{N}$,
then $t_{F,\infty}(M)\in\mathbb{R}_{\geq}^{r}$.
\begin{prop}
\label{prop:TypeHN}Let $M$ be a finite free BKF-module of HN-type.
Then
\begin{enumerate}
\item For every $\gamma\in\mathbb{R}$ and $n,m\geq1$, the exact sequence
\[
\xymatrix{0\ar[r] & M_{n}\ar[r]^{\pi^{m}} & M_{n+m}\ar[r] & M_{m}\ar[r] & 0}
\]
induces an exact sequence
\[
\xymatrix{0\ar[r] & \mathcal{F}_{F}^{\gamma}(M_{n})\ar[r]^{\pi^{m}} & \mathcal{F}_{F}^{\gamma}(M_{n+m})\ar[r] & \mathcal{F}_{F}^{\gamma}(M_{m})\ar[r] & 0}
\]

\item The formula $\mathcal{F}_{F}^{\gamma}(M)=\underleftarrow{\lim}\mathcal{F}_{F}^{\gamma}(M_{n})$
defines an $\mathbb{R}$-filtration on $M$ by finite free BKF-submodules
whose underlying $A$-submodules are direct summands: the quotient
$\Gr_{F}^{\gamma}(M)=\mathcal{F}_{F}^{\geq\gamma}(M)/\mathcal{F}_{F}^{>\gamma}(M)$
is a finite free BKF-module. 
\item For every $\gamma\in\mathbb{R}$ and $n\geq1$, 
\[
\mathcal{F}_{F}^{\gamma}(M)_{n}=\mathcal{F}_{F}^{\gamma}(M_{n})\quad\mbox{and}\quad\Gr_{F}^{\gamma}(M)_{n}=\Gr_{F}^{\gamma}(M_{n}).
\]
In particular, the type of the $\mathbb{R}$-filtration $\mathcal{F}_{F}^{\bullet}(M)$
is given by 
\[
\mathbf{t}\left(\mathcal{F}_{F}^{\bullet}(M)\right)=t_{F}(M_{1})=t_{F,1}(M)=t_{F,\infty}(M).
\]

\end{enumerate}
\end{prop}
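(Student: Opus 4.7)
The plan is to use the HN-type hypothesis to upgrade proposition~\ref{prop:ExactSeqInModt} for the short exact sequences $0 \to M_n \to M_{n+m} \to M_m \to 0$ (step (1)), then assemble $\mathcal{F}_F^\gamma(M)$ from the inverse system $\{\mathcal{F}_F^\gamma(M_n)\}_n$ (step (2)), and finally read off (3) from the construction.

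For (1), proposition~\ref{prop:ComptFinftytFn} combined with the HN-type assumption forces $t_{F,n}(M) = t_{F,1}(M) =: g$ for every $n \geq 1$, so with $r := \rank_A M$ we have $t_F(M_\ell)(s) = \ell\,g(s/\ell)$ on $[0,\ell r]$. For any decomposition $s = s_1 + s_2$ with $s_1 \in [0,nr]$, $s_2 \in [0,mr]$, concavity of $g$ gives
\[
n\,g(s_1/n) + m\,g(s_2/m) \leq (n+m)\,g\!\left(\frac{s}{n+m}\right) = t_F(M_{n+m})(s),
\]
whence $t_F(M_n) \ast t_F(M_m) \leq t_F(M_{n+m})$. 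Combined with the reverse inequality from proposition~\ref{prop:ExactSeqInModt}, this is an equality, and the ``moreover'' clause of that proposition yields the required exactness at every $\gamma$.

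For (2), applying (1) in its two natural orientations shows that the transitions $\mathcal{F}_F^\gamma(M_{n+1}) \twoheadrightarrow \mathcal{F}_F^\gamma(M_n)$ are surjective and that $\pi^n$ identifies $\mathcal{F}_F^\gamma(M_k)$ with $\ker(\mathcal{F}_F^\gamma(M_{n+k}) \to \mathcal{F}_F^\gamma(M_n))$. Define $F := \varprojlim_n \mathcal{F}_F^\gamma(M_n)$, which lies in $\varprojlim_n M_n = M$ by $\pi$-adic completeness of the finite free module $M$. The Mittag--Leffler condition gives surjectivity of $F \twoheadrightarrow \mathcal{F}_F^\gamma(M_n)$, and the second identification above yields $F/\pi^n F \simeq \mathcal{F}_F^\gamma(M_n)$. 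In particular $F/\pi F = \mathcal{F}_F^\gamma(M_1)$ is a saturated sub-$\mathcal{O}_K^\flat$-module of the finite free $\mathcal{O}_K^\flat$-module $M_1$, hence finite free of some rank $s$ (as $\mathcal{O}_K^\flat$ is a valuation ring). Lifting a basis to $F$ and extending, via Nakayama for the local ring $A$ with maximal ideal $(\pi,[\varpi])$, to a basis of $M$ produces a splitting $M = F' \oplus M'$ with $F' \subseteq F$ finite free and $F'/\pi F' = F/\pi F$; then $F/F'$ is a submodule of the $\pi$-adically separated $M'$ satisfying $F/F' = \pi(F/F')$, so $F/F' = 0$. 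Thus $F$ is finite free and a direct summand of $M$.

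To equip $F$ with a Frobenius making it a BKF-submodule, write $\varphi_M$ in block form with respect to $M = F \oplus M'$: the off-diagonal block $\gamma : \varphi^\ast F[\xi^{\prime -1}] \to M'[\xi^{\prime -1}]$ reduces modulo each $\pi^n$ to the analogous block for $M_n = F_n \oplus M'_n$, which vanishes by strictness of $\mathcal{F}_F^\gamma(M_n) \subseteq M_n$ in $\Mod_{A,t}^{\varphi}$. Hence $\gamma$ lies in $\bigcap_n \pi^n \Hom(\varphi^\ast F[\xi^{\prime -1}], M'[\xi^{\prime -1}])$; $\pi$-adic separation of this Hom module (verified via the concrete structure of $A$ and the fact that $\xi'$ is a non-zero-divisor in $A$) forces $\gamma = 0$, so $\varphi_M$ restricts to $F[\xi^{\prime -1}]$, and the symmetric argument for $\varphi_M^{-1}$ promotes this restriction to an isomorphism. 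Statement (3) is then immediate: $\mathcal{F}_F^\gamma(M)_n = F/\pi^n F \simeq \mathcal{F}_F^\gamma(M_n)$ by construction, the graded version follows by quotient, and the type of $\mathcal{F}_F^\bullet(M)$ equals $t_F(M_1) = t_{F,1}(M)$, which coincides with $t_{F,\infty}(M)$ by the HN-type hypothesis. I expect the main obstacle to be the Frobenius descent step: $\pi$-adic inverse limits do not commute with inversion of $\xi'$ in general, so the required separation has to be pinned down on the concrete $A$-finitely-presented modules in play rather than invoked abstractly for $A[\xi^{\prime -1}]$.
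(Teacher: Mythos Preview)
Your argument is correct. Part~(1) is the same as the paper's: squeeze $t_{F,n}$ between $t_{F,\infty}$ and $t_{F,1}$, deduce $t_F(M_{n+m})=t_F(M_n)\ast t_F(M_m)$, and invoke the equality clause of proposition~\ref{prop:ExactSeqInModt}.

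For part~(2) the paper takes a different route. Rather than building a free summand $F'\subset F$ by basis lifting and then squeezing $F/F'$ to zero, the paper works with the \emph{quotient} $\mathcal{G}_F^\gamma(M)=\varprojlim M_n/\mathcal{F}_F^\gamma(M_n)$: a surjection $\beta\colon A^{s'}\twoheadrightarrow \mathcal{G}_F^\gamma(M)$ from topological Nakayama has finitely generated kernel $N$ with $N\otimes\mathcal{O}_K^\flat\simeq\Tor_1^A(\mathcal{G}_F^\gamma(M),\mathcal{O}_K^\flat)\simeq\mathcal{G}_F^\gamma(M)[\pi]=0$, so $N=0$ by classical Nakayama. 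Freeness of the quotient then splits the sequence in $\Mod_A$ and forces $\mathcal{F}_F^\gamma(M)$ free. This avoids your intermediate object $F'$, but the two arguments are of comparable length.

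The paper then dismisses the Frobenius compatibility with ``the remaining assertions easily follow'', whereas you spell it out via $\pi$-adic separation of the off-diagonal block. Your concern is well-placed but resolves cleanly: since $(\pi,\xi')$ is a regular sequence on $A$ (as $\xi'$ acts injectively on each $A_n\in\Mod_{A,t}$), one has $\pi^m A[\xi'^{-1}]\cap A=\pi^m A$, so $A[\xi'^{-1}]$ is $\pi$-adically separated and your block argument goes through. An alternative that sidesteps localization entirely: after a Tate twist making $M$ effective, the composite $\varphi^\ast F\to M\to M/F$ is an $A$-map between finite free $A$-modules vanishing modulo every $\pi^n$, hence zero; the dual twist handles $\varphi_M^{-1}$.
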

\begin{proof}
$(1)$ Since $t_{F,\infty}(M)=t_{F,1}(M)$, also $t_{F,n}(M)=t_{F,1}(M)$
for every $n\geq1$, thus 
\[
t_{F}(M_{n+m})=t_{F}(M_{n})\ast t_{F}(M_{m})
\]
for every $n,m\geq1$, from which $(1)$ immediately follows by proposition~\ref{prop:ExactSeqInModt}. 

$(2)$ and $(3)$: This follows from $(1)$ by a standard argument:
consider for $n,m\geq1$ and $\gamma\in\mathbb{R}$ the commutative
diagram with exact rows and columns
\[
\xymatrix{ & 0\ar[d] & 0\ar[d] & 0\ar[d]\\
0\ar[r] & \mathcal{F}_{F}^{\gamma}(M_{n})\ar[d]\ar[r]^{\pi^{m}} & \mathcal{F}_{F}^{\gamma}(M_{n+m})\ar[d]\ar[r] & \mathcal{F}_{F}^{\gamma}(M_{m})\ar[d]\ar[r] & 0\\
0\ar[r] & M_{n}\ar[d]\ar[r]^{\pi^{m}} & M_{n+m}\ar[d]\ar[r] & M_{m}\ar[d]\ar[r] & 0\\
0\ar[r] & \mathcal{G}_{F}^{\gamma}(M_{n})\ar[d]\ar[r]^{\pi^{m}} & \mathcal{G}_{F}^{\gamma}(M_{n+m})\ar[d]\ar[r] & \mathcal{G}_{F}^{\gamma}(M_{m})\ar[d]\ar[r] & 0\\
 & 0 & 0 & 0
}
\]
Taking the projective limit over $n$, and since every one is Mittag-Leffler
surjective, we obtain a commutative diagram of $A$-modules with exact
rows and columns
\[
\xymatrix{ & 0\ar[d] & 0\ar[d] & 0\ar[d]\\
0\ar[r] & \mathcal{F}_{F}^{\gamma}(M)\ar[d]\ar[r]^{\pi^{m}} & \mathcal{F}_{F}^{\gamma}(M)\ar[d]\ar[r] & \mathcal{F}_{F}^{\gamma}(M_{m})\ar[d]\ar[r] & 0\\
0\ar[r] & M\ar[d]\ar[r]^{\pi^{m}} & M\ar[d]\ar[r] & M_{m}\ar[d]\ar[r] & 0\\
0\ar[r] & \mathcal{G}_{F}^{\gamma}(M)\ar[d]\ar[r]^{\pi^{m}} & \mathcal{G}_{F}^{\gamma}(M)\ar[d]\ar[r] & \mathcal{G}_{F}^{\gamma}(M_{m})\ar[d]\ar[r] & 0\\
 & 0 & 0 & 0
}
\]
Since $\mathcal{F}_{F}^{\gamma}(M)=\underleftarrow{\lim}\,\mathcal{F}_{F}^{\gamma}(M_{n})$,
the first row tells us that $\mathcal{F}_{F}^{\gamma}(M)$ is separated
and complete in the $\pi$-adic topology, with $\mathcal{F}_{F}^{\gamma}(M)_{1}\simeq\mathcal{F}_{F}^{\gamma}(M_{1})$
finite free over $A_{1}=\mathcal{O}_{K}^{\flat}$, say of rank $s\in\mathbb{N}$.
Pick a morphism $\alpha:A^{s}\rightarrow\mathcal{F}_{F}^{\gamma}(M)$
reducing to an isomorphism modulo $\pi$. By the topological version
of Nakayama's lemma, $\alpha$ is surjective, and $\mathcal{F}_{F}^{\gamma}(M)$
is finitely generated over $A$. Playing the same game with the third
row, we obtain a surjective morphism $\beta:A^{s'}\twoheadrightarrow\mathcal{G}_{F}^{\gamma}(M)$
reducing to an isomorphism modulo $\pi$. But now the kernel $N$
of $\beta$ has to be finitely generated over $A$ since $\mathcal{G}_{F}^{\gamma}(M)$
is finitely presented over $A$ by the second column. Applying $\Tor_{\bullet}^{A}(-,\mathcal{O}_{K}^{\flat})$
to the resulting short exact sequence $0\rightarrow N\rightarrow A^{s^{\prime}}\rightarrow\mathcal{G}_{F}^{\gamma}(M)\rightarrow0$,
we find that 
\[
N\otimes\mathcal{O}_{K}^{\flat}\simeq\Tor_{1}^{A}\left(\mathcal{G}_{F}^{\gamma}(M),\mathcal{O}_{K}^{\flat}\right)\simeq\mathcal{G}_{F}^{\gamma}(M)[\pi],
\]
which is trivial by the third row, thus $N=0$ by the classical version
of Nakayama's lemma. It follows that $\beta$ is an isomorphism, $\mathcal{G}_{F}^{\gamma}(M)$
is free, the middle column is split (in $\Mod_{A}$), and $\mathcal{F}_{F}^{\gamma}(M)$
is also free, being finite projective over the local ring $A$. The
remaining assertions of $(2)$ and $(3)$ easily follow.\end{proof}
\begin{rem}
For a finite free BKF-module $M$ of HN-type and $n\in\mathbb{Z}$,
the Tate twist $M\{n\}$ is also of HN-type and $\mathcal{F}_{F}^{\gamma}(M\{n\})=\mathcal{F}_{F}^{\gamma-n}(M)\{n\}$
by proposition~\ref{prop:tF(twist)onModt}.\end{rem}
\begin{defn}
We say that a finite free BKF-module $M$ is semi-stable (of slope
$\gamma\in\mathbb{R}$) if $M_{1}$ is semi-stable (of slope $\gamma\in\mathbb{R}$).\end{defn}
\begin{example}
Any finite free BKF-module $M$ of rank $1$ is semi-stable of slope
$\deg_{t}(M_{1})$, thus $A$ is semi-stable of slope $0$ and $A\{1\}$
is semi-stable of slope $1$. 
\end{example}
By~proposition~\ref{prop:ComptFinftytFn}, a finite free BKF-module
$M$ is semi-stable (of slope $\gamma$) if and only if $M_{n}$ is
semi-stable (of slope $\gamma$) for every $n\geq1$, in which case
$M$ is of HN-type and $t_{F,\infty}(M)=t_{F,1}(M)=t_{F}(M_{1})$
is isoclinic (of slope $\gamma$). By proposition~\ref{prop:TypeHN},
a finite free BKF-module $M$ of HN-type has a canonical filtration
$\mathcal{F}_{F}(M)$ whose graded pieces are finite free semi-stable
BKF-modules with decreasing slopes. Conversely, any finite free BKF-module
which has such a filtration is of HN-type (by unicity of the Fargues
filtration on $\Mod_{A,t}^{\varphi}$) and its filtration is the canonical
one.

\subsubsection{~}

We denote by $\Mod_{A,f}^{\varphi,\ast}$ the strictly full subcategory
of $\Mod_{A,f}^{\varphi}$ whose objects are the finite free BKF-modules
of HN-type. The functoriality of the Fargues filtration on $\Mod_{A,t}^{\varphi}$
implies that $M\mapsto\mathcal{F}_{F}(M)$ is functorial on $\Mod_{A,f}^{\varphi,\ast}$.
 
\begin{prop}
\label{prop:catOfHNTypeTensor}The subcategory $\Mod_{A,f}^{\varphi,\ast}$
of $\Mod_{A,f}^{\varphi}$ is stable under $\otimes$-products and
inner Homs and the $\mathbb{R}$-filtration $\mathcal{F}_{F}$ on
$\Mod_{A,f}^{\varphi,\ast}$ is compatible with them.\end{prop}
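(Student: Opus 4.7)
The plan is to reduce everything to a single statement: the tensor product of two semi-stable finite free BKF-modules (of slopes $\gamma_{1}$ and $\gamma_{2}$) is again semi-stable (of slope $\gamma_{1}+\gamma_{2}$). Granting this, the proposition follows by the standard tensor construction of filtrations. Given $M_{1},M_{2}\in\Mod_{A,f}^{\varphi,\ast}$, proposition~\ref{prop:TypeHN}(2) tells us that each $\mathcal{F}_{F}^{\gamma}(M_{i})$ is an $A$-direct summand of $M_{i}$ which is itself a finite free BKF-module, and $\Gr_{F}^{\gamma_{i}}(M_{i})$ is a finite free semi-stable BKF-module of slope $\gamma_{i}$. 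I would define on $M_{1}\otimes M_{2}$ the tensor filtration
\[
\mathcal{F}^{\gamma}(M_{1}\otimes M_{2})\eqd{\textstyle \sum_{\gamma_{1}+\gamma_{2}\geq\gamma}}\,\mathcal{F}_{F}^{\gamma_{1}}(M_{1})\otimes_{A}\mathcal{F}_{F}^{\gamma_{2}}(M_{2}).
\]
Because the $\mathcal{F}_{F}^{\gamma_{i}}(M_{i})$ are direct summands, this is a filtration by finite free BKF-submodules which are again direct summands, with graded pieces the direct sums $\bigoplus_{\gamma_{1}+\gamma_{2}=\gamma}\Gr_{F}^{\gamma_{1}}(M_{1})\otimes_{A}\Gr_{F}^{\gamma_{2}}(M_{2})$. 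Granted the tensor-of-semi-stables statement, each summand is semi-stable of slope $\gamma_{1}+\gamma_{2}$, so the graded pieces are semi-stable with strictly decreasing slopes. By the last paragraph of \ref{sub:TypeHN}, this forces $M_{1}\otimes M_{2}$ to be of HN-type and $\mathcal{F}^{\bullet}$ to be its Fargues filtration, establishing both the stability of $\Mod_{A,f}^{\varphi,\ast}$ under $\otimes$ and the tensor compatibility of $\mathcal{F}_{F}$.

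The heart of the proof is therefore the tensor-of-semi-stables statement, which is the main obstacle. My approach is to reduce modulo $\pi$: if $M$ is a finite free semi-stable BKF-module of slope $\gamma$, then by the definition following~\ref{Def:HNType}, $M_{1}=M\otimes_{A}\mathcal{O}_{K}^{\flat}$ is semi-stable of slope $\gamma$ in $\Mod_{A,t}^{\varphi}$. For free $M_{1},M_{2}$, $(M_{1}\otimes_{A}M_{2})_{1}=M_{1,1}\otimes_{\mathcal{O}_{K}^{\flat}}M_{2,1}$ is $\mathcal{O}_{K}^{\flat}$-flat, so it has no $\mathfrak{m}^{\infty}$-torsion and the modified tensor product $\otimes_{t}$ coincides with the usual one. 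By the twist formula of proposition~\ref{prop:tF(twist)onModt}, one can replace $M_{i}$ by a Tate twist so as to assume $\gamma_{1}=\gamma_{2}=0$, so that both $M_{i,1}$ are semi-stable of slope $0$ in $\Mod_{A,t}^{\varphi}$; then one must prove that $M_{1,1}\otimes_{\mathcal{O}_{K}^{\flat}}M_{2,1}$ is again semi-stable of slope $0$. The argument for this is the classical \emph{push-pull/diagonal} trick: assuming a contradiction, there is a nonzero strict subobject of positive degree in $M_{1,1}\otimes_{\mathcal{O}_{K}^{\flat}}M_{2,1}$, which via the graph construction (using that $\Hom$ is the inner Hom $M_{1,1}^{\vee}\otimes_{t}M_{2,1}$ and that duality reverses slopes by the additivity of $\deg_{t}$) contradicts the semi-stability of $M_{i,1}$. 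This step uses crucially that $\rank_{t}$ and $\deg_{t}$ are multiplicative and additive under the tensor product in $\Mod_{A,t}^{\varphi}$, which I would verify directly from their definitions in \ref{sub:FarguesOnModAt} and the formulas of \ref{sub:relatinglength2rank}.

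For the inner Hom, the stability and compatibility reduce to duality, since for finite free BKF-modules $\Hom(M_{1},M_{2})\simeq M_{1}^{\vee}\otimes M_{2}$. Thus it suffices to show that if $M\in\Mod_{A,f}^{\varphi,\ast}$ has Fargues filtration $\mathcal{F}_{F}^{\bullet}(M)$ with jumps $\gamma_{1}>\cdots>\gamma_{s}$, then $M^{\vee}$ is of HN-type with the dual filtration: $\mathcal{F}_{F}^{-\gamma}(M^{\vee})$ is the annihilator of $\mathcal{F}_{F}^{>\gamma}(M)$, whose graded pieces are $(\Gr_{F}^{\gamma}M)^{\vee}$. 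Given the direct-summand property in proposition~\ref{prop:TypeHN}(2), this orthogonal filtration is well-defined and consists of finite free BKF-submodules, and one checks its graded pieces are semi-stable of slope $-\gamma$ by reducing mod $\pi$ and using that $\rank_{t}$ is unchanged while $\deg_{t}$ is negated under duality in $\Mod_{A,t}^{\varphi}$ (again by \ref{sub:relatinglength2rank} applied to $M^{\vee}/\xi'M^{\vee}\simeq(M/\xi'M)^{\vee}$). Then the final paragraph of~\ref{sub:TypeHN} shows $M^{\vee}$ is of HN-type with this filtration, completing the proof.
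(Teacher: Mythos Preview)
Your reduction strategy matches the paper's exactly: filter $M_{1}\otimes M_{2}$ and $\Hom(M_{1},M_{2})$ by the tensor and Hom filtrations built from $\mathcal{F}_{F}(M_{i})$, note that the graded pieces are tensors and Homs of the semi-stable $\Gr_{F}^{\gamma_{i}}(M_{i})$, and reduce everything to the claim that semi-stability of objects in $\Mod_{\mathcal{O}_{K}^{\flat},f}^{\varphi}$ is preserved under $\otimes$ and $\Hom$ by passing to $M_{i,1}$. The paper stops at precisely this point and invokes \cite[\S 5.3]{Co16} (restated here as proposition~\ref{prop:FarguesonO_Kflattensor}).

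Your attempt to prove that last step directly has a real gap. First, a minor point: the Tate-twist reduction to slope $0$ does not work, since Tate twists shift slopes by integers while the slope $\gamma_{i}=\deg(M_{i})/\rank(M_{i})$ is only rational. But the serious issue is the ``push-pull/diagonal trick'' itself. Preservation of semi-stability under tensor product is \emph{not} a formal consequence of the Harder--Narasimhan axioms: it fails, for instance, for vector bundles on curves in positive characteristic. The sketch ``a destabilizing strict subobject $W\subset M_{1,1}\otimes_{\mathcal{O}_{K}^{\flat}}M_{2,1}$ gives, via the graph construction, something contradicting semi-stability of the $M_{i,1}$'' does not correspond to any valid argument---a subobject of $M_{1,1}\otimes M_{2,1}\simeq\Hom(M_{1,1}^{\vee},M_{2,1})$ is a family of maps, not a single map with a graph, and there is no way to extract from it a destabilizing subobject of $M_{1,1}$ or $M_{2,1}$ individually. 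What \cite[\S 5.3]{Co16} actually supplies is a convexity argument using the interpretation of the degree on $\Mod_{\mathcal{O}_{K}^{\flat},f}^{\varphi}$ through the relative position of the two $\mathcal{O}_{K}^{\flat}$-lattices $M$ and $\varphi_{M}(\varphi^{\ast}M)$ (equivalently, a norm on $M\otimes K^{\flat}$): this extra metric structure is exactly what makes the tensor compatibility go through, and is the genuine content of the proposition.
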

\begin{proof}
The Fargues filtrations on $M_{1},M_{2}\in\Mod_{A,f}^{\varphi,\ast}$
induce $\mathbb{R}$-filtrations on $M_{1}\otimes M_{2}$ and $\Hom(M_{1},M_{2})$
whose graded pieces are the finite free BKF-modules 
\[
\oplus_{\gamma_{1}+\gamma_{2}=\gamma}\Gr_{F}^{\gamma_{1}}(M_{1})\otimes\Gr_{F}^{\gamma_{2}}(M_{2})\quad\mbox{and}\quad\oplus_{\gamma_{2}-\gamma_{1}=\gamma}\Hom(\Gr_{F}^{\gamma_{1}}(M_{1}),\Gr_{F}^{\gamma_{2}}(M_{2})).
\]
We thus have to show that if $M_{1}$ and $M_{2}$ are semi-stable
of slope $\gamma_{1}$ and $\gamma_{2}$, then $P=M_{1}\otimes M_{2}$
and $H=\Hom(M_{1},M_{2})$ are semi-stable of slope $\gamma_{1}+\gamma_{2}$
and $\gamma_{2}-\gamma_{1}$. Since $P_{1}=M_{1,1}\otimes M_{2,1}$
and $H_{1}=\Hom(M_{1,1},M_{2,1})$, we need to establish the analogous
statement for BKF-modules which are finite free over $A_{1}=\mathcal{O}_{K}^{\flat}$.
This is a special case of~\cite[\S 5.3]{Co16}, see also section~\ref{sub:RisOkflat}
below.
\end{proof}

\subsection{Categories of $\varphi$-$R$-modules}

For any $A$-algebra $R$ equipped with a ring isomorphism $\varphi:R\rightarrow R$
compatible with $\varphi:A\rightarrow A$, we may analogously define
the abelian $\otimes$-category $\Mod_{R}^{\varphi}$ and its full
$\otimes$-subcategories $\Mod_{R,\ast}^{\varphi}$ and $\Mod_{R,f}^{\varphi}$.
They come equipped with $\otimes$-functors $\Mod_{A,?}^{\varphi}\rightarrow\Mod_{R,?}^{\varphi}$
for $?\in\{\emptyset,\ast,f\}$, which are exact when $A\rightarrow R$
is flat. In this section, we discuss the following cases: 
\[
R\in\left\{ \mathcal{O}_{K}^{\flat},L,\mathcal{O}_{L},A(K),A\left[{\textstyle \frac{1}{\pi}}\right]\right\} .
\]

\subsubsection{$\boxed{R=\mathcal{O}_{K}^{\flat}}$.\label{sub:RisOkflat}}

In this case, $\Mod_{R,f}^{\varphi}$ is the full subcategory of $\Mod_{A,t}^{\varphi}$
made of all BKF-modules killed by $\pi$. This is the quasi-abelian
category of all finite free $\mathcal{O}_{K}^{\flat}$-modules $M$
equipped with an isomorphism $\varphi_{M}:\varphi^{\ast}M\otimes K^{\flat}\rightarrow M\otimes K^{\flat}$,
or equivalently, with a $\varphi$-semilinear isomorphism $\phi_{M}:M\otimes K^{\flat}\rightarrow M\otimes K^{\flat}$.
As a subcategory of $\Mod_{A,\ast}^{\varphi}$, it is stable under
tensor products, internal Homs, symmetric and exterior powers, and
it has a neutral object of its own. Using the isomorphisms 
\begin{eqnarray*}
\varphi^{\ast}(M_{1}\otimes M_{2})\otimes K^{\flat} & \simeq & \left(\varphi^{\ast}(M_{1})\otimes K^{\flat}\right)\otimes_{K^{\flat}}\left(\varphi^{\ast}(M_{2})\otimes K^{\flat}\right),\\
\varphi^{\ast}\left(\Hom_{\mathcal{O}_{K}^{\flat}}(M_{1},M_{2})\right)\otimes K^{\flat} & \simeq & \Hom_{K^{\flat}}\left(\varphi^{\ast}(M_{1})\otimes K^{\flat},\varphi^{\ast}(M_{2})\otimes K^{\flat}\right),\\
\varphi^{\ast}(\mathcal{O}_{K}^{\flat})\otimes K^{\flat} & \simeq & K^{\flat},
\end{eqnarray*}
the tensor products, internal Homs and neutral object in $\Mod_{R,f}^{\varphi}$
are given by 
\begin{eqnarray*}
(M_{1},\varphi_{1})\otimes(M_{2},\varphi_{2}) & \eqd & \left(M_{1}\otimes M_{2},\varphi_{1}\otimes\varphi_{2}\right),\\
\Hom\left((M_{1},\varphi_{1}),(M_{2},\varphi_{2})\right) & \eqd & \left(\Hom_{\mathcal{O}_{K}^{\flat}}(M_{1},M_{2}),\Hom_{K^{\flat}}(\varphi_{1}^{-1},\varphi_{2})\right),\\
\mathcal{O}_{K}^{\flat} & \eqd & (\mathcal{O}_{K}^{\flat},\mathrm{Id}).
\end{eqnarray*}
The rank and degree functions on $\Mod_{A,t}^{\varphi}$ induce rank
and degree functions on $\Mod_{R,f}^{\varphi}$, and the corresponding
Harder-Narasimhan (Fargues) filtrations $\mathcal{F}_{F}$ are compatible
since the essential image of $\Mod_{R,f}^{\varphi}\hookrightarrow\Mod_{A,t}^{\varphi}$
is stable under strict subobjects. The rank of $(M,\varphi_{M})\in\Mod_{R,f}^{\varphi}$
is the usual rank of the finite free $\mathcal{O}_{K}^{\flat}$-module
$M$, and its degree is the degree of the Hodge $\mathbb{R}$-filtration
\[
\mathcal{F}_{H}(M,\varphi_{M})\eqd\mathcal{F}\left(M,\varphi_{M}\left(\varphi^{\ast}M\right)\right)
\]
which is induced by the $\mathcal{O}_{K}^{\flat}$-lattice $\varphi_{M}(\varphi^{\ast}M)$
of $M\otimes K^{\flat}$ on the residue $M\otimes\mathbb{F}$ of $M$.
The Hodge type of $(M,\varphi_{M})$ is the type $t_{H}(M,\varphi_{M})$
of $\mathcal{F}_{H}(M,\varphi_{M})$, so that
\[
t_{H}(M,\varphi_{M})=\mathbf{d}\left(M,\varphi_{M}(\varphi^{\ast}M)\right)\quad\mbox{in}\quad\mathbb{R}_{\geq}^{r}
\]
where $r=\rank\, M$. The next proposition then follows from \cite[\S 5.3]{Co16}:
\begin{prop}
\label{prop:FarguesonO_Kflattensor}The restriction of the Fargues
filtration to the subcategory $\Mod_{R,f}^{\varphi}$ of $\Mod_{A,t}^{\varphi}$
is compatible with tensor products, duals, symmetric and exterior
powers. 
\begin{prop}
\label{prop:HodgeonO_Kflat}The Hodge filtration $\mathcal{F}_{H}:\Mod_{R,f}^{\varphi}\rightarrow\Fil_{\mathbb{F}}^{\mathbb{R}}$
is compatible with tensor products, duals, symmetric and exterior
powers. For every exact sequence 
\[
0\rightarrow(M_{1},\varphi_{1})\rightarrow(M_{2},\varphi_{2})\rightarrow(M_{3},\varphi_{3})\rightarrow0
\]
in $\Mod_{R,f}^{\varphi}$ with $r_{i}=\rank M_{i}$ (so that $r_{2}=r_{1}+r_{3}$),
we have 
\[
t_{H}(M_{1},\varphi_{1})\ast t_{H}(M_{3},\varphi_{3})\leq t_{H}(M_{2},\varphi_{2})\quad\mbox{in}\quad\mathbb{R}_{\geq}^{r_{2}}
\]
with equality if and only if for every $\gamma\in\mathbb{R}$, the
complex of $\mathbb{F}$-vector spaces
\[
0\rightarrow\mathcal{F}_{H}^{\gamma}(M_{1},\varphi_{1})\rightarrow\mathcal{F}_{H}^{\gamma}(M_{2},\varphi_{2})\rightarrow\mathcal{F}_{H}^{\gamma}(M_{3},\varphi_{3})\rightarrow0
\]
is exact.
\end{prop}
\end{prop}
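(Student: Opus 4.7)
The plan is to reduce both claims to the lattice machinery developed in section~\ref{sub:Lattices} and section~\ref{sub:TensorProd}, together with lemma~\ref{lem:RelPos=000026ExcSeq}. By definition $\mathcal{F}_H(M,\varphi_M) = \mathcal{F}(M, \varphi_M(\varphi^* M))$ is the filtration on $M\otimes\mathbb{F}$ determined by the pair of $\mathcal{O}_K^{\flat}$-lattices $M$ and $\varphi_M(\varphi^* M)$ inside $M\otimes K^{\flat}$, so every claim should transfer from the corresponding statement about relative position of lattices.

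For tensor compatibility, I would first observe that since the tensor product structure on $\Mod_{R,f}^{\varphi}$ is given by $\varphi_{12} = \varphi_1\otimes\varphi_2$, the equality
\[
\varphi_{12}\bigl(\varphi^{\ast}(M_1\otimes M_2)\bigr) \;=\; \varphi_1(\varphi^{\ast}M_1)\otimes_{\mathcal{O}_K^{\flat}}\varphi_2(\varphi^{\ast}M_2)
\]
holds as lattices in $(M_1\otimes M_2)\otimes K^{\flat}$. Section~\ref{sub:TensorProd} then asserts (explicitly for exterior powers, and with the same proof for tensor products and symmetric powers) that $\mathcal{F}(L_1\otimes L_2,L_1'\otimes L_2')$ is the tensor product of $\mathcal{F}(L_1,L_1')$ and $\mathcal{F}(L_2,L_2')$, yielding $\mathcal{F}_H(M_1\otimes M_2)=\mathcal{F}_H(M_1)\otimes\mathcal{F}_H(M_2)$. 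The cases of symmetric and exterior powers are entirely analogous. For the internal Hom and its dual, one unwinds the formula $\varphi_{M_1,M_2}=\Hom(\varphi_1^{-1},\varphi_2)$ to identify the lattice $\varphi_{M_1,M_2}(\varphi^{\ast}\Hom(M_1,M_2))$ with $\Hom_{\mathcal{O}_K^{\flat}}(\varphi_1(\varphi^{\ast}M_1),\varphi_2(\varphi^{\ast}M_2))$, after which the same compatibility of $\mathcal{F}$ with internal Hom of lattices concludes.

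For the exact sequence, I would use that $\varphi$ is an automorphism of $\mathcal{O}_K^{\flat}$ (since $K^{\flat}$ is perfect), so $\varphi^{\ast}$ is exact; applying it and then inverting $\pi$ (i.e.\ tensoring with $K^{\flat}$), and using that the $\varphi_i$ are isomorphisms after $\otimes K^{\flat}$, one obtains an exact sequence $0\to V_1\to V_2\to V_3\to 0$ of $K^{\flat}$-vector spaces with $V_i=M_i\otimes K^{\flat}$, together with two systems of compatible lattices $\{M_i\}$ and $\{L_i:=\varphi_i(\varphi^{\ast}M_i)\}$. Compatibility of the $M_i$'s is tautological (inverse and direct image), while compatibility of the $L_i$'s follows from functoriality of $\varphi_M$ via the naturally exact sequence $0\to\varphi^{\ast}M_1\to\varphi^{\ast}M_2\to\varphi^{\ast}M_3\to 0$. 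Lemma~\ref{lem:RelPos=000026ExcSeq} then gives
\[
\mathbf{d}(M_2,L_2)\;\geq\;\mathbf{d}(M_1,L_1)\ast\mathbf{d}(M_3,L_3)\quad\text{in}\quad\mathbb{R}_{\geq}^{r_2},
\]
which is exactly the asserted inequality $t_H(M_1,\varphi_1)\ast t_H(M_3,\varphi_3)\leq t_H(M_2,\varphi_2)$, together with the stated characterization of equality in terms of exactness of $0\to\mathcal{F}_H^{\gamma}(M_1)\to\mathcal{F}_H^{\gamma}(M_2)\to\mathcal{F}_H^{\gamma}(M_3)\to 0$ for every $\gamma$.

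The main obstacle is really just bookkeeping: carefully verifying that the lattice system $\{L_i\}$ is compatible with the exact sequence of $K^{\flat}$-vector spaces, which reduces to checking that $\varphi_2$ restricts to (a map equivalent to) $\varphi_1$ on $\varphi^{\ast}M_1\otimes K^{\flat}$ and induces $\varphi_3$ on the quotient, a routine consequence of $\varphi$-equivariance. No genuinely new estimates are needed; everything reduces to the lattice results already established.
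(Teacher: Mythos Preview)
Your argument for the Hodge filtration proposition (\ref{prop:HodgeonO_Kflat}) is correct and is exactly what the paper does: the paper's entire proof is the one-line ``This follows from~\ref{sub:TensorProd} and lemma~\ref{lem:RelPos=000026ExcSeq}'', which you have simply unpacked in detail.

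However, you have not addressed the first proposition (\ref{prop:FarguesonO_Kflattensor}) at all---compatibility of the \emph{Fargues} filtration $\mathcal{F}_F$ with tensor products, duals, symmetric and exterior powers. This is a genuinely different kind of statement: $\mathcal{F}_F$ is a Harder--Narasimhan filtration for the pair $(\rank_t,\deg_t)$, not a relative-position-of-lattices filtration, and the lattice machinery of~\ref{sub:Lattices} and~\ref{sub:TensorProd} does not apply to it directly. The paper handles this by citing~\cite[\S 5.3]{Co16} (see the sentence just before the statement), where it is shown that HN filtrations on suitable normed quasi-abelian categories are compatible with tensor operations. Concretely, one must prove that if $(M_1,\varphi_1)$ and $(M_2,\varphi_2)$ are semi-stable of slopes $\gamma_1,\gamma_2$ then $(M_1\otimes M_2,\varphi_1\otimes\varphi_2)$ is semi-stable of slope $\gamma_1+\gamma_2$; this is a nontrivial theorem in its own right (the input being that the degree comes from a lattice invariant, so that one can invoke the metric/normed formalism of~\cite{Co16}), and your proposal gives no indication of how to obtain it.
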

\begin{proof}
This follows from \ref{sub:TensorProd} and lemma~\ref{lem:RelPos=000026ExcSeq}.\end{proof}
\begin{cor}
\label{cor:tF=000026tH4M1}For every $(M,\varphi)$ in $\Mod_{R,f}^{\varphi}$
of rank $r\in\mathbb{N}$, 
\[
t_{F}(M,\varphi)\leq t_{H}(M,\varphi)\quad\mbox{in}\quad\mathbb{R}_{\geq}^{r}.
\]
\end{cor}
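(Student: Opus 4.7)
The plan is to compare the two concave piecewise-linear functions $t_F(M,\varphi),\, t_H(M,\varphi):[0,r]\to\mathbb{R}$ by checking the partial-sum inequality at integer points, then interpolating by concavity. First, observe that both have the same value at the endpoints: at $s=0$ they vanish, and at $s=r$ both equal $\deg_t(M,\varphi)$, since the Fargues degree on $\Mod_{R,f}^{\varphi}$ coincides with $\deg\mathcal{F}_{H}(M,\varphi)$ by the definition of the Hodge filtration in~\ref{sub:RisOkflat}.

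Second, fix an HN-breakpoint $s=r_{i}=\rank\,\mathcal{F}_{F}^{\geq\mu_{i}}(M,\varphi)$, and set $N=\mathcal{F}_{F}^{\geq\mu_{i}}(M,\varphi)$. Then $N$ is a strict subobject of $(M,\varphi)$ in $\Mod_{A,t}^{\varphi}$, and since $M$ is killed by $\pi$ so are $N$ and $M/N$; by strictness $M/N$ is also torsion-free over $\mathcal{O}_{K}^{\flat}$, so the exact sequence $0\to N\to M\to M/N\to 0$ lives in $\Mod_{R,f}^{\varphi}$. By construction of the Fargues filtration, $t_{F}(M,\varphi)(r_{i})=\sum_{j=1}^{r_{i}}(t_{F})_{j}=\deg_{t}(N)$. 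Applying proposition~\ref{prop:HodgeonO_Kflat} to this exact sequence gives
\[
t_{H}(N)\ast t_{H}(M/N)\leq t_{H}(M,\varphi)\quad\mbox{in}\quad\mathbb{R}_{\geq}^{r}.
\]
Evaluating both sides as concave functions at $s=r_{i}$ and using the defining formula $(f_{1}\ast f_{2})(s)=\max\{f_{1}(s_{1})+f_{2}(s_{2}):s_{1}+s_{2}=s\}$ with the choice $(s_{1},s_{2})=(r_{i},0)$, we find
\[
\deg_{t}(N)=t_{H}(N)(r_{i})=t_{H}(N)(r_{i})+t_{H}(M/N)(0)\leq t_{H}(M,\varphi)(r_{i}),
\]
so $t_{F}(M,\varphi)(r_{i})\leq t_{H}(M,\varphi)(r_{i})$ at every HN-breakpoint.

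Third, extend to all $s\in[0,r]$. Since $t_{F}(M,\varphi)$ is affine on $[r_{i-1},r_{i}]$ between consecutive HN-breakpoints while $t_{H}(M,\varphi)$ is concave on $[0,r]$, writing $s=(1-\alpha)r_{i-1}+\alpha r_{i}$ and combining the breakpoint inequalities yields
\[
t_{F}(M,\varphi)(s)=(1-\alpha)t_{F}(M,\varphi)(r_{i-1})+\alpha t_{F}(M,\varphi)(r_{i})\leq(1-\alpha)t_{H}(M,\varphi)(r_{i-1})+\alpha t_{H}(M,\varphi)(r_{i})\leq t_{H}(M,\varphi)(s).
\]
With the equality $t_{F}(M,\varphi)(r)=t_{H}(M,\varphi)(r)$ established above, this is exactly the definition of $t_{F}(M,\varphi)\leq t_{H}(M,\varphi)$ in $\mathbb{R}_{\geq}^{r}$. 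There is no real obstacle here: the proof is essentially a bookkeeping application of proposition~\ref{prop:HodgeonO_Kflat} at each HN-step, and the only thing to verify carefully is that each Fargues step indeed yields a strict short exact sequence in $\Mod_{R,f}^{\varphi}$ to which the Hodge polygon inequality applies.
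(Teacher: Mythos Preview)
Your proof is correct and uses essentially the same idea as the paper: apply the Hodge inequality of proposition~\ref{prop:HodgeonO_Kflat} to the exact sequences coming from the Fargues filtration, together with the equality $\deg_t=\deg\mathcal{F}_H$ on $\Mod_{R,f}^{\varphi}$.

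The packaging differs slightly. The paper passes to the associated graded $X^\bullet=\oplus_\gamma X^\gamma$, observes $t_F(X)=\ast_\gamma t_F(X^\gamma)$ and $t_H(X)\geq\ast_\gamma t_H(X^\gamma)$, and then reduces to the semi-stable case where $t_F$ is a straight line below the concave $t_H$ with the same endpoints. You instead check $t_F(M,\varphi)(r_i)\leq t_H(M,\varphi)(r_i)$ directly at each HN-breakpoint using the single exact sequence $0\to\mathcal{F}_F^{\geq\mu_i}\to M\to M/\mathcal{F}_F^{\geq\mu_i}\to 0$, and interpolate by concavity. Your version is arguably more hands-on and avoids invoking monotonicity of $\ast$; the paper's version isolates the conceptual core (reduction to semi-stable) more cleanly. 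Both are short and rest on exactly the same two ingredients.
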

\begin{proof}
Let $X^{\bullet}=\oplus_{\gamma}X^{\gamma}$ be the $\mathbb{R}$-graded
object of $\Mod_{R,f}^{\varphi}$ attached to the Fargues filtration
of $X=(M,\varphi)$. Then by propositions~\ref{prop:ExactSeqInModt}
and \ref{prop:HodgeonO_Kflat}, 
\[
t_{F}(X)=t_{F}(X^{\bullet})=\ast_{\gamma}t_{F}(X^{\gamma})\quad\mbox{and}\quad t_{H}(X)\geq t_{H}(X^{\bullet})=\ast_{\gamma}t_{H}(X^{\lambda}).
\]
We may thus assume that $X$ is semi-stable, in which case the result
is obvious since the concave polygons $t_{F}(X)$ and $t_{H}(X)$
have the same terminal points.
\end{proof}
\noindent Let $\mathcal{O}_{K}^{\flat}\{n\}:=A\{n\}\otimes\mathcal{O}_{K}^{\flat}$
and $\mathbb{F}\{n\}:=A\{n\}\otimes\mathbb{F}$, so that $M\{n\}=M\otimes\mathcal{O}_{K}^{\flat}\{n\}$
for every $M$ in $\Mod_{R,f}^{\varphi}$. The map $X\mapsto X\{n\}=X\otimes\mathbb{F}\{n\}$
then induces a bijection between $\mathbb{F}$-subspaces $X$ of $M\otimes\mathbb{F}$
and $X\{n\}$ of $M\{n\}\otimes\mathbb{F}=M\otimes\mathbb{F}\{n\}$. 
\begin{prop}
\label{prop:tH(twist)4O_K^flat}For every $M\in\Mod_{R,f}^{\varphi}$
of rank $r\in\mathbb{N}$ and any $n\in\mathbb{Z}$, 
\[
\mathcal{F}_{H}^{\gamma}(M\{n\})=\mathcal{F}_{H}^{\gamma-n}(M)\{n\}\quad\mbox{inside}\quad M\{n\}\otimes\mathbb{F}
\]
for every $\gamma\in\mathbb{R}$, hence 
\[
t_{H}(M\{n\})=t_{H}(M)+(n,\cdots,n)\quad\mbox{in}\quad\mathbb{R}_{\geq}^{r}.
\]
\end{prop}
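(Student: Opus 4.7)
The plan is to trivialize the Tate twist $\mathcal{O}_{K}^{\flat}\{n\}$ over $K^{\flat}$ and reduce the claim to the scaling identity for filtrations attached to pairs of lattices. Fixing a generator $e_{n}$ of the rank-one free $\mathcal{O}_{K}^{\flat}$-module $\mathcal{O}_{K}^{\flat}\{n\}$ yields a $K^{\flat}$-linear isomorphism $M\{n\}\otimes K^{\flat}\simeq M\otimes K^{\flat}$ sending the lattice $M\{n\}$ to $M$. Using the characterization of $A\{n\}$ in~\ref{sub:DefBKFMod}, namely that $\varphi_{A\{n\}}$ takes $\varphi^{\ast}A\{n\}$ to $\xi^{\prime-n}A\{n\}$, one checks that $\varphi_{\mathcal{O}_{K}^{\flat}\{n\}}(\varphi^{\ast}e_{n})=\overline{\xi'}^{-n}e_{n}=\varpi^{-qn}e_{n}$, and hence that under this trivialization the lattice $\varphi_{M\{n\}}(\varphi^{\ast}M\{n\})$ becomes $\varpi^{-qn}\varphi_{M}(\varphi^{\ast}M)\subset M\otimes K^{\flat}$.

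Next I would invoke the scaling identity from the proof of Lemma~\ref{lem:triangIneq}: for any $L_{1},L'\in\mathcal{L}(V)$ and any $x\in K^{\flat,\times}$, the equality $xI(\gamma)=I(\gamma-\gamma_{0})$ with $\gamma_{0}=\log_{q}|x|$ immediately gives
\[
\mathcal{F}^{\gamma}(L_{1},xL')=\mathcal{F}^{\gamma-\gamma_{0}}(L_{1},L'),\qquad\mathbf{d}(L_{1},xL')=\mathbf{d}(L_{1},L')+(\gamma_{0},\ldots,\gamma_{0}).
\]
Applying this with $L_{1}=M$, $L'=\varphi_{M}(\varphi^{\ast}M)$ and $x=\varpi^{-qn}$, and using the normalization $q|\varpi^{q}|=1$ of section~3.1 (so that $\log_{q}|\varpi^{q}|=-1$ and $\gamma_{0}=n$), one obtains the equality $\mathcal{F}_{H}^{\gamma}(M\{n\})=\mathcal{F}_{H}^{\gamma-n}(M)$ as $\mathbb{F}$-subspaces of $M\otimes\mathbb{F}$. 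Reverting the trivialization via the bijection $X\mapsto X\{n\}$ recalled just before the statement upgrades this to the claimed equality $\mathcal{F}_{H}^{\gamma}(M\{n\})=\mathcal{F}_{H}^{\gamma-n}(M)\{n\}$ inside $M\{n\}\otimes\mathbb{F}$.

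The type identity then follows either by reading off the jumps of a filtration shifted by $n$, or equivalently by applying the companion scaling identity $\mathbf{d}(L_{1},xL')=\mathbf{d}(L_{1},L')+(\gamma_{0},\ldots,\gamma_{0})$ directly. The only delicate point is the bookkeeping of signs: one must track carefully that $I(\gamma)$ scales to $I(\gamma-\gamma_{0})$ under multiplication by $x$ with $\gamma_{0}=\log_{q}|x|$, and that the image of $\xi'$ in $\mathcal{O}_{K}^{\flat}$ is $\varpi^{q}$ rather than $\varpi$ so that the shift really is $n$ and not $qn$. Beyond this, no new input is needed.
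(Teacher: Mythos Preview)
Your argument is correct and follows essentially the same route as the paper's proof: both unwind the definition of $\mathcal{F}_{H}$, identify $\varphi_{M\{n\}}(\varphi^{\ast}M\{n\})$ with $\varpi^{-qn}\varphi_{M}(\varphi^{\ast}M)$ via $\overline{\xi'}=\varpi^{q}$, and then use the scaling identity $I(\gamma)\cdot\varpi^{-qn}=I(\gamma-n)$ coming from the normalization $|\varpi^{q}|=q^{-1}$. The only cosmetic difference is that you first trivialize $\mathcal{O}_{K}^{\flat}\{n\}$ and invoke the lattice-scaling identity from the proof of Lemma~\ref{lem:triangIneq}, whereas the paper writes the computation directly inside the tensor product; the substance is identical (and note that $\varphi_{\mathcal{O}_{K}^{\flat}\{n\}}(\varphi^{\ast}e_{n})$ is only $\varpi^{-qn}e_{n}$ up to a unit, but this is harmless since only the lattice matters).
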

\begin{proof}
By definition, $\mathcal{F}_{H}^{\gamma}(M\{n\})$ equals 
\[
\frac{M\otimes\mathcal{O}_{K}^{\flat}\{n\}\cap\left(I^{\gamma}\cdot\varphi_{M}\left(\varphi^{\ast}M\right)\otimes(\xi^{\prime}\bmod\pi)^{-n}\mathcal{O}_{K}^{\flat}\{n\}\right)+\mathfrak{m}_{K}^{\flat}\cdot M\otimes\mathcal{O}_{K}^{\flat}\{n\}}{\mathfrak{m}_{K}^{\flat}\cdot M\otimes\mathcal{O}_{K}^{\flat}\{n\}}
\]
where $I^{\gamma}=\left\{ x\in K^{\flat}:\left|x\right|\leq q^{-\gamma}\right\} $,
and $\xi^{\prime}\bmod\pi=\varpi^{q}$, i.e.
\begin{eqnarray*}
\mathcal{F}_{H}^{\gamma}(M\{n\}) & = & \frac{M\cap\left(I^{\gamma}\varpi^{-qn}\cdot\varphi_{M}\left(\varphi^{\ast}M\right)\right)+\mathfrak{m}_{K}^{\flat}\cdot M}{\mathfrak{m}_{K}^{\flat}\cdot M}\otimes\mathbb{F}\{n\}\\
 & = & \mathcal{F}_{H}^{\gamma-n}(M)\{n\}
\end{eqnarray*}
since $\left|\varpi^{-qn}\right|=q^{n}$, which proves the proposition.
\end{proof}

\subsubsection{$\boxed{R=L}$\label{sub:RisL}}

Here, $\Mod_{L,\ast}^{\varphi}=\Mod_{L,f}^{\varphi}$ is the tannakian
category of $E$-isocrystals over $\mathbb{F}$, i.e.~finite dimensional
vector spaces $D$ over $L$ equipped with an isomorphism $\varphi_{D}:\varphi^{\ast}D\rightarrow D$,
or equivalently, with a $\varphi$-semilinear automorphism $\phi_{D}:D\rightarrow D$.
The Dieudonné-Manin classification gives a slope decomposition 
\[
(D,\varphi_{D})=\oplus_{\lambda\in\mathbb{Q}}(D_{\lambda},\varphi_{D_{\lambda}}).
\]
For $\lambda=\frac{d}{h}$ with $d\in\mathbb{Z}$ and $h\in\mathbb{N}^{\ast}$
relatively prime, $D_{\lambda}$ is the union of the finitely generated
$\mathcal{O}_{L}$-submodules $X$ of $D$ such that $\phi_{D}^{(h)}(X)=\pi^{d}X$.
This Newton decomposition is functorial, compatible with all tensor
product constructions, thus 
\[
\mathcal{G}_{N}:\Mod_{L,f}^{\varphi}\rightarrow\Gra_{L}^{\mathbb{Q}},\qquad\mathcal{G}_{N}^{\lambda}(D,\varphi_{D})\eqd D_{\lambda}
\]
is an exact $\otimes$-functor, and so are the corresponding opposed
Newton $\mathbb{Q}$-filtrations
\[
\mathcal{F}_{N},\mathcal{F}_{N}^{\iota}:\Mod_{L,f}^{\varphi}\rightarrow\Fil_{L}^{\mathbb{Q}}
\]
which are given by the usual formulas
\[
\mathcal{F}_{N}^{\lambda}(D,\varphi_{D})\eqd\oplus_{\lambda^{\prime}\geq\lambda}D_{\lambda^{\prime}}\quad\mbox{and}\quad\mathcal{F}_{N}^{\iota\lambda}(D,\varphi_{D})\eqd\oplus_{\lambda^{\prime}\geq\lambda}D_{-\lambda^{\prime}}.
\]
We denote by $t_{N}(D,\varphi_{D})$ and $t_{N}^{\iota}(D,\varphi_{D})$
the corresponding opposed types. Both Newton filtrations are Harder-Narasimhan
filtrations, for the obvious rank function on $\Mod_{L,f}^{\varphi}$
and for the opposed degree functions which are respectively given
by 
\[
\deg_{N}(D,\varphi_{D})\eqd\deg\mathcal{F}_{N}(D,\varphi_{D})\quad\mbox{and}\quad\deg_{N}^{\iota}(D,\varphi_{D})\eqd\deg\mathcal{F}_{N}^{\iota}(D,\varphi_{D}).
\]
These degree functions are $\mathbb{Z}$-valued! If the residue field
$\mathbb{F}$ is algebraically closed, the category $\Mod_{L,f}^{\varphi}$
is even semi-simple, with one simple object $D_{\lambda}^{\circ}$
for each slope $\lambda\in\mathbb{Q}$. If $\lambda=\frac{d}{h}$
as above, then $\rank(D_{\lambda}^{\circ})=h$ and $\deg_{N}(D_{\lambda}^{\circ})=d=-\deg_{N}^{\iota}(D_{\lambda}^{\circ})$.

Since $\varphi_{A\{1\}}(\varphi^{\ast}A\{1\})=\xi^{\prime-1}A\{1\}$
and $\xi'$ maps to a uniformizer in $L$, we have $\phi_{L\{1\}}(\mathcal{O}_{L}\{1\})=\pi^{-1}\mathcal{O}_{L}\{1\}$
for the $\mathcal{O}_{L}$-lattice $\mathcal{O}_{L}\{1\}:=A\{1\}\otimes\mathcal{O}_{L}$
in the Tate object $L\{1\}:=A\{1\}\otimes L$ of $\Mod_{L,f}^{\varphi}$.
It follows that
\[
\deg_{N}(L\{1\})=-1\quad\mbox{and}\quad\deg_{N}^{\iota}(L\{1\})=+1.
\]
For $D$ in $\Mod_{L,f}^{\varphi}$ and $n\in\mathbb{Z}$, we set
$D\{n\}:=D\otimes L\{1\}^{\otimes n}$ as usual. Then
\[
\mathcal{G}_{N}^{\gamma}(D\{n\})=\mathcal{G}_{N}^{\gamma+n}(D)\{n\}
\]
for every $\gamma\in\mathbb{Q}$, therefore
\[
\mathcal{F}_{N}^{\gamma}(D\{n\})=\mathcal{F}_{N}^{\gamma+n}(D)\{n\}\quad\mbox{and}\quad\mathcal{F}_{N}^{\iota\gamma}(D\{n\})=\mathcal{F}_{N}^{\iota\gamma-n}(D)\{n\}.
\]
In particular, we have the following equalities in $\mathbb{Q}_{\geq}^{r}$
where $r=\dim_{L}D$: 
\[
t_{N}(D\{n\})=t_{N}(D)-(n,\cdots,n)\quad\mbox{and}\quad t_{N}^{\iota}(D\{n\})=t_{N}^{\iota}(D)+(n,\cdots,n).
\]

\subsubsection{$\boxed{R=\mathcal{O}_{L}}$\label{sub:RisOL}}

The category $\Mod_{\mathcal{O}_{L},f}^{\varphi}$ is now the category
of $\mathcal{O}_{E}$-crystals over $\mathbb{F}$, or $\mathcal{O}_{L}$-lattices
in $E$-isocrystals over $\mathbb{F}$, whose objects are finite free
$\mathcal{O}_{L}$-modules $M$ equipped with an isomorphism $\varphi_{M}:\varphi^{\ast}M\otimes L\rightarrow M\otimes L$.
It is a quasi-abelian $\mathcal{O}_{E}$-linear rigid $\otimes$-category,
with an exact faithful $\otimes$-functor $-\otimes L:\Mod_{\mathcal{O}_{L},f}^{\varphi}\rightarrow\Mod_{L,f}^{\varphi}$.
Since $\mathcal{O}_{L}$ is a discrete valuation ring, there is also
a Hodge $\mathbb{Z}$-filtration, defined by 
\[
\mathcal{F}_{H}(M,\varphi_{M})\eqd\mathcal{F}\left(M,\varphi_{M}(\varphi^{\ast}M)\right),
\]
a $\mathbb{Z}$-filtration on $M\otimes\mathbb{F}$, whose type will
be denoted by $t_{H}(M,\varphi_{M})$, so that 
\[
t_{H}(M,\varphi_{M})=\mathbf{d}\left(M,\varphi_{M}(\varphi^{\ast}M)\right)\quad\mbox{in}\quad\mathbb{Z}_{\geq}^{r}
\]
where $r=\rank(M)$. As before, we have the following proposition:
\begin{prop}
\label{prop:HodgeonO_L}The Hodge filtration $\mathcal{F}_{H}:\Mod_{\mathcal{O}_{L},f}^{\varphi}\rightarrow\Fil_{\mathbb{F}}^{\mathbb{Z}}$
is compatible with tensor products, duals, symmetric and exterior
powers. For every exact sequence 
\[
0\rightarrow(M_{1},\varphi_{1})\rightarrow(M_{2},\varphi_{2})\rightarrow(M_{3},\varphi_{3})\rightarrow0
\]
in $\Mod_{\mathcal{O}_{L},f}^{\varphi}$ with $r_{i}=\rank\, M_{i}$
(so that $r_{2}=r_{1}+r_{3}$), we have 
\[
t_{H}(M_{1},\varphi_{1})\ast t_{H}(M_{3},\varphi_{3})\leq t_{H}(M_{2},\varphi_{2})\quad\mbox{in}\quad\mathbb{Z}_{\geq}^{r_{2}}
\]
with equality if and only if for every $\gamma\in\mathbb{Z}$, the
complex of $\mathbb{F}$-vector spaces
\[
0\rightarrow\mathcal{F}_{H}^{\gamma}(M_{1},\varphi_{1})\rightarrow\mathcal{F}_{H}^{\gamma}(M_{2},\varphi_{2})\rightarrow\mathcal{F}_{H}^{\gamma}(M_{3},\varphi_{3})\rightarrow0
\]
is exact.\end{prop}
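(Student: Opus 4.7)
The plan is to essentially repeat the proof of Proposition~\ref{prop:HodgeonO_Kflat}, with the ring $\mathcal{O}_{K}^{\flat}$ replaced by $\mathcal{O}_{L}$. The point is that $\mathcal{O}_{L}$ is a (complete) discrete valuation ring with maximal ideal $\pi\mathcal{O}_{L}$, residue field $\mathbb{F}$, and fraction field $L$, so the whole formalism of sections~\ref{sub:Invariants}, \ref{sub:Lattices} and \ref{sub:TensorProd} applies with $\Gamma=\mathbb{Z}$. By definition $\mathcal{F}_{H}(M,\varphi_{M})=\mathcal{F}(M,\varphi_{M}(\varphi^{\ast}M))$ is the $\mathbb{Z}$-filtration on $M\otimes\mathbb{F}$ attached to the pair of $\mathcal{O}_{L}$-lattices $M$ and $\varphi_{M}(\varphi^{\ast}M)$ in the $L$-vector space $M\otimes L$, with type $\mathbf{d}(M,\varphi_{M}(\varphi^{\ast}M))\in\mathbb{Z}_{\geq}^{r}$.

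For the compatibility with $\otimes$-products, duals, symmetric and exterior powers, I would note that the pullback functor $\varphi^{\ast}$ commutes with all of these constructions on finite free $\mathcal{O}_{L}$-modules (e.g.~$\varphi^{\ast}(M_{1}\otimes M_{2})=\varphi^{\ast}M_{1}\otimes\varphi^{\ast}M_{2}$), and the Frobenius isomorphisms $\varphi_{M_{1}\otimes M_{2}}=\varphi_{M_{1}}\otimes\varphi_{M_{2}}$ etc.\ are the obvious ones. The required compatibilities then reduce to the analogous compatibilities for the lattice-theoretic invariants $\mathbf{d}$ and the filtrations $\mathcal{F}$ recorded in section~\ref{sub:TensorProd}.

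For the exact sequence statement, I would apply lemma~\ref{lem:RelPos=000026ExcSeq} to the exact sequence of $L$-vector spaces $0\rightarrow M_{1}\otimes L\rightarrow M_{2}\otimes L\rightarrow M_{3}\otimes L\rightarrow0$ with $L_{2}:=M_{2}$ and $L_{2}^{\prime}:=\varphi_{M_{2}}(\varphi^{\ast}M_{2})$, and check that the resulting inverse and direct images in $M_{1}\otimes L$ and $M_{3}\otimes L$ are exactly $(M_{i},\varphi_{M_{i}}(\varphi^{\ast}M_{i}))$ for $i\in\{1,3\}$. The direct image in $M_{3}\otimes L$ is $(M_{3},\varphi_{M_{3}}(\varphi^{\ast}M_{3}))$ by functoriality of $\varphi_{\bullet}$. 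For the inverse image in $M_{1}\otimes L$, the only mildly delicate identification is
\[
\varphi_{M_{2}}(\varphi^{\ast}M_{2})\cap(M_{1}\otimes L)=\varphi_{M_{1}}(\varphi^{\ast}M_{1}),
\]
which follows from the fact that $\varphi_{M_{2}}:\varphi^{\ast}M_{2}\otimes L\rightarrow M_{2}\otimes L$ is an $L$-linear isomorphism sending $\varphi^{\ast}M_{1}\otimes L$ onto $M_{1}\otimes L$, together with $\varphi^{\ast}M_{2}\cap(\varphi^{\ast}M_{1}\otimes L)=\varphi^{\ast}M_{1}$ (since $\varphi^{\ast}M_{3}$ is torsion-free). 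Lemma~\ref{lem:RelPos=000026ExcSeq} then yields both the inequality $t_{H}(M_{1})\ast t_{H}(M_{3})\leq t_{H}(M_{2})$ and the announced criterion for equality in terms of exactness of the induced complexes of $\mathbb{F}$-vector spaces $0\rightarrow\mathcal{F}_{H}^{\gamma}(M_{1})\rightarrow\mathcal{F}_{H}^{\gamma}(M_{2})\rightarrow\mathcal{F}_{H}^{\gamma}(M_{3})\rightarrow0$.

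There is no substantive obstacle here: the proof is a faithful transcription of the argument used for $\mathcal{O}_{K}^{\flat}$, the only point requiring a brief verification being the identification of the restriction $L_{1}^{\prime}=\varphi_{M_{1}}(\varphi^{\ast}M_{1})$ described above.
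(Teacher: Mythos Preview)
Your proposal is correct and matches the paper's approach: the paper gives no explicit proof of this proposition, leaving it implicit that the argument is identical to that of Proposition~\ref{prop:HodgeonO_Kflat} (namely, invoke section~\ref{sub:TensorProd} for the tensor compatibilities and lemma~\ref{lem:RelPos=000026ExcSeq} for the exact-sequence inequality and equality criterion). Your additional verification that the inverse image of $\varphi_{M_{2}}(\varphi^{\ast}M_{2})$ in $M_{1}\otimes L$ equals $\varphi_{M_{1}}(\varphi^{\ast}M_{1})$ is a detail the paper omits but which is indeed needed to apply lemma~\ref{lem:RelPos=000026ExcSeq} cleanly.
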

\begin{cor}
\label{cor:Mazurinequality}(Mazur's inequality) For every $X$ in
$\Mod_{\mathcal{O}_{L},f}^{\varphi}$ of rank $r\in\mathbb{N}$, 
\[
t_{N}^{\iota}(X\otimes L)\leq t_{H}(X)\quad\mbox{in}\quad\mathbb{Q}_{\geq}^{r}.
\]
\end{cor}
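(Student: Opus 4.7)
The plan is to reduce the comparison of types in $\mathbb{Q}_{\geq}^{r}$ to a single inequality on largest entries, namely $(t_{N}^{\iota}(X\otimes L))^{\max}\leq(t_{H}(X))^{\max}$ for every $X$, and then to prove this top-entry inequality by projecting $X\otimes L$ onto its minimal Newton-slope component and taking determinants.

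First I would unwind the partial-sum definition of $\leq$ on $\mathbb{Q}_{\geq}^{r}$. By Proposition~\ref{prop:HodgeonO_L}, the Hodge filtration $\mathcal{F}_{H}$ is compatible with exterior powers, and the Newton slope decomposition $\mathcal{G}_{N}$ on $\Mod_{L,f}^{\varphi}$ is an exact $\otimes$-functor, so $\mathcal{F}_{N}^{\iota}$ is too; as recorded in~\ref{sub:TensorProd}, this yields $t_{H}(\Lambda^{s}X)=\Lambda^{s}t_{H}(X)$ and $t_{N}^{\iota}(\Lambda^{s}(X\otimes L))=\Lambda^{s}t_{N}^{\iota}(X\otimes L)$, whose largest entries are respectively $\sum_{i=1}^{s}t_{H}(X)_{i}$ and $\sum_{i=1}^{s}t_{N}^{\iota}(X\otimes L)_{i}$. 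Hence the $s$-th partial-sum inequality for $X$ will follow from the top-entry inequality applied to $\Lambda^{s}X\in\Mod_{\mathcal{O}_{L},f}^{\varphi}$, and it suffices to establish the top-entry version in general. The equality at $s=r$ I would handle as the rank-one case of the top-entry result via $\det=\Lambda^{r}$: for $(M,\varphi_{M})$ of rank one with $\varphi_{M}(\varphi^{*}M)=\pi^{a}M$, the Newton slope of $M\otimes L$ is $a$ while $\mathbf{d}(M,\pi^{a}M)=(-a)$, so that $\deg t_{H}=-a=\deg t_{N}^{\iota}$, matching the computation already performed for $L\{1\}$ in~\ref{sub:RisL}.

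The core of the argument is then the top-entry inequality $-\lambda_{\min}\leq h_{\max}$, where $\lambda_{\min}$ is the smallest Newton slope of $D:=X\otimes L=\bigoplus_{\lambda}D_{\lambda}$ and $h_{\max}:=(t_{H}(X))^{\max}$. Unwinding~\ref{sub:Lattices} with an adapted basis, $h_{\max}$ is the smallest integer $h$ such that $\varphi_{M}(\varphi^{*}M)\subset\pi^{-h}M$. Let $p:D\twoheadrightarrow D_{\lambda_{\min}}$ be the $L$-linear, Frobenius-equivariant projection onto the smallest-slope summand. The image $M_{0}:=p(M)$ is finitely generated over $\mathcal{O}_{L}$ and spans $D_{\lambda_{\min}}$ over $L$, hence is an $\mathcal{O}_{L}$-lattice, and applying $p$ to $\varphi_{M}(\varphi^{*}M)\subset\pi^{-h_{\max}}M$ gives $\varphi_{0}(\varphi^{*}M_{0})\subset\pi^{-h_{\max}}M_{0}$ in the isoclinic isocrystal $(D_{\lambda_{\min}},\varphi_{0})$. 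Passing to $\det$, which produces a rank-one isocrystal of Newton slope $\lambda_{\min}\cdot\dim D_{\lambda_{\min}}$, this inclusion forces $\lambda_{\min}\cdot\dim D_{\lambda_{\min}}\geq-h_{\max}\cdot\dim D_{\lambda_{\min}}$, hence $\lambda_{\min}\geq-h_{\max}$.

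The main obstacle will be the careful bookkeeping of signs and orderings across the several opposed conventions in play (the dual Newton $t_{N}^{\iota}$, the Hodge type $t_{H}$, and the sign convention on $\mathbf{d}$ from~\ref{sub:Lattices}); once these are pinned down, the geometric input is just Mazur's classical trick of cutting to the minimal slope and comparing determinants in the resulting rank-one isoclinic quotient.
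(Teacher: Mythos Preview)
Your argument is correct, but it follows a genuinely different route from the paper's.

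The paper proceeds by d\'evissage along the Newton filtration: it first checks the degree equality (as you do, via $\det$), then pulls back $\mathcal{F}_{N}^{\iota}(X\otimes L)$ to a filtration on $X$ itself, passes to the associated graded $X^{\bullet}=\oplus_{\gamma}X^{\gamma}$ in $\Mod_{\mathcal{O}_{L},f}^{\varphi}$, and uses the exactness of $\mathcal{F}_{N}^{\iota}$ together with the convex-sum inequality for $t_{H}$ (Proposition~\ref{prop:HodgeonO_L}) to obtain
\[
t_{N}^{\iota}(X\otimes L)=\ast_{\gamma}t_{N}^{\iota}(X^{\gamma}\otimes L)\quad\text{and}\quad t_{H}(X)\geq\ast_{\gamma}t_{H}(X^{\gamma}),
\]
reducing to the isoclinic case, which is trivial because a concave polygon with the same endpoints as a line segment dominates it. This is the same pattern as the proof of Corollary~\ref{cor:tF=000026tH4M1}, and it leverages the machinery already in place.

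Your argument instead linearises the problem via exterior powers, reducing the full polygon comparison to the single inequality $(t_{N}^{\iota})^{\max}\leq(t_{H})^{\max}$ applied to each $\Lambda^{s}X$, and then proves that scalar inequality by the classical Mazur trick: project onto the minimal-slope summand and take determinants. This is closer in spirit to Mazur's original proof and is more self-contained (it does not invoke the $\ast$-inequality of Proposition~\ref{prop:HodgeonO_L}), at the cost of a slightly more hands-on computation. Both approaches rely on the compatibility of the Newton and Hodge filtrations with exterior powers, but they use it in different places: the paper only for the degree equality, you for the full reduction.
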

\begin{proof}
We first show that $\mathcal{F}_{N}^{\iota}(X\otimes L)$ and $\mathcal{F}_{H}(X)$
have the same degree. Since both filtrations are compatible with exterior
powers, we may assume that the rank of $X=(M,\varphi_{M})$ equals
$1$. Then $\varphi_{M}(\varphi^{\ast}M)=\pi^{-d}M$ for some $d\in\mathbb{Z}$,
thus indeed $\deg\mathcal{F}_{N}^{\iota}(X\otimes L)=d=\deg\mathcal{F}_{H}(X)$.
Returning to the general case, both polygons thus have the same terminal
points. We now follow the proof of corollary~\ref{cor:tF=000026tH4M1}.
Let $X^{\bullet}=\oplus_{\gamma}X^{\gamma}$ be the $\mathbb{Q}$-graded
object of $\Mod_{\mathcal{O}_{L},f}^{\varphi}$ attached to the filtration
on $X$ induced by $\mathcal{F}_{N}^{\iota}(X\otimes L)$. Then by
exactness of $\mathcal{F}_{N}^{\iota}$ and the previous proposition
\[
t_{N}^{\iota}(X\otimes L)=t_{N}^{\iota}(X^{\bullet}\otimes L)=\ast_{\gamma}t_{N}^{\iota}(X^{\gamma}\otimes L)\quad\mbox{and}\quad t_{H}(X)\geq t_{H}(X^{\bullet})=\ast_{\gamma}t_{H}(X^{\lambda}).
\]
We may thus assume that $X\otimes L$ is semi-stable (i.e.~isoclinic),
in which case the result is obvious since $t_{N}^{\iota}(X\otimes L)$
and $t_{H}(X)$ have the same terminal points. 
\end{proof}
\noindent We have already defined the Tate object $\mathcal{O}_{L}\{1\}=A\{1\}\otimes\mathcal{O}_{L}$,
giving rise to Tate twists $M\{n\}:=M\otimes\mathcal{O}_{L}\{1\}^{\otimes n}$
for every $M\in\Mod_{\mathcal{O}_{L},f}^{\varphi}$ and $n\in\mathbb{Z}$,
with a bijection $X\mapsto X\{n\}:=X\otimes\mathbb{F}\{n\}$ between
$\mathbb{F}$-subspaces of $M\otimes\mathbb{F}$ and $M\{n\}\otimes\mathbb{F}=M\otimes\mathbb{F}\{n\}$. 
\begin{prop}
\label{prop:tH(twist)4O_L}For every $M\in\Mod_{\mathcal{O}_{L},f}^{\varphi}$
of rank $r\in\mathbb{N}$ and $n\in\mathbb{Z}$, 
\[
\mathcal{F}_{H}^{\gamma}(M\{n\})=\mathcal{F}_{H}^{\gamma-n}(M)\{n\}\quad\mbox{inside}\quad M\{n\}\otimes\mathbb{F}
\]
for every $\gamma\in\mathbb{Z}$, hence 
\[
t_{H}(M\{n\})=t_{H}(M)+(n,\cdots,n)\quad\mbox{in}\quad\mathbb{Z}_{\geq}^{r}.
\]
\end{prop}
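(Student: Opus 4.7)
The plan is to imitate the proof of Proposition \ref{prop:tH(twist)4O_K^flat}, substituting $\mathcal{O}_L$ for $\mathcal{O}_K^{\flat}$ and the image of $\xi'$ in $\mathcal{O}_L$ (a uniformizer, hence $\pi$ up to a unit) for the image of $\xi'$ in $\mathcal{O}_K^{\flat}$ (which was $\varpi^q$). Since the type of an $\mathbb{R}$-filtration on a finite-dimensional vector space is determined by the dimensions of the graded pieces, the second assertion is a formal consequence of the first: if $\mathcal{F}_{H}^{\gamma}(M\{n\}) = \mathcal{F}_{H}^{\gamma-n}(M)\{n\}$ for all $\gamma\in\mathbb{Z}$, then $\rank\,\Gr_{H}^{\gamma}(M\{n\}) = \rank\,\Gr_{H}^{\gamma-n}(M)$, so the jumps are translated by $n$ and $t_H(M\{n\}) = t_H(M) + (n,\dots,n)$.

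For the first equality, I unwind the definition. By \ref{sub:Lattices}, with $L_1 = M\{n\}$ and $L_2 = \varphi_{M\{n\}}(\varphi^{\ast}M\{n\})$ inside $M\{n\}\otimes L$, we have
\[
\mathcal{F}_{H}^{\gamma}(M\{n\}) = \frac{M\{n\}\cap I(\gamma)\,\varphi_{M\{n\}}(\varphi^{\ast}M\{n\}) + \mathfrak{m}_L M\{n\}}{\mathfrak{m}_L M\{n\}}
\]
and $I(\gamma) = \pi^{\gamma}\mathcal{O}_L$ since $\mathcal{O}_L$ is a discrete valuation ring with uniformizer $\pi$. Writing $M\{n\} = M\otimes_{\mathcal{O}_L}\mathcal{O}_L\{n\}$ with $\mathcal{O}_L\{n\} := A\{n\}\otimes_A \mathcal{O}_L$, the Frobenius factors multiplicatively as
\[
\varphi_{M\{n\}}(\varphi^{\ast}M\{n\}) = \varphi_M(\varphi^{\ast}M)\otimes_{\mathcal{O}_L}\varphi_{\mathcal{O}_L\{n\}}(\varphi^{\ast}\mathcal{O}_L\{n\}).
\]
By the definition of $A\{1\}$ recalled in \ref{sub:DefBKFMod}, $\varphi_{A\{1\}}(\varphi^{\ast}A\{1\}) = \xi'^{-1}A\{1\}$, so $\varphi_{\mathcal{O}_L\{n\}}(\varphi^{\ast}\mathcal{O}_L\{n\}) = \bar{\xi}'^{-n}\mathcal{O}_L\{n\}$ where $\bar{\xi}'$ is the image of $\xi'$ in $\mathcal{O}_L$.

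The one point to verify is that $\bar{\xi}'$ is a uniformizer of $\mathcal{O}_L$: this holds because $\xi$ lies in $\ker(\theta)$, hence maps to $0$ in $\mathbb{F}$, so $\bar{\xi}\in\pi\mathcal{O}_L$; since the reduction of $\xi$ modulo $\pi$ generates the principal ideal $(\varpi)$ in $\mathcal{O}_K^{\flat}$ (a pseudo-uniformizer), $\bar{\xi}$ must equal $u\pi$ for a unit $u\in\mathcal{O}_L^{\times}$, and then $\bar{\xi}' = \varphi(\bar{\xi}) = \varphi(u)\pi$ is also a uniformizer by Frobenius-equivariance. Consequently $\varphi_{\mathcal{O}_L\{n\}}(\varphi^{\ast}\mathcal{O}_L\{n\}) = \pi^{-n}\mathcal{O}_L\{n\}$ up to a unit, which is all that matters.

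Putting it together, $I(\gamma)\varphi_{M\{n\}}(\varphi^{\ast}M\{n\}) = \pi^{\gamma-n}\varphi_M(\varphi^{\ast}M)\otimes\mathcal{O}_L\{n\}$, and since $\mathcal{O}_L\{n\}$ is free of rank one over $\mathcal{O}_L$, intersecting with $M\otimes\mathcal{O}_L\{n\}$ inside $M\{n\}\otimes L$ distributes over the tensor product to give $\bigl(M\cap \pi^{\gamma-n}\varphi_M(\varphi^{\ast}M)\bigr)\otimes\mathcal{O}_L\{n\}$. Reducing modulo $\mathfrak{m}_L$ yields $\mathcal{F}_{H}^{\gamma-n}(M)\otimes\mathbb{F}\{n\} = \mathcal{F}_{H}^{\gamma-n}(M)\{n\}$, as desired. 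There is no real obstacle: the argument is a near-verbatim repetition of the $\mathcal{O}_K^{\flat}$ case, the only substantive input being the identification of $\bar{\xi}'$ as a uniformizer of $\mathcal{O}_L$.
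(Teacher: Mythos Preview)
Your argument is correct and matches the paper's primary approach, which simply says ``this is similar to Proposition~\ref{prop:tH(twist)4O_K^flat}.'' One small point: your justification that $\bar\xi'$ is a uniformizer of $\mathcal{O}_L$ is not quite complete. Knowing only that $\xi\bmod\pi=\varpi$ is a nonzero element of $\mathfrak{m}_K^\flat$ does not by itself force the $\pi$-adic valuation of $\bar\xi$ in $\mathcal{O}_L$ to be exactly~$1$; for instance $[\varpi]+\pi^2\in A$ also reduces to $\varpi$ modulo $\pi$ but has image $\pi^2$ in $\mathcal{O}_L$. The correct reason is that any generator of $\ker(\theta)$ is a \emph{distinguished} element, i.e.\ can be written as $[a]+\pi u$ with $a\in\mathfrak{m}_K^\flat$ and $u\in A^\times$, so its image in $\mathcal{O}_L=W_{\mathcal{O}_E}(\mathbb{F})$ is $\pi\bar u$ with $\bar u$ a unit. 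The paper (in \S\ref{sub:RisL}) just quotes this as a known fact.

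The paper also records a second, slicker route that you did not take: once one knows $\mathcal{F}_H$ is compatible with tensor products (Proposition~\ref{prop:HodgeonO_L}) and that $\varphi_{L\{1\}}(\varphi^\ast\mathcal{O}_L\{1\})=\pi^{-1}\mathcal{O}_L\{1\}$, the Hodge filtration of $\mathcal{O}_L\{1\}$ has a single jump at~$1$, and the twist formula follows formally from $\mathcal{F}_H(M\{n\})=\mathcal{F}_H(M)\otimes\mathcal{F}_H(\mathcal{O}_L\{n\})$.
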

\begin{proof}
This is similar to proposition~\ref{prop:tH(twist)4O_K^flat}. It
also follows from the compatibility of the Hodge filtration with tensor
products (proposition~\ref{prop:HodgeonO_L}), along with the formula
\[
\varphi_{L\{1\}}(\varphi^{\ast}\mathcal{O}_{L}\{1\})=\pi^{-1}\mathcal{O}_{L}\{1\}
\]
which shows that $\mathcal{F}_{H}(\mathcal{O}_{L}\{1\})$ has a single
jump at $1$. 
\end{proof}

\subsubsection{$\boxed{R=A(K)=W_{\mathcal{O}_{E}}(K^{\flat})}$\label{sub:RisW(Kflat)}}

Then $\Mod_{R,\ast}^{\varphi}$ is the abelian $\otimes$-category
of finitely generated $R$-modules $M$ equipped with an isomorphism
$\varphi_{M}:\varphi^{\ast}M\rightarrow M$, or equivalently, with
a $\varphi$-semilinear automorphism $\phi_{M}:M\rightarrow M$. If
$\overline{K}^{\flat}$ is an algebraic closure of $K^{\flat}$ with
Galois group $\Gamma=\Gal(\overline{K}^{\flat}/K^{\flat})$ and $\overline{R}=W_{\mathcal{O}_{E}}(\overline{K}^{\flat})$,
the formulas 
\begin{eqnarray*}
(M,\varphi_{M}) & \mapsto & (T,\rho)=\left(\left(M\otimes_{R}\overline{R}\right)^{\phi_{M}\otimes\varphi=1},\mathrm{Id}\otimes\rho_{\overline{R}}\right)\\
(T,\rho) & \mapsto & (M,\varphi_{M})=\left(\left(T\otimes_{\mathcal{O}_{E}}\overline{R},\rho\otimes\rho_{\overline{R}}\right)^{\Gamma},\mathrm{Id}\otimes\varphi\right)
\end{eqnarray*}
yield equivalences of $\otimes$-categories between $\Mod_{R,\ast}^{\varphi}$
and the category $\Rep_{\mathcal{O}_{E},\ast}(\Gamma$) of continuous
representations $\rho:\Gamma\rightarrow\Aut_{\mathcal{O}_{E}}(T)$
on finitely generated $\mathcal{O}_{E}$-modules $T$~\cite[1.2.6]{Fo90}.
Here $\rho_{\overline{R}}:\Gamma\rightarrow\Aut_{R,\varphi}(\overline{R})$
is induced by the functoriality of $W_{\mathcal{O}_{E}}(-)$.

\subsubsection{$\boxed{{\textstyle R=A[\frac{1}{\pi}]}}$.\label{sub:RisA=00005B1/pi=00005D}}

The category $\Mod_{R,\ast}^{\varphi}=\Mod_{R,f}^{\varphi}$ is the
rigid $E$-linear $\otimes$-category of finite free $A[\frac{1}{\pi}]$-modules
$M$ with an isomorphism $\varphi_{M}:\varphi^{\ast}M[\xi^{\prime-1}]\rightarrow M[\xi^{\prime-1}]$.
Since $\xi'=\varphi(\xi)$, the Frobenius of $A$ induces an isomorphism
$\varphi:B_{dR}^{+}\rightarrow B_{dR}^{\prime+}$ of discrete valuation
rings between the completion of the local rings of $A[\frac{1}{\pi}]$
at the maximal ideals $A[\frac{1}{\pi}]\xi=\ker(\theta:A[\frac{1}{\pi}]\twoheadrightarrow K)$
and $A[\frac{1}{\pi}]\xi'=\varphi(A[\frac{1}{\pi}]\xi)$, along with
the induced isomorphisms $\varphi:K\rightarrow K'$ and $\varphi:B_{dR}\rightarrow B_{dR}^{\prime}$
between the residue and fraction fields of $B_{dR}^{+}$ and $B_{dR}^{\prime+}$.
For $(M,\varphi_{M})$ in $\Mod_{R,f}^{\varphi}$, the commutative
diagram
\[
\xyC{4pc}\xymatrix{M\left[\xi^{-1}\right]\ar[d]_{\varphi}\ar[r]^{(\varphi^{-1})^{\ast}(\varphi_{M})} & ((\varphi^{-1})^{\ast}M)\left[\xi^{-1}\right]\ar[d]^{\varphi}\\
(\varphi^{\ast}M)\left[\xi^{\prime-1}\right]\ar[r]^{\varphi_{M}} & M\left[\xi^{\prime-1}\right]
}
\]
extends to a commutative diagram 
\[
\xyC{4pc}\xymatrix{M\otimes B_{dR}\ar[d]_{\varphi\otimes\varphi}\ar[r]^{(\varphi^{-1})^{\ast}(\varphi_{M})} & ((\varphi^{-1})^{\ast}M)\otimes B_{dR}\ar[d]^{\varphi\otimes\varphi}\\
(\varphi^{\ast}M)\otimes B_{dR}^{\prime}\ar[r]^{\varphi_{M}} & M\otimes B_{dR}^{\prime}
}
\]
Then $M\otimes B_{dR}^{+}$ is a $B_{dR}^{+}$-lattice in $M\otimes B_{dR}$
and similarly for the other three vertices. Each line of our diagram
thus yields a pair of $\mathbb{Z}$-filtrations on the residue (over
$K$ or $K'$) of its vertices, which have opposed types in $\mathbb{Z}_{\geq}^{r}$
where $r$ is the rank of $M$, and the two pairs match along the
$\varphi$-equivariant isomorphisms which are induced by the vertical
maps. In particular, the Hodge $\mathbb{Z}$-filtrations 
\begin{eqnarray*}
\mathcal{F}_{H}^{\iota}(M,\varphi_{M}) & \eqd & \mathcal{F}\left(M\otimes B_{dR}^{+}\,,\left((\varphi^{-1})^{\ast}(\varphi_{M})\right)^{-1}\left(\left((\varphi^{-1})^{\ast}M\right)\otimes B_{dR}^{+}\right)\right)\\
\mbox{and}\quad\mathcal{F}_{H}(M,\varphi_{M}) & \eqd & \mathcal{F}\left(M\otimes B_{dR}^{\prime+}\,,\varphi_{M}\left((\varphi^{\ast}M)\otimes B_{dR}^{\prime+}\right)\right)
\end{eqnarray*}
on respectively $M\otimes_{A}K$ and $M\otimes_{A}K'$ have opposed
types
\[
t_{H}^{\iota}(M,\varphi_{M})\quad\mbox{and}\quad t_{H}(M,\varphi_{M})\quad\mbox{in}\quad\mathbb{Z}_{\geq}^{r}.
\]

\begin{prop}
\label{prop:Hodge4A=00005B1/pi=00005D_tensorexact}The Hodge filtration
$\mathcal{F}_{H}:\Mod_{R,f}^{\varphi}\rightarrow\Fil_{K^{\prime}}^{\mathbb{Z}}$
is compatible with tensor products, duals, symmetric and exterior
powers. For every exact sequence 
\[
0\rightarrow(M_{1},\varphi_{1})\rightarrow(M_{2},\varphi_{2})\rightarrow(M_{3},\varphi_{3})\rightarrow0
\]
 in $\Mod_{R,f}^{\varphi}$ with $r_{i}=\rank M_{i}$ (so that $r_{2}=r_{1}+r_{3}$),
we have 
\[
t_{H}(M_{1},\varphi_{1})\ast t_{H}(M_{3},\varphi_{3})\leq t_{H}(M_{2},\varphi_{2})\quad\mbox{in}\quad\mathbb{Z}_{\geq}^{r_{2}}
\]
with equality if and only if for every $\gamma\in\mathbb{R}$, the
complex of $K'$-vector spaces
\[
0\rightarrow\mathcal{F}_{H}^{\gamma}(M_{1},\varphi_{1})\rightarrow\mathcal{F}_{H}^{\gamma}(M_{2},\varphi_{2})\rightarrow\mathcal{F}_{H}^{\gamma}(M_{3},\varphi_{3})\rightarrow0
\]
is exact. The Hodge filtration $\mathcal{F}_{H}^{\iota}:\Mod_{R,f}^{\varphi}\rightarrow\Fil_{K}^{\mathbb{Z}}$
has analogous properties.
\end{prop}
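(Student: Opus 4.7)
The plan is to reduce everything to the abstract framework of $\mathcal{O}$-lattices in $K$-vector spaces established in section~\ref{sub:Lattices}, applied to the complete discrete valuation ring $B_{dR}^{\prime+}$ with uniformizer $\xi^{\prime}$, fraction field $B_{dR}^{\prime}$ and residue field $K^{\prime}$ (and symmetrically for $B_{dR}^{+}$, with residue field $K$, for the statement about $\mathcal{F}_{H}^{\iota}$).

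First, I would observe that for any $(M,\varphi_{M})\in\Mod_{R,f}^{\varphi}$, both $M\otimes_{R}B_{dR}^{\prime+}$ and $\varphi_{M}\bigl((\varphi^{\ast}M)\otimes_{R}B_{dR}^{\prime+}\bigr)$ are $B_{dR}^{\prime+}$-lattices in the finite dimensional $B_{dR}^{\prime}$-vector space $M\otimes_{R}B_{dR}^{\prime}$, because $M$ is finite free over $R=A[\tfrac{1}{\pi}]$ and $\varphi_{M}$ is an isomorphism after inverting $\xi^{\prime}$. Thus $\mathcal{F}_{H}(M,\varphi_{M})=\mathcal{F}\bigl(L,L^{\prime}\bigr)$ in the notation of section~\ref{sub:Lattices}, where $L=M\otimes B_{dR}^{\prime+}$ and $L^{\prime}=\varphi_{M}((\varphi^{\ast}M)\otimes B_{dR}^{\prime+})$, with type $t_{H}(M,\varphi_{M})=\mathbf{d}(L,L^{\prime})$ in $\mathbb{Z}_{\geq}^{r}$.

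Next, for compatibility with tensor constructions, I would invoke section~\ref{sub:TensorProd}: since $\varphi^{\ast}$ commutes with all the relevant tensor operations on finite free modules and $\varphi_{M}\otimes\varphi_{M^{\prime}}$, $\Sym^{k}\varphi_{M}$, $\Lambda^{k}\varphi_{M}$, $\Hom(\varphi_{M}^{-1},\varphi_{M^{\prime}})$ remain isomorphisms after inverting $\xi^{\prime}$, the images of the lattices under these isomorphisms are precisely the tensor/symmetric/exterior/Hom constructions of the individual image lattices. The compatibility of $\mathcal{F}(L,L^{\prime})$ with tensor products, duals, and symmetric and exterior powers, recorded in section~\ref{sub:TensorProd}, then yields the desired compatibility for $\mathcal{F}_{H}$.

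For the exact sequence statement, an exact sequence $0\to M_{1}\to M_{2}\to M_{3}\to 0$ in $\Mod_{R,f}^{\varphi}$ is in particular a split short exact sequence of finite free $R$-modules (since $M_{3}$ is finite projective). Applying the exact functor $-\otimes_{R}B_{dR}^{\prime+}$ gives an exact sequence of $B_{dR}^{\prime+}$-lattices $0\to L_{1}\to L_{2}\to L_{3}\to 0$, and applying $\varphi^{\ast}$ then the $B_{dR}^{\prime}$-linear isomorphisms $\varphi_{i}$ gives a second compatible exact sequence of $B_{dR}^{\prime+}$-lattices $0\to L_{1}^{\prime}\to L_{2}^{\prime}\to L_{3}^{\prime}\to 0$ in the underlying exact sequence of $B_{dR}^{\prime}$-vector spaces. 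Thus $L_{1}$ and $L_{1}^{\prime}$ (resp.\ $L_{3}$, $L_{3}^{\prime}$) are the inverse (resp.\ direct) images of $L_{2}$, $L_{2}^{\prime}$ along the maps of vector spaces, and lemma~\ref{lem:RelPos=000026ExcSeq} delivers directly the inequality $t_{H}(M_{1})\ast t_{H}(M_{3})\leq t_{H}(M_{2})$ together with the stated exactness criterion for equality. The statement for $\mathcal{F}_{H}^{\iota}$ is identical, using the DVR $B_{dR}^{+}$ with residue field $K$ and the isomorphism $((\varphi^{-1})^{\ast}\varphi_{M})^{-1}$ in place of $\varphi_{M}$.

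The main point requiring care is the identification of $L_{i}^{\prime}$ in the exact sequence: one must check that $\varphi^{\ast}$ preserves the short exact sequence (which it does, as all modules are finite free and $\varphi\colon A\to A$ is a ring map) and that the $\varphi_{i}$'s assemble into a morphism of short exact sequences of $B_{dR}^{\prime}$-vector spaces (which follows from functoriality of $\varphi_{M}$ in $M$ as part of the definition of a morphism in $\Mod_{R,f}^{\varphi}$). Once this bookkeeping is settled, everything else is an immediate appeal to the lattice machinery already developed.
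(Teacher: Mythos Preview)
Your proof is correct and follows exactly the approach the paper intends: the paper does not write out a proof for this proposition, but the proof of the analogous proposition~\ref{prop:HodgeonO_Kflat} reads simply ``This follows from~\ref{sub:TensorProd} and lemma~\ref{lem:RelPos=000026ExcSeq},'' and your argument is precisely the unpacking of that sentence in the $B_{dR}^{\prime+}$ setting.
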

\noindent For the Tate object $A[\frac{1}{\pi}]\{1\}:=A\{1\}[\frac{1}{\pi}]$,
$t_{H}=1=-t_{H}^{\iota}$, thus again:
\begin{prop}
\label{prop:t_H(twist)4A=00005B1/pi=00005D}For every $M$ in $\Mod_{R,f}^{\varphi}$
of rank $r\in\mathbb{N}$ and $n\in\mathbb{Z}$, 
\[
\mathcal{F}_{H}^{\gamma}(M\{n\})=\mathcal{F}_{H}^{\gamma-n}(M)\{n\}\quad\mbox{and}\quad\mathcal{F}_{H}^{\iota\gamma}(M\{n\})=\mathcal{F}_{H}^{\iota\gamma+n}(M)\{n\}
\]
for every $\gamma\in\mathbb{Z}$, therefore
\[
\begin{array}{rcl}
t_{H}(M\{n\}) & = & t_{H}(M)+(n,\cdots,n)\\
t_{H}^{\iota}(M\{n\}) & = & t_{H}^{\iota}(M)-(n,\cdots,n)
\end{array}\quad\mbox{in}\quad\mathbb{Z}_{\geq}^{r}.
\]

\end{prop}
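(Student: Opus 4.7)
The plan is to mirror the strategy suggested by the author's remark preceding the proposition (``$t_H=1=-t_H^\iota$ for $A[\tfrac{1}{\pi}]\{1\}$, thus again''), following the pattern of propositions \ref{prop:tH(twist)4O_K^flat} and \ref{prop:tH(twist)4O_L}: compute the Hodge filtrations of the Tate object directly, then transport the result to $M\{n\}$ via tensor-product compatibility.

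First I would compute $\mathcal{F}_H$ and $\mathcal{F}_H^\iota$ on the Tate object $A[\tfrac{1}{\pi}]\{n\}=A\{1\}^{\otimes n}[\tfrac{1}{\pi}]$. By \ref{sub:DefBKFMod}, $\varphi_{A\{1\}}(\varphi^{\ast}A\{1\})=\xi^{\prime-1}A\{1\}$, so taking the $n$-th tensor power,
\[
\varphi_{A\{n\}}\bigl((\varphi^{\ast}A\{n\})\otimes B_{dR}^{\prime+}\bigr)=\xi^{\prime-n}\cdot(A\{n\}\otimes B_{dR}^{\prime+})\quad\text{inside}\quad A\{n\}\otimes B_{dR}^{\prime}.
\]
Since $B_{dR}^{\prime+}$ is a discrete valuation ring with uniformizer $\xi'$, unfolding the definition from \ref{sub:Lattices} shows that $\mathcal{F}_H(A[\tfrac{1}{\pi}]\{n\})$ is the $\mathbb{Z}$-filtration on $K'\{n\}$ with a single jump at $\gamma=n$: it equals $K'\{n\}$ for $\gamma\leq n$ and $0$ for $\gamma>n$. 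The parallel calculation, using the square in \ref{sub:RisA=00005B1/pi=00005D} and the identity $(\varphi^{-1})^{\ast}(\varphi_{A\{1\}})(\varphi^{-1})^{\ast}A\{1\}=\xi^{-1}A\{1\}$ (since $\varphi^{-1}(\xi')=\xi$), yields that $\mathcal{F}_H^{\iota}(A[\tfrac{1}{\pi}]\{n\})$ has a single jump at $\gamma=-n$.

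Next, I would invoke the compatibility of $\mathcal{F}_H$ and $\mathcal{F}_H^\iota$ with tensor products (Proposition \ref{prop:Hodge4A=00005B1/pi=00005D_tensorexact}) applied to $M\{n\}=M\otimes A[\tfrac{1}{\pi}]\{n\}$. For every $\gamma\in\mathbb{Z}$,
\[
\mathcal{F}_H^{\gamma}(M\{n\})=\sum_{a+b=\gamma}\mathcal{F}_H^{a}(M)\otimes_{K'}\mathcal{F}_H^{b}(A[\tfrac{1}{\pi}]\{n\})=\mathcal{F}_H^{\gamma-n}(M)\otimes_{K'}K'\{n\}=\mathcal{F}_H^{\gamma-n}(M)\{n\},
\]
using that only the summand $b=n$ contributes. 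The identical argument with $\mathcal{F}_H^{\iota}$ and the jump at $-n$ gives $\mathcal{F}_H^{\iota\gamma}(M\{n\})=\mathcal{F}_H^{\iota(\gamma+n)}(M)\{n\}$. The type formulas are then immediate since $X\mapsto X\{n\}$ is a bijection on subspaces preserving dimension, so the multi-set of jumps of $\mathcal{F}_H(M\{n\})$ is obtained from that of $\mathcal{F}_H(M)$ by adding $n$ to each entry (and subtracting $n$ for the $\iota$ version).

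There is no serious obstacle: the only point requiring care is bookkeeping of the two residue fields $K$ and $K'$ and the two Frobenius-twisted lattices of $B_{dR}^{+}$ and $B_{dR}^{\prime+}$ attached to $\mathcal{F}_H^{\iota}$ versus $\mathcal{F}_H$, so that the single-jump location is correctly identified as $-n$ versus $n$. Alternatively one can avoid Proposition \ref{prop:Hodge4A=00005B1/pi=00005D_tensorexact} and compute $\mathcal{F}_H^{\gamma}(M\{n\})$ directly from the lattice description as in the proof of Proposition \ref{prop:tH(twist)4O_K^flat}, but the tensor-product route is cleaner and produces both equalities in a uniform way.
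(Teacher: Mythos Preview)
Your proposal is correct and matches the paper's approach exactly: the paper does not give a separate proof but simply writes ``$t_H=1=-t_H^\iota$ for the Tate object, thus again,'' pointing back to the pattern of Propositions~\ref{prop:tH(twist)4O_K^flat} and~\ref{prop:tH(twist)4O_L}, which is precisely the two-step argument (compute the single jump of the Tate object, then invoke tensor-product compatibility from Proposition~\ref{prop:Hodge4A=00005B1/pi=00005D_tensorexact}) that you carry out.
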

\noindent The $\otimes$-functor $\Mod_{A,\ast}^{\varphi}\rightarrow\Mod_{R,f}^{\varphi}$
identifies the isogeny category $\Mod_{A,\ast}^{\varphi}\otimes E$
with a full subcategory of $\Mod_{R,f}^{\varphi}$. We may thus unambiguously
denote by $X\mapsto X\otimes E$ or $X[\frac{1}{\pi}]$ the $\otimes$-functor
from $\Mod_{A,\ast}^{\varphi}$ to either $\Mod_{A,\ast}^{\varphi}\otimes E$
or $\Mod_{R,f}^{\varphi}$.
\begin{prop}
\label{prop:CompHodgeA=00005B1/pi=00005DOLA1}For a finite free BKF-module
$M$ of rank $r\in\mathbb{N}$,
\[
\begin{array}{rcll}
t_{H}(M\otimes\mathcal{O}_{L}) & \leq & t_{H}\left(M\otimes E\right) & \mbox{in}\quad\mathbb{Z}_{\geq}^{r},\\
t_{H}(M\otimes\mathcal{O}_{K}^{\flat}) & \leq & t_{H}\left(M\otimes E\right) & \mbox{in}\quad\mathbb{R}_{\geq}^{r}.
\end{array}
\]
\end{prop}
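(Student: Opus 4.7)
The plan is to reduce both inequalities to an entrywise valuation comparison on $A[\xi^{\prime-1}]$. Exterior-power compatibility of each Hodge type (propositions~\ref{prop:HodgeonO_L}, \ref{prop:HodgeonO_Kflat} and~\ref{prop:Hodge4A=00005B1/pi=00005D_tensorexact}) together with the characterization of $\leq$ on $\Gamma_{\geq}^{r}$ via partial sums reduces the proposition to: (a)~equality of total Hodge degrees (i.e.~after $\Lambda^{r}$), and (b)~for every finite free BKF-module $N$, the max-inequalities $t_{H}^{\max}(N\otimes\mathcal{O}_{L})\leq t_{H}^{\max}(N\otimes E)$ and $t_{H}^{\max}(N\otimes\mathcal{O}_{K}^{\flat})\leq t_{H}^{\max}(N\otimes E)$ (applied at each $N=\Lambda^{k}M$).

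For (b), fix an $A$-basis of $N$ and let $\Phi\in GL_{r'}(A[\xi^{\prime-1}])$ be the matrix of $\varphi_{N}$. Over each of the rank-one valuation rings $\mathcal{O}\in\{B_{dR}^{\prime+},\mathcal{O}_{L},\mathcal{O}_{K}^{\flat}\}$ the maximal elementary divisor of a matrix is the negative of the minimum valuation of its entries, so each max equals $-\min_{i,j}v_{\mathcal{O}}((\Phi_{\mathcal{O}})_{ij})$. Hence the proposition reduces to the key comparison: for every $a\in A[\xi^{\prime-1}]$,
\[
v_{\xi'}(a_{dR})\leq v_{\pi}(a_{L})\quad\mbox{and}\quad v_{\xi'}(a_{dR})\leq v(a_{\flat}).
\]

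To prove this, observe that $\xi'=\varphi(\xi)$ is prime in $A$ (since $(\xi)=\ker\theta$ is prime and $\varphi:A\to A$ is an automorphism, $\mathcal{O}_{K}^{\flat}$ being perfect), and is coprime to both $\pi$ and $[\varpi]$ in $A$: the former because $\xi'\equiv\varpi^{q}\pmod\pi$ is nonzero in $\mathcal{O}_{K}^{\flat}$; the latter because any distinguished generator $\xi=[\varpi]+\pi[u_{1}]+O(\pi^{2})$ with $u_{1}\in(\mathcal{O}_{K}^{\flat})^{\times}$ gives $\xi'\bmod[\varpi]=\pi[\bar{u}_{1}^{q}]+O(\pi^{2})$, a uniformizer of $\mathcal{O}_{L}$. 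Consequently every $a\in A[\xi^{\prime-1}]$ writes uniquely as $a=\xi^{\prime-n}y$ with $y\in A\setminus\xi'A$ and $n\in\mathbb{Z}$, whence $v_{\xi'}(a_{dR})=-n$. In $\mathcal{O}_{L}$ then $a_{L}=\pi^{-n}u^{-n}y_{L}$ with $u\in\mathcal{O}_{L}^{\times}$, giving $v_{\pi}(a_{L})=-n+v_{\pi}(y_{L})\geq-n$ since $y\in A$ implies $y_{L}\in\mathcal{O}_{L}$. Similarly $a_{\flat}=\varpi^{-qn}y_{\flat}$ in $\mathcal{O}_{K}^{\flat}$ yields $v(a_{\flat})=-n+v(y_{\flat})\geq-n$, using the paper's normalization $q|\varpi|^{q}=1$ which gives $v(\varpi^{q})=1$.

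For (a), apply the same calculation to $\Lambda^{r}M$: this rank-one BKF-module has $\varphi_{\Lambda^{r}M}$ given by $\det\Phi\in A[\xi^{\prime-1}]^{\times}=A^{\times}\cdot\xi^{\prime\mathbb{Z}}$ (by primality of $\xi'$ and coprimality with $\pi$), and writing $\det\Phi=u\xi^{\prime-n}$ the rank-one computation shows all three Hodge degrees equal $n$. The main technical nuisance is verifying the normalization $v_{\pi}(\xi^{\prime}_{L})=1$ and the matching $v(\xi^{\prime}_{\flat})=1$, which relies on the distinguishedness of any generator $\xi$ of $\ker\theta$; without this matching of normalizations the three rank-one Hodge degrees would differ by a constant and spoil the partial-sum inequality.
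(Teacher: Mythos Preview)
Your proof is correct but follows a genuinely different route from the paper's. The paper argues structurally: after a Tate twist it assumes $M\subset M'=\varphi_{M}(\varphi^{\ast}M)$, filters the quotient $Q=M'/M$ by its $\xi'$-torsion to obtain a chain $M=M^{0}\subset\cdots\subset M^{n}=M'$ with each $M^{i}$ finite free over $A$ and each $X^{i}=M^{i}/M^{i-1}$ finite free over $A/(\xi')$, then applies the triangular inequality (lemma~\ref{lem:triangIneq}) over each target ring $S\in\{B_{dR}^{\prime+},\mathcal{O}_{L},\mathcal{O}_{K}^{\flat}\}$; the key point is that for $S=B_{dR}^{\prime+}$ flatness of $A\to S$ forces the base-changed chain to coincide with the $\xi'$-adic filtration of $(M'\otimes S)/(M\otimes S)$, whence equality in the triangular inequality there. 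You instead reduce via exterior-power compatibility to a max comparison plus a degree equality, and settle both by a direct entrywise valuation estimate on the Frobenius matrix, using that $(\xi')$ is prime in the domain $A$ and that $A[\xi^{\prime-1}]^{\times}=A^{\times}\cdot\xi^{\prime\mathbb{Z}}$. Your argument is shorter and bypasses the somewhat delicate freeness verifications for $M^{i}$ and $X^{i}$ (which in the paper rely on coherence of $A/\xi^{\prime n}A$ and on $Q[\mathfrak{m}^{\infty}]=0$). The paper's filtration approach, in return, yields the finer information recorded in the remark following the proof: a Tor-vanishing criterion for equality, and in particular equality when $t_{H}(M\otimes E)$ is minuscule. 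One minor point: your justification that $\xi'_{L}$ is a uniformizer via the explicit Teichm\"uller shape of $\xi$ is a bit informal, but the fact itself is standard and is used (without proof) in~\S\ref{sub:RisL} of the paper.
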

\begin{proof}
Using the compatibility with Tate twists (propositions~\ref{prop:tH(twist)4O_K^flat},
\ref{prop:tH(twist)4O_L}, and \ref{prop:t_H(twist)4A=00005B1/pi=00005D}),
we may assume that $M\subset M'=\varphi_{M}(\varphi^{\ast}M)$ in
$M[\xi^{\prime-1}]$. Then $Q=M'/M$ is a perfect $A$-module of projective
dimension $\leq1$ which is killed by a power of $\xi^{\prime}$,
say $\xi^{\prime n}Q=0$. For $0\leq i\leq n$, let $M^{i}$ be the
inverse image of $Q^{i}=Q[\xi^{\prime i}]$ in $M'$, so that 
\[
M=M^{0}\subset M^{1}\subset\cdots\subset M^{n}=M'\quad\mbox{with}\quad M^{i}/M^{0}=Q^{i}.
\]
We first claim that each $M^{i}$ is finite free over $A$. By descending
induction on $i$, it is sufficient to establish that the following
$A$-module has projective dimension $1$: 
\[
X^{i}=M^{i}/M^{i-1}\simeq Q^{i}/Q^{i-1}\simeq\xi^{\prime i-1}Q[\xi^{\prime i}]\subset Q[\xi^{\prime}]\subset Q.
\]
We will show that it is finite free over $A(1)=A/A\xi^{\prime}$.
Since $A(1)\simeq A/A\xi\simeq\mathcal{O}_{K}$ is a valuation ring,
we just have to verify that $X^{i}$ is finitely generated and torsion-free
over $A(1)$. Since $Q$ is finitely presented over $A$, it is finitely
presented over $A(n)=A/A\xi^{\prime n}$, which is a coherent ring
by \cite[3.26]{BaMoSc16}, thus $Q^{i}=Q[\xi^{\prime i}]$ is finitely
presented over $A(n)$ and $A$ for all $i$, and so is $X^{i}\simeq Q^{i}/Q^{i-1}$.
On the other hand, $Q[\mathfrak{m}^{\infty}]=0$ by~\ref{sub:ProjDimModA},
thus also $X^{i}[\mathfrak{m}^{\infty}]=0$, which means that $X^{i}$
is indeed torsion-free as an $A(1)$-module. We denote by $x_{i}$
the rank of $X^{i}$ over $A(1)$. 

Let $S$ be any one of the valuations rings $B_{dR}^{\prime+}$, $\mathcal{O}_{K}^{\flat}$
or $\mathcal{O}_{L}$. Then $\Tor_{1}^{A}(X^{i},S)=0$ since $\Tor_{1}^{A}(A(1),S)=S[\xi^{\prime}]=0$.
We thus obtain a sequence of $S$-lattices 
\[
M\otimes S=M^{0}\otimes S\subset M^{1}\otimes S\subset\cdots\subset M^{n}\otimes S=M^{\prime}\otimes S
\]
inside $M\otimes\Frac(S)$. The triangular inequality of lemma~\ref{lem:triangIneq}
then yields
\[
\mathbf{d}\left(M\otimes S,M'\otimes S\right)\leq\sum_{i=1}^{n}\mathbf{d}\left(M^{i-1}\otimes S,M^{i}\otimes S\right)\quad\mbox{in}\quad\mathbb{R}_{\geq}^{r}.
\]
Since $M^{i}\otimes S/M^{i-1}\otimes S\simeq X^{i}\otimes S\simeq(S/\xi_{S}^{\prime}S)^{x_{i}}$
where $\xi_{S}^{\prime}$ is the image of $\xi^{\prime}$ in $S$
and since also $\left|\xi_{S}^{\prime}\right|=q^{-1}$ in all three
cases for the normalized absolute value on $S$, 
\[
\mathbf{d}\left(M^{i-1}\otimes S,M^{i}\otimes S\right)=(1,\cdots,1,0,\cdots,0)\quad\mbox{in}\quad\mathbb{Z}_{\geq}^{r}\subset\mathbb{R}_{\geq}^{r}
\]
with exactly $x_{i}$ one's. Now observe that by definition of our
various Hodge types, 
\[
\mathbf{d}\left(M\otimes S,M'\otimes S\right)=\begin{cases}
t_{H}\left(M\otimes E\right) & \mbox{for }S=B_{dR}^{\prime+},\\
t_{H}(M\otimes\mathcal{O}_{L}) & \mbox{for }S=\mathcal{O}_{L},\\
t_{H}(M\otimes\mathcal{O}_{K}^{\flat}) & \mbox{for }S=\mathcal{O}_{K}^{\flat}.
\end{cases}
\]
To establish the proposition, it is now sufficient to show that for
$S=B_{dR}^{\prime+}$, actually
\[
\mathbf{d}\left(M\otimes S,M'\otimes S\right)=\sum_{i=1}^{n}\mathbf{d}\left(M^{i}\otimes S,M^{i-1}\otimes S\right)\quad\mbox{in}\quad\mathbb{Z}_{\geq}^{r}.
\]
Since $S$ is the completion of a Noetherian local ring of $A$, it
is flat over $A$, thus 
\[
\frac{M^{i}\otimes S}{M^{0}\otimes S}=Q[\xi^{\prime i}]\otimes S=Q\otimes S[\xi_{S}^{\prime i}]=\frac{M'\otimes S}{M\otimes S}[\xi_{S}^{\prime i}],
\]
which means that $M^{i}\otimes S=(M^{\prime}\otimes S)\cap\xi_{S}^{\prime-i}(M\otimes S)$
in $M\otimes B_{dR}^{\prime}$. If 
\[
\mathbf{d}\left(M\otimes S,M'\otimes S\right)=(n_{1}\geq\cdots\geq n_{r})\quad\mbox{in}\quad\mathbb{Z}_{\geq}^{r},
\]
there exists an $S$-basis $(e_{1},\cdots,e_{r})$ of $M\otimes S$
such that $(\xi_{S}^{\prime-n_{1}}e_{1},\cdots,\xi_{S}^{\prime-n_{r}}e_{r})$
is an $S$-basis of $M^{\prime}\otimes S$. Then $(\xi_{S}^{\prime-\min(n_{1},i)}e_{1},\cdots,\xi_{S}^{\prime-\min(n_{r},i)}e_{r})$
is an $S$-basis of $M^{i}\otimes S$, $x_{i}=\max\{j:n_{j}\geq i\}$
and indeed $n_{j}=\sharp\{i:x_{i}\geq j\}$ for all $j\in\{1,\cdots,r\}$.\end{proof}
\begin{rem}
With notations as above (and for a finite free BKF-module $M$ such
that $M^{\vee}$ is effective), the proof shows that we have equality
when $n=1$, i.e.~$\xi'Q=0$, i.e.~$t_{H}(M\otimes E)=(1,\cdots,1,0,\cdots0)$
is minuscule. More generally for $S\in\{\mathcal{O}_{L},\mathcal{O}_{K}^{\flat}\}$,
$t_{H}(M\otimes E)=t_{H}(M\otimes S)$ if $\Tor_{j}^{A}(S,Q/\xi^{\prime i}Q)=0$
for $1\leq i\leq n$ and $j\in\{1,2\}$.
\begin{rem}
For a finite free BKF-module $M\in\Mod_{A,f}^{\varphi}$ of rank $r\in\mathbb{N}$,
we thus have
\[
t_{F,\infty}(M)\leq t_{F}(M\otimes\mathcal{O}_{K}^{\flat})\leq t_{H}(M\otimes\mathcal{O}_{K}^{\flat})\leq t_{H}(M[{\textstyle \frac{1}{\pi}]\geq t_{H}(M\otimes\mathcal{O}_{L})\geq t_{N}^{\iota}(M\otimes L)}
\]
by propositions~\ref{prop:ComptFinftytFn}, \ref{prop:CompHodgeA=00005B1/pi=00005DOLA1}
and corollaries \ref{cor:tF=000026tH4M1} and \ref{cor:Mazurinequality}.
In particular,
\[
\begin{aligned}t_{F,\infty}(M)(r) & =\deg_{t}(M\otimes\mathcal{O}_{K}^{\flat})=\deg\mathcal{F}_{H}(M\otimes\mathcal{O}_{K}^{\flat})\\
 & =\deg\mathcal{F}_{H}(M{\textstyle [\frac{1}{\pi}]})\\
 & =\deg\mathcal{F}_{H}(M\otimes\mathcal{O}_{L})=\deg_{N}^{\iota}(M\otimes L)
\end{aligned}
\]
and this apriori real number actually belongs to $\mathbb{Z}$. We
call it the degree of $M$. 
\end{rem}
\end{rem}

\section{The functors of Fargues\label{sec:FunctorsOfFargues}}

Suppose from now on that $K=C$ is algebraically closed. In this section,
we will define and study the following commutative diagram of covariant
$\otimes$-functors:
\[
\xymatrix{\Mod_{A,f}^{\varphi}\ar[rr]^{\HTT'}\ar@{^{(}->}[d] &  & \HT_{\mathcal{O}_{E}}^{B_{dR}}\ar@{^{(}->}[d]\\
\Mod_{A,f}^{\varphi}\otimes E\ar[r]^{\underline{\mathcal{E}}}\ar@{^{(}->}[d] & \Modif_{X}^{ad}\ar[r]^{\HTT}\ar@{^{(}->}[d] & \HT_{E}^{B_{dR}}\\
\Mod_{A[\frac{1}{\pi}]}^{\varphi}\ar[r]^{\underline{\mathcal{E}}} & \Modif_{X}
}
\]
In this diagram, the first two lines are equivalences of $\otimes$-categories,
the top vertical arrows are faithful and the bottom ones fully faithful.
The construction of $\underline{\mathcal{E}}$ which is given below
is a covariant version of the analytic construction of \cite{Fa15}.
A slightly twisted version of it was sketched in Scholze's course
\cite{ScWe15} -- for stukhas with one paw at $\mathfrak{m}=A\xi$.
Our variant is meant to match the normalized construction of $\HTT'$
in \cite{BaMoSc16}, where the paw was twisted from $\mathfrak{m}$
to $\mathfrak{m}'=A\xi'$. Following \cite{BaMoSc16}, we fix a compatible
system of $p$-power roots of unity, $\zeta_{p^{r}}\in\mathcal{O}_{C}^{\times}$
for $r\geq1$, and set 
\[
\epsilon=(1,\zeta_{p},\zeta_{p^{2}},\cdots)\in\mathcal{O}_{C}^{\flat,\times},\qquad\mu=[\epsilon]-1\in\mathfrak{m}\subset A,
\]
\[
\xi={\textstyle \frac{\mu}{\varphi^{-1}(\mu)}}=1+[\epsilon^{1/q}]+\cdots+[\epsilon^{1/q}]^{q-1}\in\mathfrak{m}\subset A,
\]
\[
\xi^{\prime}=\varphi(\xi)={\textstyle \frac{\varphi(\mu)}{\mu}}=1+[\epsilon]+\cdots+[\epsilon]^{q-1}\in\mathfrak{m}\subset A,
\]
\[
\varpi=\xi\bmod\pi=1+\epsilon^{1/q}+\cdots+(\epsilon^{1/q})^{q-1}\in\mathfrak{m}_{C}^{\flat}\subset\mathcal{O}_{C}^{\flat},
\]
\[
\varpi^{q}=\xi^{\prime}\bmod\pi=1+\epsilon+\cdots+\epsilon^{q-1}\in\mathfrak{m}_{C}^{\flat}\subset\mathcal{O}_{C}^{\flat}.
\]
As suggested by the notations, $\xi$ is a generator of $\ker(\theta:A\twoheadrightarrow\mathcal{O}_{C})$.
We have
\[
\varphi^{-1}(\mu)\mid\mu\mid\varphi(\mu)\,\,\mbox{in }A\quad\mbox{thus}\quad A[{\textstyle \frac{1}{\varphi^{-1}(\mu)}]\subset A[\frac{1}{\mu}]\subset A[\frac{1}{\varphi(\mu)}].}
\]
Moreover, $\theta(\varphi^{-1}(\mu))=\zeta_{q}-1\neq0$, and therefore
$\xi\nmid\varphi^{-1}(\mu)$ and $\xi^{\prime}\nmid\mu$.

\subsection{Modifications of vector bundles on the curve}

\subsubsection{The Fargues-Fontaine curve\label{sub:The-Fargues-Fontaine-curve}}

Let $X=X_{C^{\flat},E}$ be the Fargues-Fontaine curve attached to
$(C^{\flat},E)$ \cite{FaFo15}. This is an integral noetherian regular
$1$-dimensional scheme over $E$ which is a complete curve in the
sense of \cite[5.1.3]{FaFo15}: the degree function on divisors factors
through a degree function on the Picard group, $\deg:\Pic(X)\rightarrow\mathbb{N}$.
We denote by $\eta$ the generic point of $X$ and by $E(X)=\mathcal{O}_{X,\eta}$
the field of rational functions on $X$. In addition, there is a distinguished
closed point $\infty\in\left|X\right|$ with completed local ring
$\mathcal{O}_{X,\infty}^{\wedge}$ canonically isomorphic to the ring
$B_{dR}^{+}$ of section~\ref{sub:RisA=00005B1/pi=00005D}.

\subsubsection{Vector bundles on the curve}

Let $\Bun_{X}$ be the $E$-linear $\otimes$-category of vector bundles
$\mathcal{E}$ on $X$. Since $X$ is a regular curve, it is a quasi-abelian
category whose short exact sequences remain exact in the larger category
of all sheaves on $X$, and the generic fiber $\mathcal{E}\mapsto\mathcal{E}_{\eta}$
yields an exact and faithful $\otimes$-functor 
\[
(-)_{\eta}:\Bun_{X}\rightarrow\Vect_{E(X)}
\]
which induces an isomorphism between the poset $\Sub(\mathcal{E})$
of strict subobjects of $\mathcal{E}$ in $\Bun_{X}$ and the poset
$\Sub(\mathcal{E}_{\eta})$ of $E(X)$-subspaces of $\mathcal{E}_{\eta}$.

\subsubsection{Newton slope filtrations\label{sub:Newton4BundlesOnCurve}}

The usual rank and degree functions 
\[
\rank:\sk\,\Bun_{X}\rightarrow\mathbb{N}\quad\mbox{and}\quad\deg:\sk\,\Bun_{X}\rightarrow\mathbb{Z}
\]
are additive on short exact sequences in $\Bun_{X}$, and they are
respectively constant and non-decreasing on mono-epis in $\Bun_{X}$.
More precisely, if $f:\mathcal{E}_{1}\rightarrow\mathcal{E}_{2}$
is a mono-epi, then $\rank(\mathcal{E}_{1})=\rank(\mathcal{E}_{2})$
and $\deg(\mathcal{E}_{1})\leq\deg(\mathcal{E}_{2})$ with equality
if and only if $f$ is an isomorphism. These functions yield a Harder-Narasimhan
filtration on $\Bun_{X}$, the Newton filtration $\mathcal{F}_{N}$
with slopes $\mu=\deg/\rank$ in $\mathbb{Q}$. The filtration $\mathcal{F}_{N}(\mathcal{E})$
on $\mathcal{E}\in\Bun_{X}$ is non-canonically split. More precisely
for every $\mu\in\mathbb{Q}$, the full subcategory of semi-stable
vector bundles of slope $\mu$ is abelian, equivalent to the category
of right $D_{\mu}$-vector spaces, where $D_{\mu}$ is the semi-simple
division $E$-algebra whose invariant is the class of $\mu$ in $\mathbb{Q}/\mathbb{Z}$.
We denote by $\mathcal{O}_{X}(\mu)$ its unique simple object. Then
for every vector bundle $\mathcal{E}$ on $X$, there is unique sequence
$\mu_{1}\geq\cdots\geq\mu_{s}$ in $\mathbb{Q}$ for which there is
a (non-unique) isomorphism $\oplus_{i=1}^{s}\mathcal{O}_{X}(\mu_{i})\simeq\mathcal{E}$,
and any such isomorphism maps $\oplus_{i:\mu_{i}\geq\gamma}\mathcal{O}_{X}(\mu_{i})$
to $\mathcal{F}_{N}^{\gamma}(\mathcal{E})$ for every $\gamma\in\mathbb{Q}$.
We denote by $t_{N}(\mathcal{E})\in\mathbb{Q}_{\geq}^{r}$ the type
of $\mathcal{F}_{N}(\mathcal{E})$, where $r=\rank(\mathcal{E})$. 
\begin{prop}
The Newton filtration is compatible with tensor products, duals, symmetric
and exterior powers in $\Bun_{X}$. For any exact sequence in $\Bun_{X}$,
\[
0\rightarrow\mathcal{E}_{1}\rightarrow\mathcal{E}_{2}\rightarrow\mathcal{E}_{3}\rightarrow0
\]
set $r_{i}=\rank\,\mathcal{E}_{i}$ and view $t_{N}(\mathcal{E}_{i})$
as a concave function $f_{i}:[0,r_{i}]\rightarrow\mathbb{R}$. Then
\[
f_{1}\ast f_{3}(s)\geq f_{2}(s)\geq\begin{cases}
f_{1}(s) & \mbox{if }0\leq s\leq r_{1}\\
f_{1}(r_{1})+f_{3}(s-r_{1}) & \mbox{if }r_{1}\leq s\leq r_{2}
\end{cases}
\]
with equality for $s=0$ and $s=r_{2}$. In particular,\textup{
\[
\begin{array}{ccccc}
t_{N}^{\max}(\mathcal{E}_{1}) & \leq & t_{N}^{\max}(\mathcal{E}_{2}) & \leq & \max\left\{ t_{N}^{\max}(\mathcal{E}_{1}),t_{N}^{\max}(\mathcal{E}_{3})\right\} ,\\
t_{N}^{\min}(\mathcal{E}_{3}) & \geq & t_{N}^{\min}(\mathcal{E}_{2}) & \geq & \min\left\{ t_{N}^{\min}(\mathcal{E}_{1}),t_{N}^{\min}(\mathcal{E}_{3})\right\} ,
\end{array}
\]
\[
\mbox{and}\qquad\qquad t_{N}(\mathcal{E}_{2})\leq t_{N}(\mathcal{E}_{1})\ast t_{N}(\mathcal{E}_{3})\quad\mbox{in}\quad\mathbb{Q}_{\geq}^{r_{2}}.\qquad\qquad
\]
}\textup{\emph{Moreover, $t_{N}(\mathcal{E}_{2})=t_{N}(\mathcal{E}_{1})\ast t_{N}(\mathcal{E}_{3})$
if and only if for every $\gamma\in\mathbb{Q}$, 
\[
0\rightarrow\mathcal{F}_{N}^{\gamma}(\mathcal{E}_{1})\rightarrow\mathcal{F}_{N}^{\gamma}(\mathcal{E}_{2})\rightarrow\mathcal{F}_{N}^{\gamma}(\mathcal{E}_{3})\rightarrow0
\]
is exact.}}\end{prop}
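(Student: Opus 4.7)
The proposition splits into two parts: the compatibility of $\mathcal{F}_N$ with the various $\otimes$-product constructions, and the behaviour of $\mathcal{F}_N$ on short exact sequences. Both are instances of general Harder-Narasimhan formalism applied to the specific category $\Bun_X$, and the plan is to handle them separately.

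For the exact sequence assertions, the plan is to invoke the general theory of Harder-Narasimhan filtrations on quasi-abelian categories, exactly as in the proof of Proposition~\ref{prop:ExactSeqInModt}. The rank and degree functions on $\Bun_X$ have just been recorded as being additive on short exact sequences, with $\rank$ constant and $\deg$ non-decreasing on mono-epis (with equality characterizing isomorphisms), so the hypotheses of~\cite[Proposition~21]{Co16} (or~\cite[4.4.4]{An09}) are met. The sandwich for $f_2$ between $f_1 \ast f_3$ and the concatenation, the consequences for $t_N^{\max}$ and $t_N^{\min}$, and the characterization of equality $t_N(\mathcal{E}_2) = t_N(\mathcal{E}_1) \ast t_N(\mathcal{E}_3)$ by strict exactness of the filtered sequences are the literal output of that theorem; there is nothing to do beyond citing it.

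For the tensor compatibility, the plan is to reduce to semi-stable factors using the non-canonical splitting $\mathcal{E} \simeq \oplus_\gamma \Gr_N^\gamma(\mathcal{E})$ recalled in~\ref{sub:Newton4BundlesOnCurve}. Once splittings are fixed, expanding yields $\mathcal{E}_1 \otimes \mathcal{E}_2 = \oplus_{\gamma_1,\gamma_2} \Gr_N^{\gamma_1}(\mathcal{E}_1) \otimes \Gr_N^{\gamma_2}(\mathcal{E}_2)$ and similarly for duals, $\Sym^k$ and $\Lambda^k$, so it suffices to prove that the tensor product (respectively dual, symmetric, exterior power) of semi-stable bundles is semi-stable, of slope equal to the sum (respectively opposite, $k$-fold multiple) of the slopes. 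Using the description of the semi-stable subcategory of slope $\mu$ as direct sums of $\mathcal{O}_X(\mu)$, this reduces to the identities $\mathcal{O}_X(\mu_1) \otimes \mathcal{O}_X(\mu_2) \simeq \mathcal{O}_X(\mu_1+\mu_2)^{\oplus h}$, $\mathcal{O}_X(\mu)^\vee \simeq \mathcal{O}_X(-\mu)$, and the analogous decompositions of $\Sym^k \mathcal{O}_X(\mu)$ and $\Lambda^k \mathcal{O}_X(\mu)$ into copies of $\mathcal{O}_X(k\mu)$, all of which are established in~\cite{FaFo15}.

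The only genuinely non-formal input is thus the semi-stability of tensor products of semi-stables on $X$, which is where the particular geometry of the Fargues-Fontaine curve is used; everything else is either a transcription of the general HN formalism or an elementary computation on the simple objects $\mathcal{O}_X(\mu)$.
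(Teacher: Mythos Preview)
Your proposal is correct and follows essentially the same strategy as the paper: cite \cite{FaFo15} for the $\otimes$-compatibility (the paper points to \cite[5.6.23]{FaFo15} directly rather than unpacking the reduction to the $\mathcal{O}_X(\mu)$), and cite \cite[Proposition~21]{Co16} or \cite[4.4.4]{An09} for everything about exact sequences. The one place where the paper is slightly slicker is the treatment of $\Sym^k$ and $\Lambda^k$: since $\Bun_X$ is $E$-linear with $E$ of characteristic zero, these are direct summands of $\mathcal{E}^{\otimes k}$, so their compatibility follows immediately from the tensor case plus additivity, without needing to compute $\Sym^k\mathcal{O}_X(\mu)$ or $\Lambda^k\mathcal{O}_X(\mu)$ explicitly.
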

\begin{proof}
The compatibility of $\mathcal{F}_{N}$ with $\otimes$-products and
duals comes from~\cite[5.6.23]{FaFo15}. Since $\Bun_{X}$ is an
$E$-linear category, the compatibility of $\mathcal{F}_{N}$ with
symmetric and exterior powers follows from its additivity and compatibility
with $\otimes$-products. For the remaining assertions, see~\cite[Proposition 21]{Co16}
or \cite[4.4.4]{An09}.
\end{proof}

\subsubsection{Modifications of vector bundles}

We denote by $\Modif_{X}$ the category of triples 
\[
\underline{\mathcal{E}}=(\mathcal{E}_{1},\mathcal{E}_{2},f)
\]
where $\mathcal{E}_{1}$ and $\mathcal{E}_{2}$ are vector bundles
on $X$ while $f$ is an isomorphism
\[
f:\mathcal{E}_{1}\vert_{X\setminus\{\infty\}}\rightarrow\mathcal{E}_{2}\vert_{X\setminus\{\infty\}}.
\]
A morphism $F:\underline{\mathcal{E}}\rightarrow\underline{\mathcal{E}}'$
is a pair of morphisms $F_{i}:\mathcal{E}_{i}\rightarrow\mathcal{E}_{i}^{\prime}$
with $F_{2}\circ f=f'\circ F_{1}$. This defines a quasi-abelian $E$-linear
rigid $\otimes$-category with a Tate twist. The kernels and cokernels
are induced by those of $\Bun_{X}$. The neutral object is the trivial
modification $\underline{\mathcal{O}}_{X}=(\mathcal{O}_{X},\mathcal{O}_{X},\mathrm{Id})$,
the tensor product and duals are given by 
\[
\underline{\mathcal{E}}\otimes\underline{\mathcal{E}}^{\prime}\eqd(\mathcal{E}_{1}\otimes\mathcal{E}_{1}^{\prime},\mathcal{E}_{2}\otimes\mathcal{E}_{2}^{\prime},f\otimes f')\quad\mbox{and}\quad\underline{\mathcal{E}}^{\vee}\eqd(\mathcal{E}_{1}^{\vee},\mathcal{E}_{2}^{\vee},f^{\vee-1}).
\]
The Tate twist is $\underline{\mathcal{E}}\{i\}:=\underline{\mathcal{E}}\otimes\underline{\mathcal{O}}_{X}\{i\}$
where $\underline{\mathcal{O}}_{X}\{i\}:=\underline{\mathcal{O}}_{X}\{1\}^{\otimes i}$
with 
\[
\underline{\mathcal{O}}_{X}\{1\}\eqd\left(\mathcal{O}_{X}\otimes_{E}E(1),\mathcal{O}_{X}(1)\otimes_{E}E(1),\mathrm{can}\otimes\mathrm{Id}\right).
\]
Here $E(1)=E\otimes_{\mathbb{Z}_{p}}\mathbb{Z}_{p}(1)$ with $\mathbb{Z}_{p}(1)=\underleftarrow{\lim}\,\mu_{p^{n}}(C)$
and $\mathrm{can}:\mathcal{O}_{X}\hookrightarrow\mathcal{O}_{X}(1)$
is the canonical morphism, dual to the embedding $\mathcal{I}(\infty)\hookrightarrow\mathcal{O}_{X}$.
There are also symmetric and exterior powers, given by the following
formulae: for every $k\geq0$, 
\begin{eqnarray*}
\Sym^{k}(\underline{\mathcal{E}}) & \eqd & (\Sym^{k}\mathcal{E}_{1},\Sym^{k}\mathcal{E}_{2},\Sym^{k}f),\\
\Lambda^{k}(\underline{\mathcal{E}}) & \eqd & (\Lambda^{k}\mathcal{E}_{1},\Lambda^{k}\mathcal{E}_{2},\Lambda^{k}f).
\end{eqnarray*}
The generic fiber $\underline{\mathcal{E}}\mapsto\mathcal{E}_{1,\eta}$
yields an exact faithful $\otimes$-functor 
\[
(-)_{1,\eta}:\Modif_{X}\rightarrow\Vect_{E(X)}
\]
which induces an isomorphism between the poset $\Sub(\underline{\mathcal{E}})$
of strict subobjects of $\underline{\mathcal{E}}$ in $\Modif_{X}$
and the poset $\Sub(\mathcal{E}_{1,\eta})$ of $E(X)$-subspaces of
$\mathcal{E}_{1,\eta}$. We say that a modification $\underline{\mathcal{E}}=(\mathcal{E}_{1},\mathcal{E}_{2},f)$
is effective if $f$ extends to a (necessarily unique) morphism $f:\mathcal{E}_{1}\rightarrow\mathcal{E}_{2}$,
which is then a mono-epi in $\Bun_{X}$. For every $\underline{\mathcal{E}}$
in $\Modif_{X}$, 
\[
\underline{\mathcal{E}}\{i\}\mbox{ is effective for }i\gg0.
\]

\subsubsection{Hodge and Newton filtrations\label{sub:Hodge-and-NewtonForModif}}

For $\underline{\mathcal{E}}=(\mathcal{E}_{1},\mathcal{E}_{2},f)$
as above, we denote by 
\[
f_{dR}:\mathcal{E}_{1,dR}^{+}[\xi^{-1}]\rightarrow\mathcal{E}_{2,dR}^{+}[\xi^{-1}]
\]
the $B_{dR}$-isomorphism induced by $f$, where $\mathcal{E}_{i,dR}^{+}=\mathcal{E}_{i,\infty}^{\wedge}$
is the completed local stalk at $\infty$. For $i\in\{1,2\}$, the
Hodge filtration $\mathcal{F}_{H,i}(\underline{\mathcal{E}})$ is
the $\mathbb{Z}$-filtration induced by $\mathcal{E}_{3-i,dR}^{+}$
on the residue $\mathcal{E}_{i}(\infty)=\mathcal{E}_{i,dR}^{+}/\xi\mathcal{E}_{i,dR}^{+}$
of $\mathcal{E}_{i}$. Thus for every $\gamma\in\mathbb{Z}$, 
\[
\mathcal{F}_{H,1}^{\gamma}\eqd\frac{f_{dR}^{-1}(\xi^{\gamma}\mathcal{E}_{2,dR}^{+})\cap\mathcal{E}_{1,dR}^{+}+\xi\mathcal{E}_{1,dR}^{+}}{\xi\mathcal{E}_{1,dR}^{+}},\quad\mathcal{F}_{H,2}^{\gamma}\eqd\frac{f_{dR}(\xi^{\gamma}\mathcal{E}_{1,dR}^{+})\cap\mathcal{E}_{2,dR}^{+}+\xi\mathcal{E}_{2,dR}^{+}}{\xi\mathcal{E}_{2,dR}^{+}}.
\]
These are filtrations with opposed types $t_{H,i}(\underline{\mathcal{E}})\in\mathbb{\mathbb{Z}}_{\geq}^{r}$,
where 
\[
r=\rank(\underline{\mathcal{E}})=\rank(\mathcal{E}_{1})=\rank(\mathcal{E}_{2}).
\]
We denote by $\mathcal{F}_{N,i}(\underline{\mathcal{E}})$ the Newton
filtration on $\mathcal{E}_{i}$ with type $t_{N,i}(\mathcal{\underline{E}})\in\mathbb{Q}_{\geq}^{r}$.
Thus
\[
\begin{array}{rcl}
t_{N,1}\left(\underline{\mathcal{E}}^{\vee}\right) & = & t_{N,1}\left(\underline{\mathcal{E}}\right)^{\iota}\\
t_{N,2}\left(\underline{\mathcal{E}}^{\vee}\right) & = & t_{N,2}\left(\underline{\mathcal{E}}\right)^{\iota}\\
t_{H,1}\left(\underline{\mathcal{E}}^{\vee}\right) & = & t_{H,1}\left(\underline{\mathcal{E}}\right)^{\iota}\\
t_{H,2}\left(\underline{\mathcal{E}}^{\vee}\right) & = & t_{H,2}\left(\underline{\mathcal{E}}\right)^{\iota}
\end{array}\quad\mbox{and}\quad\begin{array}{rcl}
t_{N,1}\left(\underline{\mathcal{E}}\{i\}\right) & = & t_{N,1}\left(\underline{\mathcal{E}}\right)\\
t_{N,2}\left(\underline{\mathcal{E}}\{i\}\right) & = & t_{N,2}\left(\underline{\mathcal{E}}\right)+(i,\cdots,i)\\
t_{H,1}\left(\underline{\mathcal{E}}\{i\}\right) & = & t_{H,1}\left(\underline{\mathcal{E}}\right)+(i,\cdots,i)\\
t_{H,2}\left(\underline{\mathcal{E}}\{i\}\right) & = & t_{H,2}\left(\underline{\mathcal{E}}\right)-(i,\cdots,i)
\end{array}
\]
The filtrations $\mathcal{F}_{N,i}$ and $\mathcal{F}_{H,i}$ are
compatible with tensor products, duals, symmetric and exterior powers.
In particular for every $0\leq k\leq r$, 
\[
t_{H,i}^{\max}(\Lambda^{k}\underline{\mathcal{E}})=t_{H,i}(\underline{\mathcal{E}})(k)\quad\mbox{and}\quad t_{N,i}^{\max}(\Lambda^{k}\underline{\mathcal{E}})=t_{N,i}(\underline{\mathcal{E}})(k)
\]
viewing the right hand side terms as functions on $[0,r]$. Also,
$\underline{\mathcal{E}}$ is effective if and only if the slopes
of $\mathcal{F}_{H,1}$ (resp. $\mathcal{F}_{H,2}$) are non-negative
(resp.~non-positive), in which case $t_{H,1}(\underline{\mathcal{E}})$
is the type $t(\mathcal{Q})$ of the torsion $\mathcal{O}_{X}$-module
$\mathcal{Q}=\mathcal{E}_{2}/f(\mathcal{E}_{1})$ supported at $\infty$,
which means that if $t_{H,1}=(n_{1}\geq\cdots\geq n_{r})\in\mathbb{N}_{\geq}^{r}$,
then 
\[
\mathcal{E}_{2}/f(\mathcal{E}_{1})\simeq\mathcal{O}_{X,\infty}/\mathfrak{m}_{\infty}^{n_{1}}\oplus\cdots\oplus\mathcal{O}_{X,\infty}/\mathfrak{m}_{\infty}^{n_{r}}\simeq B_{dR}^{+}/\xi^{n_{1}}B_{dR}^{+}\oplus\cdots\oplus B_{dR}^{+}/\xi^{n_{r}}B_{dR}^{+}.
\]

\begin{prop}
\label{prop:CompNewtonHodgeModif}For every modification $\underline{\mathcal{E}}$
on $X$ of rank $r\in\mathbb{N}$, 
\[
t_{N,2}(\underline{\mathcal{E}})\leq t_{N,1}(\underline{\mathcal{E}})+t_{H,1}(\underline{\mathcal{E}})\quad\mbox{in}\quad\mathbb{Q}_{\geq}^{r}.
\]
\end{prop}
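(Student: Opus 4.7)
The plan is to reduce the inequality in $\mathbb{Q}_{\geq}^{r}$ to the rank 1 case via exterior powers, plus a single maximum-slope inequality. Since all three of $\mathcal{F}_{N,1}$, $\mathcal{F}_{N,2}$, and $\mathcal{F}_{H,1}$ are compatible with $\Lambda^{k}$, the $k$-th partial sum of any of their types on $\underline{\mathcal{E}}$ equals the corresponding maximum slope on $\Lambda^{k}\underline{\mathcal{E}}$, as recalled in section~\ref{sub:Hodge-and-NewtonForModif}. Viewing types as concave polygons on $[0,r]$, the desired inequality therefore amounts to two things: the degree equality $\deg t_{N,2}(\underline{\mathcal{E}}) = \deg t_{N,1}(\underline{\mathcal{E}}) + \deg t_{H,1}(\underline{\mathcal{E}})$, which I get by applying the rank 1 case to $\det\underline{\mathcal{E}} = \Lambda^{r}\underline{\mathcal{E}}$; and the maximum-slope inequality $t_{N,2}^{\max}(\underline{\mathcal{F}}) \leq t_{N,1}^{\max}(\underline{\mathcal{F}}) + t_{H,1}^{\max}(\underline{\mathcal{F}})$ applied to each $\underline{\mathcal{F}} = \Lambda^{k}\underline{\mathcal{E}}$.

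For the rank 1 case, let $\underline{\mathcal{L}} = (\mathcal{L}_{1}, \mathcal{L}_{2}, f)$. The isomorphism $f_{dR}$ sends $\mathcal{L}_{1,dR}^{+}$ to $\xi^{-n}\mathcal{L}_{2,dR}^{+}$ for some $n \in \mathbb{Z}$. A direct computation from the definition of $\mathcal{F}_{H,1}$ yields $t_{H,1}(\underline{\mathcal{L}}) = -n$, while the relation $f_{dR}(\mathcal{L}_{1,dR}^{+}) = \xi^{-n}\mathcal{L}_{2,dR}^{+}$ says exactly that $\mathcal{L}_{2} \simeq \mathcal{L}_{1}(-n \cdot \infty)$, whence $\deg\mathcal{L}_{2} = \deg\mathcal{L}_{1} - n$. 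Therefore $t_{N,2}(\underline{\mathcal{L}}) = t_{N,1}(\underline{\mathcal{L}}) + t_{H,1}(\underline{\mathcal{L}})$, which even gives equality in this rank.

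For the maximum-slope inequality on an arbitrary modification $\underline{\mathcal{F}}$, I would let $\mathcal{E}_{2}' \subset \mathcal{F}_{2}$ be the top piece of the Newton filtration, semi-stable of slope $\mu = t_{N,2}^{\max}(\underline{\mathcal{F}})$ and of some rank $k'$, and let $\mathcal{E}_{1}'$ be the saturation in $\mathcal{F}_{1}$ of $f^{-1}(\mathcal{E}_{2}'|_{X \setminus \{\infty\}})$; this defines a rank $k'$ strict subobject $\underline{\mathcal{E}}' \hookrightarrow \underline{\mathcal{F}}$ in $\Modif_{X}$. Applying the rank 1 identity to $\det\underline{\mathcal{E}}' = \Lambda^{k'}\underline{\mathcal{E}}'$ gives
\[
k'\mu = \deg\mathcal{E}_{2}' = \deg\mathcal{E}_{1}' + \deg t_{H,1}(\underline{\mathcal{E}}').
\]
Now $\deg\mathcal{E}_{1}' \leq k' \cdot t_{N,1}^{\max}(\underline{\mathcal{F}})$ since $\mathcal{E}_{1}' \subset \mathcal{F}_{1}$ is a sub-vector-bundle of rank $k'$, and $\deg t_{H,1}(\underline{\mathcal{E}}') \leq t_{H,1}(\underline{\mathcal{F}})(k') \leq k' \cdot t_{H,1}^{\max}(\underline{\mathcal{F}})$ (the second inequality because $t_{H,1}(\underline{\mathcal{F}})$ is a decreasing sequence). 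Dividing by $k'$ gives the desired inequality on $\underline{\mathcal{F}}$.

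The main obstacle is justifying the sub-object Hodge bound $\deg t_{H,1}(\underline{\mathcal{E}}') \leq t_{H,1}(\underline{\mathcal{F}})(k')$: the plan is to apply lemma~\ref{lem:RelPos=000026ExcSeq} at $\mathcal{O}_{X,\infty}^{\wedge} = B_{dR}^{+}$ to the exact sequence $0 \to \mathcal{E}_{1}' \to \mathcal{F}_{1} \to \mathcal{F}_{1}/\mathcal{E}_{1}' \to 0$ of vector bundles on $X$ and to the lattice pair $L = \mathcal{F}_{1,dR}^{+}$, $L' = f_{dR}^{-1}(\mathcal{F}_{2,dR}^{+})$. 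For this to apply, one must check that the Hodge filtration on the sub-modification $\underline{\mathcal{E}}'$ really is the one induced on $\mathcal{E}_{1}'(\infty)$ by $(L, L')$ in the sense of lemma~\ref{lem:RelPos=000026ExcSeq}. This boils down to the identity $\xi^{\gamma}\mathcal{F}_{2,dR}^{+} \cap \mathcal{E}_{2,dR}^{\prime +}[\xi^{-1}] = \xi^{\gamma}\mathcal{E}_{2,dR}^{\prime +}$ for every $\gamma \in \mathbb{Z}$, which holds because $\mathcal{E}_{2}'$ is a saturated subbundle of $\mathcal{F}_{2}$ (being the top HN piece, its quotient is torsion-free), so $\mathcal{E}_{2,dR}^{\prime +}$ is a direct $B_{dR}^{+}$-summand of $\mathcal{F}_{2,dR}^{+}$.
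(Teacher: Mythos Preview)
Your proof is correct and follows essentially the same strategy as the paper: reduce to a single maximum-slope inequality via exterior powers, then pick out the top Harder--Narasimhan piece $\mathcal{E}_2'\subset\mathcal{E}_2$ and bound the resulting degree identity term by term.

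The one notable difference is in how the Hodge term is controlled. The paper first Tate-twists into the effective range, so that $f$ is a genuine inclusion with torsion cokernel $\mathcal{Q}=\mathcal{E}_2/f(\mathcal{E}_1)$ supported at $\infty$; then $\mathcal{E}_1'=f^{-1}(\mathcal{E}_2')$ is automatically saturated, $\mathcal{Q}'=\mathcal{E}_2'/f(\mathcal{E}_1')$ is a submodule of $\mathcal{Q}$, and the Hodge bound becomes the elementary inequality $t^{\max}(\mathcal{Q}')\le t^{\max}(\mathcal{Q})$ of lemma~\ref{lem:InvTorsOModdecUnderSubquo}. You instead stay in the general (non-effective) case and invoke lemma~\ref{lem:RelPos=000026ExcSeq} at $B_{dR}^+$ to get $t_{H,1}(\underline{\mathcal{F}})\ge t_{H,1}(\underline{\mathcal{E}}')\ast(\text{quotient type})$, hence $\deg t_{H,1}(\underline{\mathcal{E}}')\le t_{H,1}(\underline{\mathcal{F}})(k')$. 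Both routes are fine; the paper's effective reduction trades the saturation check you had to do for a shorter endgame, while your route avoids the twist but needs the lattice lemma and the verification that $\mathcal{E}_{2,dR}^{\prime+}$ is a $B_{dR}^+$-summand of $\mathcal{F}_{2,dR}^+$.
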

\begin{proof}
Using a Tate twist, we may assume that $\underline{\mathcal{E}}$
is effective. The left and right-hand side concave polygons then already
have the same terminal point, since $\deg(\mathcal{E}_{2})=\deg(\mathcal{E}_{1})+\deg(\mathcal{Q})$
where $\mathcal{Q}=\mathcal{E}_{2}/f(\mathcal{E}_{1})$. By the formula
for the exterior powers, it is then sufficient to establish that 
\[
t_{N}^{\max}(\mathcal{E}_{2})\leq t_{N}^{\max}(\mathcal{E}_{1})+t^{\max}(\mathcal{Q}).
\]
Let $\underline{\mathcal{E}}^{\prime}=(\mathcal{E}_{1}^{\prime},\mathcal{E}_{2}^{\prime},f^{\prime})$
where $\mathcal{E}_{2}^{\prime}$ is the first (smallest) step of
$\mathcal{F}_{N}(\mathcal{E}_{2})$, $\mathcal{E}_{1}^{\prime}=f^{-1}(\mathcal{E}_{2}^{\prime})$
and $f^{\prime}=f\vert\mathcal{E}_{1}^{\prime}$. Set $\mathcal{Q}'=\mathcal{E}_{2}^{\prime}/f'(\mathcal{E}_{1}^{\prime})$.
Then $\mathcal{E}_{2}^{\prime}$ is semi-stable of slope $\mu=t_{N}^{\max}(\mathcal{E}_{2})$
and $\deg\mathcal{E}_{2}^{\prime}=\deg\mathcal{E}_{1}^{\prime}+\deg\mathcal{Q}'$,
thus $t_{N}(\mathcal{E}_{2}^{\prime})\leq t_{N}(\mathcal{E}_{1}^{\prime})+t(\mathcal{Q}')$
by concavity of the sum and equality of the terminal points. Considering
the first (largest) slopes, we find that $\mu\leq t_{N}^{\max}(\mathcal{E}_{1}^{\prime})+t^{\max}(\mathcal{Q}')$.
But $\mathcal{E}_{1}^{\prime}\subset\mathcal{E}_{1}$ and $\mathcal{Q}'\subset\mathcal{Q}$,
thus 
\[
t_{N}^{\max}(\mathcal{E}_{1}^{\prime})\leq t_{N}^{\max}(\mathcal{E}_{1})\quad\mbox{and}\quad t^{\max}(\mathcal{Q}')\leq t^{\max}(\mathcal{Q}).
\]
This yields the desired inequality.
\end{proof}

\subsubsection{Admissible modifications\label{sub:AdmModifDef}}

Let $\Modif_{X}^{ad}$ be the full subcategory of $\Modif_{X}$ whose
objects are the modifications $\underline{\mathcal{E}}=(\mathcal{E}_{1},\mathcal{E}_{2},f)$
such that $\mathcal{E}_{1}$ is semi-stable of slope $0$, i.e.~$t_{N,1}(\underline{\mathcal{E}})=t_{N}(\mathcal{E}_{1})=0$.
This is a quasi-abelian $E$-linear rigid $\otimes$-category with
Tate twists. The kernels, cokernels, duals, $\otimes$-products, Tate
twist, symmetric and exterior powers are induced by those of $\Modif_{X}$.
On $\Modif_{X}^{ad}$, we set 
\[
\mathcal{F}_{N}\eqd\mathcal{F}_{N,2},\quad\mathcal{F}_{H}\eqd\mathcal{F}_{H,1},\quad t_{N}\eqd t_{N,2}\quad\mbox{and}\quad t_{H}\eqd t_{H,1}.
\]

\begin{prop}
\label{prop:OnAdmModif_NewtonHodge}For every admissible modification
$\underline{\mathcal{E}}$ of rank $r\in\mathbb{N}$, 
\[
t_{N}(\underline{\mathcal{E}})\leq t_{H}(\underline{\mathcal{E}})\quad\mbox{in}\quad\mathbb{Q}_{\geq}^{r}.
\]
\end{prop}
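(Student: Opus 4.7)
The plan is to derive this as a direct corollary of proposition~\ref{prop:CompNewtonHodgeModif}, which already provides the general comparison
\[
t_{N,2}(\underline{\mathcal{E}}) \leq t_{N,1}(\underline{\mathcal{E}}) + t_{H,1}(\underline{\mathcal{E}}) \quad \mbox{in} \quad \mathbb{Q}_{\geq}^{r}
\]
for an arbitrary modification $\underline{\mathcal{E}} \in \Modif_{X}$. No real work beyond invoking this is required; the admissibility hypothesis simply kills the correction term.

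More precisely, I would proceed as follows. First, unfold the definition: since $\underline{\mathcal{E}} \in \Modif_{X}^{ad}$, the bundle $\mathcal{E}_{1}$ is semi-stable of slope $0$, so its Newton polygon is identically zero on $[0,r]$, i.e.
\[
t_{N,1}(\underline{\mathcal{E}}) = t_{N}(\mathcal{E}_{1}) = (0,\cdots,0) \quad \mbox{in} \quad \mathbb{Q}_{\geq}^{r}.
\]
Substituting this into the inequality of proposition~\ref{prop:CompNewtonHodgeModif} and invoking the abbreviations $t_{N} := t_{N,2}$ and $t_{H} := t_{H,1}$ that were fixed for $\Modif_{X}^{ad}$ in subsection~\ref{sub:AdmModifDef}, I obtain
\[
t_{N}(\underline{\mathcal{E}}) = t_{N,2}(\underline{\mathcal{E}}) \leq (0,\cdots,0) + t_{H,1}(\underline{\mathcal{E}}) = t_{H}(\underline{\mathcal{E}}) \quad \mbox{in} \quad \mathbb{Q}_{\geq}^{r},
\]
which is exactly the claim.

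As a sanity check on the equality of terminal points (which is part of the definition of $\leq$ on $\mathbb{Q}_{\geq}^{r}$), note that admissibility forces $\deg(\mathcal{E}_{1}) = 0$, so $\deg(\mathcal{E}_{2}) = \deg(\mathcal{E}_{1}) + \deg(\mathcal{Q}) = \deg(\mathcal{Q}) = \deg t_{H}(\underline{\mathcal{E}})$ after reduction to the effective case via a Tate twist (which is harmless by the twisting formulas of subsection~\ref{sub:Hodge-and-NewtonForModif}). Thus $\deg t_{N}(\underline{\mathcal{E}}) = \deg t_{H}(\underline{\mathcal{E}})$, consistent with the inequality. There is no real obstacle here; the genuine content has already been packaged into proposition~\ref{prop:CompNewtonHodgeModif}, whose proof (via reduction to $t_{N}^{\max}$ using exterior powers and the existence of the maximal destabilising sub-bundle) is where the work was done.
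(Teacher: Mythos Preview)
Your proof is correct and follows exactly the paper's own argument: the paper's proof is the single sentence ``This is the special case of proposition~\ref{prop:CompNewtonHodgeModif} where $t_{N,1}(\underline{\mathcal{E}})=0$,'' and you have simply unpacked this. The additional sanity check on terminal points is fine but unnecessary, since the equality of degrees is already built into the inequality coming from proposition~\ref{prop:CompNewtonHodgeModif}.
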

\begin{proof}
This is the special case of proposition~\ref{prop:CompNewtonHodgeModif}
where $t_{N,1}(\underline{\mathcal{E}})=0$.
\end{proof}
\noindent The restriction of the generic fiber functor $(-)_{1,\eta}:\Modif_{X}\rightarrow\Vect_{E(X)}$
to the full subcategory $\Modif_{X}^{ad}$ of $\Mod_{X}$ descends
to an exact $E$-linear faithful $\otimes$-functor
\[
\omega:\Modif_{X}^{ad}\rightarrow\Vect_{E},\qquad\omega(\underline{\mathcal{E}})=\Gamma(X,\mathcal{E}_{1})
\]
inducing an isomorphism between the poset $\Sub^{ad}(\underline{\mathcal{E}})\subset\Sub(\underline{\mathcal{E}})$
of strict subobjects of $\underline{\mathcal{E}}$ in $\Modif_{X}^{ad}$
and the poset $\Sub(\omega(\underline{\mathcal{E}}))\subset\Sub(\mathcal{E}_{1,\eta})$
of $E$-subspaces of $\omega(\underline{\mathcal{E}})$.

\subsubsection{\label{sub:DefFarguesFiltrOnModif}The Fargues filtration}

The rank and degree functions 
\[
\rank:\sk\,\Modif_{X}^{ad}\rightarrow\mathbb{N}\quad\mbox{and}\quad\deg:\sk\,\Modif_{X}^{ad}\rightarrow\mathbb{Z}
\]
which are respectively defined by
\[
\begin{aligned}\rank(\underline{\mathcal{E}}) & \eqd\rank(\mathcal{E}_{1})=\rank(\mathcal{E}_{2})=\dim_{E}\omega(\underline{\mathcal{E}})\\
\deg(\underline{\mathcal{E}}) & \eqd\deg\mathcal{E}_{2}=\deg\mathcal{F}_{N}(\underline{\mathcal{E}})=\deg\mathcal{F}_{H}(\underline{\mathcal{E}})
\end{aligned}
\]
$ $are additive on short exact sequences in $\Modif_{X}^{ad}$, and
they are respectively constant and non-decreasing on mono-epis in
$\Modif_{X}^{ad}$. More precisely if $F=(F_{1},F_{2})$ is a mono-epi
$F:\underline{\mathcal{E}}\rightarrow\underline{\mathcal{E}}^{\prime},$
then $F_{1}:\mathcal{E}_{1}\rightarrow\mathcal{E}'_{1}$ is an isomorphism
and $F_{2}:\mathcal{E}_{2}\rightarrow\mathcal{E}_{2}^{\prime}$ is
a mono-epi in $\Bun_{X}$, thus $\deg(\underline{\mathcal{E}})=\deg(\mathcal{E}_{2})\leq\deg(\mathcal{E}_{2}^{\prime})=\deg(\underline{\mathcal{E}}^{\prime})$
with equality if and only if $F_{2}$ is an isomorphism in $\Bun_{X}$,
which amounts to $F=(F_{1},F_{2})$ being an isomorphism in $\Modif_{X}^{ad}$.
These rank and degree functions thus induce a Harder-Narasimhan filtration
on $\Modif_{X}^{ad}$, the Fargues filtration $\mathcal{F}_{F}$ with
slopes $\mu=\deg/\rank$ in $\mathbb{Q}$, and the full subcategory
of $\Modif_{X}^{ad}$ of semi-stable objects of slope $\mu$ is abelian.
We denote by $t_{F}(\underline{\mathcal{E}})$ the type of $\mathcal{F}_{F}(\underline{\mathcal{E}})$. 
\begin{prop}
\label{prop:OnAdmModif_FarguesExactness}Let $0\rightarrow\underline{\mathcal{E}}_{1}\rightarrow\underline{\mathcal{E}}_{2}\rightarrow\underline{\mathcal{E}}_{3}\rightarrow0$
be an exact sequence in $\Modif_{X}^{ad}$, set $r_{i}=\rank\,\underline{\mathcal{E}}_{i}$
and view $t_{F}(M_{i})$ as a concave function $f_{i}:[0,r_{i}]\rightarrow\mathbb{R}$.
Then 
\[
f_{1}\ast f_{3}(s)\geq f_{2}(s)\geq\begin{cases}
f_{1}(s) & \mbox{if }0\leq s\leq r_{1}\\
f_{1}(r_{1})+f_{3}(s-r_{1}) & \mbox{if }r_{1}\leq s\leq r_{2}
\end{cases}
\]
with equality for $s=0$ and $s=r_{2}$. In particular,\textup{
\[
\begin{array}{ccccc}
t_{F}^{\max}(\underline{\mathcal{E}}_{1}) & \leq & t_{F}^{\max}(\underline{\mathcal{E}}_{2}) & \leq & \max\left\{ t_{F}^{\max}(\underline{\mathcal{E}}_{1}),t_{F}^{\max}(\underline{\mathcal{E}}_{3})\right\} ,\\
t_{F}^{\min}(\underline{\mathcal{E}}_{3}) & \geq & t_{F}^{\min}(\underline{\mathcal{E}}_{2}) & \geq & \min\left\{ t_{F}^{\min}(\underline{\mathcal{E}}_{1}),t_{F}^{\min}(\underline{\mathcal{E}}_{3})\right\} ,
\end{array}
\]
\[
\mbox{and}\qquad\qquad t_{F}(\underline{\mathcal{E}}_{2})\leq t_{F}(\underline{\mathcal{E}}_{1})\ast t_{F}(\underline{\mathcal{E}}_{3})\quad\mbox{in}\quad\mathbb{Q}_{\geq}^{r_{2}}.\qquad\qquad
\]
}\textup{\emph{Moreover, $t_{F}(\underline{\mathcal{E}}_{2})=t_{F}(\underline{\mathcal{E}}_{1})\ast t_{F}(\underline{\mathcal{E}}_{3})$
if and only if for every $\gamma\in\mathbb{Q}$, 
\[
0\rightarrow\mathcal{F}_{F}^{\gamma}(\underline{\mathcal{E}}_{1})\rightarrow\mathcal{F}_{F}^{\gamma}(\underline{\mathcal{E}}_{2})\rightarrow\mathcal{F}_{F}^{\gamma}(\underline{\mathcal{E}}_{3})\rightarrow0
\]
is exact.}}\end{prop}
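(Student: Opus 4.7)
My plan is to deduce this from the abstract Harder--Narasimhan formalism in quasi-abelian categories, exactly as was done for proposition~\ref{prop:ExactSeqInModt}. The point is that all the hypotheses needed to apply~\cite[Proposition~21]{Co16} or \cite[4.4.4]{An09} have already been verified in section~\ref{sub:AdmModifDef}: the category $\Modif_{X}^{ad}$ is quasi-abelian (its kernels and cokernels are inherited from $\Bun_X$ on each component, and the admissibility condition $t_{N,1}=0$ is stable under these operations since $\mathcal{O}_X$ has no strict subobject of positive slope and no strict quotient of negative slope), the function $\rank$ is additive on short exact sequences and constant on mono-epis, while $\deg$ is additive on short exact sequences and strictly increasing on proper mono-epis (as recalled just before the statement).

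With this structural input, the first step is to apply the general HN-theorem to read off the inequalities for $f_2$ from the semi-stable slope filtrations of the three terms. Concretely, for any $\gamma\in\mathbb{Q}$, one pulls back $\mathcal{F}_F^{\gamma}(\underline{\mathcal{E}}_2)$ to $\underline{\mathcal{E}}_1$ and pushes it forward to $\underline{\mathcal{E}}_3$, getting strict subobjects whose degrees are bounded above by $f_1$ and $f_3$ respectively; additivity of $\deg$ on the resulting exact sequence produces the pointwise bound $f_2(s) \leq f_1 \ast f_3(s)$ with equality at the endpoints $s=0$ and $s=r_2$. Dually, taking the image of $\mathcal{F}_F^\gamma(\underline{\mathcal{E}}_1)$ in $\underline{\mathcal{E}}_2$ and the preimage of $\mathcal{F}_F^\gamma(\underline{\mathcal{E}}_3)$ in $\underline{\mathcal{E}}_2$ gives the lower piecewise bound. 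The $\max$/$\min$ statements for $t_F^{\max}$ and $t_F^{\min}$ are then immediate from the behavior of the endpoints of concave polygons.

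For the characterization of equality, the plan is to use the standard criterion: $t_F(\underline{\mathcal{E}}_2) = t_F(\underline{\mathcal{E}}_1) \ast t_F(\underline{\mathcal{E}}_3)$ if and only if, for each $\gamma$, the degree inequality at that $\gamma$ is an equality, which by strictness of $\deg$ on mono-epis in $\Modif_X^{ad}$ forces $\mathcal{F}_F^\gamma(\underline{\mathcal{E}}_1)$ to be precisely the kernel of the induced map $\mathcal{F}_F^\gamma(\underline{\mathcal{E}}_2) \to \underline{\mathcal{E}}_3$ and $\mathcal{F}_F^\gamma(\underline{\mathcal{E}}_3)$ its (strict) image. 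Together these say that the sequence of strict subobjects at level $\gamma$ is short exact.

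I do not expect a real obstacle here: the only thing that could go wrong is the quasi-abelian structure of $\Modif_X^{ad}$ (in particular the stability of strict monos and strict epis under pushout and pullback), but this reduces to the corresponding statement for $\Bun_X$ applied componentwise to $(\mathcal{E}_1, \mathcal{E}_2)$, plus the fact that a strict subobject of an admissible modification is again admissible (its underlying $\mathcal{E}_1$-piece is a sub-bundle of a semi-stable bundle of slope $0$ of the same generic rank on the $\omega$-level, hence of slope $0$). Once this is in hand, the proposition is a formal consequence of the general machinery, and no further computation is needed.
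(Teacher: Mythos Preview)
Your proposal is correct and matches the paper's approach exactly: the paper's own proof is the single sentence ``Again, see~\cite[Proposition 21]{Co16} or \cite[4.4.4]{An09},'' deferring entirely to the abstract Harder--Narasimhan formalism after the quasi-abelian structure and the rank/degree functions on $\Modif_X^{ad}$ have been set up in section~\ref{sub:DefFarguesFiltrOnModif}. Your sketch of how that machinery unwinds (pullback/pushforward of the $\mathcal{F}_F^\gamma$ and the strictness of $\deg$ on mono-epis forcing the equality criterion) is a faithful account of what those references prove, so nothing further is needed.
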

\begin{proof}
Again, see~\cite[Proposition 21]{Co16} or \cite[4.4.4]{An09}. \end{proof}
\begin{prop}
\label{prop:OnAdmModif_CompFarguesNewton}For every admissible modification
$\underline{\mathcal{E}}$ of rank $r\in\mathbb{N}$, 
\[
t_{F}(\underline{\mathcal{E}})\leq t_{N}(\underline{\mathcal{E}})\quad\mbox{in}\quad\mathbb{Q}_{\geq}^{r}.
\]
\end{prop}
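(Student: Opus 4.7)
The plan is to compare the two concave polygons at integer values and use concavity. By the identification of $\mathbb{Q}_\geq^r$ with concave piecewise-linear functions on $[0,r]$ (affine between consecutive integers), it suffices to prove
\[
t_F(\underline{\mathcal{E}})(k)\leq t_N(\underline{\mathcal{E}})(k)\quad\mbox{for each integer }k\in\{0,\dots,r\},
\]
with equality at $k=r$. Both endpoints are immediate: at $k=0$ both polygons vanish, and at $k=r$ both equal $\deg(\mathcal{E}_2)$, since $\deg\underline{\mathcal{E}}=\deg\mathcal{E}_2$ by definition of degree on $\Modif_X^{ad}$, while $t_N(\underline{\mathcal{E}})(r)=\deg\mathcal{F}_N(\mathcal{E}_2)=\deg\mathcal{E}_2$.

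For the intermediate inequalities, first identify strict subobjects in $\Modif_X^{ad}$. By \ref{sub:AdmModifDef}, the functor $\omega$ induces an isomorphism $\Sub^{ad}(\underline{\mathcal{E}})\simeq\Sub(\omega(\underline{\mathcal{E}}))$. Hence a rank-$k$ strict subobject $\underline{\mathcal{E}}'=(\mathcal{E}'_1,\mathcal{E}'_2,f')\subset\underline{\mathcal{E}}$ is determined by a $k$-dimensional $E$-subspace $W\subset\omega(\underline{\mathcal{E}})$: namely $\mathcal{E}'_1=W\otimes_E\mathcal{O}_X\simeq\mathcal{O}_X^k$ (semi-stable of slope $0$), and $\mathcal{E}'_2$ is the unique strict subobject of $\mathcal{E}_2$ in $\Bun_X$ whose generic fiber is $f(\mathcal{E}'_{1,\eta})$; equivalently, $\mathcal{E}'_2$ is the saturation of $f(\mathcal{E}'_1)$ inside $\mathcal{E}_2$.

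Next, bound the degree of such an $\underline{\mathcal{E}}'$: by the definition of $\deg$ on $\Modif_X^{ad}$ we have $\deg(\underline{\mathcal{E}}')=\deg(\mathcal{E}'_2)$, and since $\mathcal{E}'_2$ is a rank-$k$ strict subobject of $\mathcal{E}_2$ in $\Bun_X$, the definition of the Newton (HN) polygon of $\mathcal{E}_2$ yields
\[
\deg(\mathcal{E}'_2)\leq t_N(\mathcal{E}_2)(k)=t_N(\underline{\mathcal{E}})(k).
\]
Taking the supremum of $\deg(\underline{\mathcal{E}}')$ over all rank-$k$ strict subobjects gives $t_F(\underline{\mathcal{E}})(k)\leq t_N(\underline{\mathcal{E}})(k)$, which is the desired inequality at each integer $k$.

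There is essentially no hard step here: the argument is the standard observation that forgetting the first component of a strict subobject produces a strict sub-bundle of $\mathcal{E}_2$ of the same rank and same degree, so that the Fargues polygon of $\underline{\mathcal{E}}$ is dominated by the HN polygon of $\mathcal{E}_2$. The only point worth double-checking is that strict subobjects in $\Modif_X^{ad}$ indeed have $\mathcal{E}'_2$ a \emph{strict} subobject of $\mathcal{E}_2$ in $\Bun_X$ (so that its degree is controlled by $t_N$), which is exactly how strict mono-epis were characterized in \ref{sub:DefFarguesFiltrOnModif}; alternatively this result is just the special case $t_{N,1}=0$ of proposition~\ref{prop:CompNewtonHodgeModif} combined with $t_F\leq t_N$ via the sub-bundle argument above.
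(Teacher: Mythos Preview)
Your approach is essentially the same as the paper's: strict subobjects of $\underline{\mathcal{E}}$ in $\Modif_X^{ad}$ yield strict subobjects of $\mathcal{E}_2$ in $\Bun_X$ of the same rank and degree, so the Fargues polygon is dominated by the Newton polygon of $\mathcal{E}_2$. The paper phrases this by looking only at the \emph{breaks} of $t_F(\underline{\mathcal{E}})$, where the Harder--Narasimhan step $\mathcal{F}_F^\gamma(\underline{\mathcal{E}})$ is an actual strict subobject realizing $(\rank,\deg)=(r_\gamma,t_F(r_\gamma))$; concavity of $t_N$ then finishes.

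One imprecision in your write-up: the sentence ``taking the supremum of $\deg(\underline{\mathcal{E}}')$ over all rank-$k$ strict subobjects gives $t_F(\underline{\mathcal{E}})(k)\leq t_N(\underline{\mathcal{E}})(k)$'' is not literally correct. At an integer $k$ that is \emph{not} a break of $t_F$, the supremum of degrees of rank-$k$ strict subobjects can be strictly smaller than $t_F(k)$ (e.g.\ if $\underline{\mathcal{E}}$ is Fargues-semistable of slope $1/2$ and rank $2$, then $t_F(1)=1/2$ but every rank-$1$ strict subobject has integer degree $\leq 0$). The inequality $\sup\leq t_N(k)$ therefore does not by itself bound $t_F(k)$. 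The fix is exactly what the paper does: restrict attention to the breaks of $t_F$, where the HN filtration furnishes a strict subobject with $\deg=t_F(r_\gamma)$, and then use concavity of $t_N$ and linearity of $t_F$ between breaks.
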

\begin{proof}
The breaks of the concave polygon $t_{F}(\underline{\mathcal{E}})$
have coordinates
\[
\left(\rank,\deg\right)\left(\mathcal{F}_{F}^{\gamma}(\underline{\mathcal{E}})_{2}\right)\in\left\{ 0,\cdots,r\right\} \times\mathbb{Z}
\]
for $\gamma\in\mathbb{Q}$, where $\mathcal{F}_{F}^{\gamma}(\underline{\mathcal{E}})_{2}$
is a strict subobject of $\mathcal{E}_{2}$ in $\Bun_{X}$, equal
to $\mathcal{E}_{2}$ for $\gamma\ll0$. Thus by definition of $\mathcal{F}_{N}(\mathcal{E}_{2})$,
we find that $t_{F}(\underline{\mathcal{E}})$ lies below $t_{N}(\mathcal{E}_{2})=t_{N}(\underline{\mathcal{E}})$
and both polygons have the same terminal points, which proves the
proposition.
\end{proof}

\subsubsection{~}

Let $\underline{\mathcal{E}}=(\mathcal{E}_{1},\mathcal{E}_{2},\alpha)$
be an admissible modification and set $V=\Gamma(X,\mathcal{E}_{1})$,
so that $\mathcal{E}_{1,\eta}=V_{E(X)}$ and $\mathcal{E}_{1}(\infty)=V_{C}$.
We view $\mathcal{F}_{H}=\mathcal{F}_{H}(\underline{\mathcal{E}})$
as an element of $\mathbf{F}(V_{C})$, $\mathcal{F}_{N}^{\ast}=\alpha_{\eta}^{-1}(\mathcal{F}_{N}(\mathcal{E}_{2})_{\eta})$
as an element of $\mathbf{F}(V_{E(X)})$ and $\mathcal{F}_{F}^{\ast}=\Gamma(X,\mathcal{F}_{F}(\underline{\mathcal{E}})_{1})$
as an element of $\mathbf{F}(V)$. For every $\mathcal{F}\in\mathbf{F}(V_{E(X)})$,
define 
\[
\left\langle \mathcal{E}_{1},\mathcal{F}\right\rangle \eqd\sum_{\gamma\in\mathbb{R}}\gamma\deg\Gr_{\mathcal{F}}^{\gamma}\left(\mathcal{E}_{1}\right)\quad\mbox{and}\quad\left\langle \mathcal{E}_{2},\mathcal{F}\right\rangle \eqd\sum_{\gamma\in\mathbb{R}}\gamma\deg\Gr_{\mathcal{F}}^{\gamma}\left(\mathcal{E}_{2}\right).
\]
Here $\Gr_{\mathcal{F}}^{\gamma}(\mathcal{E}_{i}):=\mathcal{F}^{\geq\gamma}(\mathcal{E}_{i})/\mathcal{F}^{>\gamma}(\mathcal{E}_{i})$
where $\mathcal{F}^{\geq\gamma}(\mathcal{E}_{i})$ and $\mathcal{F}^{>\gamma}(\mathcal{E}_{i})$
are the strict subobjects of $\mathcal{E}_{i}$ with generic fiber
$\mathcal{F}^{\geq\gamma}$ and $\mathcal{F}^{>\gamma}$ in $V_{E(X)}=\mathcal{E}_{1,\eta}$
if $i=1$, or $\alpha_{\eta}(\mathcal{F}^{\geq\gamma})$ and $\alpha_{\eta}(\mathcal{F}^{>\gamma})$
in $\mathcal{E}_{2,\eta}$ if $i=2$. Thus whenever $\{\gamma_{s}>\cdots>\gamma_{0}\}\subset\mathbb{R}$
contains 
\[
\Jump(\mathcal{F})\eqd\left\{ \gamma\in\mathbb{R}:\Gr_{\mathcal{F}}^{\gamma}\neq0\right\} ,
\]
we have for any $i\in\{1,2\}$ the following equality: 
\[
\left\langle \mathcal{E}_{i},\mathcal{F}\right\rangle =\gamma_{0}\deg(\mathcal{E}_{i})+\sum_{j=1}^{s}\left(\gamma_{j}-\gamma_{j-1}\right)\deg\mathcal{F}^{\gamma_{j}}\left(\mathcal{E}_{i}\right).
\]
Since $\mathcal{E}_{1}$ is semi-stable of slope $0$, $\left\langle \mathcal{E}_{1},\mathcal{F}\right\rangle \leq0$
with equality if and only if each $\mathcal{F}^{\gamma_{j}}(\mathcal{E}_{1})$
is of degree $0$. We thus obtain: for every $\mathcal{F}\in\mathbf{F}(V_{E(X)})$,
\begin{eqnarray*}
\left\langle \mathcal{E}_{1},\mathcal{F}\right\rangle \geq0 & \iff & \left\langle \mathcal{E}_{1},\mathcal{F}\right\rangle =0,\\
 & \iff & \forall\gamma\in\mathbb{R},\,\mathcal{F}^{\gamma}(\mathcal{E}_{1})\mbox{ is semi-stable of slope }0,\\
 & \iff & \mathcal{F}\in\mathbf{F}(V).
\end{eqnarray*}

\begin{prop}
\label{prop:EquivCond4Fargues=00003DNewton}With notations as above,
the following conditions are equivalent:
\begin{eqnarray*}
\mathcal{F}_{F}^{\ast}=\mathcal{F}_{N}^{\ast} & \iff & \mathcal{F}_{N}^{\ast}\in\mathbf{F}(V),\\
 & \iff & \left\langle \mathcal{E}_{1},\mathcal{F}_{N}^{\ast}\right\rangle \geq0,\\
 & \iff & \left\langle \mathcal{E}_{1},\mathcal{F}_{N}^{\ast}\right\rangle =0,\\
 & \iff & \forall\gamma\in\mathbb{R},\,(\mathcal{F}_{N}^{\ast})^{\gamma}(\mathcal{E}_{1})\mbox{ is semi-stable of slope }0.
\end{eqnarray*}
If $\underline{\mathcal{E}}=(\mathcal{E}_{1},\mathcal{E}_{2},\alpha)$
is effective, $(\mathcal{F}_{N}^{\ast})^{\gamma}(\mathcal{E}_{1})=\alpha^{-1}(\mathcal{F}_{N}^{\gamma}(\mathcal{E}_{2}))$
thus also
\[
\mathcal{F}_{F}^{\ast}=\mathcal{F}_{N}^{\ast}\iff\forall\gamma\in\mathbb{R},\,\alpha^{-1}(\mathcal{F}_{N}^{\gamma}(\mathcal{E}_{2}))\mbox{ is semi-stable of slope }0.
\]
\end{prop}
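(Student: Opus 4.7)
The plan is to exploit the equivalences already established in the discussion immediately preceding the statement. Applied to $\mathcal{F}_{N}^{\ast} \in \mathbf{F}(V_{E(X)})$, that discussion yields at once: $\mathcal{F}_{N}^{\ast} \in \mathbf{F}(V)$ iff $\langle \mathcal{E}_{1}, \mathcal{F}_{N}^{\ast}\rangle = 0$ iff $\langle \mathcal{E}_{1}, \mathcal{F}_{N}^{\ast}\rangle \geq 0$ iff each $(\mathcal{F}_{N}^{\ast})^{\gamma}(\mathcal{E}_{1})$ is semi-stable of slope $0$ in $\Bun_X$. It therefore only remains to establish the equivalence of these four conditions with $\mathcal{F}_{F}^{\ast} = \mathcal{F}_{N}^{\ast}$, and to rephrase the last of them in the effective case.

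The direction $\mathcal{F}_{F}^{\ast} = \mathcal{F}_{N}^{\ast} \Rightarrow \mathcal{F}_{N}^{\ast} \in \mathbf{F}(V)$ is trivial, since $\mathcal{F}_{F}^{\ast} = \Gamma(X, \mathcal{F}_{F}(\underline{\mathcal{E}})_{1})$ is by construction an element of $\mathbf{F}(V)$.

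For the converse, assume that each $(\mathcal{F}_{N}^{\ast})^{\gamma}(\mathcal{E}_{1})$ is semi-stable of slope $0$ in $\Bun_X$. Let $\underline{\mathcal{E}}^{\geq\gamma}$ denote the strict subobject of $\underline{\mathcal{E}}$ in $\Modif_X^{ad}$ whose first and second components are $(\mathcal{F}_{N}^{\ast})^{\gamma}(\mathcal{E}_{1})$ and $\mathcal{F}_{N}^{\gamma}(\mathcal{E}_{2})$ respectively (these are identified by $\alpha$ on $X \setminus \{\infty\}$ since their generic fibers correspond under $\alpha_\eta$); admissibility is guaranteed precisely by the semi-stability hypothesis on the $\mathcal{E}_{1}$-component. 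The graded piece $\underline{\mathcal{E}}^{\geq\gamma}/\underline{\mathcal{E}}^{>\gamma}$ then has $\mathcal{E}_{2}$-component semi-stable of slope $\gamma$ by construction of $\mathcal{F}_{N}$, and $\mathcal{E}_{1}$-component semi-stable of slope $0$ as a quotient of two semi-stable-of-slope-$0$ subbundles of $\mathcal{E}_{1}$ in $\Bun_X$. Any strict sub-modification of such a graded piece has $\mathcal{E}_{2}$-component a subbundle of a semi-stable bundle of slope $\gamma$, and hence of Fargues slope $\leq \gamma$; so the graded piece is itself semi-stable of Fargues slope exactly $\gamma$ in $\Modif_X^{ad}$. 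Since the jumps $\gamma$ of $\mathcal{F}_{N}$ are strictly decreasing, uniqueness of Harder-Narasimhan filtrations in the quasi-abelian category $\Modif_X^{ad}$ forces $\underline{\mathcal{E}}^{\geq\bullet} = \mathcal{F}_{F}(\underline{\mathcal{E}})$, and taking global sections of the $\mathcal{E}_{1}$-components yields $\mathcal{F}_{F}^{\ast} = \mathcal{F}_{N}^{\ast}$.

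In the effective case, $\alpha : \mathcal{E}_{1} \to \mathcal{E}_{2}$ is an honest $\mathcal{O}_{X}$-linear morphism, so $\alpha^{-1}(\mathcal{F}_{N}^{\gamma}(\mathcal{E}_{2}))$ is a well-defined strict subobject of $\mathcal{E}_{1}$ whose generic fiber is $\alpha_{\eta}^{-1}(\mathcal{F}_{N}^{\gamma}(\mathcal{E}_{2})_{\eta}) = (\mathcal{F}_{N}^{\ast})^{\gamma}$; it therefore coincides with $(\mathcal{F}_{N}^{\ast})^{\gamma}(\mathcal{E}_{1})$, and the final displayed equivalence is merely a restatement of condition $(5)$. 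The key step requiring care is the HN-uniqueness argument above, and within it the verification that the graded pieces are semi-stable in $\Modif_X^{ad}$; this ultimately rests on the slope inequality for subbundles of the semi-stable $\mathcal{E}_{2}$-component.
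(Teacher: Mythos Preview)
Your argument is correct, but it takes a different route from the paper's. The paper invokes \cite[Proposition~6]{Co16}, which gives a variational characterization: $\mathcal{F}_{N}^{\ast}$ is the unique $\mathcal{F}\in\mathbf{F}(V_{E(X)})$ with $\langle\mathcal{E}_{2},\mathcal{G}\rangle\leq\langle\mathcal{F},\mathcal{G}\rangle$ for all $\mathcal{G}\in\mathbf{F}(V_{E(X)})$ (equality at $\mathcal{G}=\mathcal{F}$), and $\mathcal{F}_{F}^{\ast}$ is characterized by the same inequality restricted to $\mathbf{F}(V)$. Hence if $\mathcal{F}_{N}^{\ast}$ happens to lie in $\mathbf{F}(V)$, it automatically satisfies the defining property of $\mathcal{F}_{F}^{\ast}$ and the two coincide.

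You instead argue directly: once each $(\mathcal{F}_{N}^{\ast})^{\gamma}(\mathcal{E}_{1})$ is semi-stable of slope $0$, the pair $((\mathcal{F}_{N}^{\ast})^{\gamma}(\mathcal{E}_{1}),\mathcal{F}_{N}^{\gamma}(\mathcal{E}_{2}))$ defines a strict subobject of $\underline{\mathcal{E}}$ in $\Modif_{X}^{ad}$, and the resulting filtration has graded pieces that are semi-stable in $\Modif_{X}^{ad}$ (because any strict sub-modification has $\mathcal{E}_{2}$-component of slope $\leq\gamma$); uniqueness of Harder--Narasimhan filtrations then forces this to be $\mathcal{F}_{F}(\underline{\mathcal{E}})$. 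This is more elementary and self-contained, avoiding the external reference. The paper's approach, on the other hand, situates the result within a general optimization framework that is reused later (notably in the inequality $\left\Vert t_{N}\right\Vert^{2}\leq\langle t_{H},t_{N}\rangle$ and its equality case). Both are clean; yours is the more hands-on verification, the paper's the more structural one.
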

\begin{proof}
By~\cite[Proposition 6]{Co16}: $(1)$ $\mathcal{F}_{N}^{\ast}$
is the unique element $\mathcal{F}$ of $\mathbf{F}(V_{E(X)})$ such
that $\left\langle \mathcal{E}_{2},\mathcal{G}\right\rangle \leq\left\langle \mathcal{F},\mathcal{G}\right\rangle $
for every $\mathcal{G}\in\mathbf{F}(V_{E(X)})$ with equality for
$\mathcal{G}=\mathcal{F}$, and $(2)$ $\mathcal{F}_{F}^{\ast}$ is
the unique element $f$ of $\mathbf{F}(V)$ such that $\left\langle \mathcal{E}_{2},g\right\rangle \leq\left\langle f,g\right\rangle $
for every $g\in\mathbf{F}(V)$ with equality for $g=f$. Thus $\mathcal{F}_{F}^{\ast}=\mathcal{F}_{N}^{\ast}\Leftrightarrow\mathcal{F}_{N}^{\ast}\in\mathbf{F}(V)$
and the proposition follows.
\end{proof}

\subsection{Hodge-Tate modules\label{sub:HodgeTateModDef}}

\subsubsection{~}

Let $\HT_{E}^{B_{dR}}$ be the category of pairs $(V,\Xi)$ where
$V$ is a finite $E$-vector space and $\Xi$ is a $B_{dR}^{+}$-lattice
in $V_{dR}=V\otimes_{E}B_{dR}$. A morphism $F:(V,\Xi)\rightarrow(V',\Xi')$
is an $E$-linear morphism $f:V\rightarrow V'$ whose $B_{dR}$-linear
extension $f_{dR}:V_{dR}\rightarrow V_{dR}^{\prime}$ satisfies $f_{dR}(\Xi)\subset\Xi'$.
The kernel and cokernel of $F$ are given by
\[
\ker(F)=\left(\ker(f),\ker(f_{dR})\cap\Xi\right)\quad\mbox{and}\quad\mbox{coker}(F)=\left(V'/\mathrm{im}(f),\Xi'/\mbox{im}(f_{dR})\cap\Xi'\right).
\]
This defines a quasi-abelian rigid $E$-linear $\otimes$-category
with tensor product 
\[
(V_{1},\Xi_{1})\otimes(V_{2},\Xi_{2})\eqd(V_{1}\otimes_{E}V_{2},\Xi_{1}\otimes_{B_{dR}^{+}}\Xi_{2}),
\]
neutral object $(E,B_{dR}^{+})$ and duals, symmetric and exterior
powers given by 
\[
(V,\Xi)\eqd(V^{\vee},\Xi^{\vee}),\quad\Sym^{k}(V,\Xi)\eqd(\Sym^{k}V,\Sym^{k}\Xi),\quad\Lambda^{k}(V,\Xi)\eqd(\Lambda^{k}V,\Lambda^{k}\Xi)
\]
where the tensor product constructions are over $E$ or $B_{dR}^{+}$.

\subsubsection{~}

There is an (exact) $\otimes$-equivalence of $\otimes$-categories
\[
\HTT:\Modif_{X}^{ad}\rightarrow\HT_{E}^{B_{dR}},\qquad\HTT(\mathcal{E}_{1},\mathcal{E}_{2},f)\eqd\left(\Gamma(X,\mathcal{E}_{1}),f_{dR}^{-1}(\mathcal{E}_{2,dR}^{+})\right).
\]
The Hodge filtration $\mathcal{F}_{H}(V,\Xi)$ is the $\mathbb{Z}$-filtration
which is induced by $\Xi$ on the residue $V_{C}=V\otimes_{E}C$ of
the standard lattice $V_{dR}^{+}=V\otimes_{E}B_{dR}^{+}$ of $V_{dR}$:
for $\gamma\in\mathbb{Z}$, 
\[
\mathcal{F}_{H}^{\gamma}(V,\Xi)\eqd\frac{V_{dR}^{+}\cap\xi^{\gamma}\Xi+\xi V_{dR}^{+}}{\xi V_{dR}^{+}}\quad\mbox{in}\quad V_{C}=\frac{V_{dR}^{+}}{\xi V_{dR}^{+}}.
\]
We denote by $t_{H}(V,\Xi)$ the type of $\mathcal{F}_{H}(V,\Xi)$.
The rank and degree functions
\[
\rank:\sk\,\HT_{E}^{B_{dR}}\rightarrow\mathbb{N}\quad\mbox{and}\quad\deg:\sk\,\HT_{E}^{B_{dR}}\rightarrow\mathbb{Z}
\]
are respectively given by
\[
\begin{aligned}\rank(V,\Xi) & \eqd\dim_{E}(V)=\rank_{B_{dR}^{+}}(\Xi),\\
\deg(V,\Xi) & \eqd\nu(V_{dR}^{+},\Xi)=\deg\mathcal{F}_{H}(V,\Xi).
\end{aligned}
\]
We denote by $\mathcal{F}_{F}(V,\Xi)$ the corresponding Fargues $\mathbb{Q}$-filtration,
with type $t_{F}(V,\Xi)$ in $\mathbb{Q}_{\geq}^{r}$ if $r=\dim_{E}V$.
The Tate object is $\HTT(\underline{\mathcal{O}}_{X}\{1\})=\left(E(1),\xi^{-1}E(1)_{dR}^{+}\right)$. 
\begin{prop}
Let $f:(V_{1},\Xi_{1})\rightarrow(V_{2},\Xi_{2})$ be a mono-epi in
$\HT_{E}^{B_{dR}}$, so that $f:V_{1}\rightarrow V_{2}$ is an isomorphism
and $f_{dR}:\Xi_{1}\rightarrow\Xi_{2}$ is injective with cokernel
$Q$ of finite length. If $r=\dim_{E}V_{1}=\dim_{E}V_{2}$, then for
every $s\in[0,r]$, 
\[
0\leq t_{F}(V_{2},\Xi_{2})(s)-t_{F}(V_{1},\Xi_{1})(s)\leq\length_{B_{dR}^{+}}(Q).
\]
with equality on the left (resp.~right) for $s=0$ (resp.~$s=r$).
In particular, 
\[
0\leq\left\{ \begin{array}{c}
t_{F}^{\max}(V_{2},\Xi_{2})-t_{F}^{\max}(V_{1},\Xi_{1})\\
t_{F}^{\min}(V_{2},\Xi_{2})-t_{F}^{\min}(V_{1},\Xi_{1})
\end{array}\right\} \leq\length_{B_{dR}^{+}}(Q).
\]
\end{prop}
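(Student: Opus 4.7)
The plan is to transcribe the argument of proposition~\ref{prop:PseudoIsogInModt} into the category $\HT_E^{B_{dR}}$, which enjoys completely analogous formal properties (strict subobjects given by induced lattices, a rank-preserving degree-increasing notion of mono-epi). Setting $f_i=t_F(V_i,\Xi_i)$ and $(r_i,d_i)(\gamma)=(\rank,\deg)(\mathcal{F}_F^\gamma(V_i,\Xi_i))$, I would first reduce the claim to the two pointwise inequalities
\[
d_1(\gamma)\leq f_2(r_1(\gamma))\quad\mbox{and}\quad d_2(\gamma)\leq f_1(r_2(\gamma))+\length_{B_{dR}^+}(Q)\qquad\mbox{for every }\gamma\in\mathbb{Q}.
\]
Indeed the $f_i$'s are concave piecewise linear, pinned at the breakpoints $(r_i(\gamma),d_i(\gamma))$; once $f_2$ dominates the breakpoints of $f_1$ and $f_1+\length_{B_{dR}^+}(Q)$ dominates the breakpoints of $f_2$, concavity propagates the inequalities to all of $[0,r]$ via the elementary fact that a concave function dominates any chord lying below its values at the chord's endpoints.

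The two pointwise inequalities follow from a straightforward analysis of strict subobjects: a strict subobject of $(V_i,\Xi_i)$ in $\HT_E^{B_{dR}}$ is precisely a pair $(W,W_{dR}\cap\Xi_i)$ for some $E$-subspace $W\subset V_i$. Under the identification $V_1\stackrel{\sim}{\longrightarrow}V_2$ induced by $f$, any $E$-subspace $W$ therefore yields strict subobjects $W_i=(W,W_{dR}\cap\Xi_i)$ of the same rank $\dim_E W$, fitting in a mono-epi $W_1\to W_2$ whose cokernel $W_{dR}\cap\Xi_2/W_{dR}\cap\Xi_1$ embeds into $Q$; in particular $0\leq\deg(W_2)-\deg(W_1)\leq\length_{B_{dR}^+}(Q)$. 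Applying this observation to the underlying $E$-subspace of $\mathcal{F}_F^\gamma(V_1,\Xi_1)$ gives a strict subobject of $(V_2,\Xi_2)$ of rank $r_1(\gamma)$ and degree $\geq d_1(\gamma)$, hence $f_2(r_1(\gamma))\geq d_1(\gamma)$; symmetrically, applying it to the underlying $E$-subspace of $\mathcal{F}_F^\gamma(V_2,\Xi_2)$ gives a strict subobject of $(V_1,\Xi_1)$ of rank $r_2(\gamma)$ and degree $\geq d_2(\gamma)-\length_{B_{dR}^+}(Q)$, hence $f_1(r_2(\gamma))\geq d_2(\gamma)-\length_{B_{dR}^+}(Q)$.

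Finally, equality at $s=0$ is trivial, equality at $s=r$ follows from $\deg(V_2,\Xi_2)-\deg(V_1,\Xi_1)=\length_{B_{dR}^+}(Q)$ (a rank-one computation: the degree of $(V,\Xi)$ equals the length of $\Xi/V_{dR}^+$), and the ``In particular'' bounds on $t_F^{\max}$ and $t_F^{\min}$ are obtained by examining the initial and terminal slopes of the polygons, using that the first (resp.~last) linear piece of each $f_i$ has integer horizontal length at least $1$, so that the bound $(f_2-f_1)(s)\leq\length_{B_{dR}^+}(Q)$ at the relevant breakpoint forces the slope difference to be at most $\length_{B_{dR}^+}(Q)$.

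I do not expect any genuine obstacle: this is a line-by-line adaptation of proposition~\ref{prop:PseudoIsogInModt}, and in fact simpler, since in $\HT_E^{B_{dR}}$ the degree of a mono-epi increases by exactly $\length_{B_{dR}^+}$ of the cokernel, with no Frobenius-induced $(q-1)$ factor. The only step requiring a little care is the identification of strict subobjects and the verification that pulling back along $f$ decreases degree by at most $\length_{B_{dR}^+}(Q)$, both of which are formal consequences of the description of kernels/cokernels recalled at the start of section~\ref{sub:HodgeTateModDef}.
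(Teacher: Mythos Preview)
Your proposal is correct and is exactly the approach the paper takes: its proof consists solely of the sentence ``This is analogous to proposition~\ref{prop:PseudoIsogInModt}'', and you have spelled out that analogy faithfully. One harmless slip: the degree of $(V,\Xi)$ is $\nu(V_{dR}^+,\Xi)$, not literally $\length(\Xi/V_{dR}^+)$ unless $V_{dR}^+\subset\Xi$; but the identity you actually use, $\deg(V_2,\Xi_2)-\deg(V_1,\Xi_1)=\length_{B_{dR}^+}(Q)$, follows from the additivity of $\nu$ and the inclusion $\Xi_1\subset\Xi_2$.
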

\begin{proof}
This is analogous to proposition~\ref{prop:PseudoIsogInModt}.
\end{proof}

\subsubsection{~}

There is also an exact and fully faithful $\otimes$-functor from
the category $\HT_{E}^{B_{dR}}$ to the quasi-abelian $\otimes$-category
denoted by $\Norm_{E}^{B_{dR}}$ in \cite[\S 5.2]{Co16}, which maps
$(V,\Xi)$ to $(V,\alpha_{\Xi})$ where $\alpha_{\Xi}:V_{dR}\rightarrow\mathbb{R}_{+}$
is the gauge norm of the $B_{dR}^{+}$-lattice $\Xi\subset V_{dR}$.
This functor is plainly compatible with the rank and degree functions
of both categories (for the appropriate normalization of the valuation
on $B_{dR}$), and its essential image is stable under strict subobjects.
It is therefore also compatible with the corresponding Harder-Narasimhan
filtrations. Since the Harder-Narasimhan filtration on $\Norm_{E}^{B_{dR}}$
is compatible with tensor products, duals, symmetric and exterior
powers by \cite[Proposition 22]{Co16}, we obtain the following proposition:
\begin{prop}
\label{prop:OnHT+AdModif_Farguestensor}The Fargues filtrations $\mathcal{F}_{F}$
on $\HT_{E}^{B_{dR}}$ and $\Modif_{X}^{ad}$ are compatible with
tensor products, duals, symmetric and exterior powers.
\end{prop}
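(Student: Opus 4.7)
The plan is to deduce the proposition in two moves: first transport the statement from $\Modif_X^{ad}$ to $\HT_E^{B_{dR}}$ via the $\otimes$-equivalence $\HTT$, and then transport it from $\HT_E^{B_{dR}}$ to $\Norm_E^{B_{dR}}$ via the exact fully faithful $\otimes$-functor described just before the proposition, where the desired compatibilities are already known by \cite[Proposition 22]{Co16}.

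First I would observe that $\HTT:\Modif_X^{ad}\to\HT_E^{B_{dR}}$ is an exact $\otimes$-equivalence compatible with rank and degree (both sides compute $\dim_E V=\dim_E\Gamma(X,\mathcal{E}_1)$ and $\nu(V_{dR}^+,\Xi)=\deg\mathcal{E}_2$). Since the Fargues filtration is characterised purely in terms of the rank and degree functions on a quasi-abelian category, $\HTT$ identifies Fargues filtrations on both sides, reducing the proposition to the statement for $\HT_E^{B_{dR}}$. It then suffices to prove that the Fargues filtration on $\HT_E^{B_{dR}}$ is compatible with $\otimes$, duals, $\Sym^k$ and $\Lambda^k$.

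Next I would use the $\otimes$-functor $\iota:\HT_E^{B_{dR}}\to\Norm_E^{B_{dR}}$, $(V,\Xi)\mapsto(V,\alpha_\Xi)$. The key points to verify are: $(i)$ $\iota$ is a $\otimes$-functor, i.e.\ $\alpha_{\Xi_1\otimes\Xi_2}=\alpha_{\Xi_1}\otimes\alpha_{\Xi_2}$ and similarly for duals, symmetric and exterior powers, which is a standard property of gauge norms of lattices over a discrete valuation ring; $(ii)$ $\iota$ is exact, fully faithful and compatible with rank and degree (for the normalised valuation on $B_{dR}$), as recalled in the paragraph just above the proposition; and $(iii)$ the essential image of $\iota$ is stable under strict subobjects in $\Norm_E^{B_{dR}}$, which is what allows one to identify the Harder-Narasimhan filtration of $(V,\Xi)$ with that of $(V,\alpha_\Xi)$. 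Granting this, the compatibility of the HN filtration on $\Norm_E^{B_{dR}}$ with tensor constructions established in \cite[Proposition 22]{Co16} transfers back through $\iota$, provided we also know that the essential image is stable under $\otimes$, $\Hom$, $\Sym^k$ and $\Lambda^k$; but this is immediate since a tensor product (etc.) of $B_{dR}^+$-lattices is again a $B_{dR}^+$-lattice, whose gauge norm is exactly the tensor product gauge norm by point $(i)$.

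The main obstacle, and the only piece requiring care, is point $(i)$: one must check that for lattices in $V_{dR}$ the gauge norms multiply correctly under the four tensor constructions, and that this matches the definition of the tensor, dual, symmetric and exterior constructions in $\Norm_E^{B_{dR}}$ as given in \cite[\S 5.2]{Co16}. Once this compatibility of $\iota$ with the $\otimes$-structure is in place, the proposition follows formally from \cite[Proposition 22]{Co16} combined with the $\otimes$-equivalence $\HTT$; no further geometric input on the curve $X$ is needed.
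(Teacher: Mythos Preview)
Your proposal is correct and follows essentially the same route as the paper: reduce from $\Modif_X^{ad}$ to $\HT_E^{B_{dR}}$ via the $\otimes$-equivalence $\HTT$, then embed $\HT_E^{B_{dR}}$ into $\Norm_E^{B_{dR}}$ via the gauge-norm functor and invoke \cite[Proposition~22]{Co16}. The paper simply asserts that this embedding is an exact fully faithful $\otimes$-functor with the requisite properties, whereas you spell out more carefully what must be checked (in particular the compatibility of gauge norms with tensor constructions), but the argument is the same.
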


\subsubsection{~}

Fix an admissible modification $\underline{\mathcal{E}}$ of rank
$r$ and set $\HTT(\underline{\mathcal{E}})=\left(V,\Xi\right)$.
Then 
\[
\mathcal{F}_{H}=\mathcal{F}_{H}(\underline{\mathcal{E}})=\mathcal{F}_{H}(V,\Xi)
\]
is the $\mathbb{Z}$-filtration on $V_{C}=\mathcal{E}_{1}(\infty)$
which is denoted by $\loc(\alpha_{\Xi})$ in \cite[6.4]{Co16}, where
$\alpha_{\Xi}$ is the gauge norm of the $B_{dR}^{+}$-lattice $\Xi\subset V_{dR}$.
For any $\mathcal{F}\in\mathbf{F}(V_{E(X)})$, we set
\[
\left\langle \underline{\mathcal{E}},\mathcal{F}\right\rangle \eqd\left\langle \mathcal{E}_{2},\mathcal{F}\right\rangle -\left\langle \mathcal{E}_{1},\mathcal{F}\right\rangle .
\]
Thus if $\Jump(\mathcal{F})\subset\{\gamma_{s}>\cdots>\gamma_{0}\}\subset\mathbb{R}$
for some $s\in\mathbb{N}$, then
\[
\left\langle \underline{\mathcal{E}},\mathcal{F}\right\rangle =\gamma_{0}\deg(\mathcal{E}_{2})+\sum_{i=1}^{s}(\gamma_{i}-\gamma_{i-1})\left(\deg\mathcal{F}^{\gamma_{i}}(\mathcal{E}_{2})-\deg\mathcal{F}^{\gamma_{i}}(\mathcal{E}_{1})\right).
\]
Suppose first that $\underline{\mathcal{E}}=(\mathcal{E}_{1},\mathcal{E}_{2},f)$
is effective. Then $\mathcal{F}^{\gamma}(\mathcal{E}_{1})=f^{-1}(\mathcal{F}^{\gamma}(\mathcal{E}_{2}))$
and 
\begin{eqnarray*}
\left\langle \underline{\mathcal{E}},\mathcal{F}\right\rangle  & = & \gamma_{0}\deg(\mathcal{Q})+\sum_{i=1}^{s}(\gamma_{i}-\gamma_{i-1})\left(\deg\mathcal{F}^{\gamma_{i}}(\mathcal{Q})\right)\\
 & = & \sum_{\gamma\in\mathbb{R}}\gamma\deg\Gr_{\mathcal{F}}^{\gamma}(\mathcal{Q})
\end{eqnarray*}
where $\mathcal{F}^{\gamma}(\mathcal{Q})$ is the image of $\mathcal{F}^{\gamma}(\mathcal{E}_{2})$
in the torsion sheaf $\mathcal{Q}=\mathcal{E}_{2}/f(\mathcal{E}_{1})$
on $X$ and $\Gr_{\mathcal{F}}^{\gamma}(\mathcal{Q})=\mathcal{F}^{\geq\gamma}(\mathcal{Q})/\mathcal{F}^{>\gamma}(\mathcal{Q})$.
These are skyscraper sheaves supported at $\infty$, with 
\[
\Gamma(X,\mathcal{Q})=\Xi/V_{dR}^{+}\quad\mbox{and}\quad\Gamma(X,\mathcal{F}^{\gamma}(Q))=\Xi\cap\mathcal{F}_{dR}^{\gamma}/V_{dR}^{+}\cap\mathcal{F}_{dR}^{\gamma}
\]
where $\mathcal{F}_{dR}\in\mathbf{F}(V_{dR})$ is the base change
of $\mathcal{F}$ through $E(X)\hookrightarrow B_{dR}$. Therefore
\[
\left\langle \underline{\mathcal{E}},\mathcal{F}\right\rangle =\sum\gamma\nu\left(\Gr_{\mathcal{F}_{dR}}^{\gamma}V_{dR}^{+},\Gr_{\mathcal{F}_{dR}}^{\gamma}\Xi\right)=\left\langle \overrightarrow{\circ\alpha_{\Xi}},\mathcal{F}_{dR}\right\rangle 
\]
where $\circ$ is the gauge norm of $V_{dR}^{+}\subset V_{dR}$ and
the right-hand side term is the Busemann scalar product, see \cite[6.4.15]{Co16}.
This formula still holds true for a non-necessarily effective admissible
modification $\underline{\mathcal{E}}$, since indeed for every $i\in\mathbb{Z}$,
\[
\left\langle \underline{\mathcal{E}}\{i\},\mathcal{F}\right\rangle =\left\langle \underline{\mathcal{E}},\mathcal{F}\right\rangle +i\deg\mathcal{F}\quad\mbox{and}\quad\left\langle \overrightarrow{\circ\alpha_{\xi^{-i}\Xi}},\mathcal{F}_{dR}\right\rangle =\left\langle \overrightarrow{\circ\alpha_{\Xi}},\mathcal{F}_{dR}\right\rangle +i\deg(\mathcal{F}_{dR}).
\]
Returning thus to the general case, we now obtain: 
\[
\left\langle \underline{\mathcal{E}},\mathcal{F}\right\rangle =\left\langle \overrightarrow{\circ\alpha_{\Xi}},\mathcal{F}_{dR}\right\rangle \leq\left\langle \loc(\alpha_{\Xi}),\loc(\mathcal{F}_{dR})\right\rangle =\left\langle \mathcal{F}_{H},\mathcal{F}_{C}\right\rangle \leq\left\langle t_{H},\mathbf{t}(\mathcal{F})\right\rangle .
\]
Here $t_{H}=\mathbf{t}(\mathcal{F}_{H})$ is the Hodge type of $\underline{\mathcal{E}}$
and $\mathcal{F}_{C}=\loc(\mathcal{F}_{dR})$ is the $\mathbb{R}$-filtration
on $V_{C}=V_{dR}^{+}/\xi V_{dR}^{+}$ which is induced by the $\mathbb{R}$-filtration
$\mathcal{F}_{dR}$ on $V_{dR}$, so that 
\[
\mathbf{t}(\mathcal{F}_{C})=\mathbf{t}(\mathcal{F}_{dR})=\mathbf{t}(\mathcal{F})
\]
in $\mathbb{R}_{\geq}^{r}$. The last pairing is the standard scalar
product on $\mathbb{R}_{\geq}^{r}\subset\mathbb{R}^{r}$, and the
two inequalities come from~\cite[4.2 \& 5.5]{Co14}. For $\mathcal{F}=\mathcal{F}_{N}^{\ast}=\mathcal{F}_{N}^{\ast}(\underline{\mathcal{E}})$,
we obtain
\[
\left\langle \underline{\mathcal{E}},\mathcal{F}_{N}^{\ast}\right\rangle \leq\left\langle t_{H},t_{N}\right\rangle 
\]
where $t_{N}=\mathbf{t}(\mathcal{F}_{N})$ is the Newton type of $\underline{\mathcal{E}}$.
Now we have already seen that 
\[
\left\langle \underline{\mathcal{E}},\mathcal{F}_{N}^{\ast}\right\rangle =\left\langle \mathcal{E}_{2},\mathcal{F}_{N}^{\ast}\right\rangle -\left\langle \mathcal{E}_{1},\mathcal{F}_{N}^{\ast}\right\rangle =\left\Vert t_{N}\right\Vert ^{2}-\left\langle \mathcal{E}_{1},\mathcal{F}_{N}^{\ast}\right\rangle 
\]
with $\left\langle \mathcal{E}_{1},\mathcal{F}_{N}^{\ast}\right\rangle \leq0$,
and we thus obtain the following inequalities: 
\[
\left\Vert t_{N}\right\Vert ^{2}-\left\langle t_{H},t_{N}\right\rangle \leq\left\langle \mathcal{E}_{1},\mathcal{F}_{N}^{\ast}\right\rangle \leq0.
\]

\begin{prop}
\label{prop:Crit4Fargues=00003DNewton}With notations as above, $\left\Vert t_{N}\right\Vert ^{2}\leq\left\langle t_{H},t_{N}\right\rangle $
and 
\[
\left\Vert t_{N}\right\Vert ^{2}=\left\langle t_{H},t_{N}\right\rangle \Longrightarrow\left\langle \mathcal{E}_{1},\mathcal{F}_{N}^{\ast}\right\rangle =0\iff\mathcal{F}_{N}^{\ast}=\mathcal{F}_{F}.
\]
\end{prop}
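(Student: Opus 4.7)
The plan is to extract both assertions directly from the chain of inequalities
\[
\|t_N\|^2 - \langle t_H, t_N\rangle \;\le\; \langle \mathcal{E}_1, \mathcal{F}_N^\ast\rangle \;\le\; 0
\]
established in the paragraph just preceding the proposition, combined with Proposition~\ref{prop:EquivCond4Fargues=00003DNewton}. No further geometric input is needed; the statement is essentially a bookkeeping corollary of what has already been proved.

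First, the inequality $\|t_N\|^2 \le \langle t_H, t_N\rangle$ follows immediately by transitivity through this chain. For the implication, I would observe that if equality $\|t_N\|^2 = \langle t_H, t_N\rangle$ holds, then the left end of the chain equals $0$ while the right end is also $0$, so the middle term $\langle \mathcal{E}_1, \mathcal{F}_N^\ast\rangle$ is squeezed to $0$. The remaining equivalence $\langle \mathcal{E}_1, \mathcal{F}_N^\ast\rangle = 0 \iff \mathcal{F}_N^\ast = \mathcal{F}_F$ is then exactly one of the equivalences packaged in Proposition~\ref{prop:EquivCond4Fargues=00003DNewton}, modulo the identification $\mathcal{F}_F = \mathcal{F}_F^\ast$ used in that statement (the Fargues filtration of $\underline{\mathcal{E}}$ in $\Modif_X^{ad}$ corresponds via $\omega$ to a filtration of $V$ by $E$-subspaces).

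The substantial work has already been absorbed into the preceding paragraphs: the identity $\langle \underline{\mathcal{E}}, \mathcal{F}_N^\ast\rangle = \|t_N\|^2 - \langle \mathcal{E}_1, \mathcal{F}_N^\ast\rangle$ rests on the maximizing property of the Harder--Narasimhan filtration $\mathcal{F}_N^\ast$ of $\mathcal{E}_2$ (which gives $\langle \mathcal{E}_2, \mathcal{F}_N^\ast\rangle = \|t_N\|^2$), and the upper bound $\langle \underline{\mathcal{E}}, \mathcal{F}\rangle \le \langle t_H, \mathbf{t}(\mathcal{F})\rangle$ comes from the Busemann pairing interpretation together with the two comparison inequalities from~\cite[4.2 \& 5.5]{Co14}. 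Accordingly I do not anticipate any substantive obstacle; the only point worth double-checking is that the normalizations used when passing between $\langle \overrightarrow{\circ \alpha_\Xi}, \mathcal{F}_{dR}\rangle$, $\langle \mathcal{F}_H, \mathcal{F}_C\rangle$ and $\langle t_H, \mathbf{t}(\mathcal{F})\rangle$ remain consistent throughout the squeeze, so that the vanishing of $\langle \mathcal{E}_1, \mathcal{F}_N^\ast\rangle$ really is the unique obstruction to $\mathcal{F}_F = \mathcal{F}_N$.
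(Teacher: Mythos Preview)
Your proposal is correct and follows exactly the paper's approach: the paper's proof is the single line ``This now follows from proposition~\ref{prop:EquivCond4Fargues=00003DNewton}'', relying on the displayed chain $\|t_N\|^2 - \langle t_H, t_N\rangle \le \langle \mathcal{E}_1, \mathcal{F}_N^\ast\rangle \le 0$ established just before. You have simply made the squeeze argument explicit, which is precisely what the paper intends.
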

\begin{proof}
This now follows from~proposition~\ref{prop:EquivCond4Fargues=00003DNewton}.
\end{proof}

\subsubsection{~}

Let $\HT_{\mathcal{O}_{E}}^{B_{dR}}$ be the category of pairs $(T,\Xi)$
where $T$ is a finite free $\mathcal{O}_{E}$-module and $\Xi$ is
a $B_{dR}^{+}$-lattice in $V_{dR}=T\otimes_{\mathcal{O}_{E}}B_{dR}=V\otimes_{E}B_{dR}$,
where $V=T\otimes_{\mathcal{O}_{E}}E$. A morphism $F:(T,\Xi)\rightarrow(T',\Xi')$
in $\HT_{\mathcal{O}_{E}}^{B_{dR}}$ is an $\mathcal{O}_{E}$-linear
morphism $f:T\rightarrow T'$ whose $B_{dR}$-linear extension $f_{dR}:V_{dR}\rightarrow V_{dR}^{\prime}$
satisfies $f_{dR}(\Xi)\subset\Xi'$. Any such morphism has a kernel
and a cokernel, which are respectively given by 
\[
\left(\ker(f),\ker(f_{dR})\cap\Xi\right)\quad\mbox{and}\quad\left(T'/f(T)^{sat},\Xi'/f_{dR}(V_{dR})\cap\Xi'\right)
\]
where $f(T)^{sat}/f(T)$ is the torsion submodule of $T'/f(T)$. This
defines a quasi-abelian rigid $\mathcal{O}_{E}$-linear $\otimes$-category
with tensor product 
\[
(T_{1},\Xi_{1})\otimes(T_{2},\Xi_{2})\eqd(T_{1}\otimes_{\mathcal{O}_{E}}T_{2},\Xi_{1}\otimes_{B_{dR}^{+}}\Xi_{2}),
\]
neutral object $(\mathcal{O}_{E},B_{dR}^{+})$ and duals, symmetric
and exterior powers given by 
\[
(T,\Xi)^{\vee}\eqd(T^{\vee},\Xi^{\vee}),\quad\Sym^{k}(T,\Xi)\eqd(\Sym^{k}T,\Sym^{k}\Xi),\quad\Lambda^{k}(T,\Xi)\eqd(\Lambda^{k}T,\Lambda^{k}\Xi)
\]
where the tensor product constructions are over $\mathcal{O}_{E}$
or $B_{dR}^{+}$. There is also a Tate twist in $\HT_{\mathcal{O}_{E}}^{B_{dR}}$,
corresponding to the Tate object $(\mathcal{O}_{E}(1),\xi^{-1}E(1)_{dR}^{+})$.

\subsubsection{~}

The exact and faithful $\otimes$-functor 
\[
\HT_{\mathcal{O}_{E}}^{B_{dR}}\rightarrow\HT_{E}^{B_{dR}}\qquad(T,\Xi)\mapsto(V,\Xi)\quad\mbox{with}\quad V=T\otimes_{\mathcal{O}_{E}}E
\]
induces a $\otimes$-equivalence of $\otimes$-categories 
\[
\HT_{\mathcal{O}_{E}}^{B_{dR}}\otimes E\rightarrow\HT_{E}^{B_{dR}}.
\]

\subsection{The Bhatt-Morrow-Scholze functor\label{sub:The-Bhatt-Morrow-Scholze-functor}}

\subsubsection{~}

Let $(M,\varphi_{M})$ be a finite free BKF-module over $A$. Then
$M\otimes_{A}A(C)$ is a finite free étale $\varphi$-module over
$A(C)=W_{\mathcal{O}_{E}}(C^{\flat})$, thus by \ref{sub:RisW(Kflat)},
\[
T\eqd\left\{ x\in M\otimes_{A}A(C):\phi_{M\otimes A(C)}(x)=x\right\} 
\]
is finite free over $\mathcal{O}_{E}$ and $T\hookrightarrow M\otimes A(C)$
extends to a $\varphi$-equivariant isomorphism
\[
T\otimes_{\mathcal{O}_{E}}A(C)\stackrel{\simeq}{\longrightarrow}M\otimes_{A}A(C).
\]
By~\cite[4.26]{BaMoSc16}, the latter descends to the subring $A[\frac{1}{\mu}]\subset A(C)$,
giving an isomorphism 
\[
\eta_{M}:T\otimes_{\mathcal{O}_{E}}A[{\textstyle \frac{1}{\mu}]}\stackrel{\simeq}{\longrightarrow}M[{\textstyle \frac{1}{\mu}]}.
\]
Note that since $\mu=[\epsilon]-1$ has residue $\epsilon-1\neq0$
in $C^{\flat}$, it is indeed invertible in $A(C)=W_{\mathcal{O}_{E}}(C^{\flat})$.
Tensoring with $A[\frac{1}{\mu}]\hookrightarrow B_{dR}$, we obtain
an isomorphism
\[
\eta_{M,dR}:T\otimes_{\mathcal{O}_{E}}B_{dR}\stackrel{\simeq}{\longrightarrow}M\otimes_{A}B_{dR}.
\]
This yields a Hodge-Tate module $(T,\Xi)$ over $\mathcal{O}_{E}$,
with 
\[
\Xi\eqd\eta_{M,dR}^{-1}(M\otimes_{A}B_{dR}^{+}).
\]
We have thus defined an $\mathcal{O}_{E}$-linear $\otimes$-functor
\[
\HTT^{\prime}:\Mod_{A,f}^{\varphi}\rightarrow\HT_{\mathcal{O}_{E}}^{B_{dR}},\qquad M\mapsto(T,\Xi).
\]
With $V=T\otimes_{\mathcal{O}_{E}}E$ as usual, we also denote by
\[
\HTT^{\prime}:\Mod_{A,f}^{\varphi}\otimes E\rightarrow\HT_{E}^{B_{dR}},\qquad M\otimes E\mapsto(V,\Xi)
\]
the induced $E$-linear $\otimes$-functor.

\subsubsection{Compatibility with Hodge filtrations}

Since $\xi'=\frac{\varphi(\mu)}{\mu}$ is already invertible in $A[\frac{1}{\varphi(\mu)}]$,
there is a commutative diagram whose first square is made of isomorphisms,

\[
\xymatrix{T\otimes_{\mathcal{O}_{E}}A[{\textstyle \frac{1}{\mu}}]\ar[dd]_{\mathrm{Id}\otimes\varphi}\ar[r]^{\eta_{M}} & M[{\textstyle \frac{1}{\mu}}]\ar[d]^{\varphi}\ar@{^{(}->}[r] & M\otimes A(C)\ar[dd]^{\phi_{N}}\\
 & \varphi^{\ast}M[{\textstyle \frac{1}{\varphi(\mu)}}]\ar[d]^{\varphi_{M}}\\
T\otimes_{\mathcal{O}_{E}}A[{\textstyle \frac{1}{\varphi(\mu)}]}\ar[r]^{\eta_{M}} & M[{\textstyle \frac{1}{\varphi(\mu)}}]\ar@{^{(}->}[r] & M\otimes A(C)
}
\]
This first square induces yet another commutative diagram of isomorphisms
\[
\xymatrix{T\otimes_{\mathcal{O}_{E}}B_{dR}\ar[dd]_{\mathrm{Id}\otimes\varphi}\ar[r]^{\eta_{M,dR}} & M\otimes_{A}B_{dR}\ar[d]^{\varphi\otimes\varphi}\\
 & \varphi^{\ast}M\otimes_{A}B_{dR}^{\prime}\ar[d]^{\varphi_{M}}\\
T\otimes_{\mathcal{O}_{E}}B_{dR}^{\prime}\ar[r]^{\eta_{M,dR}^{\prime}} & M\otimes_{A}B_{dR}^{\prime}
}
\]
with notations as in \ref{sub:RisA=00005B1/pi=00005D}. Restricting
to lattices, we obtain the following commutative diagrams of isomorphisms
(for the second diagram, note that $\mu\in(B_{dR}^{\prime+})^{\times}$):
\[
\xyC{1pc}\xymatrix{\Xi\ar[dd]_{\mathrm{Id}\otimes\varphi}\ar[r] & M\otimes_{A}B_{dR}^{+}\ar[d]^{\varphi\otimes\varphi} &  & V\otimes_{E}B_{dR}^{+}\ar[dd]_{\mathrm{Id}\otimes\varphi}\ar[r] & \eta_{M,dR}(V\otimes_{E}B_{dR}^{+})\ar[d]^{\varphi\otimes\varphi}\\
 & \varphi^{\ast}M\otimes_{A}B_{dR}^{\prime+}\ar[d]^{\varphi_{M}} & \mbox{and} &  & \varphi_{M}^{-1}(M\otimes_{A}B_{dR}^{\prime+})\ar[d]^{\varphi_{M}}\\
(\mathrm{Id}\otimes\varphi)(\Xi)\ar[r] & \varphi_{M}(\varphi^{\ast}M\otimes_{A}B_{dR}^{\prime+}) &  & V\otimes_{E}B_{dR}^{\prime+}\ar[r] & M\otimes_{A}B_{dR}^{\prime+}
}
\]
It follows that our various Hodge $\mathbb{Z}$-filtrations
\[
\begin{array}{c}
\mathcal{F}_{H}(M\otimes E)\quad\mbox{on}\quad M\otimes_{A}C^{\prime},\\
\mathcal{F}_{H}^{\prime}(M\otimes E)\quad\mbox{on}\quad M\otimes_{A}C,
\end{array}\,\mbox{and}\,\begin{array}{l}
\mathcal{F}_{H}(V,\Xi)=\mathcal{F}(V\otimes B_{dR}^{+},\Xi)\quad\mbox{on}\quad V\otimes_{E}C,\\
\mathcal{F}_{H}^{\prime}(V,\Xi)=\mathcal{F}(\Xi,V\otimes B_{dR}^{+})\quad\mbox{on}\quad\Xi\otimes_{B_{dR}^{+}}C.
\end{array}
\]
are related as follows:
\[
\begin{array}{rclll}
\mathcal{F}_{H}(M\otimes E) & = & \eta_{M,C}^{\prime}\left(\mathcal{F}_{H}(V,\Xi)\otimes_{C}C'\right) & \mbox{on} & M\otimes_{A}C^{\prime},\\
\mathcal{F}_{H}^{\prime}(M\otimes E) & = & \eta_{M,C}\left(\mathcal{F}_{H}^{\prime}(V,\Xi)\right) & \mbox{on} & M\otimes_{A}C
\end{array}
\]
where $\varphi:C\rightarrow C'$ is the residue of $\varphi:B_{dR}^{+}\rightarrow B_{dR}^{\prime+}$
and the isomorphisms 
\[
\eta_{M,C}:\Xi\otimes_{B_{dR}^{+}}C\stackrel{\simeq}{\longrightarrow}M\otimes_{A}C\quad\mbox{and}\quad\eta_{M,C'}:T\otimes_{\mathcal{O}_{E}}C'\stackrel{\simeq}{\longrightarrow}M\otimes_{A}C'
\]
are respectively induced by 
\[
\eta_{M,dR}:\Xi\stackrel{\simeq}{\longrightarrow}M\otimes_{A}B_{dR}^{+}\quad\mbox{and}\quad\eta_{M,dR}^{\prime}:T\otimes_{\mathcal{O}_{E}}B_{dR}^{\prime+}\stackrel{\simeq}{\longrightarrow}M\otimes_{A}B_{dR}^{\prime+}.
\]

\subsubsection{Compatibility with Tate objects}

The Tate object of $\Mod_{A,f}^{\varphi}$ is given by 
\[
A\{1\}=\left({\textstyle \frac{1}{\mu}}A\otimes\mathcal{O}_{E}(1),\varphi\otimes\mathrm{Id}\right).
\]
Thus since $\mu$ is invertible in $A(C)$, 
\[
A\{1\}(C^{\flat})=\left(A(C)\otimes\mathcal{O}_{E}(1),\varphi\otimes\mathrm{Id}\right).
\]
Since $\mathcal{O}_{E}=A(C)^{\varphi=\mathrm{Id}}$ and $t_{H}(A\{1\})=1$,
it follows that 
\[
\HTT^{\prime}\left(A\{1\}\right)=(\mathcal{O}_{E}(1),\xi^{-1}E(1)_{dR}^{+})
\]
is the Tate object of $\HT_{\mathcal{O}_{E}}^{B_{dR}}$.

\subsubsection{Fargue's theorem}

The following theorem was conjectured by Fargues in \cite{Fa15}. 
\begin{thm}
\label{thm:FarguesScholze}(Fargues, Scholze) The $\otimes$-functors
\[
\HTT^{\prime}:\Mod_{A,f}^{\varphi}\rightarrow\HT_{\mathcal{O}_{E}}^{B_{dR}}\quad\mbox{and}\quad\HTT^{\prime}:\Mod_{A,f}^{\varphi}\otimes E\rightarrow\HT_{E}^{B_{dR}}
\]
are equivalences of $\otimes$-categories.
\end{thm}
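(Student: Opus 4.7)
The plan is to deduce the integral statement from the rational one via the Tate-module lattice $T$, and to establish the rational equivalence by factoring $\HTT'[\tfrac{1}{\pi}]$ through the Fargues-Fontaine curve as already indicated by the diagram of~\S\ref{sec:FunctorsOfFargues}. Thus I would split the argument into three blocks: (a) the equivalence $\HTT:\Modif_{X}^{ad}\stackrel{\simeq}{\to}\HT_{E}^{B_{dR}}$, (b) the Fargues equivalence $\underline{\mathcal{E}}:\Mod_{A,f}^{\varphi}\otimes E\stackrel{\simeq}{\to}\Modif_{X}^{ad}$, and (c) the integral refinement.

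For (a), I would construct an explicit quasi-inverse. Given $(V,\Xi)\in\HT_{E}^{B_{dR}}$, set $\mathcal{E}_{1}:=V\otimes_{E}\mathcal{O}_{X}$ (the trivial bundle, necessarily semistable of slope $0$) and define $\mathcal{E}_{2}$ by Beauville-Laszlo gluing of $\mathcal{E}_{1}\vert_{X\setminus\{\infty\}}$ with the $B_{dR}^{+}$-lattice $\Xi\subset V\otimes_{E}B_{dR}=\mathcal{E}_{1,dR}^{+}[\xi^{-1}]$; the tautological identification on $X\setminus\{\infty\}$ provides the gluing isomorphism $f$. That this is well-defined and functorial is standard from Beauville-Laszlo, and one checks $\HTT\circ(\text{this})=\mathrm{Id}$ by unwinding definitions. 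Conversely, given $\underline{\mathcal{E}}\in\Modif_{X}^{ad}$, the fact that $\mathcal{E}_{1}$ is semistable of slope $0$ forces $\mathcal{E}_{1}\simeq\omega(\underline{\mathcal{E}})\otimes_{E}\mathcal{O}_{X}$ canonically (by \S\ref{sub:AdmModifDef}), and $\underline{\mathcal{E}}$ is then recovered from the pair $(\omega(\underline{\mathcal{E}}),f_{dR}^{-1}(\mathcal{E}_{2,dR}^{+}))$. Compatibility with $\otimes$-products, duals and Tate twists is immediate from the explicit formulas.

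For (b), which is the hard part and due to Fargues, I would proceed as follows. Fully faithfulness reduces, via Beauville-Laszlo, to the assertion that a finite free BKF-module $M$ with inverted $\pi$ is reconstructed from $\underline{\mathcal{E}}(M\otimes E)$ as the pair consisting of the $A[\tfrac{1}{\pi}]$-module of sections over $X\setminus\{\infty\}$ (recovered from $\bigoplus_{n\ge 0}(M[\tfrac{1}{\mu}])^{\varphi_{M}=\xi^{\prime -n}}$ via the graded ring construction of the Fargues-Fontaine curve) together with the $B_{dR}^{+}$-lattice $M\otimes_{A}B_{dR}^{+}$. Essential surjectivity is Fargues's main classification theorem: any admissible modification $\underline{\mathcal{E}}$ comes from a finite free BKF-module up to isogeny. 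For this step I would closely follow \cite{Fa15} (or the redaction in Scholze's Berkeley lectures \cite{ScWe15}), reproducing the construction of $M[\tfrac{1}{\pi}]$ from $\underline{\mathcal{E}}$ by descent from $A(C)$ to $A[\tfrac{1}{\mu}]$ via the formal neighborhood at $\infty$. The twist from $\xi$ to $\xi'$ (see the footnote in the Overview) enters only at the level of conventions on the paw.

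For (c), I would argue as follows. The isogeny functor $\Mod_{A,f}^{\varphi}\otimes E\to\HT_{E}^{B_{dR}}$ is an equivalence by (a) and (b) combined with the commutativity of the diagram of~\S\ref{sec:FunctorsOfFargues}. To promote this to an $\mathcal{O}_{E}$-linear equivalence, I would use that $\HTT'$ is compatible with the $\otimes$-category structures and that $\mathcal{O}_{E}=A(C)^{\varphi=\mathrm{Id}}$: full faithfulness at the integral level amounts to showing that if $f:M_{1}\otimes E\to M_{2}\otimes E$ has $\HTT'(f)$ coming from an $\mathcal{O}_{E}$-morphism $T_{1}\to T_{2}$, then $f$ itself is induced by an $A$-morphism $M_{1}\to M_{2}$; this follows from $M_{i}=\{x\in M_{i}\otimes A(C):\phi_{M_{i}\otimes A(C)}(x)\in M_{i}\otimes A(C)\}\cap M_{i}[\tfrac{1}{\pi}]$ (using \cite[4.13]{BaMoSc16}, which characterizes finite free $A$-modules as those which are simultaneously $A[\tfrac{1}{\pi}]$- and $A(C)$-free in a compatible way). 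For essential surjectivity, given $(T,\Xi)\in\HT_{\mathcal{O}_{E}}^{B_{dR}}$, apply (a) and (b) to $(V,\Xi)=(T\otimes_{\mathcal{O}_{E}}E,\Xi)$ to produce $M\otimes E\in\Mod_{A,f}^{\varphi}\otimes E$; then define $M$ as the intersection of $M\otimes E$ with the $A(C)$-lattice $T\otimes_{\mathcal{O}_{E}}A(C)$ inside $M\otimes E\otimes_{A[\tfrac{1}{\pi}]}A(C)[\tfrac{1}{\pi}]$. Checking that this $M$ is finite free over $A$ (and that $\HTT'(M)=(T,\Xi)$) again reduces to \cite[4.13]{BaMoSc16}.

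The main obstacle is clearly step (b), which is the deep geometric input of Fargues. Steps (a) and (c) are essentially formal consequences of Beauville-Laszlo gluing and the characterization of finite free BKF-modules in BMS, whereas (b) requires the full classification of vector bundles on $X$ and the Fargues-Fontaine construction of $X$ from the graded ring of $\varphi$-eigenvectors in $B$.
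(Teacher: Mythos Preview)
Your three-block decomposition (a)/(b)/(c) matches the paper's organization, and block (a) is handled the same way (Beauville--Laszlo gluing is exactly what underlies the definition of $\HTT$).

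For (b), the paper's route is more specific than your sketch. It works entirely on the adic space $\mathscr{Y}=\Spa(A)\setminus\{y_{\mathbb{F}}\}$, passing through the chain of equivalences
\[
\Mod_{A,f}^{\varphi,\leq}\;\simeq\;\Modif_{\mathscr{Y}}^{\star}\;\simeq\;\Modif_{\mathscr{Y}^{-},\mathscr{Y}^{+}}^{\varphi,\geq}
\]
and then identifying $\varphi$-bundles on $\mathscr{Y}^{-}$ with $\mathcal{O}_{E}$-lattices and $\varphi$-bundles on $\mathscr{Y}^{+}$ with vector bundles on $X$ via the Fargues--Fontaine dictionary. Your phrase ``descent from $A(C)$ to $A[\tfrac{1}{\mu}]$ via the formal neighborhood at $\infty$'' captures only the $\mathscr{Y}^{-}$-side; the substantive work lies on the $\mathscr{Y}^{+}$-side (extending across $y_{L}$ and matching with an isocrystal, using \cite{Fa15} and \cite{FaFo15}).

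For (c), the paper takes a different and cleaner path than your intersection argument. It shows directly that the outer rectangle
\[
\xymatrix{\Mod_{A,f}^{\varphi}\ar[r]\ar[d]&\Bun_{\mathcal{O}_{E}}\ar[d]\\ \Mod_{A,f}^{\varphi}\otimes E\ar[r]&\Vect_{E}}
\]
is cartesian, by factoring it (on anti-effective objects) through the adic picture and invoking Kedlaya's theorem that restriction $\Bun_{\mathscr{Y}^{-}}^{\varphi}\otimes E\to\Bun_{\mathscr{Y}^{\circ}}^{\varphi,0}$ is an equivalence. Your hands-on approach is morally the same object (the intersection $N\cap(T\otimes_{\mathcal{O}_{E}}A(C))$ \emph{is} what the cartesian square produces), but as written it has two defects. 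First, your displayed characterization of $M_{i}$ is vacuous: since $\xi'$ is a unit in $A(C)$, the map $\phi_{M_{i}\otimes A(C)}$ is already an automorphism of $M_{i}\otimes A(C)$, so the condition $\phi(x)\in M_{i}\otimes A(C)$ cuts out nothing. Second, \cite[4.13]{BaMoSc16} concerns the freeness of $M^{\vee\vee}$ for $M\in\Mod_{A,\ast}$; it does not by itself show that an intersection of an $A[\tfrac{1}{\pi}]$-lattice with an $A(C)$-lattice is finite free over $A$. That freeness is precisely the content packaged by the Kedlaya step (a $\varphi$-bundle on $\mathscr{Y}^{\circ}$ together with an $\mathcal{O}_{E}$-lattice in its fiber extends uniquely to a $\varphi$-bundle on $\mathscr{Y}^{-}$, hence to all of $\mathscr{Y}$, hence to a finite free $A$-module). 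So your (c) identifies the right object but appeals to the wrong lemma; the paper's categorical formulation is what makes the verification transparent. Full faithfulness, incidentally, is simply cited from \cite[4.29]{BaMoSc16} in both the integral and rational cases, so no separate argument is needed there.
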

\noindent The full faithfulness is established in \cite[4.29]{BaMoSc16}.
A proof of the essential surjectivity is sketched in Scholze's Berkeley
lectures \cite{ScWe15}, where it is mostly attributed to Fargues.
An expanded and referenced version of this sketch is given in section~\ref{sub:AnalConstruction}
below.
\begin{cor}
The categories $\Mod_{A,f}^{\varphi}$ and $\Mod_{A,f}^{\varphi}\otimes E$
are quasi-abelian. 
\end{cor}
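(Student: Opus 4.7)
The plan is simply to transport the quasi-abelian structure along the Fargues-Scholze equivalences of Theorem~\ref{thm:FarguesScholze}. Being quasi-abelian is a purely categorical property of an additive category (every morphism admits a kernel and a cokernel, and the classes of strict monomorphisms and strict epimorphisms are respectively stable under push-outs and pull-backs), so it is preserved by any additive equivalence.

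First I would point to the fact that both target categories $\HT_{\mathcal{O}_E}^{B_{dR}}$ and $\HT_E^{B_{dR}}$ were explicitly observed to be quasi-abelian rigid $\otimes$-categories when they were introduced in section~\ref{sub:HodgeTateModDef}: the kernels and cokernels were given there by closed formulas in terms of $(T,\Xi)$ and the induced $B_{dR}$-linear maps, and from these formulas one checks directly that a morphism $f:(T,\Xi)\to(T',\Xi')$ is a strict monomorphism (resp.\ strict epimorphism) iff $f:T\to T'$ is injective with saturated image and $f_{dR}(\Xi)=\Xi'\cap f_{dR}(V_{dR})$ (resp.\ iff $f$ is surjective and $f_{dR}(\Xi)=\Xi'$); stability of these classes under the relevant push-outs/pull-backs is routine. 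Second, the Fargues-Scholze $\otimes$-functors $\HTT'$ are $\mathcal{O}_E$-linear (resp.\ $E$-linear) equivalences of categories, hence additive equivalences. Transporting kernels, cokernels, and the classes of strict mono/epis backwards through these equivalences therefore equips $\Mod_{A,f}^{\varphi}$ and $\Mod_{A,f}^{\varphi}\otimes E$ with quasi-abelian structures.

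There is no real obstacle in the argument: all the difficulty is already absorbed in Theorem~\ref{thm:FarguesScholze}, whose essential surjectivity is the nontrivial input (sketched in section~\ref{sub:AnalConstruction}). The only point worth flagging is that the description of strict monomorphisms and strict epimorphisms in $\Mod_{A,f}^{\varphi}$ is not transparent from the BKF definition itself — it is determined only through the equivalence — and so an intrinsic characterization of kernels and cokernels on the BKF side (for instance, in terms of the underlying $A$-module structure and the saturation of the image) would require further unpacking, which the corollary as stated does not attempt.
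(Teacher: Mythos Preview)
Your proposal is correct and is exactly the argument the paper has in mind: the corollary is stated immediately after Theorem~\ref{thm:FarguesScholze} with no separate proof, relying on the fact that $\HT_{\mathcal{O}_E}^{B_{dR}}$ and $\HT_E^{B_{dR}}$ were already noted to be quasi-abelian in section~\ref{sub:HodgeTateModDef} and that this property transports along the equivalences. Your closing remark even anticipates the paper's own caveat that ``we have no explicit and manageable formulas'' for kernels and cokernels on the BKF side.
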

\noindent In particular, any morphism in these categories has a kernel
and a cokernel. But we have no explicit and manageable formulas for
them. Note also that we have two structures of exact category on $\Mod_{A,f}^{\varphi}$
and $\Mod_{A,f}^{\varphi}\otimes E$: the canonical structure which
any quasi-abelian category has, and the naive structure inherited
from the abelian category $\Mod_{A}^{\varphi}$. A three term complex
which is naively exact is also canonically exact, but the converse
is not true. We will investigate this in section~\ref{sub:Exactness}.

\subsection{The analytic construction\label{sub:AnalConstruction}}

\subsubsection{~}

In a category $\C$ with duals and effective object, let us say that
an object $X$ is anti-effective if its dual $X^{\vee}$ is effective.
We denote by $\C^{\geq}$ and $\C^{\leq}$ the full subcategories
of effective and anti-effective objects in $\C$.

\subsubsection{~}

We equip $A$ with its $(\pi,[\varpi])$-topology. Following \cite[12.2]{ScWe15},
we give names to four special points of $\Spa(A)=\Spa(A,A)$, labeled
by their residue fields: $y_{\mathbb{F}}$, $y_{C^{\flat}}$, $y_{L}$
and $y_{C}$, corresponding respectively to the trivial valuation
on the residue field $\mathbb{F}$ of $A$ and to the fixed valuations
on the $A$-algebras $C^{\flat}$, $L$ and $C$. Then $y_{\mathbb{F}}$
is the unique non-analytic point of $\Spa(A)$ and the complement
$\mathscr{Y}=\Spa(A)\setminus\{y_{\mathbb{F}}\}$ is equipped with
a continuous surjective map $\kappa:\mathscr{Y}\rightarrow[-\infty,+\infty]$
defined by 
\[
\kappa(y)\eqd\log_{q}\left(\frac{\log\left|[\varpi](\tilde{y})\right|}{\log\left|\pi(\tilde{y})\right|}\right)
\]
where $\tilde{y}$ is the maximal generalization of $y$, see~\cite[12.2]{ScWe15}.
We have 
\[
\kappa(y_{C^{\flat}})=-\infty,\qquad\kappa(y_{C})=0,\qquad\kappa(y_{L})=+\infty.
\]
The Frobenius $\varphi$ of $A$ induces an automorphism $\Spa(\varphi)$
of $\Spa(A)$ and $\mathscr{Y}$, which we still denote by $\varphi$.
It fixes $y_{\mathbb{F}}$, $y_{C^{\flat}}$ and $y_{L}$, but not
$y_{C}$. We set $y_{i}=\varphi^{i}(y_{C})$ for every $i\in\mathbb{Z}$,
so that $\kappa(y_{i})=i$ since more generally $\kappa(\varphi(y))=\kappa(y)+1$
for every $y\in\mathscr{Y}$. Thus $y_{0}=y_{C}$ while $y_{-1}$
corresponds to $A\twoheadrightarrow\mathcal{O}_{C^{\prime}}\hookrightarrow C'$.
For any interval $I\subset[-\infty,+\infty]$, we denote by $\mathscr{Y}_{I}$
the interior of the pre-image of $I$ under $\kappa$. We set 
\[
\mathscr{Y}^{+}\eqd\mathscr{Y}_{]-\infty,+\infty]},\quad\mathscr{Y}^{-}\eqd\mathscr{Y}_{[-\infty,+\infty[}\quad\mbox{and}\quad\mathscr{Y}^{\circ}\eqd\mathscr{Y}^{+}\cap\mathscr{Y}^{-}=\mathscr{Y}_{]-\infty,+\infty[}.
\]

\subsubsection{~}

By~\cite[13.1.1]{ScWe15}, $\mathscr{Y}$ is an honest -- or sheafy
-- adic space. This means that the presheaf $\mathscr{O}_{\mathscr{Y}}$
of analytic functions on $\mathscr{Y}$ is a sheaf on $\mathscr{Y}$.
Thus there is a well-defined $\otimes$-category $\Bun_{\mathscr{Y}_{I}}$
of vector bundles on $\mathscr{Y}_{I}$. A $\varphi$-equivariant
bundle on $\mathscr{Y}_{I}$ is a pair $(\mathscr{E},\varphi_{\mathscr{E}})$
where $\mathscr{E}$ is a vector bundle on $\mathscr{Y}_{I}$ and
$\varphi_{\mathscr{E}}:\varphi^{\ast}\mathscr{E}\vert\mathscr{Y}_{\varphi^{-1}(I)\cap I}\rightarrow\mathscr{E}\vert\mathscr{Y}_{\varphi^{-1}(I)\cap I}$
is an isomorphism. This defines a $\otimes$-category $\Bun_{\mathscr{Y}_{I}}^{\varphi}$.
By~\cite{Ke16}, the adic subspace $\mathscr{Y}^{\circ}$ of $\mathscr{Y}$
is strongly Noetherian. Thus for any interval $I\subset]-\infty,+\infty[$,
there is also a well-behaved abelian category $\Coh_{\mathscr{Y}_{I}}$
of coherent sheaves on $\mathscr{Y}_{I}$. A modification of vector
bundles on $\mathscr{Y}_{I}$ is a monomorphism $f:\mathscr{E}_{1}\hookrightarrow\mathscr{E}_{2}$
of vector bundles on $\mathscr{Y}_{I}$ whose cokernel is a coherent
sheaf supported at $\{y_{i}:i\in\mathbb{Z}\}\cap\mathscr{Y}_{I}$.
Similarly, there is a notion of $\varphi$-equivariant modification
of $\varphi$-equivariant vector bundles on $\mathscr{Y}_{I}$.

\subsubsection{~\label{sub:FromBKF2Modif*}}

By~\cite[3.6]{Ke16b}, the global section functor yields an equivalence
of $\otimes$-categories $\Gamma(\mathscr{Y},-):\Bun_{\mathscr{Y}}\rightarrow\Mod_{A,f}$
with inverse $M\mapsto M\otimes_{A}\mathscr{O}_{\mathscr{Y}}$. In
particular, every vector bundle $\mathscr{E}$ over $\mathscr{Y}$
is actually finite and free. Let $\Modif_{\mathscr{Y}}^{\star}$ be
the $\otimes$-category of pairs $(\mathscr{E},\psi_{\mathscr{E}})$
where $\mathscr{E}$ is a vector bundle on $\mathscr{Y}$ and $\psi_{\mathscr{E}}:\mathscr{E}\rightarrow\varphi^{\ast}\mathscr{E}$
is a modification supported at $\{y_{-1}\}$, i.e.~$\psi_{\mathscr{E}}$
is an isomorphism over $\mathscr{Y}\setminus\{y_{-1}\}$. Then plainly
\[
\xyC{2pc}\xyR{.2pc}\xymatrix{\Mod_{A,f}^{\varphi,\leq}\ar@{<->}[r] & \Modif_{\mathscr{Y}}^{\star}\\
(M,\varphi_{M})\ar@{|->}[r] & \left(\varphi_{M}^{-1}:M\rightarrow\varphi^{\ast}M\right)\otimes_{A}\mathscr{O}_{\mathscr{Y}}\\
\left(\Gamma(\mathscr{Y},\mathscr{E}),\Gamma(\mathscr{Y},\psi_{\mathscr{E}})^{-1}\right) & (\mathscr{E},\psi_{\mathscr{E}})\ar@{|->}[l]
}
\]
are mutually inverse equivalences of $\otimes$-categories.

\subsubsection{~\label{sub:FromModif*2ModifAn}}

Let $\Modif_{\mathscr{Y}^{-},\mathscr{Y}^{+}}^{\varphi,\geq}$ be
the $\otimes$-category of triples $(\mathscr{E}^{-},\mathscr{E}^{+},f_{\mathscr{E}})$
where $\mathscr{E}^{-}$ and $\mathscr{E}^{+}$ are $\varphi$-bundles
over respectively $\mathscr{Y}^{-}$ and $\mathscr{Y}^{+}$ while
$f_{\mathscr{E}}:\mathscr{E}_{-}\vert_{\mathscr{Y}^{\circ}}\rightarrow\mathscr{E}_{+}\vert_{\mathscr{Y}^{\circ}}$
is a $\varphi$-equivariant modification between their restriction
to $\mathscr{Y}^{\circ}=\mathscr{Y}^{+}\cap\mathscr{Y}^{-}$. We claim
that there are mutually inverse equivalences of $\otimes$-categories
\[
\xyR{.2pc}\xymatrix{\Modif_{\mathscr{Y}}^{\star}\ar@{<->}[r] & \Modif_{\mathscr{Y}^{-},\mathscr{Y}^{+}}^{\varphi,\geq}\\
(\mathscr{E},\psi_{\mathscr{E}})\ar@{<->}[r] & \left(\mathscr{E}^{-},\mathscr{E}^{+},f_{\mathscr{E}}\right)
}
\]
Starting on the left hand side, set $\mathscr{E}(i)=(\varphi^{i})^{\ast}\mathscr{E}$
and define $\theta_{i}:\mathscr{E}(i)\hookrightarrow\mathscr{E}(i+1)$
by $\theta_{i}=(\varphi^{i})^{\ast}(\theta_{0})$ for $i\in\mathbb{Z}$
with $\theta_{0}=\psi_{\mathscr{E}}:\mathscr{E}(0)\rightarrow\mathscr{E}(1)$.
Note that $\theta_{0}$ is a modification supported at $\{y_{-1}\}$,
thus $\theta_{i}$ is a modification supported at $\{y_{-i-1}\}$
for all $i\in\mathbb{Z}$. As in~\cite[\S 4.4]{Fa15}, the following
commutative diagram of vector bundles on $\mathscr{Y}$
\[
\xyR{2pc}\xyC{1.6pc}\xymatrix{\cdots\ar[r] & \varphi^{\ast}\mathscr{\mathscr{E}}(-2)\ar[r]^{\theta_{-1}} & \varphi^{\ast}\mathscr{\mathscr{E}}(-1)\ar[r]^{\theta_{0}} & \varphi^{\ast}\mathscr{E}(0)\ar[r]^{\theta_{1}} & \varphi^{\ast}\mathscr{\mathscr{E}}(1)\ar[r]^{\theta_{2}} & \varphi^{\ast}\mathscr{\mathscr{E}}(2)\ar[r]^{\theta_{3}} & \cdots\\
\cdots\ar[r] & \mathscr{\mathscr{E}}(-2)\ar[r]^{\theta_{-2}}\ar[u]^{\theta_{-2}} & \mathscr{\mathscr{E}}(-1)\ar[r]^{\theta_{-1}}\ar[u]^{\theta_{-1}} & \mathscr{E}(0)\ar[r]^{\theta_{0}}\ar[u]^{\theta_{0}} & \mathscr{\mathscr{E}}(1)\ar[r]^{\theta_{1}}\ar[u]^{\theta_{1}} & \mathscr{\mathscr{E}}(2)\ar[r]^{\theta_{2}}\ar[u]^{\theta_{2}} & \cdots
}
\]
defines two $\varphi$-equivariant sheaves on $\mathscr{Y}$, namely
\[
\begin{array}{rcccl}
\mathscr{\mathscr{E}}(-\infty) & \eqd & \underleftarrow{\lim}_{i\geq0}\mathscr{\mathscr{E}}(-i) & = & \cap_{i\geq0}\mathscr{\mathscr{E}}(-i)\\
\mathscr{\mathscr{E}}(+\infty) & \eqd & \underrightarrow{\lim}_{i\geq0}\mathscr{E}(+i) & = & \cup_{i\geq0}\mathscr{\mathscr{E}}(+i)
\end{array}
\]
whose inverse Frobenius mappings 
\[
\varphi_{\mathscr{\mathscr{E}}(-\infty)}^{-1}:\mathscr{\mathscr{E}}(-\infty)\rightarrow\varphi^{\ast}\mathscr{\mathscr{E}}(-\infty)\quad\mbox{and}\quad\varphi_{\mathscr{\mathscr{E}}(+\infty)}^{-1}:\mathscr{\mathscr{E}}(+\infty)\rightarrow\varphi^{\ast}\mathscr{\mathscr{E}}(+\infty)
\]
are induced by the vertical maps of the above diagram. Moreover, 
\[
\mathscr{\mathscr{E}}(-\infty)\hookrightarrow\mathscr{\mathscr{E}}(i)\quad\mbox{and}\quad\mathscr{\mathscr{E}}(i)\hookrightarrow\mathscr{\mathscr{E}}(+\infty)
\]
are respectively isomorphisms outside $\{y_{j}:j\geq-i\}$ and $\{y_{j}:j<-i\}$,
thus 
\[
\mathscr{E}^{-}\eqd\mathscr{E}(-\infty)\vert_{\mathscr{Y}^{-}}\quad\mbox{and}\quad\mathscr{E}^{+}\eqd\mathscr{E}(+\infty)\vert_{\mathscr{Y}^{+}}
\]
are $\varphi$-equivariant vector bundles over respectively $\mathscr{Y}^{-}$
and $\mathscr{Y}^{+}$, and 
\[
\left(f_{\mathscr{E}}:\mathscr{E}^{-}\vert_{\mathscr{Y}^{\circ}}\rightarrow\mathscr{E}^{+}\vert_{\mathscr{Y}^{\circ}}\right)\eqd\left(\mathscr{\mathscr{E}}(-\infty)\vert_{\mathscr{Y}^{\circ}}\rightarrow\mathscr{E}(0)\vert_{\mathscr{Y}^{\circ}}\rightarrow\mathscr{E}(+\infty)\vert_{\mathscr{Y}^{\circ}}\right)
\]
is a $\varphi$-equivariant modification as desired. 

Conversely, starting from $(\mathscr{E}^{-},\mathscr{E}^{+},f_{\mathscr{E}})$
on the right hand side, we define a vector bundle $\mathscr{E}$ on
$\mathscr{Y}$ by gluing $\mathscr{E}^{-}\vert_{\mathscr{Y}_{[-\infty,0[}}$
and $\mathscr{E}^{+}\vert_{\mathscr{Y}_{]-1,+\infty]}}$ along the
isomorphism induced by the restriction of $f_{\mathscr{E}}$ to $\mathscr{Y}_{]-1,0[}$.
Thus $\mathscr{E}\vert_{\mathscr{Y}^{\circ}}$ is the subsheaf of
$\mathscr{E}^{+}\vert_{\mathscr{Y}^{\circ}}$ made of those sections
whose restriction to $\mathscr{Y}_{]-\infty,0[}$ belong to the image
of $f_{\mathscr{E}}$. Since $f_{\mathscr{E}}$ is a $\varphi$-equivariant
modification, it follows that there is a commutative diagram
\[
\xymatrix{\mathscr{E}^{-}\vert_{\mathscr{Y}^{\circ}}\ar@{^{(}->}[r]\ar[d]^{\varphi_{\mathscr{E}^{-}}^{-1}} & \mathscr{E}\vert_{\mathscr{Y}^{\circ}}\ar@{^{(}->}[r]\ar[d]^{\psi_{\mathscr{E}}} & \mathscr{E}^{+}\vert_{\mathscr{Y}^{\circ}}\ar[d]^{\varphi_{\mathscr{E}^{+}}^{-1}}\\
\varphi^{\ast}\mathscr{E}^{-}\vert_{\mathscr{Y}^{\circ}}\ar@{^{(}->}[r] & \varphi^{\ast}\mathscr{E}\vert_{\mathscr{Y}^{\circ}}\ar@{^{(}->}[r] & \varphi^{\ast}\mathscr{E}^{+}\vert_{\mathscr{Y}^{\circ}}
}
\]
We extend $\psi_{\mathscr{E}}$ to $\mathscr{Y}$ by setting $\psi_{\mathscr{E}}:=\varphi_{\mathscr{E}^{-}}^{-1}$
on $\mathscr{E}_{[-\infty,-1[}$ and $\psi_{\mathscr{E}}:=\varphi_{\mathscr{E}^{+}}^{-1}$
on $\mathscr{Y}_{]-1,+\infty]}$. Therefore $\psi_{\mathscr{E}}:\mathscr{E}\rightarrow\varphi^{\ast}\mathscr{E}$
is an isomorphism away from $\kappa^{-1}(-1)\cap\{y_{i}\}=\{y_{-1}\}$,
i.e.~$\psi_{\mathscr{E}}$ is indeed a modification supported at
$y_{-1}$. 

One checks easily that these constructions yield mutually inverse
$\otimes$-functors.

\subsubsection{~\label{sub:VariantA=00005B1/pi=00005D}}

Starting with $\Mod_{A[\frac{1}{\pi}],f}^{\varphi}$, we may analogously
define $\otimes$-functors
\[
\xyR{.5pc}\xymatrix{\Mod_{A[\frac{1}{\pi}],f}^{\varphi,\leq}\ar[r] & \Modif_{\mathscr{Y}^{+}}^{\star}\ar@{<->}[r] & \Modif_{\mathscr{Y}^{\circ},\mathscr{Y}^{+}}^{\varphi,\geq}\\
(N,\varphi_{N})\ar@{|->}[r] & (\mathscr{E},\psi_{\mathscr{E}})\ar@{|->}[r] & (\mathscr{E}^{-},\mathscr{E}^{+},f_{\mathscr{E}})
}
\]
with the obvious definitions for the $\otimes$-categories $\Modif_{\mathscr{Y}^{+}}^{\star}$
and $\Modif_{\mathscr{Y}^{\circ},\mathscr{Y}^{+}}^{\varphi,\geq}$,
where 
\[
(\mathscr{E},\psi_{\mathscr{E}})\eqd(N,\varphi_{N}^{-1})\otimes_{A[\frac{1}{\pi}]}\mathscr{O}_{\mathscr{Y}^{+}}
\]
and $\mathscr{E}^{-}\in\Bun_{\mathscr{Y}^{\circ}}^{\varphi}$, $\mathscr{E}^{+}\in\Bun_{\mathscr{Y}^{+}}^{\varphi}$.
While $\Modif_{\mathscr{Y}^{+}}^{\star}\leftrightarrow\Modif_{\mathscr{Y}^{\circ},\mathscr{Y}^{+}}^{\varphi,\geq}$
are still mutually inverse equivalences of $\otimes$-categories,
it is not clear that the first functor is an equivalence. Indeed,
the functor $\Mod_{A[\frac{1}{\pi}],f}\rightarrow\Bun_{\mathscr{Y}^{+}}$
is already not full.

\subsubsection{~\label{sub:PhiBunOverY-}}

As in \cite[12.3.4]{ScWe15} and \cite[8.5.3]{KeLi15}, there are
equivalences of $\otimes$-categories
\[
\xymatrix{\Bun_{\mathscr{Y}^{-}}^{\varphi}\ar[r] & \Bun_{\mathscr{R}_{-}^{\mathrm{int}}}^{\varphi}\ar[r] & \Bun_{A(C)}^{\varphi}\ar[r] & \Bun_{\mathcal{O}_{E}}\\
\mathscr{E}\ar@{|->}[r] & \mathscr{E}_{y_{C^{\flat}}}\ar@{|->}[r] & \mathscr{E}_{y_{C^{\flat}}}^{\wedge}\ar@{|->}[r] & (\mathscr{E}_{y_{C^{\flat}}}^{\wedge})^{\varphi_{\mathscr{E}}=1}
}
\]
where $\mathscr{R}_{-}^{\mathrm{int}}:=\lim_{r\mapsto-\infty}\Gamma(\mathscr{Y}_{[-\infty,r]},\mathscr{O}_{\mathscr{Y}})$
is the local ring of $\mathscr{Y}$ at $y_{C^{\flat}}$; this is the
integral Robba ring, a Henselian discrete valuation ring with uniformizer
$\pi$, residue field $C^{\flat}$ and completion $A(C)=W_{\mathcal{O}_{E}}(C^{\flat})$~\cite[1.8.2]{FaFo15}.
The objects of the middle two categories are the finite free étale
$\varphi$-modules $(N,\varphi_{N})$ over the indicated local rings,
and the functor between them is the base change map (or $\pi$-adic
completion) with respect to $\mathscr{R}_{-}^{\mathrm{int}}\hookrightarrow A(C)$.
We have already encountered the last functor in~\ref{sub:RisW(Kflat)}:
it maps $(N,\varphi_{N})$ to $T=N^{\varphi_{N}=1}$. The inverse
$\otimes$-functor maps the finite free $\mathcal{O}_{E}$-module
$T$ to the ``constant'' $\varphi$-bundle $(\mathscr{E}^{-},\varphi_{\mathscr{E}^{-}})=(T\otimes_{\mathcal{O}_{E}}\mathscr{O}_{\mathscr{Y}^{-}},\mathrm{Id}\otimes\varphi)$
over $\mathscr{Y}^{-}$. In particular, every $\varphi$-bundle over
$\mathscr{Y}^{-}$ is actually finite free.

\subsubsection{~\label{sub:FFDiagram}}

There is also a commutative diagram of $\otimes$-categories \cite[\S 11.4]{FaFo15}
\[
\xyR{2.5pc}\xyC{5pc}\xymatrix{\Bun_{\mathscr{R}_{+}^{\mathrm{int}}}^{\varphi}\ar@{.>}[d]_{-\otimes\overline{B}} & \Bun_{\mathscr{Y}^{+}}^{\prime,\varphi}\ar@{.>}[l]_{(-)_{y_{L}}}\ar[r]^{-\vert\mathscr{Y}^{\circ}}\ar@<1ex>[d]^{\Gamma(\mathscr{Y}^{+},-)} & \Bun_{\mathscr{Y}^{\circ}}^{\varphi}\ar@<.7ex>[r]^{-/\varphi}\ar@<.7ex>[d]^{\Gamma(\mathscr{Y}^{\circ},-)} & \Bun_{\mathscr{X}}\ar@<1ex>[l]^{\pi^{\ast}}\ar@<1ex>[d]^{(-)^{alg}}\\
\Bun_{\overline{B}}^{\varphi} & \Bun_{B^{+}}^{\prime,\varphi}\ar[l]_{-\otimes\overline{B}}\ar[r]^{-\otimes B}\ar@<1ex>[u]^{-\otimes\mathscr{O}_{\mathscr{Y}^{+}}} & \Bun_{B}^{\varphi}\ar[r]^{\mathcal{E}}\ar@<1ex>[u]^{-\otimes\mathscr{O}_{\mathscr{Y}^{\circ}}} & \Bun_{X}\ar@<1ex>[u]^{(-)^{an}}
}
\]
in which all solid arrows are equivalences of $\otimes$-categories.
In the first line, 
\[
\mathscr{R}_{+}^{\mathrm{int}}\eqd\underrightarrow{\lim}\,\Gamma(\mathscr{Y}_{[r,+\infty]},\mathscr{O}_{\mathscr{Y}})
\]
is the analog of the integral Robba ring $\mathscr{R}_{-}^{\mathrm{int}}$
with $y_{C^{\flat}}$ replaced by $y_{L}$, and 
\[
\mathscr{X}\eqd\mathscr{Y}^{\circ}/\varphi^{\mathbb{Z}}
\]
is the adic version of the Fargues-Fontaine curve $X$, a strongly
noetherian analytic space. There is a morphism of locally ringed space
$\mathscr{X}\rightarrow X$ which induces pull-back $\otimes$-functors
$(-)^{an}:\Coh_{X}\rightarrow\Coh_{\mathscr{X}}$ and $(-)^{an}:\Bun_{X}\rightarrow\Bun_{\mathscr{X}}$.
The equivalence of $\otimes$-categories $\Bun_{\mathscr{Y}^{\circ}}^{\varphi}\leftrightarrow\Bun_{\mathscr{X}}$
maps a vector bundle on $\mathscr{X}$ to its pull-back through the
$\varphi$-invariant morphism $\pi:\mathscr{Y}^{\circ}\rightarrow\mathscr{X}$,
and maps a $\varphi$-bundle $\mathscr{E}$ on $\mathscr{Y}^{\circ}$
to the sheaf $\mathscr{E}/\varphi_{\mathscr{E}}$ of $\varphi_{\mathscr{E}}$-invariant
sections of $\pi_{\ast}\mathscr{E}$. We denote by $\mathscr{E}\mapsto\mathscr{E}(d)$
the Tate twists on $\Bun_{\mathscr{X}}$ and $\Bun_{\mathscr{Y}^{\circ}}^{\varphi}$
corresponding to the Tate objects $\mathscr{O}_{\mathscr{X}}(1)=\mathcal{O}_{X}(1)^{an}$
and $\mathscr{O}_{\mathscr{Y}^{\circ}}(1)=\pi^{\ast}\mathscr{O}_{\mathscr{X}}(1)$.
In the second line, the $A${[}$\frac{1}{\pi}]$-algebras 
\[
B\hookleftarrow B^{+}\twoheadrightarrow\overline{B}
\]
are defined in \cite[1.10]{FaFo15}. They are related to the adic
space $\mathscr{Y}$ by 
\[
B^{+}=\Gamma(\mathscr{Y}^{+},\mathscr{O}_{\mathscr{Y}})\quad\mbox{and}\quad B=\Gamma(\mathscr{Y}^{\circ},\mathscr{O}_{\mathscr{Y}}).
\]
Moreover, $\overline{B}$ is a local domain with residue field $L$
which is also a quotient of $\mathscr{R}_{+}^{\mathrm{int}}$. The
Fargues-Fontaine curve $X$ equals $\Proj(P)$ where $P:=\oplus_{d\geq0}P_{d}$
with 
\[
P_{d}\eqd\Gamma(X,\mathcal{O}_{X}(d))=\Gamma(\mathscr{X},\mathscr{O}_{\mathscr{X}}(d))=B^{\varphi=\pi^{d}}=(B^{+})^{\varphi=\pi^{d}}.
\]
The $\otimes$-functor $\mathcal{E}:\Bun_{B}^{\varphi}\rightarrow\Bun_{X}$
maps a finite projective étale $\varphi$-module $(N,\varphi_{N})$
to the quasi-coherent sheaf on $X$ associated with the graded $P$-module
$\oplus_{d\geq0}N^{\varphi_{N}=\pi^{d}}$. The $\otimes$-functor
$(-)^{alg}:\Bun_{\mathscr{X}}\rightarrow\Bun_{X}$ maps an adic vector
bundle $\mathscr{E}$ on $\mathscr{X}$ to the quasi-coherent sheaf
on $X$ associated with the graded $P$-module $\oplus_{d\geq0}\Gamma(\mathscr{X},\mathscr{E}(d))$.
In the second column of our diagram, the primes refer to the full
$\otimes$-subcategories of finite free objects in the relevant $\otimes$-categories
of $\varphi$-bundles. Thus plainly, 
\[
\Gamma(\mathscr{Y}^{+},-):\Bun_{\mathscr{Y}^{+}}^{\prime,\varphi}\longleftrightarrow\Bun_{B^{+}}^{\prime,\varphi}:(-\otimes\mathscr{O}_{\mathscr{Y}^{+}})
\]
are mutually inverse equivalences of $\otimes$-categories. The $\otimes$-functors
\[
\Bun_{\overline{B}}^{\varphi}\stackrel{1.17}{\longleftarrow}\Bun_{B^{+}}^{\prime,\varphi}\stackrel{1.19}{\longrightarrow}\Bun_{X}\stackrel{2.2}{\longrightarrow}\Bun_{\mathscr{X}}\quad\mbox{and}\quad\Bun_{\mathscr{Y}^{\circ}}^{\varphi}\stackrel{3.1}{\longrightarrow}\Bun_{B}^{\varphi}
\]
are equivalence of $\otimes$-categories by the indicated references
in \cite[\S 11]{FaFo15}, and so are therefore also all of the above
solid arrow functors. In particular, every $\varphi$-bundle on $\mathscr{Y}^{\circ}$
is finite free and extends uniquely to a finite free $\varphi$-bundle
on $\mathscr{Y}^{+}$.

\subsubsection{~\label{sub:PhiBunY=0000B0}}

This is in sharp contrast with what happens at $y_{C^{\flat}}$: not
every $\varphi$-bundle on $\mathscr{Y}^{\circ}$ extends to $\mathscr{Y}^{-}$,
and those who do have many extensions. This is related to semi-stability
as follows. Let $(-)^{+}:\Bun_{\mathscr{Y}^{\circ}}^{\varphi}\rightarrow\Bun_{\mathscr{Y}^{+}}^{\prime,\varphi}$
be a chosen $\otimes$-inverse of the restriction $\otimes$-functor
$\Bun_{\mathscr{Y}^{+}}^{\prime,\varphi}\rightarrow\Bun_{\mathscr{Y}^{\circ}}^{\varphi}$.
We then have three $\otimes$-functors
\[
\xyC{1pc}\xyR{1pc}\xymatrix{ & \Bun_{\mathscr{R}_{-}}^{\varphi} & \mbox{given by} & \underset{r\mapsto-\infty}{\lim}\Gamma\left(\mathscr{Y}_{]-\infty,r]},-\right)\simeq\Gamma\left(\mathscr{Y}^{\circ},-\right)\otimes_{B}\mathscr{R}_{-}\\
\Bun_{\mathscr{Y}^{\circ}}^{\varphi}\ar[ru]\ar[r]\ar[rd] & \Bun_{X} & \mbox{given by} & (-/\varphi)^{alg}\simeq\mathcal{E}\circ\Gamma(\mathscr{Y}^{\circ},-)\\
\Bun_{\mathscr{Y}^{+}}^{\varphi}\ar[r]\ar@{<->}[u] & \Bun_{L}^{\varphi} & \mbox{given by} & (-)_{y_{L}}^{+}\otimes_{\mathscr{R}_{+}^{\mathrm{int}}}L\simeq\Gamma(\mathscr{Y}^{+},(-)^{+})\otimes_{B^{+}}L
}
\]
where $\mathscr{R}_{-}:=\lim_{r\mapsto-\infty}\Gamma(\mathscr{Y}_{]-\infty,r]},\mathscr{O}_{\mathscr{Y}})$
is the Robba ring; this is a Bezout ring by~\cite[3.5.8]{FaFo15}
or \cite[2.9.6]{Ke05}. The first $\otimes$-functor is an equivalence
of categories by \cite[11.2.22]{FaFo15}, and we have already seen
that so is the second. The third one is not: $\Bun_{L}^{\varphi}$
is abelian semi-simple while $\Bun_{X}$ (along with $\Bun_{\mathscr{Y}^{\circ}}^{\varphi}$
and $\Bun_{\mathscr{R}_{-}}^{\varphi}$) is only quasi-abelian, and
not at all semi-simple. The three target categories are quasi-abelian
$\otimes$-categories, with a Harder-Narasimhan formalism compatible
with $\otimes$-products: this is due respectively to Kedlaya \cite{Ke05},
Fargues and Fontaine \cite{FaFo15} (see \ref{sub:Newton4BundlesOnCurve}),
and to the Dieudonné-Manin classification of isocrystals, which actually
gives rise to a pair of opposed Newton slope filtrations $\mathcal{F}_{N}$
and $\mathcal{F}_{N}^{\iota}$ (see~\ref{sub:RisL}). These formalisms
are compatible, provided that we pick the opposed Newton filtration
$\mathcal{F}_{N}^{\iota}$ on $\Bun_{L}^{\varphi}$. 

The compatibility of the slope filtrations along $\Bun_{X}\simeq\Bun_{\mathscr{Y}^{\circ}}^{\varphi}\simeq\Bun_{\mathscr{R}_{-}}^{\varphi}$
is build up in the proof of \cite[11.2.22]{FaFo15}. Their compatibility
along $\Bun_{X}\simeq\Bun_{\mathscr{Y}^{\circ}}^{\varphi}\rightarrow\Bun_{L}^{\varphi}$
can be seen as follows. Starting with a $\varphi$-bundle $(\mathscr{E},\varphi_{\mathscr{E}})$
on $\mathscr{Y}^{\circ}$, set 
\[
(M,\varphi_{M})=\Gamma\left(\mathscr{Y}^{+},(\mathscr{E},\varphi_{\mathscr{E}})^{+}\right).
\]
This is a finite free étale $\varphi$-module over $B^{+}$ and $(N,\varphi_{N})=(M,\varphi_{M})\otimes_{B^{+}}L$
is the image of $(\mathscr{E},\varphi_{\mathscr{E}})$ in $\Bun_{L}^{\varphi}$.
Fix a section $s:\mathbb{F}\hookrightarrow\mathcal{O}_{C^{\flat}}$
of the quotient map $\mathcal{O}_{C^{\flat}}\twoheadrightarrow\mathbb{F}$.
This gives rise to sections of $A\twoheadrightarrow\mathcal{O}_{L}$
and $A[\frac{1}{\pi}]\hookrightarrow B^{+}\twoheadrightarrow\mathscr{R}_{+}^{\mathrm{int}}\twoheadrightarrow\overline{B}\twoheadrightarrow L$,
which we still denote by $s$. Then $(M,\varphi_{M})$ is non-canonically
isomorphic to $(N,\varphi_{N})\otimes_{L,s}B^{+}$ by \cite[\S 11.1]{FaFo15},
thus $(\mathscr{E}/\varphi_{\mathscr{E}})^{alg}\simeq\mathcal{E}((M,\varphi_{M})\otimes_{B^{+}}B)$
is non-canonically isomorphic to $\mathcal{E}^{s}(N,\varphi_{N}):=\mathcal{E}((N,\varphi_{N})\otimes_{L,s}B)$.
Our claim now follows from~\cite[\S 8.2.4]{FaFo15}, where this $\otimes$-functor
$\mathcal{E}^{s}:\Bun_{L}^{\varphi}\rightarrow\Bun_{X}$ is denoted
by $\mathcal{E}$. 

Now by Kedlaya's theory, we have equivalences of $\otimes$-categories
\[
\xyC{4pc}\xymatrix{\Vect_{E} & \Bun_{\mathscr{R}_{-}^{\mathrm{int}}}^{\varphi}\otimes E\ar@{<->}[l]\ar[r]^{-\otimes\mathscr{R}_{-}} & \Bun_{\mathscr{R}_{-}}^{\varphi,0}}
\]
where $\Bun_{\mathscr{R}_{-}}^{\varphi,0}$ is the full $\otimes$-subcategory
of slope $0$ semi-stable objects in $\Bun_{\mathscr{R}_{-}}^{\varphi}$.
The composite $\otimes$-functor $\Vect_{E}\rightarrow\Bun_{\mathscr{R}_{-}}^{\varphi,0}$
maps $V$ to $(V\otimes_{E}\mathscr{R}_{-},\mathrm{Id}\otimes\varphi)$
with inverse $(N,\varphi_{N})\mapsto N^{\varphi_{N}=1}$. It follows
that we have equivalences of $\otimes$-categories 
\[
\xyC{4pc}\xymatrix{\Vect_{E} & \Bun_{\mathscr{Y}^{-}}^{\varphi}\otimes E\ar@{<->}[l]\ar[r]^{-\vert\mathscr{Y}^{\circ}} & \Bun_{\mathscr{Y}^{\circ}}^{\varphi,0}}
\]
where $\Bun_{\mathscr{Y}^{\circ}}^{\varphi,0}$ is the full $\otimes$-subcategory
of slope $0$ semi-stable objects in $\Bun_{\mathscr{Y}^{\circ}}^{\varphi}$.
The composite functor $\Vect_{E}\rightarrow\Bun_{\mathscr{Y}^{\circ}}^{\varphi,0}$
maps $V$ to $(V\otimes_{E}\mathscr{O}_{\mathscr{Y}^{\circ}},\mathrm{Id}\otimes\varphi)$
with inverse $(\mathscr{E},\varphi_{\mathscr{E}})\mapsto\Gamma(\mathscr{Y}^{\circ},\mathscr{E})^{\varphi_{\mathscr{E}}=1}$.
In other words, a $\varphi$-bundle $(\mathscr{E},\varphi_{\mathscr{E}})$
over $\mathscr{Y}^{\circ}$ extends to a $\varphi$-bundle over $\mathscr{Y}^{-}$
if and only if it is semi-stable of slope $0$ and then, there is
a functorial bijective correspondance between the set of all such
extensions and the set of all $\mathcal{O}_{E}$-lattices $T$ in
$V=\Gamma(\mathscr{Y}^{\circ},\mathscr{E})^{\varphi_{\mathscr{E}}=1}$,
given by $T\mapsto\left(T\otimes\mathscr{O}_{\mathscr{Y}^{-}},\mathrm{Id}\otimes\varphi\right)$.

\subsubsection{~\label{sub:ComputeAnalFunct}}

We shall now compute the equivalence of $\otimes$-categories 
\[
\xyR{.2pc}\xymatrix{\Mod_{A,f}^{\varphi,\leq}\ar[r] & \Modif_{\mathscr{Y}}^{\star}\ar[r] & \Modif_{\mathscr{Y}^{-},\mathscr{Y}^{+}}^{\varphi,\geq}\\
(M,\varphi_{M})\ar@{|->}[r] & \left(\mathscr{E},\psi_{\mathscr{E}}\right)\ar@{|->}[r] & \left(\mathscr{E}^{-},\mathscr{E}^{+},f_{\mathscr{E}}\right)
}
\]
Starting with the anti-effective finite free BKF-module $(M,\varphi_{M})$
over $A$, set 
\[
T=(M\otimes_{A}A(C))^{\varphi_{M}\otimes\varphi=1}\quad\mbox{and}\quad(\overline{M},\varphi_{\overline{M}})=(M,\varphi_{M})\otimes_{A}\overline{B}.
\]
Thus $T$ is a finite free $\mathcal{O}_{E}$-module and $(\overline{M},\varphi_{\overline{M}})$
is a finite free étale $\varphi$-module over $\overline{B}$ (since
$\xi'$ is invertible in $\overline{B}$). By~\cite[4.26]{BaMoSc16}
and its proof, the canonical isomorphism
\[
\left(T\otimes_{\mathcal{O}_{E}}A(C),\mathrm{Id}\otimes\varphi\right)\simeq\left(M\otimes_{A}A(C),\varphi_{M}\otimes\varphi\right)
\]
descends to an isomorphism over the subring $A[\frac{1}{\mu}]\subset A(C)$,
\[
\eta_{M}^{-}[{\textstyle \frac{1}{\mu}}]:T\otimes_{\mathcal{O}_{E}}A[{\textstyle \frac{1}{\mu}}]\stackrel{\simeq}{\longrightarrow}M\otimes_{A}A[{\textstyle \frac{1}{\mu}}]
\]
which is induced by a $\varphi^{-1}$-invariant $\mathcal{O}_{E}$-linear
morphism 
\[
\eta_{M}^{-}:T\hookrightarrow M.
\]
The latter gives a morphism of modifications of vector bundles on
$\mathscr{Y}$, 
\[
\eta_{M}^{-}\otimes\mathscr{O}_{\mathscr{Y}}:\left(T\otimes\mathscr{O}_{\mathscr{Y}},\mathrm{Id}\otimes\varphi^{-1}\right)\hookrightarrow\left(\mathscr{E},\psi_{\mathscr{E}}\right)
\]
whose restriction to $\mathscr{Y}^{-}$ factors through a morphism
of $\varphi$-bundles over $\mathscr{Y}^{-}$, 
\[
f_{M}^{-}:\left(T\otimes\mathscr{O}_{\mathscr{Y}^{-}},\mathrm{Id}\otimes\varphi\right)\hookrightarrow\left(\mathscr{E}^{-},\varphi_{\mathscr{E}^{-}}\right).
\]
Since $\mu$ is invertible on $\mathscr{Y}_{[-\infty,0[}$, both $\eta_{M}^{-}\otimes\mathscr{O}_{\mathscr{Y}}$
and $f_{M}^{-}$ are isomorphisms over $\mathscr{Y}_{[-\infty,0[}$.
In particular, the localization of $f_{M}^{-}$ at $y_{C^{\flat}}$
is an isomorphism, and so is therefore $f_{M}^{-}$ itself by \ref{sub:PhiBunOverY-}. 

On the other hand, pick any finite free étale $\varphi$-module $(D^{+},\varphi_{D^{+}})$
over $B^{+}$ reducing to $(\overline{M},\varphi_{\overline{M}})$
over $\overline{B}$. By \cite[4.26]{Fa15} applied to the effective
dual BKF-module $(M,\varphi_{M})^{\vee}=(M^{\vee},\varphi_{M}^{\vee-1})$,
there is a unique $\varphi^{-1}$-equivariant morphism 
\[
\eta_{M}^{+}:\left(M\otimes_{A}B^{+},\varphi_{M}^{-1}\otimes\varphi^{-1}\right)\rightarrow\left(D^{+},\varphi_{D^{+}}^{-1}\right)
\]
reducing to the given isomorphism $(\overline{M},\varphi_{\overline{M}}^{-1})\simeq\left(D^{+},\varphi_{D^{+}}^{-1}\right)\otimes_{B^{+}}\overline{B}$.
As above, the latter yields a morphism of modifications of vector
bundles over $\mathscr{Y}^{+}$, 
\[
\eta_{M}^{+}\otimes\mathscr{O}_{\mathscr{Y}}:\left(\mathscr{E},\psi_{\mathscr{E}}\right)\vert_{\mathscr{Y}^{+}}\rightarrow\left(D^{+},\varphi_{D^{+}}^{-1}\right)\otimes_{B^{+}}\mathscr{O}_{\mathscr{Y}^{+}}
\]
which induces a morphism of $\varphi$-bundles over $\mathscr{Y}^{+}$,
\[
f_{M}^{+}:\left(\mathscr{E}^{+},\varphi_{\mathscr{E}^{+}}\right)\rightarrow\left(D^{+},\varphi_{D^{+}}\right)\otimes_{B^{+}}\mathscr{O}_{\mathscr{Y}^{+}}.
\]
By \cite[4.31]{Fa15}, $\eta_{M}^{+}\otimes\mathscr{O}_{\mathscr{Y}}$
restricts to an isomorphism over $\mathscr{Y}_{[r,+\infty]}$ for
$r\gg0$. Since also $\mathscr{E}\vert_{\mathscr{Y}^{+}}\hookrightarrow\mathscr{E}^{+}$
is an isomorphism over $\mathscr{Y}_{]-1,\infty]}$, it follows that
$f_{M}^{+}$ is an isomorphism over $\mathscr{Y}_{[r,+\infty]}$.
But then $(\varphi^{i})^{\ast}(f_{M}^{+})\simeq f_{M}^{+}$ is an
isomorphism over $\mathscr{Y}_{[r-i,+\infty]}$ for all $i\geq0$,
thus $f_{M}^{+}$ is an isomorphism over the whole of $\mathscr{Y}^{+}$. 

Finally, let $f_{M}:T\otimes_{\mathcal{O}_{E}}\mathscr{O}_{\mathscr{Y}^{\circ}}\rightarrow D^{+}\otimes_{B^{+}}\mathscr{O}_{\mathscr{Y}^{\circ}}$
be the $\varphi$-equivariant morphism
\[
\xymatrix{T\otimes_{\mathcal{O}_{E}}\mathscr{O}_{\mathscr{Y}^{\circ}}\ar[r]^{\eta_{M}^{-}\otimes\mathscr{O}_{\mathscr{Y}^{\circ}}} & M\otimes_{A}\mathscr{O}_{\mathscr{Y}^{\circ}}\ar[r]^{\eta_{M}^{+}\otimes\mathscr{O}_{\mathscr{Y}^{\circ}}} & D^{+}\otimes_{B^{+}}\mathscr{O}_{\mathscr{Y}^{\circ}}}
\]
We thus have shown that $f_{M}^{-}$ and $f_{M}^{+}$ induce an isomorphism
\[
\left(\mathscr{E}^{-},\mathscr{E}^{+},f_{\mathscr{E}}\right)\simeq\left(T\otimes_{\mathcal{O}_{E}}\mathscr{O}_{\mathscr{Y}^{-}},D^{+}\otimes_{B^{+}}\mathscr{O}_{\mathscr{Y}^{+}},f_{M}\right).
\]
In particular, $\mathscr{E}^{+}\simeq D^{+}\otimes_{B^{+}}\mathscr{O}_{\mathscr{Y}^{+}}$
is finite free (we did not knew this so far) and 
\[
\eta_{M}^{+}\otimes\mathscr{O}_{\mathscr{Y}^{+}}:M\otimes_{A}\mathscr{O}_{\mathscr{Y}^{+}}\rightarrow D^{+}\otimes_{B^{+}}\mathscr{O}_{\mathscr{Y}^{+}}
\]
is an isomorphism over $\mathscr{Y}_{]-1,+\infty]}$. The freeness
of $\mathscr{E}^{+}$ also yields a canonical choice for the finite
free lift $D^{+}$ of $\overline{M}=M\otimes_{A}\overline{B}$: we
may take $D^{+}=\Gamma(\mathscr{Y}^{+},\mathscr{E}^{+})$ with the
isomorphism $\overline{M}\simeq D^{+}\otimes_{B^{+}}\overline{B}$
induced by $\mathscr{E}\vert_{\mathscr{Y}^{+}}\hookrightarrow\mathscr{E}^{+}$.

\subsubsection{~\label{sub:ConstruFarguesFunctEff}}

The discussion above shows that the $\otimes$-functor
\[
\xyC{2pc}\xymatrix{\Mod_{A[\frac{1}{\pi}],f}^{\varphi,\leq}\ar[r] & \Modif_{\mathscr{Y}^{+}}^{\star}\ar[r] & \Modif_{\mathscr{Y}^{\circ},\mathscr{Y}^{+}}^{\varphi,\geq}}
\]
induces an equivalence of $\otimes$-categories
\[
\xymatrix{\Mod_{A,f}^{\varphi,\leq}\otimes E\ar[r] & \Modif_{\mathscr{Y}^{\circ},\mathscr{Y}^{+}}^{\varphi,ad,\geq}}
\]
where $\Modif_{\mathscr{Y}^{\circ},\mathscr{Y}^{+}}^{\varphi,ad,\geq}$
is the full $\otimes$-subcategory of objects $(\mathscr{E}^{-},\mathscr{E}^{+},f_{\mathscr{E}})$
in $\Modif_{\mathscr{Y}^{\circ},\mathscr{Y}^{+}}^{\varphi,\geq}$
such that $\mathscr{E}^{-}\in\Bun_{\mathscr{Y}^{\circ}}^{\varphi}$
belongs to $\Bun_{\mathscr{Y}^{\circ}}^{\varphi,0}$. Moreover for
any such object, $\mathscr{E}^{+}$ is actually finite free, and $-\vert_{\mathscr{Y}^{\circ}}:\Bun_{\mathscr{Y}^{+}}^{\prime,\varphi}\stackrel{\simeq}{\rightarrow}\Bun_{\mathscr{Y}^{\circ}}^{\varphi}$
thus induces an equivalence 
\[
\xymatrix{\Modif_{\mathscr{Y}^{\circ},\mathscr{Y}^{+}}^{\varphi,ad,\geq}\ar[r] & \Modif_{\mathscr{Y}^{\circ},\mathscr{Y}^{\circ}}^{\varphi,ad,\geq}\\
(\mathscr{E}^{-},\mathscr{E}^{+},f_{\mathscr{E}})\ar@{|->}[r] & (\mathscr{E}^{-},\mathscr{E}^{+}\vert_{\mathscr{Y}^{\circ}},f_{\mathscr{E}})
}
\]
of $\otimes$-categories. Finally, the equivalence of $\otimes$-categories
\[
\left(\xymatrix{\Bun_{\mathscr{Y}^{\circ}}^{\varphi}\ar[r]^{-/\varphi} & \Bun_{\mathscr{X}}\ar[r]^{(-)^{\mathrm{alg}}} & \Bun_{X}}
\right)=\left(\xymatrix{\Bun_{\mathscr{Y}^{\circ}}^{\varphi}\ar[r]^{\Gamma(\mathscr{Y}^{\circ},-)} & \Bun_{B}^{\varphi}\ar[r]^{\mathcal{E}} & \Bun_{X}}
\right)
\]
induces equivalences of $\otimes$-categories 
\[
\xymatrix{\Modif_{\mathscr{Y}^{\circ},\mathscr{Y}^{\circ}}^{\varphi,\geq}\ar[r] & \Modif_{X}^{\geq}}
\quad\mbox{and}\quad\xymatrix{\Modif_{\mathscr{Y}^{\circ},\mathscr{Y}^{\circ}}^{\varphi,ad,\geq}\ar[r] & \Modif_{X}^{ad,\geq}}
.
\]
Putting everything together, we obtain an equivalence of $\otimes$-categories
\[
\xymatrix{\underline{\mathcal{E}}:\Mod_{A,f}^{\varphi,\leq}\otimes E\ar[r] & \Modif_{X}^{ad,\geq}.}
\]
This is of course the restriction of the $\otimes$-functor
\[
\xymatrix{\underline{\mathcal{E}}:\Mod_{A[\frac{1}{\pi}]}^{\varphi,\leq}\ar[r] & \Modif_{\mathscr{Y}^{\circ},\mathscr{Y}^{+}}^{\varphi,\geq}\ar[r] & \Modif_{\mathscr{Y}^{\circ},\mathscr{Y}^{\circ}}^{\varphi,\geq}\ar[r] & \Modif_{X}^{\geq}}
\]
but the first two components of the latter may not be equivalences.

\subsubsection{Compatibility with Hodge filtrations\label{sub:CompHodge}}

The morphisms of locally ringed space 
\[
\mathscr{Y}^{\circ}\rightarrow\mathscr{X}\rightarrow X\quad\mbox{and}\quad\mathscr{Y}^{\circ}\rightarrow\Spa(A)\rightarrow\Spec(A)
\]
map $y_{i}\in\left|\mathscr{Y}^{\circ}\right|$ to respectively $\infty\in\left|X\right|$
and $\mathfrak{m}_{i}=A\varphi^{-i}(\xi)\in\left|\Spec(A)\right|$.
Moreover, they induce isomorphism between the corresponding completed
local rings $\mathscr{O}_{\mathscr{Y},y_{i}}^{\wedge}$, $\mathcal{O}_{X,\infty}^{\wedge}$
and $A_{\mathfrak{m}_{i}}^{\wedge}=B_{dR,\mathfrak{m}_{i}}^{+}$.
For $i=0$, the latter is just $B_{dR}^{+}$. For $(N,\varphi_{N})$
in $\Mod_{A[\frac{1}{\pi}]}^{\varphi,\leq}$ mapping to $(\mathscr{E},\psi_{\mathscr{E}})$
in $\Modif_{\mathscr{Y}^{+}}^{\ast}$ and $\underline{\mathcal{E}}=(\mathcal{E}_{1}\hookrightarrow\mathcal{E}_{2})$
in $\Modif_{X}^{\geq}$, we thus find 
\begin{eqnarray*}
\left(\mathcal{E}_{1,\infty}^{\wedge}\hookrightarrow\mathcal{E}_{2,\infty}^{\wedge}\right) & \simeq & \left(\mathscr{\mathscr{E}}(-\infty)_{y_{0}}^{\wedge}\hookrightarrow\mathscr{\mathscr{E}}(+\infty)_{y_{0}}^{\wedge}\right)\\
 & = & \left(\mathscr{\mathscr{E}}(-1)_{y_{0}}^{\wedge}\hookrightarrow\mathscr{\mathscr{E}}(0)_{y_{0}}^{\wedge}\right)\\
 & \simeq & \left((\varphi^{-1})^{\ast}(\varphi_{N}^{-1}):(\varphi^{-1})^{\ast}N\otimes B_{dR}^{+}\hookrightarrow N\otimes B_{dR}^{+}\right).
\end{eqnarray*}
It follows that 
\begin{eqnarray*}
\mathcal{F}_{H,2}(\underline{\mathcal{E}}) & = & \mathcal{F}_{H}^{\iota}(N,\varphi_{N})\quad\mbox{on}\quad\mathcal{E}_{2}(\infty)=N\otimes_{A}C\\
\mathcal{F}_{H,1}(\underline{\mathcal{E}})\otimes_{C,\varphi}C' & = & \mathcal{F}_{H}(N,\varphi_{N})\quad\mbox{on}\quad\mathcal{E}_{1}(\infty)\otimes_{C}C'=N\otimes_{A}C'
\end{eqnarray*}

\subsubsection{Compatibility with Tate objects\label{sub:CompTate}}

The Tate object over $A$ is anti-effective, 
\[
A\{1\}=\left({\textstyle \frac{1}{\mu}}A\otimes\mathcal{O}_{E}(1),\varphi\otimes\mathrm{Id}\right).
\]
The corresponding sequence $\cdots\rightarrow\mathscr{E}(i)\rightarrow\mathscr{E}(i+1)\rightarrow\cdots$
is obtained from
\[
\xyC{2pc}\xymatrix{\cdots\ar[r] & \frac{1}{\varphi^{-2}(\mu)}A\ar@{^{(}->}[r] & \frac{1}{\varphi^{-1}(\mu)}A\ar@{^{(}->}[r] & \frac{1}{\mu}A\ar@{^{(}->}[r] & \frac{1}{\varphi(\mu)}A\ar@{^{(}->}[r] & \frac{1}{\varphi^{2}(\mu)}A\ar@{^{(}->}[r] & \cdots}
\]
by tensoring with $-\otimes_{A}(\mathscr{O}_{\mathscr{Y}}\otimes\mathcal{O}_{E}(1))$.
Thus by~\cite[3.23]{BaMoSc16},
\[
\left(\mathscr{E}^{-}\vert_{\mathscr{Y}^{\circ}}\hookrightarrow\mathscr{E}^{+}\vert_{\mathscr{Y}^{\circ}}\right)=\left(\mathscr{O}_{\mathscr{Y}^{\circ}}\otimes E(1)\hookrightarrow\mathscr{O}_{\mathscr{Y}^{\circ}}(1)\otimes E(1)\right)
\]
where $\mathscr{O}_{\mathscr{Y}^{\circ}}\hookrightarrow\mathscr{O}_{\mathscr{Y}^{\circ}}(1)$
maps to $\mathcal{O}_{X}\hookrightarrow\mathcal{O}_{X}(1)$, therefore
\[
\underline{\mathcal{E}}\left(A[{\textstyle \frac{1}{\pi}}]\{1\}\right)=\left(\mathcal{O}_{X}\otimes E(1)\hookrightarrow\mathcal{O}_{X}(1)\otimes E(1)\right)=\underline{\mathcal{O}}_{X}\{1\}.
\]
The $\otimes$-functor constructed in $\ref{sub:ConstruFarguesFunctEff}$
thus extends to a $\otimes$-functor 
\[
\underline{\mathcal{E}}:\Mod_{A[\frac{1}{\pi}],f}^{\varphi}\rightarrow\Modif_{X}
\]
mapping $N$ to $\underline{\mathcal{E}}(N\{i\})\{-i\}$ for $i\gg0$.
The latter is still compatible with Hodge filtrations by~\ref{sub:Hodge-and-NewtonForModif}
and proposition~\ref{prop:t_H(twist)4A=00005B1/pi=00005D}, and it
induces an equivalence of $\otimes$-categories 
\[
\underline{\mathcal{E}}:\Mod_{A,f}^{\varphi}\otimes E\rightarrow\Modif_{X}^{ad}.
\]

\subsubsection{Compatibility with Newton types\label{sub:CompNewton}}

For $(N,\varphi_{N})$ in $\Mod_{A,f}^{\varphi}\otimes E$ of rank
$r\in\mathbb{N}$ mapping to $\underline{\mathcal{E}}=(\mathcal{E}_{1},\mathcal{E}_{2},f)$
in $\Modif_{X}^{ad}$ and to $(D,\varphi_{D})=(M,\varphi_{M})\otimes_{A}L$
in $\Bun_{L}^{\varphi}$,
\[
t_{N}(\underline{\mathcal{E}})=t_{N}(\mathcal{E}_{2})\quad\mbox{equals}\quad t_{N}^{\iota}(D,\varphi_{D})\quad\mbox{in}\quad\mathbb{Q}_{\geq}^{r}.
\]
Indeed, we may assume that $(N,\varphi_{N})=(M,\varphi_{M})\otimes E$
for an anti-effective finite free BKF-module $(M,\varphi_{M})$ over
$A$ by \ref{sub:RisL} and \ref{sub:Hodge-and-NewtonForModif}. If
$(M,\varphi_{M})$ maps to $(\mathscr{E},\psi_{\mathscr{E}})$ and
$(\mathscr{E}^{+},\mathscr{E}^{-},f_{\mathscr{E}})$ as above, then
$\mathcal{E}_{2}$ is the image of $(\mathscr{E}^{+},\varphi_{\mathscr{E}^{+}})$
under 
\[
\mathcal{E}\circ\Gamma(\mathscr{Y}^{\circ},-):\Bun_{\mathscr{Y}^{+}}^{\varphi}\rightarrow\Bun_{X}
\]
thus $t_{N}(\mathcal{E}_{2})=t_{N}^{\iota}(D',\varphi_{D'})$ by \ref{sub:PhiBunY=0000B0}
where $(D^{\prime},\varphi_{D^{\prime}})$ is the image of $(\mathscr{E}^{+},\varphi_{\mathscr{E}^{+}})$
under 
\[
(-)_{y_{L}}\otimes_{\mathscr{R}_{+}^{\mathrm{int}}}L:\Bun_{\mathscr{Y}^{+}}^{\varphi}\rightarrow\Bun_{L}^{\varphi}.
\]
Since $(\mathscr{E},\psi_{\mathscr{E}})\vert_{\mathscr{Y}^{+}}\hookrightarrow(\mathscr{E}^{+},\varphi_{\mathscr{E}^{+}}^{-1})$
is an isomorphism over $\mathscr{Y}_{]-1,\infty]}$, it induces 
\[
(M,\varphi_{M}^{-1})\otimes_{A}\mathscr{R}_{+}^{\mathrm{int}}=(\mathscr{E},\psi_{\mathscr{E}})_{y_{L}}\stackrel{\simeq}{\longrightarrow}(\mathscr{E}^{+},\varphi_{\mathscr{E}^{+}}^{-1})_{y_{L}}
\]
therefore also $(D,\varphi_{D})=(M,\varphi_{M})\otimes_{A}L\simeq(D',\varphi_{D'})$,
which proves our claim.

\subsubsection{Compatibility with Bhatt-Morrow-Scholze\label{sub:CompScholzeFargues}}

We now claim that the $\otimes$-functor 
\[
\HTT\circ\underline{\mathcal{E}}:\Mod_{A,f}^{\varphi}\otimes E\rightarrow\Modif_{X}^{ad}\rightarrow\HT_{E}^{B_{dR}}
\]
is canonically isomorphic to the Bhatt-Morrow-Scholze $\otimes$-functor
\[
\HTT':\Mod_{A,f}^{\varphi}\otimes E\rightarrow\HT_{E}^{B_{dR}}
\]
of section~\ref{sub:The-Bhatt-Morrow-Scholze-functor}. Since both
functors are compatible with Tate twists, it is sufficient to establish
that they have canonically isomorphic restrictions to the full $\otimes$-subcategory
of anti-effective objects in $\Mod_{A,f}^{\varphi}\otimes E$, and
this immediately follows from the computations in section~\ref{sub:ComputeAnalFunct}
and \ref{sub:CompHodge}.

\subsubsection{Proof of Theorem~\ref{thm:FarguesScholze}}

It remains to establish that the $\otimes$-functor 
\[
\HTT':\Mod_{A,f}^{\varphi}\rightarrow\HT_{\mathcal{O}_{E}}^{B_{dR}}
\]
is an equivalence of $\otimes$-categories. Consider the ($2-)$commutative
diagram 
\[
\xyC{3pc}\xyR{2pc}\xymatrix{\Mod_{A,f}^{\varphi}\ar[r]^{\HTT'}\ar[d]_{-\otimes E} & \HT_{\mathcal{O}_{E}}^{B_{dR}}\ar[r]^{T}\ar[d]_{-\otimes E} & \Bun_{\mathcal{O}_{E}}\ar[d]^{-\otimes E}\\
\Mod_{A,f}^{\varphi}\otimes E\ar[r]^{\HTT'} & \HT_{E}^{B_{dR}}\ar[r]^{V} & \Vect_{E}
}
\]
Since the second square is cartesian, it is sufficient to establish
that the outer rectangle is cartesian, for then so will be the first
square, and its top row will thus be an equivalence of categories
since so is the second row. We may again restrict our attention to
anti-effective objects. The outer rectangle then factors as 
\[
\xyC{3pc}\xyR{2pc}\xymatrix{\Mod_{A,f}^{\varphi,\leq}\ar[r]^{\underline{\mathscr{E}}}\ar[d]_{-\otimes E} & \Modif_{\mathscr{Y}^{-},\mathscr{Y}^{+}}^{\varphi,\geq}\ar[r]^{\mathscr{E}^{-}}\ar[d]_{-\vert_{\mathscr{Y}^{\circ}}} & \Bun_{\mathscr{Y}^{-}}^{\varphi}\ar[d]_{-\vert_{\mathscr{Y}^{\circ}}}\ar[r] & \Bun_{\mathcal{O}_{E}}\ar[d]^{-\otimes E}\\
\Mod_{A,f}^{\varphi,\leq}\otimes E\ar[r]^{\underline{\mathscr{E}}} & \Modif_{\mathscr{Y}^{\circ},\mathscr{Y}^{+}}^{\varphi,ad,\geq}\ar[r]^{\mathscr{E}^{-}} & \Bun_{\mathscr{Y}^{\circ}}^{\varphi,0}\ar[r] & \Vect_{E}
}
\]
In this commutative diagram, the first square is cartesian since the
two $\underline{\mathscr{E}}$'s are equivalences of $\otimes$-categories,
the second square is obviously cartesian, and the third square is
cartesian by Kedlaya's theory as explained in~\ref{sub:PhiBunY=0000B0}.
So the outer rectangle is indeed cartesian. This finishes the proof
of theorem~\ref{thm:FarguesScholze}.

\subsubsection{Final questions}

Is it true that every $\varphi$-bundle over $\mathscr{Y}^{+}$ is
finite and free? Is there an integral version of the Fargues-Fontaine
curve $X$ corresponding to $\mathscr{Y}^{-}/\varphi^{\mathbb{Z}}$?
And is it true that $\underline{\mathcal{E}}:\Mod_{A[\frac{1}{\pi}],f}^{\varphi}\rightarrow\Modif_{X}$
is an equivalence of $\otimes$-categories?

\subsection{Exactness\label{sub:Exactness}}

\subsubsection{~}

We now want to investigate the difference between naive and canonical
short exact sequences in $\Mod_{A,f}^{\varphi}$. We start with a
canonical short exact sequence, 
\[
0\rightarrow(M_{1},\varphi_{1})\rightarrow(M_{2},\varphi_{2})\rightarrow(M_{3},\varphi_{3})\rightarrow0.
\]
The corresponding complex of Hodge-Tate module is a short exact sequence
\[
0\rightarrow(T_{1},\Xi_{1})\rightarrow(T_{2},\Xi_{2})\rightarrow(T_{3},\Xi_{3})\rightarrow0
\]
and we now know what it means: the underlying complexes of $\mathcal{O}_{E}$
and $B_{dR}^{+}$-modules are both exact. Since $M_{i}[\frac{1}{\mu}]\simeq T_{i}\otimes A[\frac{1}{\mu}]$,
it follows that 
\[
0\rightarrow M_{1}[{\textstyle \frac{1}{\mu}}]\rightarrow M_{2}[{\textstyle \frac{1}{\mu}}]\rightarrow M_{3}[{\textstyle \frac{1}{\mu}}]\rightarrow0
\]
is exact. In particular, $M_{1}\rightarrow M_{2}$ is injective.

\subsubsection{~}

Let $(Q,\varphi_{Q})$ be the cokernel of $(M_{1},\varphi_{1})\hookrightarrow(M_{2},\varphi_{2})$
in the abelian category $\Mod_{A}^{\varphi}$. Then $Q$ is the cokernel
of $M_{1}\hookrightarrow M_{2}$ in $\Mod_{A}$, therefore $Q$ is
a perfect $A$-module of projective dimension $\leq1$ with $Q[\frac{1}{\mu}]\simeq M_{3}[\frac{1}{\mu}]$
finite free over $A[\frac{1}{\mu}]$. 
\begin{lem}
If $Q[\pi^{\infty}]$ is finitely presented over $A$, then $Q[\pi^{\infty}]=0$.\end{lem}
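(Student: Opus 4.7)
The plan is to show that $N := Q[\pi^\infty]$ lies in $\Mod_{A,t}$ and is simultaneously annihilated by a power of $\mu$, and then deduce $N = 0$ by unwinding the structure of $\Mod_{A,t}$.

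First, I would observe that $N$ is killed by a bounded power of $\pi$ and by a bounded power of $\mu$. The former is immediate: $N$ is finitely generated (being finitely presented by hypothesis) and $\pi$-power torsion, so $N = N[\pi^n]$ for $n \gg 0$. For the latter, the isomorphism $Q[\tfrac{1}{\mu}] \simeq M_3[\tfrac{1}{\mu}]$ is finite free over $A[\tfrac{1}{\mu}]$, hence $N[\tfrac{1}{\mu}] = 0$, and finite generation upgrades this to $\mu^m N = 0$ for some $m \gg 0$.

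Next I would invoke the projective-dimension hypothesis on $Q$: by the Auslander--Buchsbaum discussion in \ref{sub:ProjDimModA}, $\mathrm{proj.dim}_A(Q) \leq 1$ is equivalent to $Q[\mathfrak{m}^\infty] = 0$, which forces $N[\mathfrak{m}^\infty] = 0$. Combined with the fact that $N$ is finitely presented and killed by $\pi^n$, this places $N$ in the category $\Mod_{A,t}$ (see \ref{sub:DevissageOfModules}). By \ref{sub:FiltOnModAt=000026ModAm}, $N$ then admits the canonical filtration $0 \subset N[\pi] \subset \cdots \subset N[\pi^n] = N$, whose successive quotients $N[\pi^k]/N[\pi^{k-1}]$ are finite free $\mathcal{O}_C^\flat$-modules.

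Finally, I would extract a contradiction from the $\mu$-torsion. The annihilator $\mu^m$ of $N$ acts as zero on each subquotient $N[\pi^k]/N[\pi^{k-1}]$; but modulo $\pi$ we have $\mu \equiv [\epsilon]-1 \equiv \epsilon - 1$, and $\epsilon - 1$ is a nonzero element of the valuation ring $\mathcal{O}_C^\flat$ (because $\epsilon \neq 1$ in $C^\flat$), hence a non-zero-divisor. Multiplication by $(\epsilon-1)^m$ is therefore injective on any finite free $\mathcal{O}_C^\flat$-module, forcing each graded piece to vanish and so $N=0$.

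The step I expect to be the most delicate is Step~3 (placing $N$ in $\Mod_{A,t}$): it crucially relies on the equivalence between $\mathrm{proj.dim}_A(Q)\le 1$ and $Q[\mathfrak{m}^\infty]=0$, which in turn depends on the somewhat subtle computation of local cohomology of $A$ at $\mathfrak{m}$ recalled in \ref{sub:ProjDimModA}. The final contradiction via the non-vanishing of $\mu \bmod \pi$ is then essentially formal.
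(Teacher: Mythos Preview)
Your argument is correct, and it is genuinely different from the paper's proof. The paper argues via the categorical structure: it takes the preimage $M_1'$ of $Q[\pi^\infty]$ in $M_2$, passes to its bidual $M_{1,f}'$ (a finite free BKF-module sitting between $M_1$ and $M_2$), observes that $M_{1,f}' \to M_3$ vanishes because its image is $\pi$-power torsion in the free module $M_3$, and then invokes the fact that $M_1$ is the kernel of $M_2 \to M_3$ in the quasi-abelian category $\Mod_{A,f}^{\varphi}$ to force $M_{1,f}'=M_1$. Your route is purely module-theoretic: you use $\mathrm{proj.dim}_A Q \leq 1 \Leftrightarrow Q[\mathfrak{m}^\infty]=0$ to place $N=Q[\pi^\infty]$ in $\Mod_{A,t}$, then kill it via the $\pi$-d\'evissage and the observation that $\mu \bmod \pi = \epsilon - 1$ is a nonzero element of the domain $\mathcal{O}_C^\flat$. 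Your proof never touches the $\varphi$-structure or the kernel property of $M_1$, so it actually proves a slightly more general module-theoretic fact (for any perfect $A$-module $Q$ of projective dimension $\leq 1$ with $Q[\tfrac{1}{\mu}]$ free and $Q[\pi^\infty]$ finitely presented, one has $Q[\pi^\infty]=0$); the paper's argument, by contrast, stays closer to the ambient exact-sequence setup and avoids the local-cohomology input. Note also that your ``delicate'' Step~3 is in fact routine here, since the equivalence you cite is exactly the content of~\ref{sub:ProjDimModA} applied to the perfect module $Q$.
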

\begin{proof}
Suppose that $Q[\pi^{\infty}]$ is finitely presented over $A$. Its
inverse image $M_{1}^{\prime}$ in $M_{2}$ is then a finitely presented
$A$-module with $M_{1}^{\prime}[\frac{1}{\pi}]\simeq M_{1}[\frac{1}{\pi}]$
free over $A$, so $M_{1}^{\prime}$ is a torsion-free BKF-module.
Then $M_{1}\subset M_{1}^{\prime}\subset M_{1,f}^{\prime}\subset M_{2}$
with $\overline{M}_{1}^{\prime}=M_{1,f}^{\prime}/M_{1}$ killed by
$\pi^{n}$ for $n\gg0$, so $M_{1,f}^{\prime}$ is contained in the
kernel of $(M_{2},\varphi_{2})\rightarrow(M_{3},\varphi_{3})$ in
$\Mod_{A,f}^{\varphi}$, i.e.~$M_{1,f}^{\prime}\hookrightarrow M_{2}$
factors through $M_{1}\hookrightarrow M_{2}$, which means that actually
$M_{1}=M_{1}^{\prime}=M_{1,f}^{\prime}$, and indeed $Q[\pi^{\infty}]=M_{1}^{\prime}/M_{1}=0$.
\end{proof}

\subsubsection{~}

Recall that $B_{crys}^{+}=A_{crys}[\frac{1}{\pi}]$ where $A_{crys}$
is the $\pi$-adic completion of the $A$-subalgebra of $A[\frac{1}{\pi}]$
generated by $\frac{\xi^{m}}{m!}$ for all $m\geq0$. 
\begin{prop}
\label{prop:ConditionsExactAlgebraic}The following conditions are
equivalent:
\begin{enumerate}
\item Our complex induces an exact sequence of $B_{crys}^{+}$-modules 
\[
0\rightarrow M_{1}\otimes_{A}B_{crys}^{+}\rightarrow M_{2}\otimes_{A}B_{crys}^{+}\rightarrow M_{3}\otimes_{A}B_{crys}^{+}\rightarrow0.
\]

\item Our complex induces an exact sequence of $B_{crys}^{+}$-modules 
\[
M_{1}\otimes_{A}B_{crys}^{+}\rightarrow M_{2}\otimes_{A}B_{crys}^{+}\rightarrow M_{3}\otimes_{A}B_{crys}^{+}\rightarrow0.
\]

\item $Q[\frac{1}{\pi}]$ if free over $A[\frac{1}{\pi}]$. 
\item $Q[\frac{1}{\pi}]$ is projective over $A[\frac{1}{\pi}].$
\item Our complex induces an exact sequence of $A$-modules
\[
0\rightarrow M_{1}\rightarrow M_{2}\rightarrow M_{3}\rightarrow\overline{Q}\rightarrow0
\]
with $\overline{Q}$ supported at $\{\mathfrak{m}\}$, i.e.~$\overline{Q}\in\Mod_{A,\mathfrak{m}^{\infty}}$.
\item Our complex induces an exact sequence
\[
0\rightarrow\tilde{M}_{1}\rightarrow\tilde{M}_{2}\rightarrow\tilde{M}_{3}\rightarrow0
\]
of quasi-coherent sheaves on $U=\Spec(A)\setminus\{\mathfrak{m}\}$.
\item Our complex induces an exact sequence of $A[\frac{1}{\pi}]$-modules
\[
0\rightarrow M_{1}[{\textstyle \frac{1}{\pi}}]\rightarrow M_{2}[{\textstyle \frac{1}{\pi}}]\rightarrow M_{3}[{\textstyle \frac{1}{\pi}}]\rightarrow0
\]

\item Our complex is isogeneous to a complex which is naively exact.
\end{enumerate}
\end{prop}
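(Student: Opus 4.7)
Let $Q = \coker(M_1 \to M_2)$ computed in $\Mod_A$. Then $Q$ is a perfect $A$-module of projective dimension at most one, fitting in the free resolution $0 \to M_1 \to M_2 \to Q \to 0$. Applying $\HTT'$ to the canonical short exact sequence yields short exact sequences $0 \to T_1 \to T_2 \to T_3 \to 0$ of $\mathcal{O}_E$-modules and $0 \to \Xi_1 \to \Xi_2 \to \Xi_3 \to 0$ of $B_{dR}^+$-modules. Via the isomorphisms $M_i[\frac{1}{\mu}] \simeq T_i \otimes_{\mathcal{O}_E} A[\frac{1}{\mu}]$ of \cite[4.26]{BaMoSc16} and $M_i \otimes_A B_{dR}^+ \simeq \Xi_i$, the map $M_1 \to M_2$ is injective and the natural map $Q \to M_3$ becomes an isomorphism after inverting $\mu$; in particular the cokernel $\overline{Q}' = M_3/\mathrm{im}(M_2 \to M_3)$ is annihilated by $\mu^k$ for some $k$.

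The implications $(1) \Rightarrow (2)$, $(5) \Leftrightarrow (6)$, $(5) \Rightarrow (7) \Rightarrow (3) \Rightarrow (4)$ are immediate, using that objects of $\Mod_{A,\mathfrak{m}^{\infty}}$ are killed by a power of $\pi$ and that finite projective $A[\frac{1}{\pi}]$-modules are free by \cite[4.9]{BaMoSc16}. The crucial step is $(4) \Rightarrow (5)$. Condition $(4)$ places $Q$ in $\Mod_{A,\ast}$, so $Q[\pi^{\infty}]$ is finitely presented by the discussion of \ref{sub:basicsonModA*}, hence zero by the lemma preceding the proposition. Therefore the canonical map $Q \hookrightarrow Q_f := Q^{\vee\vee}$ is injective with cokernel $\overline{Q} \in \Mod_{A,\mathfrak{m}^{\infty}}$, by the d\'evissage of \ref{sub:DevissageOfModules}. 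It remains to identify $Q_f$ with $M_3$, which I would do by matching universal properties: both $M_3$ and $Q_f$ corepresent on $\Mod_{A,f}^{\varphi}$ the functor $N \mapsto \Hom_{\Mod_A^{\varphi}}(Q,N)$. Indeed, $M_3$ does so because it is the cokernel of $M_1 \to M_2$ in the quasi-abelian category $\Mod_{A,f}^{\varphi}$, while $Q \to Q_f$ is universal among $\varphi$-equivariant $A$-linear maps from $Q$ to reflexive (hence finite free) BKF-modules by functoriality of $-^{\vee\vee}$. Yoneda provides a canonical isomorphism $M_3 \simeq Q_f$ compatible with the arrow from $M_2$, whence the four-term exact sequence $0 \to M_1 \to M_2 \to M_3 \to \overline{Q} \to 0$ of $(5)$.

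For $(5) \Rightarrow (1)$, I rely on the observation that every $\pi^n$-torsion $A$-module $N$ satisfies $\Tor_i^A(N, B_{crys}^+) = 0$ for all $i \geq 0$: multiplication by $\pi^n$ acts as zero on $N$ yet as a unit on $B_{crys}^+$, so acts simultaneously as zero and as an isomorphism on each $\Tor_i^A(N, B_{crys}^+)$. Applied to the exact sequence $0 \to Q \to M_3 \to \overline{Q} \to 0$ with $M_3$ finite free, this yields $\Tor_1^A(Q, B_{crys}^+) = \Tor_2^A(\overline{Q}, B_{crys}^+) = 0$, so the free resolution $0 \to M_1 \to M_2 \to Q \to 0$ remains exact after $\otimes_A B_{crys}^+$; combined with $Q \otimes_A B_{crys}^+ \simeq M_3 \otimes_A B_{crys}^+$ this gives $(1)$. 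For $(2) \Rightarrow (5)$, one exploits that the vanishing $\overline{Q}' \otimes_A B_{crys}^+ = 0$ combined with the $\mu^k$-torsion of $\overline{Q}'$ forces $\overline{Q}'$ to be $\pi^n$-torsion as well, from which $\overline{Q}' \in \Mod_{A,\mathfrak{m}^{\infty}}$ follows via the radical computation $\sqrt{(\pi^n,\mu^k)} \supset (\pi, [\varpi]) = \mathfrak{m}$ (using $[\varpi]^m \in (\pi,\mu)$ for $m \gg 0$, because $\varpi$ and $\epsilon - 1$ have commensurable valuations in $\mathcal{O}_C^{\flat}$). Finally, $(8) \Leftrightarrow (7)$ is formal: an isogeny of complexes is the same data as an isomorphism after $\otimes E$, and naive exactness is preserved by such isomorphisms.

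The main obstacle is the universal property identification $Q_f \simeq M_3$ in the step $(4) \Rightarrow (5)$: one must verify that the Yoneda correspondence is compatible with the full $\varphi$-equivariant structure on both sides, which ultimately rests on the Fargues-Scholze equivalence ensuring $\Mod_{A,f}^{\varphi}$ is quasi-abelian with cokernels computable via the explicit formula in $\HT_{\mathcal{O}_E}^{B_{dR}}$.
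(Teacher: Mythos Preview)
Your overall logical architecture matches the paper's closely: the cycle through $(3)/(4) \Rightarrow (5)$ via the preceding lemma on $Q[\pi^\infty]$, the identification $M_3 \simeq Q_f$, and the easy implications among $(5),(6),(7)$ are all essentially as in the paper. Your $(5)\Rightarrow(1)$ via $\Tor$-vanishing is correct, though the paper takes the shorter route $(7)\Rightarrow(1)$, which is immediate because $B_{crys}^{+}$ is an $A[\frac{1}{\pi}]$-algebra (so one just base-changes the exact sequence of free $A[\frac{1}{\pi}]$-modules).

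The genuine gap is your step $(2)\Rightarrow(5)$. You assert that ``$\overline{Q}'\otimes_A B_{crys}^{+}=0$ together with $\mu^k\overline{Q}'=0$ forces $\overline{Q}'$ to be $\pi^n$-torsion'', but you give no argument for this, and it is not clear: it would amount to a faithfulness statement for $A[\frac{1}{\pi}]/\mu^k \to B_{crys}^{+}/\mu^k$ that you have not established. Moreover, even granting that claim, you would only have shown $\overline{Q}'\in\Mod_{A,\mathfrak{m}^\infty}$; condition $(5)$ also requires exactness at $M_2$, i.e.\ that $Q\to M_3$ is injective, and your argument for $(2)\Rightarrow(5)$ does not address this. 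The paper does not attempt $(2)\Rightarrow(5)$ directly: it proves $(2)\Rightarrow(3)$ by invoking \cite[4.1.9]{BaMoSc16}, a nontrivial structural result saying (in effect) that a finitely presented $A[\frac{1}{\pi}]$-module whose base change to $B_{crys}^{+}$ is finite free is itself finite free. This external input is precisely what closes the loop from the $B_{crys}^{+}$ side back to the $A[\frac{1}{\pi}]$ side, and there is no elementary substitute.

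A smaller point: your claim that $(7)\Leftrightarrow(8)$ is ``formal'' overstates things. The direction $(8)\Rightarrow(7)$ is indeed formal, but $(7)\Rightarrow(8)$ needs a construction: one must actually produce a naively exact sequence in $\Mod_{A,f}^{\varphi}$ isogenous to the given one. The paper does this as $(5)\Rightarrow(8)$ by pulling back along $\pi^n:M_3\to M_3$ once $\pi^n\overline{Q}=0$. Since you already have $(7)\Rightarrow(3)\Rightarrow(4)\Rightarrow(5)$, you could simply invoke that same construction rather than calling the step formal.
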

\begin{proof}
$(1)\Rightarrow(2)$ is obvious. $(2)\Rightarrow(3)$ follows from
\cite[4.1.9]{BaMoSc16}. $(3)\Leftrightarrow(4)$ is \cite[4.12]{BaMoSc16}. 

$(3)\Rightarrow(5)$ The assumption says that $Q$ is a BKF-module.
Then $Q[\pi^{\infty}]$ is finitely presented, hence trivial by the
previous lemma. It is then obvious that 
\[
M_{2}\twoheadrightarrow Q\hookrightarrow Q_{f}
\]
is a cokernel of $M_{1}\hookrightarrow M_{2}$ in $\Mod_{A,f}^{\varphi}$,
which proves $(5)$ with $M_{3}=Q_{f}$. 

$(5)\Rightarrow(6)\Rightarrow(7)\Rightarrow(1)$ and $(8)\Rightarrow(7)$
are obvious.

$(5)\Rightarrow(8)$: if $\pi^{n}\overline{Q}=0$, the pull-back through
multiplication by $\pi^{n}$ on $M_{3}$ yields an exact sequence
\[
\begin{array}{ccccccccccc}
0 & \rightarrow & M_{1} & \rightarrow & M_{2}^{\prime} & \rightarrow & M_{3} & \rightarrow & 0\\
 &  & \parallel &  & \downarrow &  & \downarrow\pi^{n}\\
0 & \rightarrow & M_{1} & \rightarrow & M_{2} & \rightarrow & M_{3} & \rightarrow & \overline{Q} & \rightarrow & 0
\end{array}
\]
of the desired form, i.e.~isogeneous to the initial sequence and
naively exact.
\end{proof}

\subsubsection{~}

Suppose that our BKF-modules are anti-effective and let 
\[
0\rightarrow(\mathscr{E}_{1},\psi_{1})\rightarrow(\mathscr{E}_{2},\psi_{2})\rightarrow(\mathscr{E}_{3},\psi_{3})\rightarrow0
\]
\[
0\rightarrow(\mathscr{E}_{1}^{-}\hookrightarrow\mathscr{E}_{1}^{+})\rightarrow(\mathscr{E}_{2}^{-}\hookrightarrow\mathscr{E}_{2}^{+})\rightarrow(\mathscr{E}_{3}^{-}\hookrightarrow\mathscr{E}_{3}^{+})\rightarrow0
\]
be the corresponding complexes in $\Modif_{\mathscr{Y}}^{\star}$
and $\Modif_{\mathscr{Y}^{-},\mathscr{Y}^{+}}^{\geq}$. Note that
\[
0\rightarrow\mathscr{E}_{1}^{-}\rightarrow\mathscr{E}_{2}^{-}\rightarrow\mathscr{E}_{3}^{-}\rightarrow0
\]
is a (split) short exact sequence of sheaves on $\mathscr{Y}^{-}$
since $\mathscr{E}_{i}^{-}=T_{i}\otimes_{\mathcal{O}_{E}}\mathscr{O}_{\mathscr{Y}^{-}}$. 
\begin{prop}
\label{prop:ConditionsExactnessAdic}The conditions of proposition~\ref{prop:ConditionsExactAlgebraic}
are equivalent to:
\begin{enumerate}
\item Anyone of the following complexes is exact:

\begin{enumerate}
\item \textup{$0\rightarrow\mathscr{E}_{1}\rightarrow\mathscr{E}_{2}\rightarrow\mathscr{E}_{3}\rightarrow0$
in $\Bun_{\mathscr{Y}}$.}
\item $0\rightarrow\mathscr{E}_{1}\vert_{\mathscr{Y}^{+}}\rightarrow\mathscr{E}_{2}\vert_{\mathscr{Y}^{+}}\rightarrow\mathscr{E}_{3}\vert_{\mathscr{Y}^{+}}\rightarrow0$
in $\Bun_{\mathscr{Y}^{+}}$.
\item $0\rightarrow\mathscr{E}_{1}^{+}\rightarrow\mathscr{E}_{2}^{+}\rightarrow\mathscr{E}_{3}^{+}\rightarrow0$
in $\Bun_{\mathscr{Y}^{+}}$.
\item $0\rightarrow M_{1}\otimes\mathscr{R}_{+}^{\mathrm{int}}\rightarrow M_{2}\otimes\mathscr{R}_{+}^{\mathrm{int}}\rightarrow M_{3}\otimes\mathscr{R}_{+}^{\mathrm{int}}\rightarrow0$
in $\Mod_{\mathscr{R}_{+}^{\mathrm{int}}}$.
\item $0\rightarrow M_{1}\otimes\overline{B}\rightarrow M_{2}\otimes\overline{B}\rightarrow M_{3}\otimes\overline{B}\rightarrow0$
in $\Mod_{\overline{B}}$.
\end{enumerate}
\item Anyone of the following complexes is split exact.

\begin{enumerate}
\item $0\rightarrow\mathscr{E}_{1}^{+}\rightarrow\mathscr{E}_{2}^{+}\rightarrow\mathscr{E}_{3}^{+}\rightarrow0$
in $\Bun_{\mathscr{Y}^{+}}^{\varphi}$.
\item $0\rightarrow\mathscr{E}_{1}^{+}\vert_{\mathscr{Y}^{\circ}}\rightarrow\mathscr{E}_{2}^{+}\vert_{\mathscr{Y}^{\circ}}\rightarrow\mathscr{E}_{3}^{+}\vert_{\mathscr{Y}^{\circ}}\rightarrow0$
in $\Bun_{\mathscr{Y}^{\circ}}^{\varphi}$.
\item $0\rightarrow M_{1}\otimes\overline{B}\rightarrow M_{2}\otimes\overline{B}\rightarrow M_{3}\otimes\overline{B}\rightarrow0$
in $\Bun_{\overline{B}}^{\varphi}$.
\end{enumerate}
\end{enumerate}
\end{prop}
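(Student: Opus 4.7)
The underlying idea is that every new condition measures the same obstruction as condition~$(5)$ of Proposition~\ref{prop:ConditionsExactAlgebraic}, namely whether the algebraic cokernel $\overline{Q}$ of $M_2 \to M_3$ in $\Mod_A$ is supported at the closed point $\mathfrak{m}=(\pi,[\varpi])$. Two features are used systematically: each $M_i$ is finite free over $A$, so base change is exact in the $M_i$-slots and the injectivity of $M_1 \to M_2$ is preserved; and, assuming $(5)$, $\overline{Q}$ is killed by $(\pi,[\varpi])^n$ for some $n$, hence vanishes after any base change in which either $\pi$ or $[\varpi]$ becomes a local unit, together with its first $\Tor$ (which is resolved using the two-term free resolution of $Q=M_2/M_1$).

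\textbf{Forward direction.} Given $(5)$, all relevant sheaves and rings locally invert $\pi$ or $[\varpi]$: $\mathscr{O}_\mathscr{Y}$ because $\mathscr{Y}=\Spa(A)\setminus\{y_\mathbb{F}\}$; and $\mathscr{O}_{\mathscr{Y}^+}$, $\mathscr{R}_+^{\mathrm{int}}$, $B^+$, $\overline{B}$ because they are $A[\tfrac{1}{\pi}]$-algebras. This immediately yields $(1a)$, $(1b)$, $(1d)$ and $(1e)$. For $(1c)$, use the filtered colimit $\mathscr{E}^+=\bigcup_{k\ge 0}(\varphi^k)^*\mathscr{E}\vert_{\mathscr{Y}^+}$ from~\ref{sub:FromModif*2ModifAn} together with exactness of $\varphi$-pullback and of filtered colimits, bootstrapping from $(1b)$. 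For the split-exactness conditions $(2a)$--$(2c)$, the chain of $\otimes$-equivalences $\Bun_{B^+}^{\prime,\varphi}\simeq \Bun_{\overline{B}}^\varphi \simeq \Bun_L^\varphi$ of~\ref{sub:FFDiagram}, combined with Dieudonn\'e--Manin's semi-simplicity of $\Bun_L^\varphi$, forces every short exact sequence in $\Bun_{\overline{B}}^\varphi$ to split; this gives $(1e)\Leftrightarrow (2c)$, and via $\mathscr{E}_i^+=D_i^+\otimes_{B^+}\mathscr{O}_{\mathscr{Y}^+}$ with $D_i^+\leftrightarrow \overline{M}_i$ (from~\ref{sub:ComputeAnalFunct}) also $(1c)\Leftrightarrow (2a)$; restriction to $\mathscr{Y}^\circ$ handles $(2b)$.

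\textbf{Reverse direction.} To recover $(5)$ from any one of the new conditions, the most direct route is $(1a) \Rightarrow (5)$: taking $\Gamma(\mathscr{Y},-)$ of the asserted short exact sequence of sheaves, and applying the equivalence $\Gamma:\Bun_\mathscr{Y}\simeq \Mod_{A,f}$ of~\ref{sub:FromBKF2Modif*} together with the long exact cohomology sequence, we see that the cokernel of $M_2\to M_3$ in $\Mod_A$ embeds into $H^1(\mathscr{Y},\mathscr{E}_1)$. The local-cohomology calculation of~\ref{sub:ProjDimModA} identifies this $H^1$ with $M_1\otimes_A \mathcal{E}$ where $\mathcal{E}$ is supported at $\mathfrak{m}$, so $\overline{Q}$ is too. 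Alternatively, $(1e)\Rightarrow (7)$ follows directly: since $M_i[\tfrac{1}{\pi}]$ is finite free over $A[\tfrac{1}{\pi}]\hookrightarrow \overline{B}$, exactness of $0\to \overline{M}_1\to \overline{M}_2\to \overline{M}_3\to 0$ forces surjectivity of $M_2[\tfrac{1}{\pi}]\to M_3[\tfrac{1}{\pi}]$ by a rank comparison and the already-known injectivity of $M_1[\tfrac{1}{\pi}]\to M_2[\tfrac{1}{\pi}]$. The remaining reverse implications among $(1b)$--$(1e)$ and $(2a)$--$(2c)$ are then routine, via the chain $(1a)\Rightarrow (1b)\Rightarrow (1c)\Leftrightarrow (1e)$ and forgetting splittings.

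\textbf{Main obstacle.} The subtle point is the finite-presentation part of condition $(5)$: a priori $\overline{Q}$ is only a quotient of the finite free module $M_3$ by the finitely generated submodule $Q$, not obviously finitely presented. This is handled by using the perfectness of $Q=M_2/M_1$ (a two-term finite free resolution over $A$) together with the coherence of each $A_n$ established in~\cite[Proposition~3.24]{BaMoSc16}: once one checks that $\overline{Q}$ is $\pi^n$-torsion for some $n$, the problem reduces to a finitely presented module over the coherent ring $A_n$, and finite presentation over $A$ follows. A secondary but standard issue is verifying that the sheaf-theoretic exactness in $\Bun_\mathscr{Y}$ agrees with the naive (kernel/cokernel) interpretation needed for the cohomological argument; this reduces to local-freeness of all terms and Čech-computations on the standard affinoid cover of $\mathscr{Y}$.
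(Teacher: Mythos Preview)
Your overall architecture matches the paper's, but there is one genuine gap, one wrong citation, and one misidentified difficulty.

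\textbf{The gap: closing the loop from $(1e)$ back to the algebraic side.} Your route $(1e)\Rightarrow(7)$ by ``rank comparison'' does not work: $A[\tfrac{1}{\pi}]\to\overline{B}$ is far from faithfully flat (the image of $\Spec\overline{B}$ hits only the prime lying under the residue field $L$), so $\overline{Q}\otimes_A\overline{B}=0$ does not force $\overline{Q}[\tfrac{1}{\pi}]=0$; a torsion-free $A[\tfrac{1}{\pi}]$-submodule of $M_3[\tfrac{1}{\pi}]$ of full rank need not be all of $M_3[\tfrac{1}{\pi}]$. Since your reverse direction for $(1b)$--$(1e)$ and $(2a)$--$(2c)$ funnels through $(1e)\Rightarrow(7)$, this breaks everything except $(1a)$. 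The paper closes the loop differently: it proves $(1c)\Rightarrow(1a)$ by gluing, using that $\mathscr{E}_i=\mathscr{E}_i^-$ on $\mathscr{Y}_{[-\infty,0[}$ and $\mathscr{E}_i=\mathscr{E}_i^+$ on $\mathscr{Y}_{]-1,+\infty]}$, together with the already-known split exactness of the $\mathscr{E}_i^-$-sequence on $\mathscr{Y}^-$. Then every new condition reaches $(1a)\Rightarrow(5)$.

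\textbf{The citation: $H^1(\mathscr{Y},\mathscr{O}_\mathscr{Y})$ is not the object computed in~\ref{sub:ProjDimModA}.} In your $(1a)\Rightarrow(5)$ step you cite~\ref{sub:ProjDimModA}, but that section computes cohomology on the \emph{scheme} $U=\Spec A\setminus\{\mathfrak{m}\}$, not on the adic space $\mathscr{Y}$; there is no identification of the two $H^1$'s on offer. The paper supplies the needed input as a separate proposition immediately following this one: $H^1(\mathscr{Y}^\pm,\mathscr{O}_\mathscr{Y})=0$ and $H^1(\mathscr{Y},\mathscr{O}_\mathscr{Y})[\tfrac{1}{\pi}]=H^1(\mathscr{Y},\mathscr{O}_\mathscr{Y})[\tfrac{1}{[\varpi]}]=0$, proved via Mayer--Vietoris for $\mathscr{Y}=\mathscr{Y}^-\cup\mathscr{Y}^+$, the affinoid vanishing $H^1(\mathscr{Y}_{[-\infty,s]},\mathscr{O}_\mathscr{Y})=H^1(\mathscr{Y}_{[r,+\infty]},\mathscr{O}_\mathscr{Y})=0$ from~\cite[2.7.7]{KeLi15}, and a Mittag--Leffler density argument. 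Your conclusion that $\overline{Q}$ is $\mathfrak{m}^\infty$-torsion is correct; only the justification needs replacing.

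\textbf{A red herring.} Your ``main obstacle'' is not one: $\overline{Q}=M_3/Q$ where $Q$ is the image of the finite free module $M_2$, hence finitely generated, so $\overline{Q}$ is finitely presented by the definition of finite presentation. No appeal to coherence of $A_n$ is needed here.
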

\begin{proof}
In $(1)$, plainly $(a)\Rightarrow(b)$, moreover $(b)\Rightarrow(c)$
by construction of $\mathscr{E}\mapsto\mathscr{E}^{+}$, $(c)\Rightarrow(d)$
by localization at $y_{L}$ and $(d)\Rightarrow(e)$ by base change
along $\mathscr{R}_{+}^{\mathrm{int}}\twoheadrightarrow\overline{B}$
(using that $M_{3}$ is free over $A$). Moreover, $(c)\Rightarrow(a)$
since $\mathscr{E}_{i}=\mathscr{E}_{i}^{-}$ on $\mathscr{Y}_{[-\infty,0[}$
and $\mathscr{E}_{i}=\mathscr{E}_{i}^{+}$ on $\mathscr{Y}_{]-1,+\infty]}$.
Since $\Bun_{\overline{B}}^{\varphi}\simeq\Bun_{\mathscr{Y}^{+}}^{\prime,\varphi}\simeq\Bun_{\mathscr{Y}^{\circ}}^{\varphi}$,
the three conditions of $(2)$ are equivalent. Obviously $(2a)\Rightarrow(1c)$,
and $(1e)\Rightarrow(2c)$ by \cite[\S 11.1]{FaFo15}. Condition~$(7)$
of proposition~\ref{prop:ConditionsExactAlgebraic} implies $(2c)$.
Finally $(1a)$ implies condition~$(5)$ of proposition~\ref{prop:ConditionsExactAlgebraic}
by the next proposition -- since indeed $M_{i}=\Gamma(\mathscr{Y},\mathscr{E}_{i})$
and $\mathscr{E}_{1}=M_{1}\otimes_{A}\mathscr{O}_{\mathscr{Y}}\simeq\mathscr{O}_{\mathscr{Y}}^{r_{1}}$
with $r_{1}=\rank_{A}M_{1}$. \end{proof}
\begin{prop}
We have $H^{1}(\mathscr{Y}^{+},\mathscr{O}_{\mathscr{Y}})=0=H^{1}(\mathscr{Y}^{-},\mathscr{O}_{\mathscr{Y}})$
and 
\[
H^{1}(\mathscr{Y},\mathscr{O}_{\mathscr{Y}})[{\textstyle \frac{1}{\pi}}]=H^{1}(\mathscr{Y},\mathscr{O}_{\mathscr{Y}})[{\textstyle \frac{1}{[\varpi]}}]=0.
\]
\end{prop}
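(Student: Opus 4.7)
The plan is to first establish the Stein-type vanishing on the two half-spaces and then deduce the statement on $\mathscr{Y}$ from the Mayer-Vietoris sequence for the open cover $\mathscr{Y}=\mathscr{Y}^{+}\cup\mathscr{Y}^{-}$, whose intersection is $\mathscr{Y}^{\circ}$.

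For the vanishing $H^{1}(\mathscr{Y}^{-},\mathscr{O}_{\mathscr{Y}})=0$, I would exhaust $\mathscr{Y}^{-}$ by the affinoid subspaces $\mathscr{Y}_{[-\infty,n]}$ for $n\geq 1$, viewed as sheafy adic spaces over $A$ (using Kedlaya's analytic sheafiness, invoked in \ref{sub:The-Fargues-Fontaine-curve} and \ref{sub:PhiBunOverY-}). For each such affinoid, higher cohomology of the structure sheaf vanishes (the affinoid acyclicity theorem in the strongly Noetherian range $-\infty<a\leq b<+\infty$, and directly for the Witt-type presentation at $y_{C^{\flat}}$). The restriction maps on global sections between successive affinoids are continuous with dense image, as can be read off the explicit description of $\Gamma(\mathscr{Y}_{[-\infty,n]},\mathscr{O})$ as a completion of a polynomial algebra over $A$ with respect to a Gauss norm. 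A standard Čech-Mittag-Leffler argument on the two-open cover $\mathscr{Y}^{-}=\mathscr{Y}_{[-\infty,n]}\cup\mathscr{Y}_{[-\infty,n+1]}\cup\cdots$ (the quasi-Stein analogue of Kiehl's Theorem B) then produces $H^{1}(\mathscr{Y}^{-},\mathscr{O}_{\mathscr{Y}})=0$. The vanishing on $\mathscr{Y}^{+}$ is entirely symmetric, exhausting $\mathscr{Y}^{+}$ by $\mathscr{Y}_{[-n,+\infty]}$ with $n\to\infty$ and treating the distinguished limit point $y_{L}$ via the local ring $\mathscr{R}_{+}^{\mathrm{int}}$ introduced in \ref{sub:FFDiagram}.

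With the vanishings on $\mathscr{Y}^{\pm}$ in hand, Mayer-Vietoris for the open cover $\{\mathscr{Y}^{+},\mathscr{Y}^{-}\}$ yields the exact sequence
\[
0\rightarrow \Gamma(\mathscr{Y},\mathscr{O})\rightarrow B^{+}\oplus B^{-}\rightarrow B\rightarrow H^{1}(\mathscr{Y},\mathscr{O}_{\mathscr{Y}})\rightarrow 0,
\]
where $B^{-}:=\Gamma(\mathscr{Y}^{-},\mathscr{O})$, $B^{+}=\Gamma(\mathscr{Y}^{+},\mathscr{O})$ and $B=\Gamma(\mathscr{Y}^{\circ},\mathscr{O})$. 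To finish, I would use that $\mathscr{Y}^{+}=\{|\pi|\neq 0\}$ and $\mathscr{Y}^{-}=\{|[\varpi]|\neq 0\}$ are rational subdomains of $\mathscr{Y}$, cut out by localizing at a single analytic function. Since $\mathscr{Y}$ is qcqs, localization at a regular function commutes with sheaf cohomology, giving
\[
H^{1}(\mathscr{Y},\mathscr{O}_{\mathscr{Y}})\bigl[{\textstyle \frac{1}{\pi}}\bigr]=H^{1}(\mathscr{Y}^{+},\mathscr{O}_{\mathscr{Y}})=0,\qquad H^{1}(\mathscr{Y},\mathscr{O}_{\mathscr{Y}})\bigl[{\textstyle \frac{1}{[\varpi]}}\bigr]=H^{1}(\mathscr{Y}^{-},\mathscr{O}_{\mathscr{Y}})=0,
\]
by the first step. (Equivalently, one can argue directly from the sequence above that $B^{+}[\tfrac{1}{[\varpi]}]$ surjects onto $B$, because $\mathscr{Y}^{\circ}=\mathscr{Y}^{+}\cap\{|[\varpi]|\neq 0\}$, so the image of $B^{+}\oplus B^{-}\to B$ becomes surjective after inverting either $\pi$ or $[\varpi]$.)

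The main technical obstacle is the quasi-Stein step: one must verify that the exhausting affinoids $\mathscr{Y}_{[a_{n},b_{n}]}$ are genuinely sheafy with vanishing higher $\mathscr{O}$-cohomology, and that the restriction maps on global sections are continuous with dense image so that the relevant $\mathrm{R}^{1}\varprojlim$ vanishes. Both facts are available through the Witt-vector formalism of \cite{Ke16} and \cite{KeLi15}, but require an explicit Gauss-norm description of the sections $\Gamma(\mathscr{Y}_{[a,b]},\mathscr{O})$. Once this is granted, the remaining steps are straightforward sheaf-theoretic manipulations.
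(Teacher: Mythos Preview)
Your approach is essentially the same as the paper's: exhaust $\mathscr{Y}^{\pm}$ by affinoids $\mathscr{Y}_{[-\infty,s]}$ and $\mathscr{Y}_{[r,+\infty]}$, use affinoid acyclicity (the paper cites \cite[2.7.7]{KeLi15}), kill the $R^{1}\varprojlim$ term via Mittag-Leffler from density of the restriction maps, and then use Mayer-Vietoris to identify $H^{1}(\mathscr{Y},\mathscr{O}_{\mathscr{Y}})$ with the cokernel of $B^{+}\oplus B^{-}\to B$, which one checks directly is killed by inverting $\pi$ or $[\varpi]$.

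One caution: your first route for the last step, invoking ``localization at a regular function commutes with sheaf cohomology on a qcqs space'' to get $H^{1}(\mathscr{Y},\mathscr{O})[\tfrac{1}{\pi}]=H^{1}(\mathscr{Y}^{+},\mathscr{O})$, is not a free move in the adic setting, since passing to the rational subdomain $\mathscr{Y}^{+}$ involves completion, not just algebraic localization. Your parenthetical alternative (surjectivity of $B^{+}\to B$ after inverting $[\varpi]$, and of $B^{-}\to B$ after inverting $\pi$) is the safe argument, and it is exactly what the paper does.
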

\begin{proof}
The following proof was indicated to us by Fargues. First since 
\[
\mathscr{Y}^{-}=\cup_{s}\mathscr{Y}_{[-\infty,s]}\quad\mbox{and}\quad\mathscr{Y}^{+}=\cup_{r}\mathscr{Y}_{[r,+\infty]},
\]
we have exact sequences of $A$-modules 
\[
0\rightarrow R^{1}\underleftarrow{\lim}H^{0}\left(\mathscr{Y}_{[-\infty,s]},\mathscr{O}_{\mathscr{Y}}\right)\rightarrow H^{1}\left(\mathscr{Y}^{-},\mathscr{O}_{\mathscr{Y}}\right)\rightarrow\underleftarrow{\lim}H^{1}\left(\mathscr{Y}_{[-\infty,s]},\mathscr{O}_{\mathscr{Y}}\right)\rightarrow0
\]
\[
0\rightarrow R^{1}\underleftarrow{\lim}H^{0}\left(\mathscr{Y}_{[r,+\infty]},\mathscr{O}_{\mathscr{Y}}\right)\rightarrow H^{1}\left(\mathscr{Y}^{+},\mathscr{O}_{\mathscr{Y}}\right)\rightarrow\underleftarrow{\lim}H^{1}\left(\mathscr{Y}_{[r,+\infty]},\mathscr{O}_{\mathscr{Y}}\right)\rightarrow0
\]
By~\cite[2.7.7]{KeLi15}, $H^{1}\left(\mathscr{Y}_{[-\infty,s]},\mathscr{O}_{\mathscr{Y}}\right)=0=H^{1}\left(\mathscr{Y}_{[r,+\infty]},\mathscr{O}_{\mathscr{Y}}\right)$,
thus 
\[
\underleftarrow{\lim}H^{1}\left(\mathscr{Y}_{[-\infty,s]},\mathscr{O}_{\mathscr{Y}}\right)=0=\underleftarrow{\lim}H^{1}\left(\mathscr{Y}_{[r,+\infty]},\mathscr{O}_{\mathscr{Y}}\right).
\]
On the other hand the images of the restriction maps 
\[
H^{0}\left(\mathscr{Y}^{-},\mathscr{O}_{\mathscr{Y}}\right)\rightarrow H^{0}\left(\mathscr{Y}_{[-\infty,s]},\mathscr{O}_{\mathscr{Y}}\right)\quad\mbox{and}\quad H^{0}\left(\mathscr{Y}^{+},\mathscr{O}_{\mathscr{Y}}\right)\rightarrow H^{0}\left(\mathscr{Y}_{[r,+\infty]},\mathscr{O}_{\mathscr{Y}}\right)
\]
are dense in their complete codomain, thus by the Mittag-Leffler lemma,
\[
R^{1}\underleftarrow{\lim}H^{0}\left(\mathscr{Y}_{[-\infty,s]},\mathscr{O}_{\mathscr{Y}}\right)=0=R^{1}\underleftarrow{\lim}H^{0}\left(\mathscr{Y}_{[r,+\infty]},\mathscr{O}_{\mathscr{Y}}\right).
\]
Thus indeed $H^{1}(\mathscr{Y}^{+},\mathscr{O}_{\mathscr{Y}})=0=H^{1}(\mathscr{Y}^{-},\mathscr{O}_{\mathscr{Y}})$.
The Mayer-Vietoris sequence gives
\[
H^{1}\left(\mathscr{Y},\mathscr{O}_{\mathscr{Y}}\right)=\coker\left(H^{0}\left(\mathscr{Y}^{-},\mathscr{O}_{\mathscr{Y}}\right)\oplus H^{0}\left(\mathscr{Y}^{+},\mathscr{O}_{\mathscr{Y}}\right)\rightarrow H^{0}\left(\mathscr{Y}^{\circ},\mathscr{O}_{\mathscr{Y}}\right)\right).
\]
One checks that this cokernel is indeed annihilated by $-[\frac{1}{\pi}]$
and $-[\frac{1}{[\varpi]}]$.
\end{proof}

\subsubsection{~}

Returning to the general case, let 
\[
0\rightarrow(\mathcal{E}_{1,s},\mathcal{E}_{1,t},f_{1})\rightarrow(\mathcal{E}_{2,s},\mathcal{E}_{2,t},f_{2})\rightarrow(\mathcal{E}_{3,s},\mathcal{E}_{3,t},f_{3})\rightarrow0
\]
be the image of our canonical short exact sequence in $\Modif_{X}^{ad}$.
Then
\[
0\rightarrow\mathcal{E}_{1,s}\rightarrow\mathcal{E}_{2,s}\rightarrow\mathcal{E}_{3,s}\rightarrow0\quad\mbox{and}\quad0\rightarrow\mathcal{E}_{1,t}\rightarrow\mathcal{E}_{2,t}\rightarrow\mathcal{E}_{3,t}\rightarrow0
\]
are short exact sequences in $\Bun_{X}$, and the first one is even
split. 
\begin{prop}
\label{prop:CritExactOnModif}The conditions of proposition~\ref{prop:ConditionsExactAlgebraic}
are equivalent to: 

The exact sequence $0\rightarrow\mathcal{E}_{1,t}\rightarrow\mathcal{E}_{2,t}\rightarrow\mathcal{E}_{3,t}\rightarrow0$
is split.\end{prop}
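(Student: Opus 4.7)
The plan is to add the splitting of $0\rightarrow\mathcal{E}_{1,t}\rightarrow\mathcal{E}_{2,t}\rightarrow\mathcal{E}_{3,t}\rightarrow0$ as yet one more equivalent condition to the lists already compiled in propositions~\ref{prop:ConditionsExactAlgebraic} and~\ref{prop:ConditionsExactnessAdic}. Since $\underline{\mathcal{E}}$ commutes with Tate twists by~\ref{sub:CompTate} and splitness of a short exact sequence is preserved both by tensoring with an invertible object and by any equivalence of categories, I would first reduce to the case where the three BKF-modules $(M_i,\varphi_i)$ are anti-effective. In that case proposition~\ref{prop:ConditionsExactnessAdic} applies directly, and it suffices to show the splitness of $0\rightarrow\mathcal{E}_{1,t}\rightarrow\mathcal{E}_{2,t}\rightarrow\mathcal{E}_{3,t}\rightarrow0$ in $\Bun_X$ is equivalent to its condition $(2c)$: splitness of
\[
0 \rightarrow M_{1}\otimes_A\overline{B} \rightarrow M_{2}\otimes_A\overline{B} \rightarrow M_{3}\otimes_A\overline{B} \rightarrow 0
\]
in $\Bun_{\overline{B}}^{\varphi}$.

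Then I would trace the second modification bundle $\mathcal{E}_{i,t}=\mathcal{E}_{i,2}$ through the chain of $\otimes$-equivalences used to construct $\underline{\mathcal{E}}$ in~\ref{sub:ConstruFarguesFunctEff}:
\[
\Bun_X \xleftarrow{\sim} \Bun_{\mathscr{X}} \xleftarrow{\sim} \Bun_{\mathscr{Y}^{\circ}}^{\varphi} \xleftarrow{\sim} \Bun_{\mathscr{Y}^+}^{\prime,\varphi} \xrightarrow{\sim} \Bun_{B^+}^{\prime,\varphi} \xrightarrow{\sim} \Bun_{\overline{B}}^{\varphi}.
\]
By the explicit computations of section~\ref{sub:ComputeAnalFunct}, $\mathcal{E}_{i,t}$ lifts to the $\varphi$-bundle $\mathscr{E}_i^+ \in \Bun_{\mathscr{Y}^+}^{\prime,\varphi}$, which in turn is canonically identified with the finite free \'etale $\varphi$-module $D_i^+ = \Gamma(\mathscr{Y}^+,\mathscr{E}_i^+) \in \Bun_{B^+}^{\prime,\varphi}$, whose image in $\Bun_{\overline{B}}^{\varphi}$ is canonically isomorphic to $M_i \otimes_A \overline{B}$ compatibly with Frobenii. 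Functoriality of this chain of equivalences implies that the short exact sequence of $\mathcal{E}_{i,t}$'s splits in $\Bun_X$ if and only if the induced short exact sequence of $M_i\otimes_A\overline{B}$'s splits in $\Bun_{\overline{B}}^{\varphi}$, which is precisely condition~$(2c)$ of proposition~\ref{prop:ConditionsExactnessAdic}.

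I anticipate no serious obstacle: the content is already in the previously established equivalences, and this proposition merely repackages the sequence on $\mathcal{E}_{i,t}$ through the Fargues-Fontaine dictionary. The main point requiring care is the bookkeeping that the modification bundle $\mathcal{E}_{i,t}$ (rather than the trivial bundle $\mathcal{E}_{i,s}$, on which the sequence is automatically split by~$\omega$) is the one controlling the $\overline{B}$-side of the picture, together with the compatibility of the Frobenius structures on $\mathscr{E}_i^+$ and on $M_i\otimes_A\overline{B}$ throughout the chain.
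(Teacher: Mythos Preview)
Your proposal is correct and follows essentially the same route as the paper: reduce to the anti-effective case via Tate twists, then transport the splitness question along the chain of $\otimes$-equivalences from section~\ref{sub:FFDiagram}. The only cosmetic difference is that the paper lands on condition~$(2b)$ of proposition~\ref{prop:ConditionsExactnessAdic} (splitness in $\Bun_{\mathscr{Y}^{\circ}}^{\varphi}$, using directly that $\mathcal{E}_{i,t}=(-/\varphi)^{alg}(\mathscr{E}_{i}^{+}\vert_{\mathscr{Y}^{\circ}})$), whereas you go one step further along the chain to condition~$(2c)$ (splitness in $\Bun_{\overline{B}}^{\varphi}$); since $(2a)$, $(2b)$, $(2c)$ are already proven equivalent, this makes no material difference.
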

\begin{proof}
Using the compatibility of $\underline{\mathcal{E}}:\Mod_{A,f}^{\varphi}\otimes E\rightarrow\Modif_{X}^{ad}$
with Tate twists, we may assume that our BKF-modules are anti-effective.
Our claim then follows from the criterion $(2.b)$ of proposition~\ref{prop:ConditionsExactnessAdic}
since $\mathcal{E}_{i,t}=(-/\varphi)^{alg}(\mathscr{E}_{i}^{+}\vert_{\mathscr{Y}^{\circ}})$
with the local notations, and $(-/\varphi)^{alg}:\Bun_{\mathscr{Y}^{\circ}}^{\varphi}\rightarrow\Bun_{X}$
is an exact equivalence of categories. 
\end{proof}

\subsubsection{Application}

Let $\Modif_{X}^{ad,\ast}$ be the strictly full subcategory of $\Modif_{X}^{ad}$
whose objects are the admissible modifications $(\mathcal{E}_{1},\mathcal{E}_{2},f_{\mathcal{E}})$
such that the $\mathbb{Q}$-filtration on $\mathcal{E}_{2}$ induced
by the Fargues $\mathbb{Q}$-filtration of $(\mathcal{E}_{1},\mathcal{E}_{2},f_{\mathcal{E}})$
is split. 
\begin{prop}
\label{prop:SpecialSubCats}Fix $(M,\varphi_{M})\in\Mod_{A,f}^{\varphi}\otimes E$
with image $(\mathcal{E}_{1},\mathcal{E}_{2},f_{\mathcal{E}})\in\Modif_{X}^{ad}$
and rank $r\in\mathbb{N}$. Then $t_{F,\infty}(M,\varphi_{M})(r)=t_{F}(\mathcal{E}_{1},\mathcal{E}_{2},f_{\mathcal{E}})(r)$
and for every $s\in[0,r]$, 
\begin{eqnarray*}
(M,\varphi_{M})\in\Mod_{A,f}^{\varphi,\ast}\otimes E & \Longrightarrow & t_{F,\infty}(M,\varphi_{M})(s)\leq t_{F}(\mathcal{E}_{1},\mathcal{E}_{2},f_{\mathcal{E}})(s),\\
(\mathcal{E}_{1},\mathcal{E}_{2},f_{\mathcal{E}})\in\Modif_{X}^{ad,\ast} & \Longrightarrow & t_{F}(\mathcal{E}_{1},\mathcal{E}_{2},f_{\mathcal{E}})(s)\leq t_{F,\infty}(M,\varphi_{M})(s).
\end{eqnarray*}
If both condition holds, then $\underline{\mathcal{E}}$ maps the
Fargues filtration $\mathcal{F}_{F}$ on $(M,\varphi_{M})$ (from
proposition~\ref{prop:TypeHN}) to the Fargues filtration $\mathcal{F}_{F}$
on $(\mathcal{E}_{1},\mathcal{E}_{2},f_{\mathcal{E}})$ (defined in
section~\ref{sub:DefFarguesFiltrOnModif}). \end{prop}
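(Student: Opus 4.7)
I would organize the proof around four distinct assertions: the endpoint equality, the first inequality, the second inequality, and the compatibility of Fargues filtrations under both hypotheses.

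For the endpoint equality $t_{F,\infty}(M)(r) = t_F(\underline{\mathcal{E}})(r)$, I would simply collect identifications already at hand: by the remark closing section~\ref{sub:RisA=00005B1/pi=00005D}, $t_{F,\infty}(M)(r) = \deg_N^\iota(M\otimes L)$, which by \ref{sub:CompNewton} equals $\deg t_N(\underline{\mathcal{E}}) = \deg(\underline{\mathcal{E}}) = t_F(\underline{\mathcal{E}})(r)$ since Fargues and Newton filtrations on $\Modif_X^{ad}$ share the same total degree.

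For the first implication, assume $M$ is of HN-type. Proposition~\ref{prop:TypeHN} supplies the $\mathbb{R}$-filtration $\mathcal{F}_F^\gamma(M)$ of $M$ by finite free sub-BKF-modules whose underlying $A$-modules are direct summands; in particular each step fits in a naively exact sequence of finite free BKF-modules. By criterion (8) of proposition~\ref{prop:ConditionsExactAlgebraic}, applying the equivalence $\underline{\mathcal{E}}$ yields a filtration of $\underline{\mathcal{E}}$ by strict subobjects $\underline{\mathcal{E}}(\mathcal{F}_F^\gamma(M))$ in $\Modif_X^{ad}$. By proposition~\ref{prop:TypeHN}(3), the breakpoints $(\rank\mathcal{F}_F^\gamma(M),\deg\mathcal{F}_F^\gamma(M))$ are precisely the corners of $t_{F,\infty}(M)$; the equivalence preserves rank and (by the endpoint argument applied to the sub) degree, so these same points arise from strict subobjects of $\underline{\mathcal{E}}$, and therefore lie weakly below $t_F(\underline{\mathcal{E}})$. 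Since the endpoints of both polygons agree and $t_F(\underline{\mathcal{E}})$ is concave, a standard concavity argument upgrades this to the pointwise inequality $t_{F,\infty}(M)\leq t_F(\underline{\mathcal{E}})$.

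For the second implication, assume $\underline{\mathcal{E}}\in\Modif_X^{ad,\ast}$. Each short exact sequence $0\to\mathcal{F}_F^\gamma(\underline{\mathcal{E}})\to\underline{\mathcal{E}}\to\underline{\mathcal{E}}/\mathcal{F}_F^\gamma(\underline{\mathcal{E}})\to 0$ has split $\mathcal{E}_2$-part, as each piece of a split filtration. Proposition~\ref{prop:CritExactOnModif} then guarantees that this sequence is isogeneous to one coming from a naively exact sequence of finite free BKF-modules; after choosing a suitable isogeneous representative of $M$, we extract a saturated sub-BKF-module $N^\gamma\subset M$ with free quotient, whose image is $\mathcal{F}_F^\gamma(\underline{\mathcal{E}})$ and which therefore has $(\rank N^\gamma,\deg N^\gamma)$ equal to the breakpoint of $t_F(\underline{\mathcal{E}})$ at $\gamma$. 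The key lemma I must prove is that for any saturated sub-BKF-module $N\subset M$ (with $M/N$ free), the point $(\rank N,\deg N)$ lies below $t_{F,\infty}(M)$. For this, $N_n\hookrightarrow M_n$ is a strict subobject in $\Mod_{A,t}^\varphi$ with $\rank_t(N_n)=n\rank N$ and $\deg_t(N_n)=n\deg N$ (using \ref{sub:rankfree/tors} and the fact that $t_{F,n}(N)(r_N)=t_{F,1}(N)(r_N)=\deg N$ established in the proof of proposition~\ref{prop:DeftFinfty}); hence $(n\rank N,n\deg N)$ lies below $t_F(M_n)$, and scaling by $1/n$ and passing to the limit gives $\deg N\leq t_{F,\infty}(M)(\rank N)$. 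Applying this to each $N^\gamma$ and invoking concavity once more yields $t_F(\underline{\mathcal{E}})\leq t_{F,\infty}(M)$.

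For the final assertion, suppose both conditions hold, so $t_{F,\infty}(M)=t_F(\underline{\mathcal{E}})$. Each $\mathcal{F}_F^\gamma(M)$ is a finite free sub-BKF of HN-type (being a piece of the HN filtration of $M$), so the first implication applies: $t_{F,\infty}(\mathcal{F}_F^\gamma(M))\leq t_F(\underline{\mathcal{E}}(\mathcal{F}_F^\gamma(M)))$. The left-hand side is the top-slopes-$\geq\gamma$ part of $t_{F,\infty}(M)=t_F(\underline{\mathcal{E}})$, while the right-hand side is bounded above by the same top part (because $\underline{\mathcal{E}}(\mathcal{F}_F^\gamma(M))$ is a strict subobject of $\underline{\mathcal{E}}$ of the matching rank) and shares its endpoint. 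Equality forces all HN slopes of $\underline{\mathcal{E}}(\mathcal{F}_F^\gamma(M))$ to be $\geq\gamma$; the standard characterization of $\mathcal{F}_F^\gamma(\underline{\mathcal{E}})$ as the largest strict subobject all of whose HN slopes are $\geq\gamma$ (no nonzero morphism from semistables of higher slope to lower) yields $\underline{\mathcal{E}}(\mathcal{F}_F^\gamma(M))\subset\mathcal{F}_F^\gamma(\underline{\mathcal{E}})$, and equality of ranks finishes the argument.

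The main obstacle is the second implication: lifting the Fargues filtration of $\underline{\mathcal{E}}$ across the equivalence to saturated sub-BKF-modules of a (possibly different isogeneous) representative of $M$, without accumulating the isogeny twists into something that interferes with the concavity argument. The observation that $t_{F,\infty}$ is constant on isogeny classes (proposition~\ref{prop:DeftFinfty}) is what makes it legitimate to handle each breakpoint with a potentially different representative. The key lemma about saturated sub-BKFs, while conceptually clean, requires the linear scaling $\deg_t(N_n)=n\deg N$ which is not quite obvious and relies on the endpoint properties of $t_{F,n}$.
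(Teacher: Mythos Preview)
Your proof is correct and follows essentially the same approach as the paper. The paper's argument for part~(2) is phrased slightly differently---it cites proposition~\ref{prop:ExactSeqInModt} to get $\frac{1}{n}t_F(M'_n)(ns)\leq\frac{1}{n}t_F(M_n)(ns)$ and passes to the limit, which is exactly your ``key lemma'' in different words---and for part~(3) the paper invokes proposition~\ref{prop:OnAdmModif_FarguesExactness} directly (the breakpoint of $\underline{\mathcal{E}}(\mathcal{F}_F^\gamma M)$ lies on the polygon $t_F(\underline{\mathcal{E}})$, forcing $\underline{\mathcal{E}}(\mathcal{F}_F^\gamma M)=\mathcal{F}_F^\gamma\underline{\mathcal{E}}(\mathcal{F}_F^\gamma M)$) rather than reapplying part~(1) to the sub-BKF, but these are cosmetic variations of the same idea.
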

\begin{proof}
The first claim follows from~\ref{sub:CompNewton}. 

$(1)$ Suppose that $(M,\varphi_{M})$ belongs to $\Mod_{A,f}^{\varphi,\ast}$,
so that $t_{F,\infty}(M,\varphi_{M})=t_{F}(M,\varphi_{M})$ by proposition~\ref{prop:TypeHN}.
The graph of $t_{F}(M,\varphi_{M})$ (resp. $t_{F}(\mathcal{E}_{1},\mathcal{E}_{2},f_{\mathcal{E}})$)
is the concave upper bound of $\mathcal{A}$ (resp.~$\mathcal{B}$)
where 
\begin{eqnarray*}
\mathcal{A} & = & \left\{ (\rank,\deg)\left(\mathcal{F}_{F}^{\gamma}(M,\varphi_{M})\right):\gamma\in\mathbb{R}\right\} ,\\
\mathcal{B} & = & \left\{ (\rank,\deg)\left(\underline{\mathcal{E}}'\right):\underline{\mathcal{E}}'\mbox{ strict subobject of }(\mathcal{E}_{1},\mathcal{E}_{2},f_{\mathcal{E}})\mbox{ in }\Modif_{X}^{ad}\right\} .
\end{eqnarray*}
Now for every $\gamma\in\mathbb{R}$, the naively exact sequence 
\[
0\rightarrow\mathcal{F}_{F}^{\gamma}M\rightarrow M\rightarrow M/\mathcal{F}_{F}^{\gamma}M\rightarrow0
\]
in $\Mod_{A,f}^{\varphi}$ induces a canonically exact sequence 
\[
0\rightarrow\underline{\mathcal{E}}\left(\mathcal{F}_{F}^{\gamma}M\right)\rightarrow(\mathcal{E}_{1},\mathcal{E}_{2},f_{\mathcal{E}})\rightarrow\underline{\mathcal{E}}\left(M/\mathcal{F}_{F}^{\gamma}M\right)\rightarrow0
\]
in $\Modif_{X}^{ad}$. Thus $\mathcal{A}\subset\mathcal{B}$ and our
claim easily follows. 

$(2)$ Suppose that $(\mathcal{E}_{1},\mathcal{E}_{2},f_{\mathcal{E}})$
belongs to $\Modif_{X}^{ad,\ast}$. We need to show that for all $\gamma\in\mathbb{R}$,
$d\leq f(s)$ where $f=t_{F,\infty}(M,\varphi_{M})$ and $(s,d)=(\rank,\deg)\left(\mathcal{F}_{F}^{\gamma}(\mathcal{E}_{1},\mathcal{E}_{2},f_{\mathcal{E}})\right)$.
By assumption, propositions~\ref{prop:CritExactOnModif} and \ref{prop:DeftFinfty},
we may assume that the exact sequence
\[
0\rightarrow\mathcal{F}_{F}^{\gamma}(\mathcal{E}_{1},\mathcal{E}_{2},f_{\mathcal{E}})\rightarrow(\mathcal{E}_{1},\mathcal{E}_{2},f_{\mathcal{E}})\rightarrow\frac{(\mathcal{E}_{1},\mathcal{E}_{2},f_{\mathcal{E}})}{\mathcal{F}_{F}^{\gamma}(\mathcal{E}_{1},\mathcal{E}_{2},f_{\mathcal{E}})}\rightarrow0
\]
in $\Modif_{X}^{ad}$ arises from a naively exact sequence 
\[
0\rightarrow M'\rightarrow M\rightarrow M''\rightarrow0
\]
in $\Mod_{A,f}^{\varphi}$, which gives rise to exact sequences 
\[
0\rightarrow M'_{n}\rightarrow M_{n}\rightarrow M''_{n}\rightarrow0
\]
in $\Mod_{A,t}^{\varphi}$ for all $n\geq0$. Then by definition of
$f'=t_{F,\infty}(M')$ and $f=t_{F,\infty}(M)$, 
\[
d=\deg(M')=f'(s)=\lim_{n\rightarrow\infty}{\textstyle \frac{1}{n}}t_{F}(M'_{n})(ns)\leq\lim_{n\rightarrow\infty}{\textstyle \frac{1}{n}}t_{F}(M{}_{n})(ns)=f(s)
\]
using proposition~\ref{prop:ExactSeqInModt} for the middle inequality. 

$(3)$ Suppose now that both conditions hold. For $\gamma\in\mathbb{R}$,
consider the image of the (naively) exact sequence
\[
0\rightarrow\mathcal{F}_{F}^{\gamma}M\rightarrow M\rightarrow M/\mathcal{F}_{F}^{\gamma}M\rightarrow0
\]
from~proposition~\ref{prop:TypeHN}, which is an exact sequence
in $\Modif_{X}^{ad}$, 
\[
0\rightarrow\underline{\mathcal{E}}(\mathcal{F}_{F}^{\gamma}M)\rightarrow(\mathcal{E}_{1},\mathcal{E}_{2},f_{\mathcal{E}})\rightarrow\underline{\mathcal{E}}(M/\mathcal{F}_{F}^{\gamma}M)\rightarrow0.
\]
Set $(r_{\gamma},d_{\gamma})=(\rank,\deg)(\mathcal{F}_{F}^{\gamma}M)$,
so that $f(r_{\gamma})=d_{\gamma}$ where $f=t_{F}(M)$. By $(1)$
and $(2)$, we know that $f=t_{F}(\mathcal{E}_{1},\mathcal{E}_{2},f_{\mathcal{E}})$,
thus also $(r_{\gamma},d_{\gamma})=(\rank,\deg)(\mathcal{F}_{F}^{\gamma}(\mathcal{E}_{1},\mathcal{E}_{2},f_{\mathcal{E}}))$.
It then follows from proposition~\ref{prop:OnAdmModif_FarguesExactness}
that $\underline{\mathcal{E}}(\mathcal{F}_{F}^{\gamma}M)=\mathcal{F}_{F}^{\gamma}\underline{\mathcal{E}}(\mathcal{F}_{F}^{\gamma}M)$.
By functoriality of $\mathcal{F}_{F}$ on $\Modif_{X}^{ad}$, we find
that $\underline{\mathcal{E}}(\mathcal{F}_{F}^{\gamma}M)\hookrightarrow(\mathcal{E}_{1},\mathcal{E}_{2},f_{\mathcal{E}})$
induces a monomorphism $\underline{\mathcal{E}}(\mathcal{F}_{F}^{\gamma}M)\hookrightarrow\mathcal{F}_{F}^{\gamma}(\mathcal{E}_{1},\mathcal{E}_{2},f_{\mathcal{E}})$.
Since its domain and codomain have the same rank and degree, this
monomorphism is indeed an isomorphism. 
\end{proof}
\bibliographystyle{plain}
\bibliography{MyBib}

\end{document}